\documentclass[opre,sglanonrev]{informs4}

\OneAndAHalfSpacedXI
\usepackage{booktabs}
\usepackage{tabularx}
\usepackage{array}

\usepackage{graphicx}   
\usepackage{wrapfig}    
\usepackage{subcaption} 
\usepackage{titlesec}  
\usepackage{float}     
\usepackage{framed}
\usepackage{lscape}
\usepackage{makecell}

\usepackage{threeparttable}

\usepackage{amsmath}    
\usepackage{amssymb}    
\usepackage{amsfonts}   
\usepackage{mathtools}  
\usepackage{bm}         
\usepackage{scalerel}   
\usepackage{dsfont}
\usepackage{mathrsfs}

\DeclareMathAlphabet{\msml}{OT1}{pzc}{m}n
\usepackage{bbm}
\usepackage{siunitx}

\usepackage{cite}
\usepackage{hyperref}
\usepackage{url} 
\usepackage{tikz}
\usepackage{pgfplots}
\pgfplotsset{width=10cm,compat=1.9}
\usepackage{makecell} 
\usepackage{threeparttable}

\usetikzlibrary{calc, positioning}
\usepackage{pdflscape}
\usepackage{bbm}
\usepackage{enumitem}
\usepackage{longtable}
\usepackage{adjustbox}
\usepackage{multirow}
\usepackage{multicol}
\usepackage{array}
\usepackage{booktabs}
\usepackage{wrapfig}

\usepackage{scalerel}

\DeclareMathOperator{\diag}{diag}

\allowdisplaybreaks[1]

\let\proof\relax

\let\theoremstyle\relax
\let\proof\relax

\usepackage{amsthm}

\usepackage{capt-of}

\newtheorem{theorem}{Theorem}
\newtheorem{proposition}{Proposition}
\newtheorem{corollary}{Corollary}
\newtheorem{lemma}{Lemma}

\newtheorem{definition}{Definition}

\newtheorem{assumption}{Assumption}

\newtheorem{remark}{Remark}

\usepackage{graphicx}

\usepackage{afterpage}
\graphicspath{{../../}}
\usepackage{algorithm}       
\usepackage{algpseudocode}   
\usepackage{hhline}

\DeclareCaptionSubType*{algorithm}

\makeatletter
\renewcommand\p@subalgorithm{\thealgorithm}
\makeatother

\DeclarePairedDelimiterX\Set[2]{\lbrace}{\rbrace}{ #1 \,\delimsize| \,\mathopen{} #2 }  

\newcommand{\mK}{\mathcal{K}}          
\newcommand{\mL}{\mathcal{L}}          
\newcommand{\mF}{\mathcal{F}}
\newcommand{\mD}{\mathcal{D}}
\newcommand{\mT}{\mathcal{T}}
\newcommand{\mZ}{\mathcal{Z}}
\newcommand{\mS}{\mathcal{S}}
\newcommand{\mQ}{\mathcal Q}

\newcommand{\mN}{\mathcal{N}}
\newcommand{\mG}{\mathcal{G}}
\newcommand{\mH}{\mathcal{H}}
\newcommand{\mM}{\mathcal{M}}
\newcommand{\mX}{\mathcal{X}}

\newcommand{\mW}{\mathcal{W}}
\newcommand{\mP}{\mathcal{P}}
\newcommand{\mB}{\mathcal{B}}

\newcommand{\mfD}{\mathfrak{D}}

\newcommand{\mfB}{\mathfrak{B}}

\newcommand{\mfZ}{\mathfrak{Z}}

\newcommand{\mbX}{\mathbb{X}}
\newcommand{\mbY}{\mathbb{Y}}
\newcommand{\mbR}{\mathbb R}

\newcommand{\mbS}{\mathbb S}
\newcommand{\mbE}{\mathbb{E}}
\newcommand{\mbP}{\mathbb{P}}

\newcommand{\mbN}{\mathbb{N}}

\newcommand{\mbGM}{\mathbb{GM}}
\newcommand{\ind}{\mathds{1}}

\newcommand{\hbbeta}{\hat{\bbeta}}

\newcommand{\hbs}{\hat{\bs}}
\newcommand{\zz}{\check{z}}

\newcommand{\bzz}{\bm{\zz}}
\newcommand{\bxi}{\bm{\xi}}

\newcommand{\bbeta}{\bm{\beta}}

\newcommand{\balpha}{\bm{\alpha}}

\newcommand{\bzero}{\bm{0}}

\newcommand{\bmean}{\bm{m}}

\newcommand{\bc}{\bm{c}}

\newcommand{\bq}{\bm{q}}

\newcommand{\bs}{\bm{s}}
\newcommand{\bt}{\bm{t}}
\newcommand{\bu}{\bm{u}}
\newcommand{\bv}{\bm{v}}
\newcommand{\bw}{\bm{w}}
\newcommand{\bx}{\bm{x}}
\newcommand{\by}{\bm{y}}
\newcommand{\bz}{\bm{z}}

\newcommand{\bA}{\bm{A}}
\newcommand{\bB}{\bm{B}}

\newcommand{\bH}{\bm{H}}
\newcommand{\bI}{\bm{I}}

\newcommand{\bQ}{\bm Q}
\newcommand{\bR}{\bm{R}}

\newcommand{\bU}{\bm{U}}

\newcommand{\bW}{\bm{W}}

\newcommand{\bZ}{\bm{Z}}

\newcommand{\bSigma}{\bm{\Sigma}}
\newcommand{\Tr}{\operatorname{tr}}
\newcommand{\Val}{\operatorname{Val}}

\usepackage{etoolbox}

\makeatletter

\g@addto@macro\normalsize{%
  \setlength{\abovedisplayskip}{2pt}%
  \setlength{\belowdisplayskip}{2pt}%
  \setlength{\abovedisplayshortskip}{2pt}%
  \setlength{\belowdisplayshortskip}{2pt}%
  \setlength{\jot}{2pt}%
}

\g@addto@macro\small{%
  \setlength{\abovedisplayskip}{2pt}%
  \setlength{\belowdisplayskip}{3pt}%
  \setlength{\abovedisplayshortskip}{2pt}%
  \setlength{\belowdisplayshortskip}{2pt}%
  \setlength{\jot}{2pt}%
}

\g@addto@macro\footnotesize{%
  \setlength{\abovedisplayskip}{2pt}%
  \setlength{\belowdisplayskip}{2pt}%
  \setlength{\abovedisplayshortskip}{2pt}%
  \setlength{\belowdisplayshortskip}{2pt}%
  \setlength{\jot}{2pt}%
}

\makeatother

\makeatletter
\patchcmd{\proof}{\trivlist}{%
  \setlength{\topsep}{0pt plus 0pt minus 0pt}%
  \setlength{\partopsep}{1pt}%
  \setlength{\parsep}{1pt}%
  \setlength{\itemsep}{1pt}%
  \trivlist
}{}{}
\makeatother

\AtBeginEnvironment{proof}{%
  \setlength{\abovedisplayskip}{0pt}%
  \setlength{\belowdisplayskip}{0pt}%
  \setlength{\abovedisplayshortskip}{0pt}%
  \setlength{\belowdisplayshortskip}{0pt}%
  \setlength{\jot}{0pt}%
}

\newcommand{\tightthmspacing}{%
  \setlength{\topsep}{1pt plus 1pt minus 1pt}%
  \setlength{\partopsep}{1pt}%
  \setlength{\parsep}{1pt}%
  \setlength{\itemsep}{0pt}%
  \setlength{\abovedisplayskip}{1pt}%
  \setlength{\belowdisplayskip}{1pt}%
  \setlength{\abovedisplayshortskip}{1pt}%
  \setlength{\belowdisplayshortskip}{1pt}%
  \setlength{\jot}{1pt}%
}

\AtBeginEnvironment{theorem}{\tightthmspacing}
\AtBeginEnvironment{proposition}{\tightthmspacing}
\AtBeginEnvironment{corollary}{\tightthmspacing}
\AtBeginEnvironment{lemma}{\tightthmspacing}
\AtBeginEnvironment{observation}{\tightthmspacing}
\AtBeginEnvironment{uproposition}{\tightthmspacing}

\AtBeginEnvironment{definition}{\tightthmspacing}
\AtBeginEnvironment{example}{\tightthmspacing}
\AtBeginEnvironment{instance}{\tightthmspacing}
\AtBeginEnvironment{assumption}{\tightthmspacing}
\AtBeginEnvironment{construction}{\tightthmspacing}
\AtBeginEnvironment{remark}{\tightthmspacing}
\AtBeginEnvironment{proof}{\tightthmspacing}

\BeforeBeginEnvironment{theorem}{\vspace{-1.0pt}}
\AfterEndEnvironment{theorem}{\vspace{-1.0pt}}

\BeforeBeginEnvironment{proposition}{\vspace{-1.5pt}}
\AfterEndEnvironment{proposition}{\vspace{-1.5pt}}

\BeforeBeginEnvironment{corollary}{\vspace{-1.5pt}}
\AfterEndEnvironment{corollary}{\vspace{-1.5pt}}

\BeforeBeginEnvironment{lemma}{\vspace{-1.5pt}}
\AfterEndEnvironment{lemma}{\vspace{-2.5pt}}

\BeforeBeginEnvironment{observation}{\vspace{-1.5pt}}
\AfterEndEnvironment{observation}{\vspace{-1.5pt}}

\BeforeBeginEnvironment{uproposition}{\vspace{-1.5pt}}
\AfterEndEnvironment{uproposition}{\vspace{-1.5pt}}

\BeforeBeginEnvironment{definition}{\vspace{-1.5pt}}
\AfterEndEnvironment{definition}{\vspace{-1.5pt}}

\BeforeBeginEnvironment{example}{\vspace{-1.5pt}}
\AfterEndEnvironment{example}{\vspace{-1.5pt}}

\BeforeBeginEnvironment{instance}{\vspace{-1.5pt}}
\AfterEndEnvironment{instance}{\vspace{-1.5pt}}

\BeforeBeginEnvironment{assumption}{\vspace{-1.5pt}}
\AfterEndEnvironment{assumption}{\vspace{-1.5pt}}

\BeforeBeginEnvironment{construction}{\vspace{-1.5pt}}
\AfterEndEnvironment{construction}{\vspace{-1.5pt}}

\BeforeBeginEnvironment{remark}{\vspace{-1.5pt}}
\AfterEndEnvironment{remark}{\vspace{-1.5pt}}

\BeforeBeginEnvironment{proof}{\vspace{-4pt}}
\AfterEndEnvironment{proof}{\vspace{-3pt}}


\titlespacing*{\section}{1pt}{1.0ex plus 0.6ex minus 0.5ex}{0.75ex}
\titlespacing*{\subsection}{1pt}{0.5ex plus 0.25ex minus 0.25ex}{0.25ex}
\titlespacing*{\subsubsection}{0.5pt}{.15ex plus 0.15ex minus 0.15ex}{0.15ex}

\tikzstyle{startstop} = [rectangle, rounded corners, minimum width=1cm, minimum height=1cm, text centered, draw=black]
\tikzstyle{process} = [rectangle, minimum width=15cm, minimum height=1cm, text centered, draw=black]
\tikzstyle{decision} = [diamond, minimum width=.5cm, minimum height=.5cm, text centered, draw=black]
\tikzstyle{arrow} = [thick,->,>=stealth]
\tikzstyle{rarrow} = [thick,<-,>=stealth]

\usepackage{bbm}      
\usepackage{fancyvrb} 
\usepackage{rotating} 
\newcommand{\ubar}[1]{\underline{#1}}
\allowdisplaybreaks[1]

\newcommand{\srevision}[1]{{\color{black}{#1}}}

\usepackage{rotating}
\usepackage{fancyvrb}

\EquationsNumberedThrough    
\ECRepeatTheorems  
\MANUSCRIPTNO{MOOR-0001-2024.00}

\begin{document}

\RUNAUTHOR{Dey and Mehrotra}

\RUNTITLE{Robust chance-Constrained optimization with Gaussian-mixture-based Wasserstein-2-type ambiguity set}

\TITLE{\centering Robust Chance-Constrained Optimization using a Continuous Parameter Space Wasserstein-2 Ambiguity Set of Gaussian Mixtures}

\ARTICLEAUTHORS{%
\AUTHOR{Shibshankar Dey}

\AFF{Department of Industrial Engineering and Management Sciences, Northwestern University \EMAIL{shibshankardey2025@u.northwestern.edu}}

\AUTHOR{Sanjay Mehrotra}

\AFF{Department of Industrial Engineering and Management Sciences, Northwestern University \EMAIL{mehrotra@northwestern.edu}}
}

\ABSTRACT{%

We study distributionally robust linear chance-constrained optimization models whose coefficients follow a Gaussian mixture model (GMM). We develop a novel computable Wasserstein-2-type metric for GMMs over continuous parameter sets, significantly generalizing a finite-set-based definition, and prove its equivalence to the standard data-space Wasserstein-2 metric between distributions. Using the resulting ambiguity set, we prove strong duality for the inner worst-case chance-constraint problem and derive its semi-infinite reformulation. We then develop an adaptive cutting-surface algorithm that endogenously determines mass-receiving mixture-component locations and the associated Gaussian means and covariances. We prove finite convergence to any prescribed optimality gap using finitely many linear segments to approximate the nonconvexity of Gaussian cumulative distribution functions. A block-alternating local-search procedure identifies new Gaussian distributions added to the current pool.

An electric vehicle charging station energy-allocation model serves as the case study for the proposed continuous-support robustification (CDR). For chance probability targets \(\theta\in\{0.95,0.97,0.99\}\) and radii \(\rho\in\{0.001,0.005,0.01\}\), CDR shows superior out-of-sample satisfaction (OSS) relative to no robustification and finite-support robustification (FDR) of the nominal GMM chance constraint.
While FDR fails to attain the prescribed target for every tested \((\theta, \rho) \) pair, CDR shows consistent \(\rho\)-based OSS improvement under \(\pm5\%\) or \(\pm10\%\) mean-support uncertainty. In particular, at \(\rho=0.001\), all CDR specifications satisfy \(\theta=0.95\), with OSS \(95.04\%\)--\(95.87\%\). At \(\rho=0.01\), CDR attains the \(97\%\) target in all continuous-support settings and reaches \(98.49\%\) and \(99.17\%\) OSS for \(\theta=0.99\) under \(\pm5\%\) and \(\pm10\%\), respectively. CDR model also changes the energy allocation solution structurally, whereas FDR remains close to that of the nominal model.
}
\FUNDING{This research was fully funded by the US National Science Foundation Grant DMS 2229410.}
\KEYWORDS{Gaussian mixture model (GMM), Wasserstein metric, pushforward measure, Gelbrich distance, optimal value function, piecewise linear approximation, semi-infinite program, cutting surface algorithm, block alternating minimization.} 
\maketitle

\section{Introduction}
We study a distributionally robust linear chance-constrained program
with decision vector $\bx\in\mbR^n$:
\vspace{-0.5em}
\begin{align}\label{prob: DR-CCP}
\min_{\bx \in \mX}\;& \bc^\top \bx
\quad \mathrm{s.t.}\quad
\min_{\mbP\in\mfD}\ \mbP\!\left[\bxi^\top \bx \le b\right]\ \ge\ \theta,
\tag{DR-C}
\end{align}
where $\mbP[\cdot]$ denotes the probability of an event, $\theta\in(0,1)$ is the prescribed satisfaction probability of the uncertain requirement, and $\mX$ is assumed to be a compact feasible set specified by deterministic constraints. $\mfD$ denotes the ambiguity set of distributions representing the uncertainty of the random vector $\bxi$. In operational applications, the prescribed satisfaction probability $\theta$ is generally interpreted as a probability-based service-level target. This view is common in inventory and service operations, where service-level constraints are used to control stockout probabilities and fill rates, and in EV charging infrastructure planning, where service quality is measured using charging-request fulfillment or lost demand~\cite{chen2001inventory,rossi2013service,khaksari2021sizing}.

We introduce a novel construction of $\mfD$ by assuming that $\bxi$ follows a Gaussian mixture distribution. Our consideration of the Gaussian mixture-based uncertainty representation and the corresponding ambiguity set is motivated by the fact that Gaussian Mixture Models (GMMs) are dense in the space of probability distributions, meaning a mixture with enough components can approximate any probability distribution arbitrarily well \cite{lindsay1995mixture, li1999mixture}. 
Thus, a GMM acts as an approximator for modeling multimodal and non-Gaussian data. 
Our ambiguity set allows the prescribed service level to be hedged against statistical errors in Gaussian mixture parameter estimation, particularly when limited data are used to estimate the nominal mixture. 
For probability distributions $\mu_1$ and $\mu_2$, the Wasserstein-2 metric \cite{vaserstein1969markov} is defined as:
\begin{align}\label{def: wasser-dist}
W_2(\mu_1,\mu_2)
:=
\left(
\inf_{\substack{Y_1\sim\mu_1,\;Y_2\sim\mu_2}}
\mbE\!\left[\|Y_1-Y_2\|^2\right]
\right)^{1/2}
=
\left(
\inf_{\pi\in\Pi(\mu_1,\mu_2)}
\int_{\mbR^n\times\mbR^n}
\|\by_1-\by_2\|^2\,d\pi(\by_1,\by_2)
\right)^{1/2}.
\end{align}
Here, $\mu_1$ and $\mu_2$ are assumed to be multivariate Gaussian mixture distributions. 
The objective in \eqref{def: wasser-dist} minimizes the optimal transport cost obtained by minimizing the expected ``Euclidean'' distance of the transport cost over all joint probability distributions $\pi\in\Pi(\mu_1, \mu_2)$, called couplings, with marginals $Y_1\sim\mu_1$ and $Y_2\sim\mu_2$ \cite{monge1781memoire,kantorovich1942translocation,kantorovich_rubinstein1958,villani2008optimal}. 
We call the metric in \eqref{def: wasser-dist} the Wasserstein-2 metric in the data space, as it uses the space where samples (data) are observed in the metric definition. 
This metric can be used to describe a set $\{\mu_2 \;|\; W_2(\mu_1,\mu_2) \leq \rho\}$ of distributions called a Wasserstein-2 ball that are within $\rho$ Wasserstein-2 distance away from the nominal distribution $\mu_1$.  

\subsection{Gaussian Mixture Model for Multi-Modal Uncertainty Representation}
 \captionsetup{font=scriptsize}
\begin{wrapfigure}{r}{0.34\textwidth}
  \vspace{-5pt}
  \includegraphics[width=0.34\textwidth]{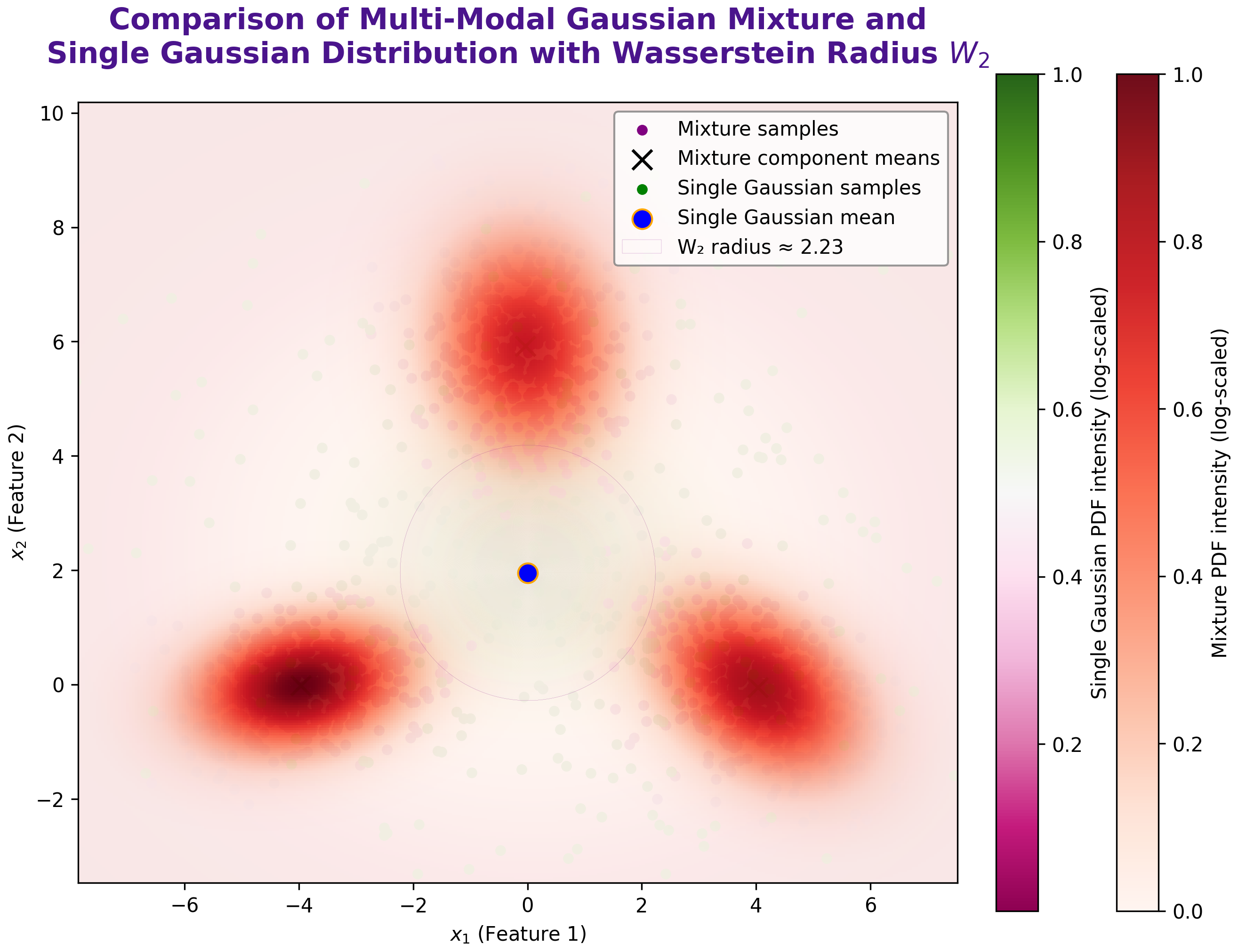}
  \caption{\centering Any uncertainty ball with $W_2 < 2.23$ around a single-modal distribution fitted using the data (see footnote) would not contain the three-modal distribution.}
  \label{fig:wasserstein-radius}
  \vspace{-5pt}
\end{wrapfigure}Multimodality and non-Gaussianity arise when the data reflect multiple clusters or regimes. Examples are
electric-vehicle charging demand~\cite{neuman2021workplace}, wind power generation density~\cite{fathabad2023asymptotically, yi2024discrete}, and short-horizon financial returns modeled using mixture or regime-switching distributions~\cite{kon1984models,perezquiros2000firmsize}. 
In such settings, population heterogeneity induces mixture distributions. 
Specifically, let $\bmean_k \in \mbR^n, \bQ_k \in \mbR^{n\times n}$ for $k=1,\ldots, K$ be the mean vectors and covariance matrices of component Gaussian distributions $\mN(\bmean_k, \bQ_k)$. 
Then the mixture distribution is given as $\sum_{k\in [K]} w_k \mN(\bmean_k, \bQ_k)$ where $\sum_{k\in [K]} w_k = 1$ and $w_k \geq 0$ for all $k \in [K]$.  
When the data-generating distribution is expected to have a mixture structure, a tighter ambiguity set can be obtained by modeling uncertainty over the mixing distribution on component parameters. For example, if a Gaussian distribution is used as a nominal distribution using the dataset\footnote{We generated a synthetic 2D dataset (sample size \(=1200\), seed \(=42\)) from an equally weighted 3-component Gaussian mixture with \(w_k=1/3\), \(\bmean_1=(-4,0)^\top\), \(\bmean_2=(4,0)^\top\), \(\bmean_3=(0,6)^\top\), and full covariance matrices \(\bQ_1=\bigl(\begin{smallmatrix}1.0&0.2\\0.2&0.5\end{smallmatrix}\bigr)\), \(\bQ_2=\bigl(\begin{smallmatrix}1.0&-0.3\\-0.3&0.7\end{smallmatrix}\bigr)\), and \(\bQ_3=\bigl(\begin{smallmatrix}0.8&0.0\\0.0&1.2\end{smallmatrix}\bigr)\). Using the \texttt{scikit-learn} \texttt{GaussianMixture} package in Python with full covariance, we fitted a 3-component GMM to represent the multimodal distribution and a 1-component Gaussian surrogate. The reported Wasserstein radius is the empirical \(W_2\) distance between these two fitted distributions, computed with the POT \texttt{ot.emd2} package under squared Euclidean transport cost from Monte Carlo samples.}, Fig.~\ref{fig:wasserstein-radius} shows that a Wasserstein ball of radius exceeding $2.23$ units is required for a unimodal Gaussian surrogate to cover the fitted multimodal Gaussian mixture. In this case, the ambiguity set radius can be much smaller when a Gaussian mixture is used as the nominal distribution, yielding a more localized ambiguity set around the nominal multimodal structure for service-level analysis.

\subsubsection{Wasserstein-2-type metric for finite Gaussian mixtures. }\, 
Let $\widehat\mbGM_n$ denote the set of Gaussian mixture distributions in $\mbR^n$, and $\widehat\mbGM_n(K)$ those with $K$ mixing terms. For $\mu_1 \in \widehat\mbGM_n(K)$ and $\mu_2 \in \widehat\mbGM_n(\hat{K})$, Delon and Desolneux \cite{delon2020wasserstein} proposed a Wasserstein-type metric by restricting the coupling distributions $\Pi(\mu_1,\mu_2)$ to $\widehat\mbGM_{2n}$: 
\begin{align}\label{def: mod-wasser-dist} 
{RW}^2_2 := \inf_{\pi \in \Pi(\mu_1, \mu_2) \cap \widehat\mbGM_{2n}} \int_{\mbR^n \times \mbR^n} \|y_1 - y_2\|^2 d\pi(\by_1, \by_2). 
\end{align}
In particular, if $\mu_1$, $\mu_2$ follow the Gaussian mixture distributions 
$\sum_{k\in[\hat{K}]}\hat{w}_k\,\hat{\mN}_k(\hat{\bmean}_k,\hat{\bQ}_k)$ and $\sum_{l\in[K]}w_l\,\mN_l(\bm m_l,\bQ_l)$ respectively, then \cite[Proposition~4]{delon2020wasserstein} showed that ${RW}^2_2=\inf_{q\in\Pi(\hat{w},w)}\sum_{k,l}q_{kl}\,d^2_{\mathrm{BW}}(\hat{\mN}_k,\mN_l),$ where $d_{\mathrm{BW}}$ is the Burer-Wasserstein metric between two Gaussian laws (see equation \eqref{def: distance-matric}). Here $\Pi(\bm{\hat{w}},\bw)$ is the standard coupling between two finitely supported probability distributions (represented by the transportation constraints). This result can be used to describe an ambiguity set around a nominal Gaussian-mixture distribution $\mu_1$ as follows:
\begin{equation}
\mathfrak{D}_d
:=\left\{(\bw, \bq) \middle| \,
\begin{array}{l}
\sum_{k\in \hat{K}} \sum_{l\in K}q_{kl}\,
d^2_{\mathrm{BW}}({\mN}_k, \mN_l)\le \rho \\
\sum_{l\in [K]} q_{kl}=\hat{w}_k,\ \forall k\in[\hat{K}] \\
\sum_{k\in [\hat{K}]} q_{kl}=w_l,\ \forall l\in[{K}] \\
\sum_{k\in [\hat{K}]} \hat{w}_k = 1, \,\, \hat{w}_k \geq 0 \,\, \forall k\in [\hat{K}]
\end{array}
\right\}.
\label{def: M-Wass-Ambiguity-Set}
\end{equation}
The solutions feasible to $\mathfrak{D}_d$ provide mixing weights $\bw$ for a Gaussian mixture distribution $\sum_{k\in[{K}]}{w}_k\,\mN_l(\bmean_k,\bQ_k)$ in the ambiguity set. The ambiguity set in \eqref{def: M-Wass-Ambiguity-Set} is referred to as the finite-parameter ambiguity set, and the corresponding distributionally robust model from~\eqref{prob: DR-CCP} is called FDR.

Note that $d^2_{\mathrm{BW}}({\mN}_k, \mN_l)$ is pre-computable, hence, the ambiguity set in \eqref{def: M-Wass-Ambiguity-Set} is interesting in its own right. However, the distributions in this set are restricted either to those means and covariances selected a priori or exactly to those of the nominal finite Gaussian mixture distribution. For example, the approach of selecting the unknown $K$ shown in~\cite[Proposition 3]{yoon2025data} restricts the components' means and covariances to those obtained from GMM fitting to available data. Fixing the mixture component distribution parameters to those of the nominal mixture and optimizing only over the mixing weights is similar to a scenario-reweighting approach (see, e.g., \cite{kammammettu2024distributionally}). Such support-restricted constructions may provide limited protection against realizations outside the observed support and underrepresent low-frequency (tail) regions unless the support is sufficiently rich \cite{ben-tal_den-hertog_dewaegenaere_2013,duchi_namkoong_2019,gao_kleywegt_2023,luo2019decomposition}. From the chance-constrained optimization perspective, the service-level assessment for any candidate decision $\bx$ is therefore confined to adversarial distributions supported on a pre-selected finite set of component mean--covariance parameters.
These limitations motivate us to study a generalization of \eqref{def: M-Wass-Ambiguity-Set} based on \eqref{def: wasser-dist} in which the component Gaussian distributions are not restricted to a fixed finite set of parameters, and the mixing law has a continuous support. 
The proposed construction allows the worst-case mixing law to place probability mass over the continuous parameter support, with component mean-covariances selected endogenously.
\subsection{Contributions}

\subsubsection{Contributions to optimal transport. }\, Since the identification of and computations with the Gaussian mixture distributions in the  Wasserstein-$2$ ball defined in the data space using \eqref{def: wasser-dist} are not practical, in this paper, we show that under suitable conditions, this data-space Wasserstein-$2$ ball can be equivalently described using a Wasserstein-like metric representation in the parameter space (see Theorems~\ref{thm: Wass2-Gaussian-EquivalenceTheorem} and~\ref{thm:Wass2-Gaussian-EquivalenceTheorem-restricted}). Although this paper focuses on the chance constraint optimization problem, from a distributional robust optimization and modeling perspective, this result applies to any optimization model that benefits from the ambiguity set representation based on a Gaussian mixture distribution.

\subsubsection{Algorithmic development. }\, For the ambiguity set developed in this paper, we establish a strong duality result for the robust chance constraint in \eqref{prob: DR-CCP} by showing that the sufficient conditions in \cite[Theorem 1(a)]{blanchet2019quantifying} are satisfied. This duality result allows us to reformulate \eqref{prob: DR-CCP} as a nonlinear semi-infinite program. The $\tau$-optimal convergence, after adding a finite number of cuts, of an adaptive cutting surface algorithm for this nonlinear semi-infinite program is proved. The solvability of the subproblem arising in the proposed cutting-surface algorithm is studied.

\subsubsection{Computational contributions. }\, We evaluate continuous- and finite-parameter distributionally robust formulations (CDR and FDR), along with their nominal counterparts, for a service-level-aware demand management problem in electric-vehicle charging station operations. To develop an implementation of the cutting-surface algorithm using the currently available commercial solvers, we develop a local-optimization heuristic to solve the cutting-surface identification problem (subproblem) efficiently. Across different experimental settings, the proposed adaptive cutting-surface algorithm's behavior indicates that the heuristic identifies the dominant violating mixture components in the first few iterations and then refines a reduced set. Beginning with five nominal cuts, the total cut count after four master iterations remains below 25.

\subsubsection{Experimental findings and managerial implications.  }\,  The computational study using a real dataset from repository~\cite{evcharging2025figshare} exhibits a quantitatively distinctive robustness frontier for the continuous-support ambiguity set. Table~\ref{tab:compact_target_summary} summarizes the key messages from experimental findings in terms of out-of-sample satisfaction (OSS) probability over target $\theta \in \{0.95, 0.97, 0.99\}$, incremental objective overhead, and runtime. While FDR approach protects only over finite empirical support points, CDR approach additionally allows the nominal GMM means and covariances to vary within compact support sets, controlled by the Wasserstein radius \(\rho \in \{0.001, 0.005, 0.01\}\), the mean-support expansion parameter \(\varsigma \in \{0.05,0.1\}\) that symmetrically expands the mixture component mean support around their respective nominal value, and the covariance-scaling bounds \((\underline{\lambda}_f,\overline{\lambda}_f) \in \{(1/2,2),\,(1/3,3)\}\). 

Table~\ref{tab:compact_target_summary} highlights the main gain of the proposed model with a clear demonstration of the added robustness by CDR. FDR does not achieve OSS under any settings. The same conclusion holds under the additional stand-alone benchmark runs at the larger radii $\rho \in \{5,10\}$: for each $\theta$, both radii yield identical objective premiums and identical OSS values of $0.9488$, $0.9530$, and $0.9604$ for $\theta=0.95$, $0.97$, and $0.99$, respectively, and all remain below their prescribed targets despite consuming the full 24-hour runtime budget. Whereas for the mean-support uncertainty parameter value $\varsigma = 0.1$, CDR achieves OSS for all values of $\theta$. The use of mean-covariance uncertainty beyond the Wasserstein radius in CDR expands the set of target-attaining solutions at a $1.5\%$-$5\%$ increase in the objective and a runtime increase from minutes to hours relative to the low-radius FDR instances. Enlarging the FDR radius to $\rho \in \{5,10\}$, however, also increases runtime and objective premium, while leaving OSS below the prescribed targets. Under the experimental settings considered for CDR, we further observe that the effect of changing the covariance-scaling multiplier is limited compared to that of the mean-uncertainty hyperparameter.

\begin{table}[!hbtp]
\centering
\caption{Comparative summary of prescribed target attainment, objective increase, and runtime under FDR and CDR. For CDR, the reported objective increase in \% is calculated by $100 \times |\operatorname{Obj}^{\mathrm{CDR}}-\operatorname{Obj}^{\mathrm{FDR}}|/\operatorname{Obj}^{\mathrm{FDR}}$. FDR performance is investigated under both the stand-alone (\(24\)-hour) solve and the initial iteration budget used to initialize CDR, so that ambiguity-set effects can be distinguished from solve-time effects. For FDR, the reported OSS and runtime ranges include both the cases with $\rho \in \{0.001,0.005,0.01\}$ and the additional stand-alone benchmark runs at $\rho \in \{5,10\}$. 
}
\label{tab:compact_target_summary}
\setlength{\tabcolsep}{4.5pt}
\renewcommand{\arraystretch}{1.05}
\scriptsize
\begin{tabular}{c l c c c c c}
\toprule
 \(\theta\) & \makecell{DR \\ Model} & \makecell{Mean-support \\ uncertainty \\ parameter \(\varsigma\)} & \makecell{OSS probability \\ range (\%)} & \makecell{Target attained at \\ ball radius \(\rho\)} & \makecell{Objective \\ increase (\%)} & Total time (hr) \\
\midrule
\multirow{3}{*}{0.95}
& FDR & -- & 92.76--94.88 & none & -- & 0.02--24.00 \\
& CDR & 0.05 & 95.04--96.41 & all & 1.49--2.31 & 4.64--12.01 \\
& CDR & 0.10 & 95.87--97.71 & all & 2.37--3.63 & 4.65--12.01 \\
\midrule
\multirow{3}{*}{0.97}
& FDR & -- & 94.09--95.30 & none & -- & 0.15--24.00 \\
& CDR & 0.05 & 96.21--97.36 & 0.01 & 1.64--2.48 & 15.70--24.00 \\
& CDR & 0.10 & 96.79--98.45 & 0.005, 0.01 & 2.53--3.99 & 16.08--20.00 \\
\midrule
\multirow{3}{*}{0.99}
& FDR & -- & 95.57--96.04 & none & -- & 24.00 \\
& CDR & 0.05 & 97.45--98.49 & none & 1.93--3.16 & 24.00 \\
& CDR & 0.10 & 98.10--99.17 & 0.01 & 2.82--5.00 & 24.00 \\
\bottomrule
\end{tabular}
\end{table}

The CDR model also provides decision-level guidance toward achieving higher service-level targets. The energy allocation diagnostics for the considered demand management case study further show that these gains are operationally substantive: FDR remains close to the nominal schedule as \(\rho\) varies, whereas CDR generates a distinct family of hourly energy allocations through targeted changes in a small set of recurring hours. Taken together, these findings indicate that allowing ambiguity over continuous mixture parameters, compared to only over empirical support points, provides complete control for service-level attainment, operating cost, and a meaningful structure of the charging schedule when the decision-maker can afford additional computational overhead.

\subsection{Organization}
Some basic notation used throughout the paper is given in Section~\ref{sec:NotationsAndBasicDefinitions}. Existing literature on the use of Gaussian mixture in data representation, chance-constraint optimization, distributional robustness, and optimal transport is reviewed in Section~\ref{sec: Literature}. Sections~\ref{sec: continuous-param-GMM}-\ref{sec: WassersteinTypeTransportEquivalence} present our proposed generalized representation of the Wasserstein-2 metric and ambiguity set~\eqref{def: mod-wasser-dist}-\eqref{def: M-Wass-Ambiguity-Set}. In these sections, we develop the optimal transport (OT) results for this set. Section~\ref{sec: formulation&Duality} provides strong duality results and reformulations resulting from its use for the distributionally robust GMM-based chance-constrained problem resulting from our proposed ambiguity set. Section~\ref{sec: Dual-formulation} provides the resulting semi-infinite program to ensure the convergence of a cutting-surface (Cut-S) algorithm, followed by a comprehensive computational study focusing on an electric vehicle charging capacity planning problem in Section~\ref{sec:numerical}.

\subsection{Notations and Basic Definitions}\label{sec:NotationsAndBasicDefinitions}
 For any function $h(\cdot \;;\;\cdot)$, arguments before and after $``;"$ respectively stand for variables and parameters of that function.  $\mP(\mbR^n)$ represents all sets of probability measures on $\mbR^n$, and $\mP_2(\mbR^n)$ represents those having finite second moment.   The notation $Y\sim\nu$ means that $Y$ is a random variable with probability distribution $\nu$. ``a.e." abbreviates almost everywhere. $\Tr(M)$ denotes the trace of a matrix~$M$. 
For any non-negative integer $n$, we use $[n]$ to denote the set $\{1, 2, \ldots, n\}$ and $[n]_0$ to denote $\{0, 1, 2, \ldots, n\}$. $\mF(\mathtt{\tau})$ represents a set defined by constraints, with the right-hand side of those constraints being $\tau$. 
$\|\cdot\|_F$ is used to denote the Frobenius norm of a matrix. $\ind$ denotes an indicator function. $C_b(\mbR^n)$ denotes the space of bounded continuous functional on $\mbR^n$. $\mbS^n_+$ represents the set of positive-definite matrices. 

$\phi(\cdot)$ and $\Phi(\cdot)$ represent the standard univariate Gaussian probability density and cumulative distribution function (CDF), respectively. The derivative of $\phi$ is denoted by $\phi'$. We let $\mathcal{S} := \mbR^n \times \mbS^n_+$ be the parameter space of $n$-variate Gaussian distribution  $\mathcal{N}(\bmean, \bQ)$ with $\bmean \in \mbR^n,\;\bQ \in \mbS^n_+$ be its mean vector and positive definite covariance matrix. $\mathcal{B}(\mathcal{S})$ is the Borel sigma algebra of $\mathcal{S}$. 
We let  $\bs := (\bmean, \bQ) \in \mathcal{S}$, and $\mathcal{N}(\bs) := \mathcal{N}(\bmean, \bQ)$.  
For 2n-variate Gaussian distributions, we use the notation  $\mathcal{Z}:=\mbR^n\times\mbR^n\times\mbS^{2n}_+$; $\bz := \bigl((\bmean_1,\bmean_2),\bQ\bigr)$ and $\mathcal{N}(\bz)=\mathcal{N}\bigl((\bmean_1,\bmean_2),\bQ\bigr)$.  
We use the symbol $\#$ to represent a pushforward operator. $\mathrm{pr}_i$ denotes the coordinate projection onto the $i$-th component of a product space; specific domains will be clear from the context.

\begin{definition}[Gaussian Mixture Measure]
    We call a measure $\mu \in \mP(\mbR^n)$ a Gaussian mixture measure if $\exists \, \lambda \in \mP(\mS)$ such that for all $C \in \mB(\mbR^n)$
    \[\mu(C) = \int_\mS \mN(\bs)(C) d\lambda(\bs),\]
\end{definition}
\noindent where $\mN(\bs)(C) = \int_{\mbR^n}\ind_C(\by) d\mN(\bs)$ for $\bs \in \mS$.

\begin{definition}[Mixing law via Borel sets]\label{def:mixing-law-set}
Let $\mu \in \mP(\mbR^n)$.  We say that a probability measure $\pi$ on $(\mathcal{S},\mathcal{B}(\mathcal{S}))$ is a mixing law  for a Gaussian mixture distribution $\mu$ if, for every Borel set
$C \in \mB(\mbR^n)$,
\begin{equation}\label{eq:mixing-law-pi1}
   \mu(C)
   =
   \int_{\mathcal{S}} \mN(\bs)(C)\,d\pi(\bs).
\end{equation}
\end{definition}
Note that the mixing law for a Gaussian mixture distribution may not be unique, and the second moment of a Gaussian mixture distribution may not be bounded.  We denote the set of all Gaussian mixture distributions with finite second moment as
\begin{equation}
\hspace{-1em}
\mbGM_n
  := \Bigl\{\nu \;\Big| \;\exists \, \lambda \in \mP(\mS) \,\,\, \mathrm{s.t.} \,\,\, \nu = \hspace{-0.3em}\int_{\bs \in \mS} \mN(\bs)\,d\lambda(\bs) \,\,\text{and}
       \int_{\bs \in \mS}\hspace{-0.5em}\left(\lVert \bmean(\bs)\rVert^2 + \Tr\big(\bQ(\bs)\big)\right)\,d\lambda(\bs) < \infty
       \Bigr\}.
  \label{def: GMMmix}
\end{equation}

\section{Literature Review}\label{sec: Literature}

This section summarizes three streams of relevant literature. The first stream provides the Wasserstein geometry, strong-duality reformulations, and finite-sample guarantees that justify data-driven radius calibration.    We then cover the distributionally robust chance-constrained optimization literature, emphasizing how the support structure of the random variables shapes the available deterministic reformulations and how the uncertainty law itself is represented, especially for multimodal data. The third identifies the related OT literature on Gaussian and mixture structure.  Reviewing these three streams provides a clear, distinctive indication between sample-supported mixture models and the continuous-parameter mixture class adopted in this work.

\subsection{Distributionally Robust Optimization with Wasserstein Set}

Foundational Wasserstein distributionally robust optimization (WDRO) results show that worst-case expectations over Wasserstein balls admit strong dual representations and, under standard growth and Lipschitz conditions, reduce to finite-dimensional programs \cite{gao_kleywegt_2023, esfahani_kuhn_2018, blanchet2019quantifying, luo2019decomposition, zhang2024shortDualityWassersteinDRO}. These results justify deterministic reformulations by viewing the Wasserstein-ball as a model-misspecification device and replacing the distributional supremum with a tractable dual program \cite{gao_kleywegt_2023, blanchet2019quantifying, luo2019decomposition, zhang2024shortDualityWassersteinDRO}. In parallel, data-driven WDRO centers the ball at an empirical law, yields convex reformulations in broad settings, and links radius calibration to finite-sample reliability \cite{esfahani_kuhn_2018}. Wasserstein neighborhoods also underpin robust learning objectives based on adversarial perturbations of the data law, with population-level robustness guarantees and statistical consistency analyses \cite{sinha2018certifyingRobustness, kwon2020wassersteinLocalPerturbations}. Algorithmically, the inner maximization has been handled in the literature via flow-based transport-map parameterizations \cite{xu2024flowdro}, entropic smoothing and Sinkhorn-type discrepancies \cite{wang2025sinkhorn}, and iterative minimax schemes over continuous densities with convergence guarantees \cite{zhu2024iterativeMinimaxWasserstein}.

\subsection{Distributionally Robust Chance-Constrained Optimization (DRCCO)}

A DRCCO problem replaces a nominal chance constraint under a fixed law by a worst-case chance constraint over an ambiguity set $\mP$ of probability measures \cite{shapiro_dentcheva_ruszczynski_2014}. When the ambiguity-set size is calibrated from data, one can obtain finite-sample guarantees that the unknown data-generating law lies in $\mP$ with prescribed confidence, which in turn supports out-of-sample reliability claims for decisions computed from the resulting DR model \cite{lam_2019}. Most continuous-support DRCCO literature falls into three families—Wasserstein-metric, $\varphi$-divergence, and moment-based sets—each with distinct reformulation mechanisms and complexity trade-offs.

 A theme of Wasserstein-metric ambiguity set is the deterministic reformulation of worst-case probability constraints through duality, leading to exact mixed-integer models in broad settings and MILP reductions under $\ell_1$ or $\ell_\infty$ transport costs \cite{xie_2019,chen_kuhn_wiesemann_2022}. Related developments treat individual and joint chance constraints and variants such as random right-hand sides, again relying on strong duality for the worst-case probability evaluation \cite{ji_lejeune_2020,ho_nguyen_kilic_2021}. Structural properties have also been identified under which Wasserstein robustification yields convex feasible regions \cite{shen_jiang_2021}.

$ \varphi$-divergence-based ambiguity sets (e.g., KL, $\chi^2$) have also been used to define confidence-region-based sets around a nominal law \cite{ben-tal_ghaoui_nemirovski_2009,duchi_namkoong_2019}. 
For chance constraints, robustification can reduce to a nominal chance constraint with a modified chance satisfaction level, which can be calculated explicitly for KL divergence. It also has a relationship with Bernstein-type convex approximations \cite{hu_hong_2013,nemirovski_shapiro_2006}. 
Divergence-based ambiguity set radii are calibrated for finite-sample coverage guarantees \cite{jiang_guan_2016,lam_2019,duchi_namkoong_2019}. Related robust-counterpart views for uncertain probability vectors and application-driven MILP reformulations (e.g., $\chi^2$-robust surgery scheduling) appear in \cite{ben-tal_den-hertog_dewaegenaere_2013,deng_shen_denton_2019}.
Moment-based ambiguity sets hedge over all laws consistent with the given moment (and possibly support) information. They typically yield conic reformulations when the first two moments are used in describing the ambiguity set formulation \cite{shapiro_dentcheva_ruszczynski_2014,delage_ye_2010}.  Such models also lead to safe approximations for (distributionally robust) chance constraints, with possible extensions to generalized dispersion and broader admissible moment/support sets \cite{hanasusanto_kuhn_wiesemann_zymler_2015,ding_etal_2020}. 

\subsubsection{Use of mixture models in uncertainty representation. }\, The distributionally robust optimization models developed in the literature have focused on representing distribution ambiguity by using a finite-sample support of the empirical distributions. Unless the sample size or the ambiguity set is very large, such finite-support representations may miss regions with nontrivial probability mass (e.g., those arising in rare event regimes), and thus provide only a coarse description of multivariate dependence \cite{shapiro_dentcheva_ruszczynski_2014,nemirovski_shapiro_2006,kleywegt_shapiro_homemdemello_2002,barrera_etal_2014_optimonline,blanchet_nam_zwart_2024}. A possible solution is to use mixture laws. Using finite support for the mean and variance parameters,  \cite{kammammettu2024distributionally} use a Gaussian mixture law to represent distribution ambiguity in the context of a chemical engineering application. In the chance-constrained setting, finite Gaussian-mixture models have been studied in \cite{fathabad2023asymptotically,hu2022chance,wei2024enhanced}, where convex relaxations are derived under strong assumptions. In a recent paper \cite{dey2025solving}, a finite Gaussian mixture was used for linear chance constraints, which established its solvability by using piecewise-linear inner and outer approximations of the nonlinearity arising from the Gaussian term in the mixture. The piecewise-linear approximation idea from \cite{dey2025solving} is further developed in the current paper in the analysis of the proposed cutting surface algorithm.  
\subsubsection{Service-level requirements through chance constraints.}\,
Beyond the illustrative case study of the electric vehicle charging center problem, the work in this paper has broad applicability. Chance constraint models arise in Operations Research from the service-level requirement perspective. In inventory and production systems, service-level constraints are often used to specify the probability of stockout, fill rate, or multi-period demand-fulfillment targets. Example settings are minimum-service inventory models \cite{chen2001inventory}, service-constrained stochastic lot-sizing formulations \cite{tarim_kingsman_2004}, and chance-constraint models that impose service targets jointly over the planning horizon \cite{rossi2008global}, cycle-service, fill rate, and $\alpha$-/$\beta$-service measures in stochastic inventory control \cite{rossi2013service,jiang2019service}. A more recent data-driven line of work emphasizes target attainment rather than only nominal service feasibility. In particular, empirical newsvendor decisions can undershoot prescribed service levels, motivating distributionally robust chance-constrained formulations to achieve on-target service levels~\cite {van_der_laan2022datadriven}. Probability-based quality-of-service (QoS) constraints also appear in call-center staffing~\cite{dam2022joint}, charging-station sizing with QoS requirements~\cite{khaksari2021sizing}, and peak charging-demand satisfaction constraints~\cite{casini2021chance}. Surgery-assignment and operating-room bin-packing models provide another service-level interpretation: elective cases with uncertain durations are packed into operating room blocks so that overtime or capacity violation remains below a prescribed probability~\cite{Shylo2012StochasticOR, Wang2021ChanceConstrainedBinPacking}; related surgery-planning models also impose distributionally robust chance constraints on waiting and overtime~\cite{Deng2019ChanceConstrainedSurgery}. Relative to these works, our focus is not to introduce a new service-level measure; rather, we study how a chance-constraint service target can be robustified when multimodal uncertainty is represented by a Gaussian mixture with continuous mean--covariance support.

\subsection{Optimal Transport (OT) with Gaussian and Gaussian-Mixture Models}
Wasserstein metric-based ambiguity sets are motivated from the theory of optimal transport. A key enabler for OT-based modeling with Gaussian uncertainty is that the $2$-Wasserstein distance between Gaussian laws admits a closed-form expression in terms of means and covariances, which yields a computable geometry on Gaussian families \cite{gelbrich1990formula,takatsu2011wasserstein}. 
It introduces a Riemannian viewpoint on the manifold of Gaussian densities that supports geodesics and gradient-based constructions directly in mean--covariance variables \cite{malago2018wasserstein}. There has been work on OT constructions with direct operations on Gaussian mixture models \cite{chen2018optimal} and structured mixture-restricted OT distances that preserve the mixture form while yielding finite-dimensional transport problems over component couplings \cite{delon2020wasserstein}. These results provide a principled finite-dimensional geometry for studying Gaussian-mixture uncertainty representations without reverting to empirical OT computations over samples in the data space.

 Because mixture-structured OT still requires solving an OT problem whose cost matrix involves pairwise Gaussian Wasserstein terms, recent work emphasizes scalable surrogates and approximations that retain mixture geometry. These include the minimized aggregated Wasserstein distance, developed for efficient mixture barycenter computation and later used to integrate heterogeneous datasets summarized by mixtures \cite{lin2023maw}, as well as sliced and multi-sliced constructions, which replace high-dimensional OT with collections of one-dimensional transport problems for mixture comparison and Wasserstein-based Gaussian mixture learning \cite{kolouri2018sliced,piening2025double}. 

These developments are related to robust modeling with heterogeneous data sources, where the empirical evidence indicates multiple regimes and a mixture representation is natural. In Wasserstein DRO with heterogeneous data sources, distributional ambiguity is modeled through OT neighborhoods around source-specific empirical laws and their composition, providing a principled mechanism to combine biased or non-identically distributed samples \cite{rychener2024heterogeneous}. Such multi-source OT developments also motivate tractable robust counterparts beyond finite support. Mean--covariance robust risk measurement uses Burer-Gelbrich balls in mean--covariance space to form conservative outer approximations of Wasserstein ambiguity sets that become exact under Gaussian structure and yield a reformulated model whose size does not scale with the nominal support \cite{nguyen2025meancov}. However, these advancements remain fragmented across distribution comparison, barycenter computation, and learning, and do not yet have a unified robust-optimization framework for multimodal uncertainty distributions having a continuous-support parameter set.

\section{Wasserstein-2 Metric for Gaussian Mixtures}\label{sec: continuous-param-GMM}
This section develops a theoretical basis for a metric between two Gaussian mixture distributions by allowing mixing weights and parameters of the Gaussians to vary over an admissible parameter set. The key mechanism is an optimal-transport coupling that transports mass between Gaussian components indexed by their parameters. This construction preserves analytic structure while remaining computationally amenable.

\subsection{Wasserstein-2 Metric between Two Gaussian Distributions in Data Space}
\label{sec:pre}

The following proposition highlights that the second moment of a Gaussian distribution is bounded. 

\begin{proposition}[Second moment of a Joint Gaussian in $\mbR^{2n}$]\label{prop: joint-gaussian-second-moment}
Let $Z \sim \mN\bigl((\bmean_1, \bmean_2),\,\bQ\bigr)$ in $\mbR^{2n}$ where $\bQ$ is a covariance matrix.
Then
\begin{equation}\label{eq: joint-gaussian-second-moment}
\mathbb E\bigl[\|Z\|^2\bigr]
\;=\;
\|(\bmean_1,\bmean_2)\|^2 \;+\; \Tr(\bQ)
\; \;=\; \|\bmean_1\|^2+\|\bmean_2\|^2 +\; \Tr(\bQ) <\;\infty.
\end{equation}
\end{proposition}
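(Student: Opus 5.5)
The plan is to compute the second moment by decomposing $Z$ into its mean and centered part. Write $\bmean:=(\bmean_1,\bmean_2)\in\mbR^{2n}$ and $Z=\bmean+X$, where $X:=Z-\bmean\sim\mN(\bzero,\bQ)$. Since $\|Z\|^2=\|\bmean\|^2+2\bmean^\top X+\|X\|^2$, linearity of expectation gives
\begin{align*}
\mbE\bigl[\|Z\|^2\bigr]=\|\bmean\|^2+2\bmean^\top\mbE[X]+\mbE\bigl[\|X\|^2\bigr].
\end{align*}
The cross term vanishes because $\mbE[X]=\bzero$.

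For the last term, I will use the trace trick:
\begin{align*}
\mbE\bigl[\|X\|^2\bigr]=\mbE\bigl[\Tr(XX^\top)\bigr]=\Tr\bigl(\mbE[XX^\top]\bigr)=\Tr(\bQ).
\end{align*}
Interchanging trace and expectation is legitimate because the trace is a finite linear combination of the entries $X_iX_j$. Each such entry is integrable, since Gaussian coordinates have all moments finite; equivalently, $\mbE[X_i^2]=\bQ_{ii}<\infty$. The concatenation identity $\|(\bmean_1,\bmean_2)\|^2=\|\bmean_1\|^2+\|\bmean_2\|^2$ is just the definition of the Euclidean norm on $\mbR^{2n}=\mbR^n\times\mbR^n$.

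Finiteness then follows because $\bmean_1,\bmean_2$ are finite vectors and $\bQ$ is a finite matrix, so $\Tr(\bQ)=\sum_i\bQ_{ii}<\infty$. The proposition only assumes $\bQ$ is a covariance matrix, so it may be singular and $Z$ may have no density. This causes no difficulty, because the argument above uses only the first two moments of $X$ and never a density.

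The only point needing any care is therefore the justification that $\mbE[X_iX_j]$ exists. This follows from Cauchy--Schwarz, $|\mbE[X_iX_j]|\le(\bQ_{ii}\bQ_{jj})^{1/2}$, so there is no real obstacle.
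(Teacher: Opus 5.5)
Your proposal is correct and follows essentially the same route as the paper's proof: center $Z$ at $(\bmean_1,\bmean_2)$, expand the squared norm so that the cross term vanishes because $\mbE[X]=\bzero$, and use $\mbE[X^\top X]=\Tr\bigl(\mbE[XX^\top]\bigr)=\Tr(\bQ)$. The paper leaves implicit two points you make explicit — the integrability of the products $X_iX_j$ and the fact that the argument still works when $\bQ$ is singular.
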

\begin{proof}
    See proof in Appendix~\ref{appndx:very-first-proof}.
\end{proof}

\begin{definition} Let $\mu_1, \mu_2$ be two n-variate probability distributions. Then, the Wasserstein-2 metric between $\mu_1$ and $\mu_2$ is defined as  
\begin{equation}
    W^2_2(\mu_1,\mu_2) :=
  \inf_{\nu \in \Pi(\mu_1, \mu_2)}
    \int_{\mbR^n \times \mbR^n}
      \|\by_1 - \by_2\|^2 \,d\nu(\by_1,\by_2), \label{eqn:Wass-2}
\end{equation}
where
\begin{align}\label{def: coupling-Euclidean}
  \Pi(\mu_1, \mu_2)
  =
  \bigl\{
    \nu \in \mP(\mbR^n \times \mbR^n)
    :
    \nu^{(1)} = \mu_1,\ \nu^{(2)} = \mu_2
  \bigr\},
\end{align}
\noindent and $\nu^{(1)}$ and $\nu^{(2)}$ are the marginal distributions of $\nu$ on $\mbR^n$. A 2n-variate distribution attaining the optimal value in \eqref{eqn:Wass-2} is called the Wasserstein-2 optimal coupling.
\end{definition}

\begin{theorem} \cite{GivensShortt1984, gelbrich1990formula}\label{thm: Gelbrich}
For any $\bs_1=(\bmean_1, \bQ_1) \in \mS$ and $\bs_2=(\bmean_2, \bQ_2) \in \mS$, the Wasserstein-2 optimal coupling between Gaussian distributions $\mN(\bs_1)$ and $\mN(\bs_2)$  is  given by the Gaussian distribution $\mN(\bs_1, \bs_2) \in \mP_2(\mbR^n \times \mbR^n)$, where
     $ \mN(\bs_1, \bs_2)= \mN\!\bigl((\bmean_1,\bmean_2),\bar{\bQ}\bigr) \in \mP_2(\mbR^n \times \mbR^n)$ with 
     \begin{equation}\label{def: gelbrich-off-diagonal}
      \bar{\bQ}=\begin{pmatrix}\bQ_{1}&\bSigma\\ \bSigma^\top &\bQ_{2}\end{pmatrix}\in\mbS^{2n}_+,    \quad \text{and} \quad  
\bSigma = {\bQ_1}^{\frac12}\,\bigl(\,{\bQ_1}^{\frac12}\,{\bQ_2}\,{\bQ_1}^{\frac12}\bigr)^{\frac12}\,{\bQ_1}^{-\frac12}.
     \end{equation}
The matrix $\bSigma\in\mbR^{n\times n}$ is the cross-covariance matrix of the optimal Gaussian coupling.
The Wasserstein-2 metric between $\mN(\bs_1)$ and $\mN(\bs_2)$ is given by:
\begin{equation}\label{def: distance-matric}
d^2_{\mathrm{BW}}\bigl(\mN(\bs_1),\mN(\bs_2)\bigr)
=
\|\bmean_1-\bmean_2\|^2
+\Tr \,\Bigl(
\bQ_1+\bQ_2
-2\bigl(\bQ_1^{\frac12}\,\bQ_2\,\bQ_1^{\frac12}\bigr)^{\frac12}
\Bigr),
\end{equation}
where the covariance term is the squared Bures--Wasserstein distance between
$\bQ_1$ and $\bQ_2$.
\end{theorem}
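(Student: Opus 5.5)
We prove the Gelbrich/Givens--Shortt formula in two steps: a universal lower bound valid for every coupling, and a Gaussian coupling that attains it. Reduce to centered variables first. For any coupling $\nu\in\Pi(\mN(\bs_1),\mN(\bs_2))$ with $(Y_1,Y_2)\sim\nu$, write $Y_i=\bmean_i+X_i$ with $\mbE[X_i]=0$. Then
\[
\mbE\|Y_1-Y_2\|^2=\|\bmean_1-\bmean_2\|^2+\Tr(\bQ_1)+\Tr(\bQ_2)-2\,\Tr(\bm{C}),
\qquad \bm{C}:=\mbE[X_1X_2^\top].
\]
The cross terms involving the means vanish, and second moments are finite by Proposition~\ref{prop: joint-gaussian-second-moment}. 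The cost therefore depends on $\nu$ only through $\bm{C}$, and the problem reduces to maximizing $\Tr(\bm{C})$ subject to $\begin{pmatrix}\bQ_1&\bm{C}\\ \bm{C}^\top&\bQ_2\end{pmatrix}\succeq 0$. This constraint is necessary for every coupling. Conversely, any admissible $\bm{C}$ is realized by the Gaussian $\mN((\bmean_1,\bmean_2),\cdot)$ with that block covariance. Hence it suffices to solve this matrix problem and to check that its maximizer is the $\bSigma$ given in \eqref{def: gelbrich-off-diagonal}.

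\emph{Upper bound.} Since $\bQ_1\succ0$, the Schur complement shows the block PSD condition is equivalent to $\bm{C}=\bQ_1^{1/2}\bm{K}\bQ_2^{1/2}$ for some contraction $\|\bm{K}\|_2\le1$. Then
\[
\Tr(\bm{C})=\Tr\bigl(\bm{K}\,\bQ_2^{1/2}\bQ_1^{1/2}\bigr)\le \|\bQ_2^{1/2}\bQ_1^{1/2}\|_{*},
\]
using von Neumann's trace inequality, or equivalently duality between the operator and nuclear norms. The nuclear norm equals the sum of singular values, namely $\Tr\bigl((\bQ_1^{1/2}\bQ_2\bQ_1^{1/2})^{1/2}\bigr)$. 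Substituting this bound into the cost gives the lower bound \eqref{def: distance-matric}.

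\emph{Attainment.} Take $\bm{C}=\bSigma=\bQ_1^{1/2}\bm{M}\bQ_1^{-1/2}$ with $\bm{M}:=(\bQ_1^{1/2}\bQ_2\bQ_1^{1/2})^{1/2}$. Its trace is $\Tr(\bm{M})$ by similarity, so the bound is met. It remains to check feasibility, and this is the step I expect to need the most care. The cleanest route is a transport map: let $T=\bQ_1^{-1/2}\bm{M}\bQ_1^{-1/2}$, which is symmetric positive definite, and set $X_2=TX_1$. Then $\mathrm{Cov}(X_2)=T\bQ_1T=\bQ_1^{-1/2}\bm{M}^2\bQ_1^{-1/2}=\bQ_2$, so the marginals are correct. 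Also $\mbE[X_1X_2^\top]=\bQ_1T=\bQ_1^{1/2}\bm{M}\bQ_1^{-1/2}=\bSigma$. The coupling $(X_1,TX_1)$ is Gaussian with block covariance $\bar{\bQ}$, which is automatically PSD as the covariance of a random vector. This shows that $\mN(\bs_1,\bs_2)$ is an optimal coupling and gives the stated value of $d^2_{\mathrm{BW}}$.

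The main technical point is the trace maximization under the block-PSD constraint. If needed, von Neumann's inequality can be replaced by a direct argument: take a polar decomposition and apply Cauchy--Schwarz in the Frobenius inner product.
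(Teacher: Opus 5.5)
The paper does not prove this theorem. It imports the result from Givens--Shortt and Gelbrich and uses it as a black box in Lemma~\ref{lem: forward} and Propositions~\ref{thm: R<=L}--\ref{thm: R>=L}. Your argument is a correct, self-contained proof along the standard lines, and each computation checks out:
\begin{itemize}
\item the cost depends on the coupling only through the cross-covariance $\bm{C}$;
\item the contraction factorization $\bm{C}=\bQ_1^{1/2}\bm{K}\bQ_2^{1/2}$ (the same one the paper invokes in Lemma~\ref{lem:Sigma-bound}) combined with von Neumann's inequality bounds $\Tr(\bm{C})$ by $\Tr\bigl((\bQ_1^{1/2}\bQ_2\bQ_1^{1/2})^{1/2}\bigr)$;
\item the symmetric map $T=\bQ_1^{-1/2}\bm{M}\bQ_1^{-1/2}$ realizes $\bSigma$ with the correct marginals, since $T\bQ_1T=\bQ_2$ and $\bQ_1T=\bSigma$.
\end{itemize}

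Compared with the citation, your route makes the hypotheses visible. The lower bound holds for arbitrary PSD covariances, because the contraction factorization does not need $\bQ_1\succ0$. Realizing the optimum by the displayed $\bSigma$ does genuinely need $\bQ_1\succ0$. This is consistent with the paper's notational definition of $\mbS^n_+$ as positive-definite matrices. Note, however, that the paper's later pseudo-inverse convention does not extend the coupling claim to singular $\bQ_1$. Take $\bQ_1=\diag(1,0)$ and $\bQ_2=\bigl(\begin{smallmatrix}1&1\\1&2\end{smallmatrix}\bigr)$. The formula then gives $\bSigma=\diag(1,0)$, and $v=(2,0,-2,1)$ yields $v^\top\bar{\bQ}v=-2$, so $\bar{\bQ}$ is not PSD. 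The trace formula \eqref{def: distance-matric} nevertheless remains correct in that example.

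Two minor points. First, you show that $\mN(\bs_1,\bs_2)$ is \emph{an} optimal coupling, which is all the paper ever uses. If you want the definite article in the statement, add uniqueness:
\begin{itemize}
\item any coupling with cross-covariance $\bSigma$ satisfies $\mbE\|X_2-TX_1\|^2=2\Tr(\bQ_2)-2\Tr(T\bSigma)=0$, so $X_2=TX_1$ almost surely;
\item the equality case of von Neumann's inequality (both covariances positive definite) forces $\bm{C}=\bSigma$ for every optimal coupling.
\end{itemize}
Second, finiteness of the cost for an arbitrary coupling follows from the Gaussian marginals and Cauchy--Schwarz. It does not follow from Proposition~\ref{prop: joint-gaussian-second-moment}, which concerns a joint Gaussian.
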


We denote the optimal coupling between $\mN(\bs_1)$ and $\mN(\bs_2)$ by $\mG(\bs_1,\bs_2)$ and refer to it as Gelbrich's coupling. 
Here and throughout, for any symmetric positive semidefinite matrix $\bQ\succeq 0$, $\bQ^{1/2}$ denotes the principal (symmetric PSD) square root, i.e., the unique matrix $\bQ^{1/2}\succeq 0$ such that $\bQ^{1/2}\bQ^{1/2}=\bQ$ \cite{bhatia2009positive}.
Moreover, whenever an inverse square root appears, we adopt the convention followed in Theorem~\ref{thm: Gelbrich}, i.e., $\bQ^{-1/2}\ \equiv\ \bQ^{\dagger/2}:=(\bQ^{1/2})^\dagger,$ where $(\cdot)^\dagger$ denotes the Moore--Penrose pseudoinverse with $\bQ^{\dagger/2}=\bQ^{-1/2}$ when $\bQ\succ 0$~\cite{Moore1920, Penrose1955}. In particular, $\bQ^\dagger$ is characterized as the unique matrix satisfying
$\bQ \bQ^\dagger \bQ = \bQ$, $\bQ^\dagger \bQ \bQ^\dagger = \bQ^\dagger$, $(\bQ \bQ^\dagger)^\top = \bQ \bQ^\dagger$, and $(\bQ^\dagger \bQ)^\top = \bQ^\dagger \bQ$.
Equivalently, if $\bQ=\bU\diag(\lambda_1,\ldots,\lambda_n)\bU^\top$ with $\lambda_i\ge 0$, then
\[
\bQ^{\dagger/2}=\bU\,\diag(\lambda_1^{-1/2}\mathbf{1}_{\{\lambda_1>0\}},\ldots,\lambda_n^{-1/2}\mathbf{1}_{\{\lambda_n>0\}})\,\bU^\top, \quad \text{and} \quad (\bQ^{\dagger/2})^2=\bQ^\dagger.
\]

\subsection{Construction of Transport Maps between Parameter Spaces}\label{sec: map-construct}
This section constructs two transport maps $\mT$ and $\mM$, respectively, for joint parameter construction and marginal parameter extraction.

\subsubsection{\texorpdfstring{Joint parameter construction map $\mT$. }{Joint parameter construction map}}\, 
For $\bs_i=(\bmean_i,\bQ_i)\in\mS$, define the induced parameter map
\begin{align}\label{def: OT-map-forward}
\mT:\mS\times\mS\to\mZ,\qquad
\mT(\bs_1,\bs_2) \coloneqq \bigl((\bmean_1,\bmean_2),\,\bar{\bQ}\bigr),
\end{align}
where $\bar{\bQ}$ is defined in~\eqref{def: gelbrich-off-diagonal} under the convention
$\bQ^{-1/2}:=(\bQ^{1/2})^\dagger$ for all $\bQ\succeq 0$. Consequently, $\mT$ is well-defined on $\mS\times\mS$.
Moreover, since $\bQ\mapsto \bQ^{1/2}$ and $\bQ\mapsto (\bQ^{1/2})^\dagger$ are Borel measurable on $\mbS_+^n$,
the map $\mT$ is Borel measurable.
Hence, for any $A\in\mB(\mZ)$, we define the preimage $\mT^{-1}(A)\in\mB(\mS\times\mS)$ as
\[
\mT^{-1}(A) \coloneqq \{(\bs_1,\bs_2)\in\mS\times\mS \mid\ \mT(\bs_1,\bs_2)\in A\}.
\]
Let $\pi\in\mP_2(\mS\times\mS)$ and $\mT_\#\pi\in\mP(\mZ)$ denote the probability measure such that
\begin{align*}
 (\mT_\#\pi)(A)=\pi(\mT^{-1}(A))
=\int_{\mS\times\mS}\ind_A\!\bigl(\mT(\bs_1,\bs_2)\bigr)\,d\pi(\bs_1,\bs_2)
\qquad \forall A\in\mB(\mZ).
\end{align*}
Then we call $\lambda := \mT_\# \pi \in \mP(\mZ)$ a push-forward measure. Consequently, for any measurable $\Psi$ that is either nonnegative or integrable (or, more strongly, bounded)~\cite{villani2008optimal},
\begin{align}\label{eq: push-integral}
\int_{\mS\times\mS}\Psi\!\bigl(\mT(\bs_1,\bs_2)\bigr)\,d\pi(\bs_1,\bs_2)
=\int_{\mZ}\Psi(\bz)\,d(\mT_\#\pi)(\bz)
=\int_{\mZ}\Psi(\bz)\,d\lambda(\bz).
\end{align}
The map $\mT$ preserves the second moment of the probability measure $\pi$, as the following proposition states.

\begin{proposition}[Moment preservation under $\mT$]\label{prop: finite-GMM-mmnt}
Let $\pi\in \mP(\mS\times \mS)$ and set $\lambda:=\mT_\#\pi\in \mP(\mZ)$. 
If
\begin{equation}\label{eq:assump-input}
\int_{\mS\times\mS}\Bigl(\|\bmean_1(\bs_1)\|^2+\|\bmean_2(\bs_2)\|^2+\|\bQ_1(\bs_1)\|_F^2+\|\bQ_2(\bs_2)\|_F^2\Bigr)\,
d\pi\bigl(\bs_1, \bs_2\bigr) < \infty,
\end{equation}
then $\lambda\in \mP_2(\mZ)$ with respect to the gauge
$\|((\bmean_1,\bmean_2),\bar{\bQ})\|_{\mZ}^2 := \|\bmean_1\|^2+\|\bmean_2\|^2+\|\bar{\bQ}\|_F^2$, i.e.,
\begin{equation}\label{eq:concl-output}
\int_{\mZ}\Bigl(\|\bmean_1(\bz)\|^2+\|\bmean_2(\bz)\|^2+\|\bar{\bQ}(\bz)\|_F^2\Bigr)\,
d\lambda\bigl(z\bigr) < \infty.
\end{equation}
\end{proposition}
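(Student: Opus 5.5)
By the change-of-variables identity \eqref{eq: push-integral}, the plan is to pull the integral in \eqref{eq:concl-output} back to $\mS\times\mS$. Take the nonnegative measurable integrand $\Psi(\bz)=\|\bmean_1(\bz)\|^2+\|\bmean_2(\bz)\|^2+\|\bar{\bQ}(\bz)\|_F^2$. Then
\[
\int_{\mZ}\Psi\,d\lambda=\int_{\mS\times\mS}\Bigl(\|\bmean_1\|^2+\|\bmean_2\|^2+\|\bar{\bQ}(\bs_1,\bs_2)\|_F^2\Bigr)\,d\pi(\bs_1,\bs_2),
\]
where $\bar{\bQ}(\bs_1,\bs_2)$ is the block matrix in \eqref{def: gelbrich-off-diagonal}. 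The mean terms are then directly controlled by \eqref{eq:assump-input}. It remains to bound $\|\bar{\bQ}\|_F^2$ pointwise by a multiple of $\|\bQ_1\|_F^2+\|\bQ_2\|_F^2$.

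Expanding the Frobenius norm blockwise gives
\[
\|\bar{\bQ}\|_F^2=\|\bQ_1\|_F^2+\|\bQ_2\|_F^2+2\|\bSigma\|_F^2,
\]
so the key step is a bound on the cross-covariance $\bSigma$.

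\textbf{Main obstacle.} The formula for $\bSigma$ involves the pseudo-inverse $\bQ_1^{-1/2}$, which is unbounded near singular $\bQ_1$, so a naive submultiplicative bound fails. I would avoid the explicit formula and use positive semidefiniteness instead. By Theorem~\ref{thm: Gelbrich}, $\bar{\bQ}$ is the covariance of a coupling, so $\bar{\bQ}\succeq 0$. For PSD block matrices one has $|\bSigma_{ij}|^2\le (\bQ_1)_{ii}(\bQ_2)_{jj}$, obtained from the $2\times 2$ principal minors. Summing over $i,j$ yields
\[
\|\bSigma\|_F^2\le \Tr(\bQ_1)\Tr(\bQ_2)\le \tfrac12\bigl(\Tr(\bQ_1)^2+\Tr(\bQ_2)^2\bigr)\le \tfrac n2\bigl(\|\bQ_1\|_F^2+\|\bQ_2\|_F^2\bigr),
\]
using Cauchy--Schwarz in the form $\Tr(\bQ)^2\le n\|\bQ\|_F^2$.

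Should the PSD property need independent verification under the pseudo-inverse convention, I would check it directly. Write $\bSigma=\bQ_1^{1/2}\bm C\,\bQ_1^{\dagger/2}$ with $\bm C=(\bQ_1^{1/2}\bQ_2\bQ_1^{1/2})^{1/2}$. Then $\bSigma$ equals the symmetric matrix $\bQ_1^{\dagger/2}\bm C\bQ_1^{1/2}$ restricted appropriately. Alternatively, one can argue by a limiting argument with $\bQ_1+\epsilon\bI$ and continuity.

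Combining these pieces gives the pointwise bound
\[
\Psi(\mT(\bs_1,\bs_2))\le (1+n)\Bigl(\|\bmean_1\|^2+\|\bmean_2\|^2+\|\bQ_1\|_F^2+\|\bQ_2\|_F^2\Bigr).
\]
Integrating against $\pi$ and invoking \eqref{eq:assump-input} yields \eqref{eq:concl-output}, and hence $\lambda\in\mP_2(\mZ)$.
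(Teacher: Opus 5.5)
Your argument is correct and has the same skeleton as the paper's proof. You pull the integral back through $\mT$, expand $\|\bar{\bQ}\|_F^2=\|\bQ_1\|_F^2+\|\bQ_2\|_F^2+2\|\bSigma\|_F^2$, bound $\bSigma$ using only $\bar{\bQ}\succeq 0$ rather than its explicit formula, and integrate. The difference is in the $\bSigma$ bound:
\begin{itemize}
\item The paper's Lemma~\ref{lem:Sigma-bound} uses the contraction characterization $\bSigma=\bQ_1^{1/2}K\bQ_2^{1/2}$ with $\|K\|_{\mathrm{op}}\le 1$, plus Frobenius Cauchy--Schwarz. This gives $\|\bSigma\|_F^2\le\|\bQ_1\|_F\|\bQ_2\|_F$ and hence the dimension-free bound $\|\bar{\bQ}\|_F^2\le 2(\|\bQ_1\|_F^2+\|\bQ_2\|_F^2)$.
\item Your $2\times 2$ principal-minor inequality is more elementary; the paper itself uses it in Lemma~\ref{lem:ZS-compact}. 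Passing from traces to Frobenius norms via $\Tr(\bQ)^2\le n\|\bQ\|_F^2$ costs a factor $n$, so your constant is $1+n$ instead of $2$. For finiteness this does not matter.
\end{itemize}

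You should drop the fallback paragraph on verifying positive semidefiniteness; it does not work.
\begin{itemize}
\item $\bQ_1^{\dagger/2}\bm{C}\bQ_1^{1/2}$ is just $\bSigma^\top$, which is not symmetric in general.
\item Symmetry of $\bSigma$ would not give $\bar{\bQ}\succeq 0$ anyway.
\item The limit of the $\bQ_1+\epsilon\bI$ couplings need not coincide with the pseudo-inverse formula.
\end{itemize}
In fact, for singular $\bQ_1$ the formula can fail to be PSD. Take $n=2$, $\bQ_1=\diag(1,0)$, $\bQ_2=\begin{psmallmatrix}1&1/2\\1/2&1\end{psmallmatrix}$. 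Then $\bm{C}=\bSigma=\diag(1,0)$, and $v=(1,0,-1,\tfrac12)^\top$ gives $v^\top\bar{\bQ}v=-\tfrac14$. So the PSD premise, which you and the paper both rely on, holds only when $\bQ_1\succ 0$, or as the statement's presupposition that $\mT$ maps into $\mZ$.

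If you want the bound for singular $\bQ_1$ as well, bypass positive semidefiniteness. Set $K:=\bm{C}\bQ_1^{\dagger/2}$ and let $P:=\bQ_1^{1/2}\bQ_1^{\dagger/2}$ be the orthogonal projector onto the range of $\bQ_1$. Then $\bSigma=\bQ_1^{1/2}K$ and $K^\top K=\bQ_1^{\dagger/2}\bm{C}^2\bQ_1^{\dagger/2}=P\bQ_2P$. Hence
\[
\|\bSigma\|_F^2\le\lambda_{\max}(\bQ_1)\Tr(P\bQ_2P)\le\Tr(\bQ_1)\Tr(\bQ_2),
\]
and the rest of your chain goes through unchanged.
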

\begin{proof}
    See proof in Appendix~\ref{appndx: second-moment-preservation}.
\end{proof}

\subsubsection{\texorpdfstring{Marginal parameter extraction map $\mM$. }{Marginal parameter extraction map}}\,
For $\bz=((\bmean_1,\bmean_2),\bQ)\in\mZ$ with
\begin{align}\label{def: Q-block}
\bQ=\begin{pmatrix}\bQ_{11}&\bQ_{12}\\ \bQ_{21}&\bQ_{22}\end{pmatrix}\in\mbS^{2n}_+,
\qquad \bQ_{11},\bQ_{22}\in\mbS^n_+, 
\end{align} define the marginal-parameter extraction map
\begin{align}\label{def: OT-map-backward}
\mM:\mZ\to \mS\times\mS,\qquad
\mM(\bz) \coloneqq \bigl((\bmean_1,\bQ_{11}),(\bmean_2,\bQ_{22})\bigr). 
\end{align}
The map $\mM$ is continuous (hence Borel measurable; see \cite[Theorem 1.6]{folland1999real}). For $B\in\mB(\mS\times\mS)$, the preimage of $\mM$ is
\vspace{-2.0em}
\[
\mM^{-1}(B)\coloneqq \{z\in\mZ \mid\ \mM(\bz)\in B\}\in\mB(\mZ).
\]
Given a probability measure $\lambda \in \mP_2(\mZ)$, define the pushforward measure
$\pi := \mM_\#\lambda \in \mP(\mS\times\mS)$ by
\begin{align*}
(\mM_\#\lambda)(B)
:=\lambda(\mM^{-1}(B))
=\int_{\mZ}\ind_{B}(\mM(\bz))\,d\lambda(\bz)
\qquad \forall B\in\mB(\mS\times\mS).
\end{align*}

Hence, for any measurable $\Psi$ that is either nonnegative or integrable (or, more strongly, bounded)~\cite{villani2008optimal},
\begin{align}\label{eq: push-integral-M}
\int_{\mZ}\Psi\!\bigl(\mM(\bz)\bigr)\,d\lambda(\bz)
=\int_{\mS \times \mS }\Psi(\bs_1, \bs_2)\,d(\mM_\#\lambda)(\bs_1, \bs_2)
=\int_{\mS\times \mS}\Psi(\bs_1, \bs_2)\,d\pi(\bs_1, \bs_2).
\end{align}
The transport map $\mM$ is also moment-preserving: if $\lambda$ has finite second moment, so does the pushforward measure $\pi$ as stated in the following proposition. 
\begin{proposition}[Moment preservation under $\mM$]\label{prop: finite-2nd-moment}
Let $\lambda\in \mP(\mZ)$ and set $\pi:=\mM_\#\lambda\in \mP(\mS\times \mS)$. If
\begin{equation}\label{eq:assump-Z-generalQ}
\int_{\mZ}\Bigl(\|\bmean_1(\bz)\|^2+\|\bmean_2(\bz)\|^2+\|\bQ(\bz)\|_F^2\Bigr)\,
d\lambda(\bz) < \infty,
\end{equation}
then $\pi\in \mP_2(\mS\times \mS)$ with respect to the product gauge $\|((\bmean_1,\bQ_{11}),(\bmean_2,\bQ_{22}))\|_{\mS\times \mS}^2
:=\|\bmean_1\|^2+\|\bmean_2\|^2+\|\bQ_{11}\|_F^2+\|\bQ_{22}\|_F^2$, i.e., \begin{equation}\label{eq:concl-SxS-generalQ}
\int_{\mS\times\mS}\Bigl(\|\bmean_1(\bs_1)\|^2+\|\bmean_2(\bs_2)\|^2+\|\bQ_{11}(\bs_1)\|_F^2+\|\bQ_{22}(\bs_2)\|_F^2\Bigr)\,
d\pi\bigl(\bs_1, \bs_2\bigr) < \infty.
\end{equation}
\end{proposition}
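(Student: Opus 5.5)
The argument is a change of variables through the pushforward identity \eqref{eq: push-integral-M}, followed by a pointwise bound on the integrand. Set
\[
\Psi(\bs_1,\bs_2):=\|\bmean_1\|^2+\|\bmean_2\|^2+\|\bQ_{11}\|_F^2+\|\bQ_{22}\|_F^2,
\]
with $\bs_1=(\bmean_1,\bQ_{11})$ and $\bs_2=(\bmean_2,\bQ_{22})$. This function is continuous, hence Borel measurable, and it is nonnegative. Therefore \eqref{eq: push-integral-M} applies without any integrability precondition and gives
\[
\int_{\mS\times\mS}\Psi\,d\pi=\int_{\mZ}\Psi(\mM(\bz))\,d\lambda(\bz),
\]
where both sides may a priori equal $+\infty$. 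The problem thus reduces to bounding the integrand $\Psi\circ\mM$ on $\mZ$.

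Next I will bound $\Psi(\mM(\bz))$ pointwise. Take $\bz=((\bmean_1,\bmean_2),\bQ)$ with the block structure \eqref{def: Q-block}. By definition of $\mM$, the mean coordinates are passed through unchanged. The squared Frobenius norm of $\bQ$ is the sum of the squared Frobenius norms of its four blocks:
\[
\|\bQ\|_F^2=\|\bQ_{11}\|_F^2+\|\bQ_{12}\|_F^2+\|\bQ_{21}\|_F^2+\|\bQ_{22}\|_F^2\ \ge\ \|\bQ_{11}\|_F^2+\|\bQ_{22}\|_F^2.
\]
It follows that
\[
\Psi(\mM(\bz))\le \|\bmean_1(\bz)\|^2+\|\bmean_2(\bz)\|^2+\|\bQ(\bz)\|_F^2
\]
for every $\bz\in\mZ$.

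Integrating this inequality against $\lambda$ and using the hypothesis \eqref{eq:assump-Z-generalQ} bounds $\int\Psi\,d\pi$ by a finite quantity, which is \eqref{eq:concl-SxS-generalQ}. One detail needs checking: the statement says $\pi\in\mP(\mS\times\mS)$ with $\bQ_{11},\bQ_{22}\in\mbS^n_+$. Diagonal blocks of a positive definite matrix are positive definite, so $\mM$ does map $\mZ$ into $\mS\times\mS$ and $\pi$ is a probability measure there.

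There is no substantive obstacle. The only point to handle carefully is invoking the change of variables for a nonnegative, possibly non-integrable $\Psi$. Doing so avoids assuming integrability in advance, which is exactly what we are trying to prove.
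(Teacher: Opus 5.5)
Your proposal is correct and matches the paper's proof essentially step for step. The paper likewise applies the pushforward identity \eqref{eq: push-integral-M} to the nonnegative integrand $g(\bs_1,\bs_2)$. It then bounds $g(\mM(\bz))$ pointwise by $\|\bmean_1(\bz)\|^2+\|\bmean_2(\bz)\|^2+\|\bQ(\bz)\|_F^2$, using the block decomposition of $\|\bQ\|_F^2$, and integrates against $\lambda$. Your remarks on measurability and on $\mM$ mapping into $\mS\times\mS$ are harmless additions that the paper leaves implicit.
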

\begin{proof}
    See proof in Appendix~\ref{appndx: second-moment-preservation}.
\end{proof}

\subsubsection{Marginalization properties of Gaussian measures. }\, For any fixed Gaussian parameter, the corresponding Gaussian measure \emph{in the data space} satisfies the standard marginalization property. In particular,
for $(Y_1,Y_2)\sim \mN\!\bigl((\bmean_1,\bmean_2),\bQ\bigr) \in \mbR^n\times \mbR^n$, with block decomposition $\bQ$ in~\eqref{def: Q-block}, the marginals satisfy $Y_1\sim \mN(\bmean_1,\bQ_{11})$ and $Y_2\sim \mN(\bmean_2,\bQ_{22})$.
Define the continuous coordinate projections $\mathrm{pr}_i:\mbR^n\times\mbR^n\to \mbR^n$ by
$\mathrm{pr}_1(\by_1,\by_2)=y_1$ and $\mathrm{pr}_2(\by_1,\by_2)=y_2$. 
Their preimages for any $C\in\mB(\mbR^n)$ are
\begin{align}\label{eq: proj-preimage}
\mathrm{pr}^{-1}_1(C) \coloneqq \{(\by_1,\by_2)\in\mbR^n\times\mbR^n \mid\ \mathrm{pr}_1(\by_1,\by_2)\in C\}, \,\,
\mathrm{pr}^{-1}_2(C) \coloneqq \{(\by_1,\by_2)\in\mbR^n\times\mbR^n \mid\ \mathrm{pr}_2(\by_1,\by_2)\in C\}.   
\end{align}
Also, denote $\mM_1(\bz)\coloneqq \bs_1(\bz)=(\bmean_1,\bQ_{11}), \,\, 
\mM_2(\bz)\coloneqq \bs_2(\bz)=(\bmean_2,\bQ_{22})$. Then, by the definition of pushforward measure, for any $C\in\mB(\mbR^n)$,
\begin{equation}\label{def: proj-param}
\begin{aligned}
\mN(\bs_1(\bz))(C)
&=\mN(\mM_1(\bz))(C)
=\bigl((\mathrm{pr}_1)_{\#}\mN(\bz)\bigr)(C)
=\mN(\bz)\!\left(\mathrm{pr}^{-1}_1(C)\right),\\
\mN(\bs_2(\bz))(C)
&=\mN(\mM_2(\bz))(C)
=\bigl((\mathrm{pr}_2)_{\#}\mN(\bz)\bigr)(C)
=\mN(\bz)\!\left(\mathrm{pr}^{-1}_2(C)\right).
\end{aligned}
\end{equation}
Equivalently, $(\mathrm{pr}_1)_{\#}\mN(\bz)$ and $(\mathrm{pr}_2)_{\#}\mN(\bz)$ are the two marginals of $\mN(\bz)$ (in data space).

\subsection{Wasserstein-2-type Metric between Gaussian Mixtures}
We now define a Wasserstein-2-type metric between two Gaussian mixture distributions. This metric is related to the Wasserstein-2 metric. 

\begin{definition} Let $\mu_1, \mu_2 \in \mP(\mbR^n)$. Let $\Pi^\mathrm{mix}(\mu_1, \mu_2) = \{\Pi(\mu_1, \mu_2)\, \cap\  \mbGM_{2n}\}$. Consider the following optimization problem:
\begin{equation}
    W_2^\mathrm{mix}(\mu_1,\mu_2) := \inf_{\nu \in \Pi^\mathrm{mix}(\mu_1, \mu_2)}
    \int_{\mbR^n \times \mbR^n}
      \|\by_1 - \by_2\|^2 \,d\nu(\by_1,\by_2) \label{eqn: WassGausianMix}
\end{equation}
An optimal solution to~\eqref{eqn: WassGausianMix} exists (see Proposition~\ref{prop:intersection-weak-compact} in Appendix~\ref{appndx: ommited-proof}). We call $W_2^\mathrm{mix}(\mu_1,\mu_2)$ a Wasserstein-2-type metric between $\mu_1$ and $\mu_2$. The corresponding optimal solution is called an optimal Wasserstein-2-type coupling between $\mu_1$ and $\mu_2$. We will refer to $W_2^{\mathrm{mix}}(\mu_1,\mu_2)$  and the corresponding coupling as the Wasserstein-2 metric and coupling defined in the data space.
\end{definition}
\begin{remark}
Since $\{\Pi(\mu_1,\mu_2)\cap \mbGM_{2n} \} \subseteq \Pi(\mu_1,\mu_2)$,  $W_2(\mu_1,\mu_2) \leq W_2^{\mathrm{mix}}(\mu_1,\mu_2)$.  
\end{remark}
\section{Transport via Gaussian Mixture Distribution Parameters} 
\label{sec: WassersteinTypeTransportEquivalence}

We now define a metric that can be computed from an optimization model defined in the parameter space. For $\pi \in \mP(\mS \times \mS)$, let $\pi_1,\pi_2$ denote its first and second marginals:
$\pi_1(C) := \pi(C \times \mS)$, $\pi_2(C) := \pi(\mS \times C)$ for Borel $C \in \mB(\mS)$. Let
\begin{align}\label{def: coupling-param}
\hspace{-1.75em}\Gamma = 
  \Bigl\{
    \pi \in \mP_2(\mS \times \mS)
    \mid \hspace{-1pt}
   \int_{\mathcal{S}} \hspace{-1pt} \mN(\bs)(A)\,d\pi_1(\bs) = \mu_1(A),\hspace{-1pt}
   \int_{\mathcal{S}} \hspace{-1pt} \mN(\bs)(B)\,d\pi_2(\bs) = \mu_2(B), \forall A, B \in \mB{(\mbR^n)}
  \Bigr\}, 
\end{align}
\vspace{-1.0em}
\begin{equation}
\text{and} \qquad    W_2^\mathrm{prm} := \inf_{\pi\in\Gamma}
    \int_{\mS\times \mS} 
d^2_{\mathrm{BW}}\!\bigl(\mN(\bs_1),\,\mN(\bs_2)\bigr)\,d\pi(\bs_1, \bs_2)\srevision{.}\label{eqn:Wass2ParamSpace}
\end{equation}

\begin{theorem}\label{thm: Wass2-Gaussian-EquivalenceTheorem} Let $W_2^\mathrm{mix}(\mu_1,\mu_2)$ and $W_2^\mathrm{prm}$ be defined as in \eqref{eqn: WassGausianMix} and \eqref{eqn:Wass2ParamSpace}, respectively. Then
\begin{equation}
    W_2^\mathrm{mix}(\mu_1,\mu_2) = W_2^\mathrm{prm}. \label{eqn:Wass2-Dist-ParamSpace}
\end{equation}
\end{theorem}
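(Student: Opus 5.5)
The plan is to prove the two inequalities $W_2^{\mathrm{prm}}\le W_2^{\mathrm{mix}}(\mu_1,\mu_2)$ and $W_2^{\mathrm{mix}}(\mu_1,\mu_2)\le W_2^{\mathrm{prm}}$ separately. The maps $\mM$ and $\mT$ of Section~\ref{sec: map-construct} move feasible points between the two problems: $\mM$ takes data-space couplings to parameter-space couplings, and $\mT$ goes the other way. Propositions~\ref{prop: finite-GMM-mmnt} and~\ref{prop: finite-2nd-moment} keep the second moments finite along the way.

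\emph{Step 1 ($\le$ direction: $W_2^{\mathrm{mix}}\le W_2^{\mathrm{prm}}$).} Fix $\pi\in\Gamma$ with finite objective and set $\lambda:=\mT_\#\pi$. Define $\nu(E):=\int_{\mZ}\mN(\bz)(E)\,d\lambda(\bz)$ for $E\in\mB(\mbR^{2n})$.
\begin{itemize}
\item By \eqref{eq: push-integral}, $\nu(E)=\int_{\mS\times\mS}\mG(\bs_1,\bs_2)(E)\,d\pi(\bs_1,\bs_2)$, so $\nu$ is a mixture of Gelbrich couplings.
\item For $A\in\mB(\mbR^n)$, the marginalization identity \eqref{def: proj-param} gives $\mG(\bs_1,\bs_2)(\mathrm{pr}_1^{-1}(A))=\mN(\bs_1)(A)$. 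Integrating and using the definition of $\Gamma$ yields $\nu^{(1)}(A)=\int\mN(\bs_1)(A)\,d\pi_1=\mu_1(A)$, and likewise $\nu^{(2)}=\mu_2$.
\item Finite second moment of $\lambda$, together with Proposition~\ref{prop: joint-gaussian-second-moment}, puts $\nu\in\mbGM_{2n}$. Using $\Tr(\bar\bQ)$ in place of $\|\bar\bQ\|_F^2$ here is harmless, since $\Tr(\bar\bQ)\le\sqrt{2n}\,\|\bar\bQ\|_F\le \sqrt{2n}\,(1+\|\bar\bQ\|_F^2)$.
\item By Fubini/Tonelli (the integrand is nonnegative), $\int\|\by_1-\by_2\|^2d\nu=\int_{\mS\times\mS}\int\|\by_1-\by_2\|^2\,d\mG(\bs_1,\bs_2)\,d\pi=\int d^2_{\mathrm{BW}}(\mN(\bs_1),\mN(\bs_2))\,d\pi$, by Theorem~\ref{thm: Gelbrich}.
\end{itemize}
Taking the infimum over $\pi\in\Gamma$ gives $W_2^{\mathrm{mix}}\le W_2^{\mathrm{prm}}$.

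\emph{Step 2 ($\ge$ direction: $W_2^{\mathrm{prm}}\le W_2^{\mathrm{mix}}$).} Fix $\nu\in\Pi^{\mathrm{mix}}(\mu_1,\mu_2)$ with mixing law $\lambda\in\mP(\mZ)$, and set $\pi:=\mM_\#\lambda$.
\begin{itemize}
\item By \eqref{def: proj-param} and \eqref{eq: push-integral-M}, $\int\mN(\bs)(A)\,d\pi_1(\bs)=\int_{\mZ}\mN(\bz)(\mathrm{pr}_1^{-1}(A))\,d\lambda(\bz)=\nu(\mathrm{pr}_1^{-1}(A))=\mu_1(A)$. The same holds for the second marginal, so $\pi$ satisfies both constraints in $\Gamma$.
\item Proposition~\ref{prop: finite-2nd-moment} gives $\pi\in\mP_2(\mS\times\mS)$. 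The $\mbGM_{2n}$ moment condition controls $\Tr\bQ(\bz)$, and hence $\|\bQ\|_F\le\Tr\bQ$ for PSD $\bQ$. That bounds only the first moment of $\|\bQ\|_F$, not its square, so there is a mismatch between the trace gauge in \eqref{def: GMMmix} and the Frobenius gauge in $\mP_2$. I expect to handle it by checking that $\Gamma$'s moment requirement is the trace-based one (the meaningful one for $d_{\mathrm{BW}}$), or by truncating $\lambda$ and passing to a limit.
\item For each $\bz$, $\mN(\bz)$ is a coupling of $\mN(\bs_1(\bz))$ and $\mN(\bs_2(\bz))$. Theorem~\ref{thm: Gelbrich} then gives $\int\|\by_1-\by_2\|^2d\mN(\bz)\ge d^2_{\mathrm{BW}}(\mN(\mM_1(\bz)),\mN(\mM_2(\bz)))$.
\item Integrating against $\lambda$ and applying \eqref{eq: push-integral-M} gives $\int\|\by_1-\by_2\|^2d\nu\ge\int d^2_{\mathrm{BW}}\,d\pi\ge W_2^{\mathrm{prm}}$.
\end{itemize}
Taking the infimum over $\nu$ gives $W_2^{\mathrm{prm}}\le W_2^{\mathrm{mix}}$.

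\emph{Main obstacle.} I expect the difficulty to lie in measurability and integrability, not in the transport inequalities. Three points need care:
\begin{itemize}
\item Measurability of $\bz\mapsto\mN(\bz)(E)$ and of $(\bs_1,\bs_2)\mapsto\mG(\bs_1,\bs_2)(E)$, so that the mixtures are well-defined kernels. I would use weak continuity of Gaussian laws in their parameters plus a monotone-class argument, together with Borel measurability of $\mT$.
\item The justification of Tonelli in both steps.
\item Degenerate covariances. The pseudoinverse convention keeps $\mT$ defined, but I must verify that $\bar\bQ\succeq 0$ and that the Gelbrich formula still gives the optimal cost when $\bQ_1$ is singular. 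I would handle this by approximating with $\bQ_1+\epsilon I$ and using continuity of $d_{\mathrm{BW}}$.
\end{itemize}
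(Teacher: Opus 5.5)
Your two steps are the paper's argument: Lemma~\ref{lem: forward} with Proposition~\ref{thm: R<=L}, then Lemma~\ref{lem: Gamma-mixture-to-joint-gauss} with Proposition~\ref{thm: R>=L}. Step~1 is fine, up to the degenerate-covariance point below, because a finite Frobenius second moment of $\pi$ does control $\int\Tr\bar{\bQ}\,d\lambda$. The genuine gap is the one you flagged in Step~2, and neither of your remedies closes it. Membership $\nu\in\mbGM_{2n}$ gives only $\int(\|\bmean\|^2+\Tr\bQ)\,d\lambda<\infty$. But $\Gamma\subseteq\mP_2(\mS\times\mS)$ is taken in the product gauge of Proposition~\ref{prop: finite-2nd-moment}, which requires $\int(\|\bQ_{11}\|_F^2+\|\bQ_{22}\|_F^2)\,d\lambda<\infty$. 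For example, if $\lambda$ is carried by $\{((\bzero,\bzero),\tau\bI):\tau\ge 1\}$ with $\tau$ of finite mean and infinite variance, then $\mM_\#\lambda\notin\Gamma$. The paper's proof of Lemma~\ref{lem: Gamma-mixture-to-joint-gauss} also steps over this: it invokes Proposition~\ref{prop: finite-2nd-moment}, whose hypothesis \eqref{eq:assump-Z-generalQ} does not follow from \eqref{def: GMMmix}. Your first remedy, reading $\Gamma$'s moment condition as trace-based, changes the statement rather than proving it. Your second, truncating $\lambda$ to $\{\Tr\bQ\le R\}$, changes $\nu$ and hence its marginals, so the truncated plan violates the mixing-law constraints in $\Gamma$. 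Recovering the bound as $R\to\infty$ would then need lower semicontinuity of $W_2^{\mathrm{prm}}$ in $(\mu_1,\mu_2)$, which you do not have.

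The missing idea is to keep $\nu$ fixed and replace its mixing law. For $\bz=((\bmean_1,\bmean_2),\bQ)$ put $t(\bz):=1/\max\{1,\Tr\bQ\}\in(0,1]$. Independent $\mN((\bmean_1,\bmean_2),(1-t)\bQ)$ and $\mN(\bzero,t\bQ)$ vectors sum to an $\mN(\bz)$ vector, so
\[
\mN(\bz)=\int_{\mbR^{2n}}\mN\bigl(\bu,\,t(\bz)\bQ\bigr)\,d\mN\bigl((\bmean_1,\bmean_2),(1-t(\bz))\bQ\bigr)(\bu).
\]
Let $\lambda'$ be the law of $(U,t(Z)\bQ(Z))$, where $Z\sim\lambda$ and, given $Z$, $U\sim\mN((\bmean_1,\bmean_2)(Z),(1-t(Z))\bQ(Z))$. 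Then the following hold:
\begin{itemize}
\item $\nu=\int\mN(\bz')\,d\lambda'(\bz')$;
\item $\|t(\bz)\bQ\|_F\le t(\bz)\Tr\bQ\le 1$;
\item $\int\|\bu\|^2\,d\lambda'=\int(\|\bmean\|^2+(1-t)\Tr\bQ)\,d\lambda<\infty$.
\end{itemize}
Hence $\pi':=\mM_\#\lambda'$ has finite Frobenius second moment. Its diagonal blocks are positive multiples of $\bQ_{11},\bQ_{22}$, so they stay in the same cone. Your first bullet uses only $\nu\in\Pi(\mu_1,\mu_2)$ and some mixture representation, so it gives $\pi'\in\Gamma$. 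The pointwise Gelbrich bound applied to each $\bz'$ then gives $\int\|\by_1-\by_2\|^2\,d\nu\ge\int d^2_{\mathrm{BW}}\,d\pi'\ge W_2^{\mathrm{prm}}$, with no limiting argument.

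Your caution about degenerate covariances in Step~1 is also warranted: under the pseudoinverse convention, $\bar{\bQ}$ can fail to be PSD. For $\bQ_1=\diag(1,0)$ and $\bQ_2=\begin{psmallmatrix}1&1\\1&1\end{psmallmatrix}$ one gets $\bSigma=\diag(1,0)$, and the quadratic form of $\bar{\bQ}$ equals $-1$ at $(-1,0,1,-1)$. Regularizing with $\bQ_1+\epsilon\bI$ produces an optimal Gaussian coupling only as a limit point, so you still need a Borel choice. One option is $Y_1=\bmean_1+\bQ_1^{1/2}\xi$, $Y_2=\bmean_2+\bQ_2^{1/2}O\xi$, with $\xi\sim\mN(\bzero,\bI)$ and $O$ a measurably chosen orthogonal polar factor of $\bQ_2^{1/2}\bQ_1^{1/2}$. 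The other is to restrict to positive definite covariances, as the paper's notation section states.
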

The importance of Theorem~\ref{thm: Wass2-Gaussian-EquivalenceTheorem} is that it allows us to compute the Wasserstein-2-type metric defined in \eqref{eqn: WassGausianMix} by setting up an optimization model using the parameters of the Gaussians in the mixture. $W_2^\mathrm{prm}$ is more amenable to algorithmic development, as will be seen in subsequent sections. We further show in Theorem~\ref{thm:Wass2-Gaussian-EquivalenceTheorem-restricted} that the equivalence continues to hold when the mean--covariance parameter support is a compact subset of $\mbR^n\times\mbS^n_+$. We prove Theorem~\ref{thm: Wass2-Gaussian-EquivalenceTheorem} below; the restricted-support case is proved in Appendix~\ref{appndx: restricted}.

\subsection{Constructing Data-Space Coupling from Parameter Space Coupling}
The following result shows that given any parameter space coupling from $\Gamma$, we can construct a data space coupling in the form of a Gaussian mixture distribution $\nu_\pi$ with marginals $\mu_1, \mu_2$, i.e., $\nu_\pi \in \Pi^{\mathrm{mix}}(\mu_1, \mu_2)$. 

\begin{lemma}\label{lem: forward}
 Let $\bs_1, \bs_2 \in \mS$, \ $\mG(\bs_1, \bs_2)$ be the Gelbrich coupling, and $\,\pi \in \Gamma$. Then, for $\nu_{\pi}:=\int_{\mS\times \mS}\mG(\bs_1, \bs_2)\,d\pi(\bs_1,\bs_2)$
\begin{eqnarray}
\nu_{\pi}^{(1)}(A) 
&:=& \int_{\mbR^{2n}} \!\! \ind_{A}(\by_1)\, d\nu_{\pi}(\by_1,\by_2)
\;=\; \mu_1(A) \,\,\, \forall\, \text{Borel set } A\in \mB{(\mbR^n)},
\text{therefore}, \nu_{\pi}^{(1)} = \mu_1,
\label{eq:nu1-equality} \\[0.5em]
\nu_{\pi}^{(2)}(B) 
&:=& \int_{\mbR^{2n}} \!\! \ind_{B}(\by_2)\, d\nu_{\pi}(\by_1,\by_2)
\;=\; \mu_2(B) \,\,\,  \forall\, \text{Borel set } B\in \mB{(\mbR^n)},
\text{therefore}, 
\nu_{\pi}^{(2)} = \mu_2.
\label{eq:nu2-equality}
\end{eqnarray}
Moreover, $\nu_\pi \in \Pi^\mathrm{mix}(\mu_1, \mu_2)$.
\end{lemma}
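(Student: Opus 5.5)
The plan is to compute the marginals of $\nu_\pi$ by Fubini--Tonelli. Then I will show that $\nu_\pi$ is itself a Gaussian mixture on $\mbR^{2n}$ with finite second moment, by pushing $\pi$ forward through the map $\mT$ from Section~\ref{sec: map-construct}.

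\textbf{Step 1: well-definedness.} First I would make sure that $\nu_\pi(E)=\int_{\mS\times\mS}\mG(\bs_1,\bs_2)(E)\,d\pi(\bs_1,\bs_2)$ is a probability measure on $\mbR^{2n}$. Since $\mG(\bs_1,\bs_2)=\mN(\mT(\bs_1,\bs_2))$ by Theorem~\ref{thm: Gelbrich}, the map $(\bs_1,\bs_2)\mapsto \mG(\bs_1,\bs_2)(E)$ is the composition of the Borel map $\mT$ with $\bz\mapsto\mN(\bz)(E)$. The latter is measurable because Gaussian laws depend weakly continuously on their parameters, which makes $\bz\mapsto\int f\,d\mN(\bz)$ continuous for $f\in C_b$; a monotone class argument then extends measurability to indicators of Borel sets. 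Countable additivity of $\nu_\pi$ follows from monotone convergence.

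\textbf{Step 2: marginals.} Fix $A\in\mB(\mbR^n)$. Then
\[
\nu_\pi^{(1)}(A)=\int_{\mS\times\mS}\mG(\bs_1,\bs_2)\bigl(\mathrm{pr}_1^{-1}(A)\bigr)\,d\pi .
\]
By Theorem~\ref{thm: Gelbrich} together with the marginalization identity \eqref{def: proj-param}, the first marginal of $\mG(\bs_1,\bs_2)$ is $\mN(\bs_1)$. The integrand therefore equals $\mN(\bs_1)(A)$, which depends only on $\bs_1$, so
\[
\nu_\pi^{(1)}(A)=\int_{\mS}\mN(\bs)(A)\,d\pi_1(\bs)=\mu_1(A),
\]
where the last equality is the defining constraint of $\Gamma$ in \eqref{def: coupling-param}. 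The second marginal is handled identically, using $\mathrm{pr}_2$ and $\pi_2$.

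\textbf{Step 3: membership in $\mbGM_{2n}$.} Set $\lambda:=\mT_\#\pi$. The change-of-variables formula \eqref{eq: push-integral}, applied with $\Psi(\bz)=\mN(\bz)(E)$, gives $\nu_\pi=\int_\mZ\mN(\bz)\,d\lambda(\bz)$, so $\nu_\pi$ is a Gaussian mixture on $\mbR^{2n}$ with mixing law $\lambda$.

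For the moment condition in \eqref{def: GMMmix}, I need
\[
\int\bigl(\|\bmean_1\|^2+\|\bmean_2\|^2+\Tr\bar\bQ\bigr)\,d\lambda<\infty .
\]
Since $\Tr\bar\bQ=\Tr\bQ_1+\Tr\bQ_2$, this reduces to
\[
\int\bigl(\|\bmean_1\|^2+\|\bmean_2\|^2+\Tr\bQ_1+\Tr\bQ_2\bigr)\,d\pi<\infty .
\]
I expect this to follow from $\pi\in\mP_2(\mS\times\mS)$. With the gauge used in Proposition~\ref{prop: finite-GMM-mmnt}, one can use $\Tr\bQ\le\sqrt n\,\|\bQ\|_F$ and then Cauchy--Schwarz, $\int\|\bQ\|_F\,d\pi\le\bigl(\int\|\bQ\|_F^2\,d\pi\bigr)^{1/2}$. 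Proposition~\ref{prop: finite-GMM-mmnt} additionally certifies $\lambda\in\mP_2(\mZ)$.

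Alternatively, Proposition~\ref{prop: joint-gaussian-second-moment} gives $\int\|\by\|^2\,d\nu_\pi=\int(\|\bmean_1\|^2+\|\bmean_2\|^2+\Tr\bar\bQ)\,d\lambda$, which is finite by the same bound. Combined with Step 2, this yields $\nu_\pi\in\Pi(\mu_1,\mu_2)\cap\mbGM_{2n}=\Pi^{\mathrm{mix}}(\mu_1,\mu_2)$.

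\textbf{Main obstacle.} I expect the hardest part to be the measure-theoretic bookkeeping in Step 1: establishing joint measurability of $(\bs_1,\bs_2)\mapsto\mG(\bs_1,\bs_2)(E)$, and handling the pseudoinverse convention inside $\mT$. The paper already asserts that $\mT$ is Borel measurable, so it remains to prove measurability of the Gaussian kernel $\bz\mapsto\mN(\bz)(E)$. I would first try the route through characteristic functions: these are continuous in $\bz$, which gives weak continuity of the kernel, and a $\pi$--$\lambda$ (Dynkin) argument then extends measurability to all Borel sets $E$.
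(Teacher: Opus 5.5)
Your proposal is correct and follows the paper's proof essentially step for step. The marginals come from the Gelbrich coupling's marginals together with the defining constraint of $\Gamma$, and membership in $\mbGM_{2n}$ comes from the mixing law $\lambda=\mT_\#\pi$. The one real difference is the moment check: you observe that $\Tr\bar{\bQ}=\Tr\bQ_1+\Tr\bQ_2$, which reduces the condition directly to a moment of $\pi$, and this is simpler than the paper's route through $\Tr\bar{\bQ}\le\sqrt{2n}\,(1+\|\bar{\bQ}\|_F^2)$, the cross-covariance bound of Lemma~\ref{lem:Sigma-bound}, and Proposition~\ref{prop: finite-GMM-mmnt} (your weak-continuity/Dynkin measurability argument likewise stands in for the Gaussian-kernel construction in Appendix~\ref{appndx: known-results}).
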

\vspace{-2em}
\begin{proof}
First, for each $(\bs_1,\bs_2)\in \mS\times \mS$ and for Borel sets $A\in \mB(\mbR^n)$,
\begin{align}\label{eq: Gelbrich-Construct1}
\int_{\mbR^{2n}} \mathbf \ind_A(\by_1)\,d\mG(\bs_1, \bs_2)(\by_1,\by_2)
   =\mG(\bs_1, \bs_2)(A,\mbR^n) = \mN(\bs_1)(A),
\end{align}
\noindent where the first equality holds since $\ind_A(\by_1)$ does not depend on $y_2$, and the second equality holds since the marginal for the Gelbrich coupling is $\mN(\bs_1)$. Similarly,  $    \int_{\mbR^{2n}} \mathbf \ind_B(\by)\,d\mG(\bs_1, \bs_2)(\by_1,\by_2)
   =\mG(\bs_1, \bs_2)(\mbR^n,B) = \mN(\bs_2)(B)$
for any Borel set $B\in \mB(\mbR^n)$. Therefore, it is sufficient to prove \eqref{eq:nu1-equality}. Let $\Psi(\by_1,\by_2) :=\ind_A(\by_1)$. Clearly, $\Psi(\by_1,\by_2)$ is measurable and bounded since $0 \leq \Psi\leq 1$. Then, 
\begin{align*}
&\nu_{\pi}^{(1)}(A)
   = \int_{\mbR^{2n}}\! \ind_A(\by_1)\,d\nu_{\pi}(\by_1,\by_2) =  \int_{\mbR^{2n}}\!\Psi(\by_1,\by_2)\,d\nu_{\pi}(\by_1,\by_2)\\[2pt]
   &=  \int_{\mbR^{2n}}\!\Psi(\by_1,\by_2)\,d\big(\int_{\mS\times \mS}\mG(\bs_1, \bs_2)(\by_1,\by_2)\,d\pi(\bs_1,\bs_2)\big)  \text{\big(since $\,\nu_{\pi}(\by_1,\by_2)=\int_{\mS\times \mS}\mG(\bs_1, \bs_2)(\by_1,\by_2)\,d\pi(\bs_1,\bs_2)$\big)}\\[2pt]
    &= \int_{\mathcal{S}\times \mathcal{S}}
    \Bigl(\int_{\mbR^{2n}}\!\Psi(\by_1,\by_2)\,d\mG(\bs_1, \bs_2)(\by_1,\by_2)\Bigr)
              d\pi(\bs_1,\bs_2) \,\,\text{\big(using Kernel Fubini-Tonelli Theorem~\ref{thm: kernel-fubini-tonelli}\big)} \\[2pt]
    &= \int_{\mathcal{S}\times \mathcal{S}}
      \Bigl(\int_{\mbR^{2n}} \mathbf \ind_A(\by_1)\,d\mG(\bs_1, \bs_2)(\by_1,\by_2)\Bigr)
      d\pi(\bs_1,\bs_2) \quad \text{\big(using definition of $\Psi$\big)} \\[2pt]
   &= \int_{\mS\times \mS} \mN(\bs_1)(A)\,d\pi(\bs_1,\bs_2) \quad \text{\big(using \eqref{eq: Gelbrich-Construct1}\big)} \\
   &= \int_{\mS}   \mN(\bs_1)(A)\,d\pi_{1}(\bs_1)
        \quad \text{\big(integrate out the $\bs_2$-variable\big)} \\[2pt]
   &= \mu_1(A). \quad  \text{\big(since by definition of $\Gamma, \pi$ is a mixing law for $\mu_1$\big)} 
\end{align*}

Since this identity holds for every Borel set $A\in \mB(\mbR^n)$, the measures coincide, and $\nu_{\pi}^{(1)} = \mu_1$. Similarly, $\nu_{\pi}^{(2)}(B) = \mu_2(B)$. Hence, $\nu_\pi \in \Pi(\mu_1, \mu_2)$. 
We now prove that $\nu_{\pi} \in \mbGM_{2n}$. Since $\lambda := \mT_\#\pi \in \mP(\mZ)$, we obtain the following chain of equality:
\begin{align*}
    \nu_{\pi} = &\int_{\mS\times \mS}
        \mG(\bs_1, \bs_2)\,
        d\pi(\bs_1,\bs_2)
  \stackrel{\mathrm{Theorem}~\ref{thm: Gelbrich}}{=}\int_{\mS\times \mS}
\mN\!\bigl((\bmean_1(\bs_1),\bmean_2(\bs_2)),\bar{\bQ}(\bs_1, \bs_2)\bigr)\,
        d\pi(\bs_1,\bs_2) \\
        \stackrel{\eqref{def: OT-map-forward}}{=} &   \int_{\mS\times \mS}
        \mN\!\bigl(\mT(\bs_1, \bs_2)\bigr)\,
        d\pi(\bs_1,\bs_2) \stackrel{by~\eqref{eq: push-integral}}{=} \int_{\mZ} \mN(\bz)\,d(\mT_\# \pi)(\bz) \stackrel{by~\eqref{eq: push-integral}}{=}\displaystyle\int_{\mZ}\mN(\bz)\,d\lambda(\bz).
\end{align*}

We finally show finiteness of the second moment of $\nu_\pi$ leveraging Propositions~\ref{prop: joint-gaussian-second-moment} and~\ref{prop: finite-GMM-mmnt} from Section~\ref{sec: continuous-param-GMM}. 
First, since $\|\by\|^2\ge 0$, by the kernel Fubini-Tonelli Theorem~\ref{thm: kernel-fubini-tonelli} under the $\nu_\pi$ construction we have
\[
\int_{\mbR^{2n}} \|\by\|^2\,d\nu(\by)
=
\int_{\mZ}\left(\int_{\mbR^{2n}} \|\by\|^2\,
d\mN\!\bigl((\bmean_1,\bmean_2),\bar{\bQ}\bigr)(\by)\right)\,d\lambda(\bz).
\]
Additionally, by Proposition~\ref{prop: joint-gaussian-second-moment}, since
\begin{align*}
\int_{\mbR^{2n}} \|\by\|^2\,
d\mN\!\bigl((\bmean_1,\bmean_2),\bar{\bQ}\bigr)(\by)
=
\|(\bmean_1,\bmean_2)\|^2+\Tr(\bar{\bQ}),
\end{align*} it implies that 
\[
\int_{\mbR^{2n}}\|\by\|^2\,d\nu(\by)
=
\int_{\mZ}\Bigl(\|(\bmean_1,\bmean_2)\|^2+\Tr(\bar{\bQ})\Bigr)\,
d\lambda\bigl(z\bigr).
\]
Now note that for any $\bQ\succeq 0$, hence for $\bar{\bQ}\succeq 0$, $\Tr(\bar{\bQ}) \le \sqrt{2n}\,\|\bar{\bQ}\|_F$ and $\|\bar{\bQ}\|_F\le 1+\|\bar{\bQ}\|_F^2$~\cite{bhatia2009positive, horn2012matrix}. Hence, $\Tr(\bar{\bQ})
\le \sqrt{2n}\,\bigl(1+\|\bar{\bQ}\|_F^2\bigr).$
Therefore,
\begin{align*}
&     \|(\bmean_1,\bmean_2)\|^2+\Tr(\bar{\bQ})
\le
\|\bmean_1\|^2+\|\bmean_2\|^2+\sqrt{2n}\,\bigl(1+\|\bar{\bQ}\|_F^2\bigr) \le \sqrt{2n}\Bigl(\|\bmean_1\|^2+\|\bmean_2\|^2+ |\bar{\bQ}\|^2_F\Bigr) + \sqrt{2n} \implies \\
& \int_{\mZ} \hspace{-5pt}\Bigl(\|(\bmean_1,\bmean_2)(\bz)\|^2+\Tr(\bar{\bQ}(\bz))\Bigr)\,
d\lambda\bigl(z\bigr) \leq \sqrt{2n} \Bigl( \int_{\mZ} \hspace{-5pt}\bigl(\|\bmean_1(\bz)\|^2+\|\bmean_2(\bz)\|^2+\|\bar{\bQ}(\bz)\|_F^2\bigr)\,d\lambda(\bz) + \int_\mZ  \hspace{-5pt}\lambda(\bz) \Bigr),
\end{align*}
where the first integral on the right-hand side is finite by Proposition~\ref{prop: finite-GMM-mmnt} and the second integral is 1 since for any probability measure $\lambda \in \mP(\mZ), \int_\mZ d\lambda(\bz) = 1$. Thus, we obtain $\int_{\mbR^{2n}}\|\by\|^2\,d\nu(\by)<\infty$, i.e., $\nu\in \mP_2(\mbR^{2n})$.
Hence, $\nu_{\pi}\in \mbGM_{2n}$. Therefore, $\nu_\pi \in \Pi^{\mathrm{mix}}(\mu_1, \mu_2)$.
\end{proof}

\subsection{Construction of Parameter-space Coupling from Data Space Coupling}
The following lemma shows that a parameter-space coupling $\pi_\nu \in \Gamma$ can be constructed from a given data-space coupling $\nu \in \Pi^\mathrm{mix}(\mu_1, \mu_2)$.

\begin{lemma}\label{lem: Gamma-mixture-to-joint-gauss}
Let $\mM : \mZ \to \mS \times \mS$ be a Borel measurable map defined in~\eqref{def: OT-map-backward}. Assume $\nu \in \Pi^\mathrm{mix}(\mu_1, \mu_2)$ is given with corresponding mixing law $\lambda \in \mP(\mZ)$. Let $\pi_\nu = \mM_\# \lambda$ and $\pi_\nu^{(1)}, \, \pi_\nu^{(2)}$ be the marginals of $\pi_\nu$ on $\mP(\mS)$. Then \, (i) $\pi_\nu^{(1)}$ is a mixing law for $\mu_1$, \, (ii) $\pi_\nu^{(2)}$ is a mixing law for $\mu_2$.
Therefore, $\pi_\nu \in \Gamma$.
\end{lemma}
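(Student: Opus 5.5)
The plan is to mirror the proof of Lemma~\ref{lem: forward}, but now pushing the mixing law $\lambda$ through the marginal-extraction map $\mM$ rather than through $\mT$. Since $\nu\in\Pi^\mathrm{mix}(\mu_1,\mu_2)$, we have $\nu\in\mbGM_{2n}$. Hence there is a $\lambda\in\mP(\mZ)$ with $\nu=\int_\mZ\mN(\bz)\,d\lambda(\bz)$ and finite second-moment integral $\int_\mZ(\|\bmean_1\|^2+\|\bmean_2\|^2+\Tr\bQ)\,d\lambda<\infty$. In addition, $\nu^{(1)}=\mu_1$ and $\nu^{(2)}=\mu_2$.

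For part (i), fix a Borel set $A\in\mB(\mbR^n)$. We compute
\[\mu_1(A)=\nu(\mathrm{pr}_1^{-1}(A))=\int_\mZ \mN(\bz)(\mathrm{pr}_1^{-1}(A))\,d\lambda(\bz).\]
This uses the kernel Fubini--Tonelli Theorem~\ref{thm: kernel-fubini-tonelli} with the bounded measurable integrand $\ind_A(\by_1)$. By the marginalization identity \eqref{def: proj-param}, the integrand equals $\mN(\mM_1(\bz))(A)$. Set $\Psi(\bs_1,\bs_2):=\mN(\bs_1)(A)$. This is bounded in $[0,1]$, and it is measurable in $\bs_1$ because the Gaussian kernel $\bs\mapsto\mN(\bs)(A)$ is measurable; this already underlies the definition of $\mbGM_n$. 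Applying the pushforward identity \eqref{eq: push-integral-M} gives
\[\int_\mZ\Psi(\mM(\bz))\,d\lambda=\int_{\mS\times\mS}\mN(\bs_1)(A)\,d\pi_\nu(\bs_1,\bs_2)=\int_\mS\mN(\bs)(A)\,d\pi^{(1)}_\nu(\bs).\]
The last step integrates out $\bs_2$. So $\pi_\nu^{(1)}$ is a mixing law for $\mu_1$. Part (ii) is identical with $\mathrm{pr}_2$ and $\mM_2$.

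To conclude $\pi_\nu\in\Gamma$, we still need $\pi_\nu\in\mP_2(\mS\times\mS)$, and I expect this step to be the only real obstacle. Proposition~\ref{prop: finite-2nd-moment} requires the Frobenius-norm moment $\int\|\bQ(\bz)\|_F^2\,d\lambda<\infty$, but membership in $\mbGM_{2n}$ only guarantees $\int\Tr\bQ\,d\lambda<\infty$. The route I would try first is to note that the relevant gauge on $\mS$ for $\mP_2$ is the one compatible with \eqref{def: GMMmix}, namely $\|\bmean\|^2+\Tr\bQ$. For a PSD block matrix, $\Tr\bQ_{11}+\Tr\bQ_{22}=\Tr\bQ$, so the pushforward moment is
\[\int(\|\bmean_1\|^2+\|\bmean_2\|^2+\Tr\bQ_{11}+\Tr\bQ_{22})\,d\pi_\nu=\int(\|\bmean_1\|^2+\|\bmean_2\|^2+\Tr\bQ)\,d\lambda<\infty\]
by \eqref{eq: push-integral-M}. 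If instead the Frobenius gauge of Proposition~\ref{prop: finite-2nd-moment} is intended, I would invoke that proposition directly under its hypothesis \eqref{eq:assump-Z-generalQ}. This works because $\|\bQ_{ii}\|_F\le\|\bQ\|_F$, so the diagonal blocks are controlled by the full matrix, and I would flag that hypothesis as an assumption on the chosen mixing law.
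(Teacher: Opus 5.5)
Your argument for (i) and (ii) follows the paper's proof step for step. You write $\mu_1(A)$ through $\nu$ and its mixture representation, identify the inner integral as $\mN(\mM_1(\bz))(A)$ via \eqref{def: proj-param}, push forward through $\mM$ with \eqref{eq: push-integral-M}, and integrate out $\bs_2$.

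The two proofs differ only at the final membership $\pi_\nu\in\mP_2(\mS\times\mS)$, and there your treatment is the more careful one. The paper simply cites Proposition~\ref{prop: finite-2nd-moment}. That proposition's hypothesis \eqref{eq:assump-Z-generalQ} asks for $\int\|\bQ(\bz)\|_F^2\,d\lambda<\infty$, whereas $\nu\in\mbGM_{2n}$ only gives $\int\Tr\bQ(\bz)\,d\lambda<\infty$. For $\bQ\succeq 0$ we have $(\Tr\bQ)^2/(2n)\le\|\bQ\|_F^2\le(\Tr\bQ)^2$, so the required condition is a second moment of the trace, and it does not follow.

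Under the Frobenius gauge, the lemma's last claim can in fact fail. Take $n=1$ and let $\lambda$ be the law of $((0,0),q\bI)$, where $q\ge 1$ is Pareto with $\mbE q<\infty=\mbE q^2$. Then $\nu=\int\mN(\bz)\,d\lambda(\bz)$ lies in $\Pi^{\mathrm{mix}}(\mu,\mu)$ for the scale mixture $\mu$ of the laws $\mN(0,q)$, yet $\int\|\bQ_{11}\|_F^2\,d\pi_\nu=\mbE q^2=\infty$.

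Your route (a) is the right repair, for three reasons:
\begin{itemize}
\item The gauge $\|\bmean\|^2+\Tr\bQ$ is the one built into \eqref{def: GMMmix}. It equals the second moment of $\mN(\bs)$ (Proposition~\ref{prop: joint-gaussian-second-moment}), i.e.\ $d^2_{\mathrm{BW}}$ from $\mN(\bs)$ to $\delta_{\bzero}$, so it matches the cost in \eqref{eqn:Wass2ParamSpace}.
\item The block identity $\Tr\bQ_{11}+\Tr\bQ_{22}=\Tr\bQ$ turns the moment bound into an equality. By the Tonelli identity behind Lemma~\ref{lem:mixture-moment-identity}, its value is $\int\|\by\|^2\,d\nu<\infty$ for every mixing law of $\nu$, not just a favourable one.
\item The same gauge makes the moment step of Lemma~\ref{lem: forward} immediate, since $\Tr\bar{\bQ}=\Tr\bQ_1+\Tr\bQ_2$ and Lemma~\ref{lem:Sigma-bound} is no longer needed.
\end{itemize}

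Commit to (a) and drop (b): route (b) imposes a hypothesis on $\lambda$ that the lemma does not grant. Under the compact support of Theorem~\ref{thm:Wass2-Gaussian-EquivalenceTheorem-restricted}, the choice of gauge is immaterial.
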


\begin{proof}
Since $\nu \in \Pi^\mathrm{mix}(\mu_1, \mu_2) = \{\Pi(\mu_1, \mu_2) \cap \mbGM_{2n}\}$, we have
\begin{align}\label{eq: given-identity}
 \nu^{(1)} = \mu_1, \quad \nu^{(2)} = \mu_2, \quad \nu = \int_\mZ \mN(\bz)\,d\lambda(\bz).     
\end{align}
Also, given a map $\mM$, $\pi_\nu(E) = \mM_{\#} \lambda(E)$ for all Borel set $E \in \mB(\mS \times \mS)$. It is sufficient to prove that the first marginal of $\pi_\nu$ is a mixing law for $\mu_1$.  In particular, we show that $   \mu_1(B)
= \int_{\mathcal{S}} \mN(\bs_1)(B)\,d\pi_\nu^{(1)}(\bs_1)$ for any Borel set $B\in \mB(\mbR^n)$. We proceed by first expressing $\mu_1(B)$ via $\nu$ and then via $\lambda$ through the following steps. Let $\Psi(\by_1,\by_2) :=\ind_A(\by_1)$. Clearly, $\Psi(\by_1,\by_2)$ is measurable and bounded since $0 \leq \Psi\leq 1$. Then, 
\begin{align}
       \mu_1(B)
   & =
   \int_{\mbR^n} \ind_B(\by_1)\,d\mu_1(\by_1) =
   \int_{\mbR^n} \ind_B(\by_1)\,d\nu^{(1)}(\by_1) \quad \text{\big(since $\nu^{(1)} = \mu_1$\big)} \nonumber\\
   & =
   \int_{\mbR^n\times\mbR^n} \Psi(\by_1,\by_2)\,d\nu(\by_1,\by_2) \quad \text{\big(By the definition of the first marginal $\nu^{(1)}$\big)} \nonumber\\
   & =\int_{\mbR^n\times\mbR^n} \Psi(\by_1,\by_2)\,d\biggl[ \int_{\mZ} \mN(\bz)\,d\lambda(\bz)\biggr](\by_1,\by_2) \, \, \text{\big(Using the mixture representation $\nu = \displaystyle\int_{\mZ} \mN(\bz)\,d\lambda(\bz)$\big)} \nonumber\\
&=  \int_{\mZ}
      \biggl[        \int_{\mbR^n\times\mbR^n} \Psi(\by_1,\by_2)\,d\mN(\bz)(\by_1,\by_2)
      \biggr]
   d\lambda(\bz) \quad \text{\big(By the kernel Fubini--Tonelli Theorem~\ref{thm: kernel-fubini-tonelli}\big)} \nonumber\\
   &=
   \int_{\mZ} \mN(\bs_1(\bz))(B)\,d\lambda(\bz), \label{eq:mu0-as-theta}
\end{align}
where we obtain the last equality as follows: since $(Y_1, Y_2) \sim \mN((\bmean_1, \bmean_2), \bQ)$, $ Y_1 \sim \mN(\bmean_1, \bQ_{11}) = \mN(\bs_1(\bz))$. Then, using the coordinate projection operation, for any Borel set $B\in\mB(\mbR^n)$, 
\begin{align*}
  \mN(\bs_1(\bz))(B)
\stackrel{by~\eqref{def: proj-param}}{=}\mN(\bz)\bigl(\mathrm{pr}_1^{-1}(B)\bigr) 
\stackrel{by~\eqref{eq: proj-preimage}}{=}\int_{\mbR^n\times\mbR^n}\ind_B(\by_1)\,d\mN(\bz)(\by_1,\by_2).  
\end{align*}

Now, our goal is to replace $\lambda$ by $\pi_\nu^{(1)}$ in~\eqref{eq:mu0-as-theta}. Since $\mathcal{M}_{\#}\lambda = \pi_\nu$ with $\mathcal{M}(\bz) = (\bs_1(\bz),\bs_2(\bz))$ and $\mathrm{pr}_1\bigl(\mM(\bz)\bigr) = \bs_1(\bz)$, by integration rule~\eqref{eq: push-integral-M} and finally by definition of $\pi_\nu^{(1)}$,
\begin{align*}
\int_{\mZ} \mN(\bs_1(\bz))(B)\,d\lambda(\bz)
& \stackrel{}{=} \int_{\mZ} \mN\!\Bigl(\mathrm{pr}_1\bigl(\mM(\bz)\bigr)\Bigr)(B)\,d\lambda(\bz) \stackrel{by~\eqref{eq: push-integral-M}}{=}  \int_{\mS\times\mS} \mN\!\bigl(\mathrm{pr}_1(\bs_1,\bs_2)\bigr)(B)\,d(\mM_\#\lambda)(\bs_1,\bs_2) \\
& \stackrel{by~\eqref{eq: push-integral-M}}{\;=\;}\int_{\mS\times\mS} \mN(\bs_1)(B)\,d\pi_\nu(\bs_1,\bs_2) = \int_{\mS} \mN(\bs_1)(B)\,d\pi_\nu^{(1)}(\bs_1).
\end{align*}

Combining this equality with \eqref{eq:mu0-as-theta}, we obtain
\[
   \mu_1(B)
   =
   \int_{\mathcal{S}} \mN(\bs_1)(B)\,d\pi_\nu^{(1)}(\bs_1),
   \qquad \forall\,B \in \mathcal{B}(\mbR^n),
\]
which proves that $\pi_\nu^{(1)}$ is a mixing law for $\mu_1$.

Additionally, by Proposition~\ref{prop: finite-2nd-moment}, for any given $\nu$, the second moment of $\pi_\nu$ is finite. Therefore, $\pi_\nu \in \Gamma$.
\end{proof}

\subsection{Parameter Space Wasserstein-2 Metric Equality}
We are now in a position to prove the main result. In the following, Proposition~\ref{thm: R<=L} shows that $W_2^\mathrm{mix} \leq W_2^\mathrm{prm}$, whereas Proposition~\ref{thm: R>=L} shows $W_2^\mathrm{mix} \geq W_2^\mathrm{prm}$ and thereby, proves Theorem~\ref{thm: Wass2-Gaussian-EquivalenceTheorem}.

\begin{proposition}\label{thm: R<=L}
 Let $\Pi(\mu_1, \mu_2), \mbGM_{2n}$, $\Gamma$ and $W_2^2$ be as defined  in~\eqref{def: coupling-Euclidean}, \eqref{def: GMMmix}, \eqref{def: coupling-param} and \eqref{def: distance-matric}, respectively. 
 Then 
 $$\,\, W_2^\mathrm{mix} = \inf_{\nu \in \Pi^\mathrm{mix}(\mu_1, \mu_2)} \int_{\mbR^n \times \mbR^n} \|\by_1 - \by_2\|^2 \,d\nu(\by_1,\by_2)  \leq \int_{\mS\times \mS} 
d^2_{\mathrm{BW}}\bigl(\mN(\bs_1),\mN(\bs_2)\bigr)\,d\pi(\bs_1,\bs_2)$$ for any $\pi \in \Gamma$. Hence, $  W_2^\mathrm{mix} \leq \inf_{\pi \in \Gamma} \int_{\mS\times \mS}
          d^2_{\mathrm{BW}}\bigl(\mN(\bs_1),\mN(\bs_2)\bigr)\,d\pi(\bs_1,\bs_2)$.
\end{proposition}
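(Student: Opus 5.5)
Fix an arbitrary $\pi\in\Gamma$ and use the data-space coupling built from it in Lemma~\ref{lem: forward}, namely $\nu_\pi:=\int_{\mS\times\mS}\mG(\bs_1,\bs_2)\,d\pi(\bs_1,\bs_2)$. That lemma already gives $\nu_\pi\in\Pi^{\mathrm{mix}}(\mu_1,\mu_2)$, so $\nu_\pi$ is feasible for the infimum defining $W_2^{\mathrm{mix}}$. Hence
\[
W_2^{\mathrm{mix}}\le \int_{\mbR^n\times\mbR^n}\|\by_1-\by_2\|^2\,d\nu_\pi(\by_1,\by_2).
\]
What remains is to show that the right-hand side equals $\int_{\mS\times\mS} d^2_{\mathrm{BW}}(\mN(\bs_1),\mN(\bs_2))\,d\pi(\bs_1,\bs_2)$.

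For this, take $\Psi(\by_1,\by_2):=\|\by_1-\by_2\|^2$, which is nonnegative and measurable. The kernel Fubini--Tonelli Theorem~\ref{thm: kernel-fubini-tonelli}, applied as in the proof of Lemma~\ref{lem: forward}, lets us exchange integrals with no integrability hypothesis because $\Psi\ge 0$:
\[
\int_{\mbR^{2n}}\Psi\,d\nu_\pi=\int_{\mS\times\mS}\Bigl(\int_{\mbR^{2n}}\|\by_1-\by_2\|^2\,d\mG(\bs_1,\bs_2)(\by_1,\by_2)\Bigr)d\pi(\bs_1,\bs_2).
\]
By Theorem~\ref{thm: Gelbrich}, $\mG(\bs_1,\bs_2)$ is the optimal Wasserstein-2 coupling of $\mN(\bs_1)$ and $\mN(\bs_2)$. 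Its transport cost is therefore $W_2^2(\mN(\bs_1),\mN(\bs_2))=d^2_{\mathrm{BW}}(\mN(\bs_1),\mN(\bs_2))$, which is \eqref{def: distance-matric}. The inner integral equals $d^2_{\mathrm{BW}}$ pointwise in $(\bs_1,\bs_2)$, and this gives the claimed inequality for every $\pi\in\Gamma$. Taking the infimum over $\pi\in\Gamma$ yields $W_2^{\mathrm{mix}}\le W_2^{\mathrm{prm}}$.

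The main obstacle is measurability and the legitimacy of the kernel exchange. One needs $(\bs_1,\bs_2)\mapsto\mG(\bs_1,\bs_2)(E)$ to be a measurable kernel. This follows because $\mG=\mN\circ\mT$, $\mT$ is Borel (Section~\ref{sec: map-construct}), and $\bz\mapsto\mN(\bz)(E)$ is measurable, since Gaussian laws depend weakly continuously on their parameters. Continuity of $d^2_{\mathrm{BW}}$ then makes the integrand measurable. As a direct check of the pointwise identity, one can also expand $\mbE\|Y_1-Y_2\|^2=\|\bmean_1-\bmean_2\|^2+\Tr(\bQ_1+\bQ_2-2\bSigma)$ and verify that $\Tr\bSigma=\Tr(\bQ_1^{1/2}\bQ_2\bQ_1^{1/2})^{1/2}$ by cyclicity of the trace; with the pseudoinverse convention this cyclicity step needs a short argument when $\bQ_1$ is singular. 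If the right-hand side is $+\infty$ the inequality is trivial, so no finiteness issue arises.
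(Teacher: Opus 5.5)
Your proposal is correct and follows essentially the same route as the paper: take $\nu_\pi$ from Lemma~\ref{lem: forward} as a feasible element of $\Pi^{\mathrm{mix}}(\mu_1,\mu_2)$, swap integrals by the kernel Tonelli theorem for the nonnegative cost $\|\by_1-\by_2\|^2$, identify the inner integral with $d^2_{\mathrm{BW}}$ via Theorem~\ref{thm: Gelbrich}, and take the infimum over $\Gamma$. One caution on your optional direct check: if singular $\bQ_1$ were admitted, the issue with the pseudoinverse formula is not trace cyclicity but that $\bar{\bQ}$ need not be positive semidefinite (e.g.\ $\bQ_1=\diag(1,0)$ and $\bQ_2=\bigl(\begin{smallmatrix}1&1\\1&1\end{smallmatrix}\bigr)$ give $\bSigma=\diag(1,0)$ and an indefinite $\bar{\bQ}$), so building $\mG$ through \eqref{def: gelbrich-off-diagonal} genuinely needs $\bQ_1\succ 0$, which holds here because the paper's $\mbS^n_+$ denotes positive-definite matrices.
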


\begin{proof}
Since $\|\by_1 - \by_2\|^2 \ge 0$, and our constructed probability measure $\nu_{\pi}:=\int_{\mS\times \mS}\mG(\bs_1, \bs_2)\,d\pi(\bs_1,\bs_2)$ for a given $\pi \in \Gamma$, using Theorem~\ref{thm: kernel-fubini-tonelli} (Kernel Fubini-Tonelli), we have
\[
  \int_{\mbR^n\times\mbR^n} \|\by_1 - \by_2\|^2\,d\nu_{\pi}
    =
  \int_{\mS\times \mS}
        \Bigl(\int_{\mbR^n\times\mbR^n} \|\by_1 - \by_2\|^2\,
              d\mG(\bs_1, \bs_2)\Bigr)
        d\pi(\bs_1,\bs_2).
\]
Additionally, from Theorem~\ref{thm: Gelbrich}, for each fixed pair $(\bs_1,\bs_2)$,
\[
  \int_{\mbR^n\times\mbR^n} \|\by_1 - \by_2\|^2\,
        d\mG(\bs_1, \bs_2)
    \;=\;
  d^2_{\mathrm{BW}}\bigl(\mN(\bs_1),\mN(\bs_2)\bigr).
\]
\text{Hence,} 
\begin{align}\label{eq: integral_identity_dbw}
     \int_{\mbR^n\times\mbR^n} \|\by_1 - \by_2\|^2\,d\nu_{\pi}
     = \int_{\mS\times \mS}
          d^2_{\mathrm{BW}}\bigl(\mN(\bs_1),\mN(\bs_2)\bigr)\,d\pi(\bs_1,\bs_2).
\end{align}

Now, because $\nu_{\pi}\in \mbGM_{2n}$ by Lemma~\ref{lem: forward}, $\nu_{\pi}$ has finite second moment. Hence, the above integral is finite. Additionally,  considering all elements $\nu$ from $\Pi^\mathrm{mix}(\mu_1, \mu_2)$, one of which is $\nu_{\pi}$, we obtain $\inf_{\nu\in \Pi^\mathrm{mix}(\mu_1, \mu_2)}\int_{\mbR^{2n}}\|\by_1 - \by_2\|^2\,d\nu
  \;\le\;
  \int_{\mbR^n\times\mbR^n}
    \|\by_1 - \by_2\|^2\,d\nu_{\pi}$. Therefore,
\[
  W_2^\mathrm{mix}=\inf_{\nu\in \Pi^\mathrm{mix}(\mu_1, \mu_2)}\int_{\mbR^{2n}}\|\by_1 - \by_2\|^2\,d\nu
  \;\le\;
  \int_{\mbR^n\times\mbR^n}
    \|\by_1 - \by_2\|^2\,d\nu_{\pi}
  \stackrel{by~\eqref{eq: integral_identity_dbw}}{\;=\;}
  \int_{\mS\times \mS}d^2_{\mathrm{BW}}\bigl(\mN(\bs_1), \mN(\bs_2)\bigr)\,d\pi(\bs_1, \bs_2).
\]
Since this inequality holds for all $\pi \in \Gamma$,  $ W_2^\mathrm{mix}\,\, \leq \,\, W_2^\mathrm{prm}$.
\end{proof}

We now establish $ W_2^\mathrm{mix}\,\, \geq \,\, W_2^\mathrm{prm}$.


\begin{proposition}\label{thm: R>=L}
 Let $\Pi(\mu_1, \mu_2), \mbGM_{2n}$, $\Gamma$ and $W_2^2$ be as defined  in~\eqref{def: coupling-Euclidean}, \eqref{def: GMMmix}, \eqref{def: coupling-param} and \eqref{def: distance-matric}, respectively. Then 
\[\,\, \int_{\mbR^n \times \mbR^n}
      \|\by_1 - \by_2\|^2 \,d\nu(\by_1,\by_2)  \geq \inf_{\pi \in \Gamma}\int_{\mS\times \mS}           d^2_{\mathrm{BW}}\bigl(\mN(\bs_1),\mN(\bs_2)\bigr)\,d\pi(\bs_1, \bs_2) = W_2^\mathrm{prm}
      \]
for any $\nu \in \Pi^\mathrm{mix}(\mu_1, \mu_2)$. Hence, $W_2^\mathrm{mix} \geq W_2^\mathrm{prm}$.
\end{proposition}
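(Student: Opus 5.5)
Fix an arbitrary $\nu \in \Pi^\mathrm{mix}(\mu_1,\mu_2)$ with mixing law $\lambda \in \mP(\mZ)$, so that $\nu = \int_{\mZ}\mN(\bz)\,d\lambda(\bz)$ and $\nu$ has finite second moment. The plan is to push $\lambda$ forward by the marginal extraction map $\mM$ and compare costs componentwise. By Lemma~\ref{lem: Gamma-mixture-to-joint-gauss}, $\pi_\nu := \mM_\#\lambda \in \Gamma$. It therefore suffices to show
\[
\int_{\mbR^n\times\mbR^n}\|\by_1-\by_2\|^2\,d\nu(\by_1,\by_2) \;\ge\; \int_{\mS\times\mS} d^2_{\mathrm{BW}}\bigl(\mN(\bs_1),\mN(\bs_2)\bigr)\,d\pi_\nu(\bs_1,\bs_2),
\]
because the right-hand side is at least $W_2^\mathrm{prm}$. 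Taking the infimum over $\nu$ then gives $W_2^\mathrm{mix}\ge W_2^\mathrm{prm}$.

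\textbf{Step 1 (disintegrate the cost).} Since $\|\by_1-\by_2\|^2\ge 0$, the kernel Fubini--Tonelli Theorem~\ref{thm: kernel-fubini-tonelli} gives
\[
\int\|\by_1-\by_2\|^2\,d\nu=\int_{\mZ}\Bigl(\int_{\mbR^{2n}}\|\by_1-\by_2\|^2\,d\mN(\bz)\Bigr)\,d\lambda(\bz).
\]

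\textbf{Step 2 (pointwise comparison).} For each fixed $\bz=((\bmean_1,\bmean_2),\bQ)$, the Gaussian $\mN(\bz)$ is a coupling of its marginals. By \eqref{def: proj-param}, these marginals are $\mN(\mM_1(\bz))$ and $\mN(\mM_2(\bz))$. By the definition of $W_2$ as an infimum over all couplings, together with Theorem~\ref{thm: Gelbrich}, which identifies that infimum as $d^2_{\mathrm{BW}}$, we get
\[
\int\|\by_1-\by_2\|^2\,d\mN(\bz) \;\ge\; d^2_{\mathrm{BW}}\bigl(\mN(\mM_1(\bz)),\mN(\mM_2(\bz))\bigr).
\]
One can also verify this directly: the left side equals $\|\bmean_1-\bmean_2\|^2+\Tr(\bQ_{11}+\bQ_{22}-2\bQ_{12})$, and the Gelbrich bound is $\Tr\bQ_{12}\le \Tr(\bQ_{11}^{1/2}\bQ_{22}\bQ_{11}^{1/2})^{1/2}$ for any PSD block matrix. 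Integrating this inequality against $\lambda$ preserves it, since both integrands are nonnegative.

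\textbf{Step 3 (change of variables).} Set $\Psi(\bs_1,\bs_2):=d^2_{\mathrm{BW}}(\mN(\bs_1),\mN(\bs_2))$. This function is nonnegative, and it is Borel measurable because it is continuous in $(\bmean_i,\bQ_i)$; continuity follows from the continuity of the PSD square root. Applying \eqref{eq: push-integral-M} converts $\int_{\mZ}\Psi(\mM(\bz))\,d\lambda(\bz)$ into $\int_{\mS\times\mS}\Psi\,d\pi_\nu$. Chaining Steps 1--3 yields the displayed inequality.

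\textbf{Main obstacle.} The main care point is Step 2 in degenerate cases. The statement requires $\bQ\in\mbS^{2n}_+$ with PSD diagonal blocks, whereas Theorem~\ref{thm: Gelbrich} is stated on $\mS$. I would handle singular blocks either by invoking the general fact that $W_2^2(\mN(\bs_1),\mN(\bs_2))=d^2_{\mathrm{BW}}$ for all PSD covariances, or by the direct trace inequality above. That inequality holds for arbitrary PSD blocks, via $\bQ_{12}=\bQ_{11}^{1/2}K\bQ_{22}^{1/2}$ with $\|K\|\le1$. The measurability needed in Step 3 is routine.
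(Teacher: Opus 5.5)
Your proposal is correct and follows the paper's proof essentially step for step. You push the mixing law forward by $\mathcal{M}$ to get $\pi_\nu\in\Gamma$ (Lemma~\ref{lem: Gamma-mixture-to-joint-gauss}), bound the cost of each component $\mathcal{N}(\bm{z})$ below by $d^2_{\mathrm{BW}}$ of its marginals because it couples them, and then combine kernel Tonelli with the push-forward identity~\eqref{eq: push-integral-M} before taking infima. Your extra justifications fill in details the paper leaves implicit: the pointwise bound for singular diagonal blocks via $\bm{Q}_{12}=\bm{Q}_{11}^{1/2}K\bm{Q}_{22}^{1/2}$, and the Borel measurability of $d^2_{\mathrm{BW}}$.
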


\begin{proof}
Let $\nu_z := \mN(\bz)\in\mP(\mbR^n\times\mbR^n)$. We first show that the following inequality holds pointwise in $z\in \mZ$
\vspace{-1.5em}
\begin{align}\label{eq: pointwise-inq}
      \underbrace{\int_{\mbR^n\times\mbR^n}
                 \|\by_1 - \by_2\|^2\,
                 d\nu_z(\by_1, \by_2)}
              _{\text{cost of \emph{this} coupling}}
  \;\;\ge\;\;
  \underbrace{d^2_{\mathrm{BW}}\!\bigl(\mN(\bs_1(\bz)),\mN(\bs_2(\bz))\bigr)}
              _{\text{lowest possible cost}}.
\end{align}

By construction, the marginals of $\nu_z$ are $\nu^{(1)}_z=\mN(\bs_1(\bz)),\,\, \nu^{(2)}_z=\mN(\bs_2(\bz))$. Hence, $\nu_z\in\Pi\bigl(\mN(\bs_1(\bz)),\mN(\bs_2(\bz))\bigr)$. Then considering all the elements (coupling) of $\Pi\bigl(\mN(\bs_1(\bz)),\mN(\bs_2(\bz))\bigr)$,
\[
\int_{\mbR^n\times\mbR^n}
                 \|\by_1-\by_2\|^2\,
                 d\nu_z(\by_1, \by_2) 
\, \geq\,
\inf_{\nu\in\Pi(\mN(\bs_1(\bz)),\,\mN(\bs_2(\bz)))}
\int_{\mbR^n\times\mbR^n}\|\by_1-\by_2\|^{2}\,d\nu(\by_1,\by_2),
\]
where the right term is equal to $d^2_{\mathrm{BW}}\!\bigl(\mN(\bs_1(\bz)),\mN(\bs_2(\bz))\bigr)$ by definition. Hence, inequality \eqref{eq: pointwise-inq} follows. 

We next obtain
\(
  \displaystyle
  \int_{\mS\times \mS} d^2_{\mathrm{BW}}\bigl(\mN(\bs_1),\mN(\bs_2)\bigr)\,d\pi_\nu(\bs_1,\bs_2)
\)
as an equality to an integral of the right-hand side of~\eqref{eq: pointwise-inq} over $\lambda(\bz)$ using the push-forward technique.

Let $f(\bs_1(\bz),\bs_2(\bz)):=d^2_{\mathrm{BW}}\bigl(\mN(\bs_1(\bz)),\mN(\bs_2(\bz))\bigr)$. Then, by~\eqref{def: OT-map-backward} and integration identity~\eqref{eq: push-integral-M}
\begin{align}
   \hspace{-3em} & \int_{\mZ}      \hspace{-5pt}   d^2_{\mathrm{BW}}\bigl(\mN(\bs_1(\bz)),\mN(\bs_2(\bz))\bigr)\,d\lambda(\bz) = \int_{\mZ}  \hspace{-5pt} f\bigl(\bs_1(\bz), \bs_2(\bz)\bigr)\,d\lambda(\bz) 
    \stackrel{\eqref{def: OT-map-backward}}{=}  \int_{\mZ}  \hspace{-5pt} f \bigl(\mM(\bz)\bigr)\,d\lambda(\bz) \stackrel{by~\eqref{eq: push-integral-M}}{=} \label{eq: d_{BW}^2(s)-to-d_{BW}^2(s_2(z))}  \\
    & \int_{\mS\times \mS}  \hspace{-10pt} f(\bs_1, \bs_2)\,d(\mM_{\#}\lambda)(\bs_1, \bs_2) = 
    \int_{\mS\times \mS} \hspace{-15pt} f(\bs_1, \bs_2)\,d\pi_\nu(\bs_1, \bs_2) =\int_{\mS\times \mS}  \hspace{-15pt} d^2_{\mathrm{BW}}\bigl(\mN(\bs_1),\mN(\bs_2)\bigr)\,d\pi_\nu(\bs_1, \bs_2). \nonumber
\end{align}
From~\eqref{eq: pointwise-inq} we can write
      \[ \int_{\mZ}
           \biggl[
              \int_{\mbR^{2n}}
                   \|\by_1 - \by_2\|^2\,
                   d\nu_z(\by_1, \by_2)
           \biggr] d\lambda(\bz) \,\, \geq \,\, \int_{\mZ}
        d^2_{\mathrm{BW}}\bigl(\mN(\bs_1(\bz)),\mN(\bs_2(\bz))\bigr)\,d\lambda(\bz), \]
and using \eqref{eq: d_{BW}^2(s)-to-d_{BW}^2(s_2(z))} implies that
\[\int_{\mZ}
           \biggl[
              \int_{\mbR^{2n}}
                   \|\by_1 - \by_2\|^2\,
                   d\nu_z(\by_1, \by_2)
           \biggr] d\lambda(\bz) \,\, \geq \,\,   \int_{\mS\times \mS} d^2_{\mathrm{BW}}\bigl(\mN(\bs_1),\mN(\bs_2)\bigr)\,d\pi_\nu(\bs_1 \bs_2).
      \]
      
Additionally, since $\|y_1 - y_2\|^2 \geq 0$, $\nu_z := \mN(\bz)$ and  $\nu(\by_1, \by_2)=\int_{\mZ} \nu_z(\by_1, \by_2)\,d\lambda(\bz)=\int_{\mZ} \mN(\bz)(\by_1, \by_2)\,d\lambda(\bz)$,
\begin{align*}
 \int_{\mZ}
     &      \biggl[
              \int_{\mbR^{2n}}
                   \|\by_1 - \by_2\|^2\,
                   d\mN(\bz)(\by_1,\by_2)
           \biggr] d\lambda(\bz) =         
              \int_{\mbR^{2n}}
               \|\by_1 - \by_2\|^2\,d\,
               \biggl[\int_{\mZ}
                   \mN(\bz)(\by_1,\by_2)
           d\lambda(\bz)  \biggr] \\
   & =\int_{\mbR^{2n}}
           \|\by_1 - \by_2\|^2\,d\nu(\by_1, \by_2). \quad  \text{Thus}, \\
    \quad I(\nu) \,\, & \coloneqq \,\, \int_{\mbR^{2n}}
           \|\by_1 - \by_2\|^2\,d\nu(\by_1, \by_2) \,\, \geq \,\, \int_{\mS\times \mS} d^2_{\mathrm{BW}}\bigl(\mN(\bs_1),\mN(\bs_2)\bigr)\,d\pi_\nu(\bs_1, \bs_2) \,\, \forall \,  \nu\in\Pi^\mathrm{mix}(\mu_1, \mu_2).        
\end{align*}

Moreover, since $\pi_\nu$ is one admissible element of $\Gamma$ (see Lemma~\eqref{lem: Gamma-mixture-to-joint-gauss}), considering all elements $\pi \in \Gamma$ we have
\[
 I(\nu) \, \geq \, \int_{\mS\times \mS} d^2_{\mathrm{BW}}\bigl(\mN(\bs_1),\mN(\bs_2)\bigr)\,d\pi_\nu(\bs_1, \bs_2) \, \geq \,\, \inf_{\pi\in\Gamma} \int_{\mS\times \mS} d^2_{\mathrm{BW}}\bigl(\mN(\bs_1),\mN(\bs_2)\bigr)\,d\pi(\bs_1, \bs_2)  \,=\, W_2^\mathrm{prm}.  
\]
The above inequality holds for all $\nu\in\Pi^\mathrm{mix}(\mu_1, \mu_2)$. Therefore,
$ \inf_{\nu \in \Pi^\mathrm{mix}(\mu_1, \mu_2)} I(\nu) \, = \, W_2^\mathrm{mix} \geq W_2^\mathrm{prm}$. 
\end{proof}

In the following sections, we use the Gaussian-mixture ambiguity set induced by the Wasserstein-2-type metric introduced above to formulate the distributionally robust chance-constrained problem and to solve its strong-duality-based semi-infinite reformulation. In these developments, the chance-constraint expression is viewed as a function of both the decision vector $\bx$ and the Gaussian component parameters. 
To ensure that the resulting formulation is well-posed and for the algorithmic analysis, we require regularity conditions on the parameter space of the component distributions (see Assumption~\ref{assmp: compactness}). Theorem~\ref{thm:Wass2-Gaussian-EquivalenceTheorem-restricted} (provided in Appendix~\ref{appndx: restricted}) is a counterpart of Theorem~\ref{thm: Wass2-Gaussian-EquivalenceTheorem} under the assumption that the covariance matrices of the Gaussians are positive definite and the support for mean--covariance is compact.
\begin{theorem}[Wasserstein-type distance on compact parameter support]
\label{thm:Wass2-Gaussian-EquivalenceTheorem-restricted}
Let $\mS:=\Theta\times\Xi\subset\mbR^n_+\times\mbS^n_{++}$ be nonempty and compact. Define
\[
\Gamma_{\mS}
:=
\left\{
\pi\in\mP(\mS\times\mS):
\int_{\mS}\mN(\bs)(A)\,d\pi_1(\bs)=\mu_1(A),\ 
\int_{\mS}\mN(\bs)(B)\,d\pi_2(\bs)=\mu_2(B),\ 
\forall A,B\in\mB(\mbR^n)
\right\}.
\]
For this restricted support $\mS$, let
\[
W_{2,\mS}^{\mathrm{prm}}(\mu_1,\mu_2)
:=
\inf_{\pi\in\Gamma_{\mS}}
\int_{\mS\times\mS}
d^2_{\mathrm{BW}}\bigl(\mN(\bs_1),\mN(\bs_2)\bigr)\,d\pi(\bs_1,\bs_2),
\]
and
\begin{align}\label{def:W2-mix-restricted}
    W_{2,\mS}^{\mathrm{mix}}(\mu_1,\mu_2)
:=
\inf_{\nu\in\Pi(\mu_1, \mu_2)\cap\mbGM_{2n}^{\mS}}
\int_{\mbR^n\times\mbR^n}\|\by_1-\by_2\|_2^2\,d\nu(\by_1,\by_2),
\end{align}
where $\mbGM_{2n}^{\mS}$ is the class of Gaussian-mixture couplings whose component parameters are induced by elements of $\mS\times\mS$. Then $\Pi(\mu_1, \mu_2)\cap\mbGM_{2n}^{\mS}$ is non-empty and an optimal solution to~\eqref{def:W2-mix-restricted} exists. Additionally, every $\pi\in\Gamma_{\mS}$ has finite second moment, and
\begin{equation}
    W_{2,\mS}^{\mathrm{mix}}(\mu_1,\mu_2)
    =
    W_{2,\mS}^{\mathrm{prm}}(\mu_1,\mu_2).
    \label{eqn:Wass2-Dist-ParamSpace-restricted}
\end{equation} 
\end{theorem}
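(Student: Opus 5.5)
The plan is to mirror the proof of Theorem~\ref{thm: Wass2-Gaussian-EquivalenceTheorem}, restricting every construction to the compact set $\mS\times\mS$ and to its image under the Gelbrich map. I would take $\mbGM_{2n}^{\mS}$ to mean the mixtures $\int_{\mZ}\mN(\bz)\,d\lambda(\bz)$ whose mixing law $\lambda$ is concentrated on $\mZ_{\mS}:=\{\bz\in\mZ:\mM(\bz)\in\mS\times\mS\}$.

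\textbf{Step 1 (finite second moments).} For any $\pi\in\Gamma_{\mS}$, the integrand $\|\bmean_1\|^2+\|\bmean_2\|^2+\|\bQ_1\|_F^2+\|\bQ_2\|_F^2$ is continuous on the compact set $\mS\times\mS$. It is therefore bounded by some constant $C_{\mS}$, so \eqref{eq:assump-input} holds trivially and $\pi\in\mP_2(\mS\times\mS)$. Consequently $\Gamma_{\mS}\subseteq\Gamma$, after identifying measures on $\mS\times\mS$ with their extension by zero. Since $\bQ_1\succ 0$ on $\Xi$, the map $\mT$ is continuous on $\mS\times\mS$ and no pseudoinverse is needed.

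\textbf{Step 2 (the two inequalities).} For ``$\le$'', take $\pi\in\Gamma_{\mS}$ and form $\nu_\pi=\int\mG(\bs_1,\bs_2)\,d\pi$ as in Lemma~\ref{lem: forward}. That lemma gives $\nu_\pi\in\Pi(\mu_1,\mu_2)$. Its mixing law $\mT_\#\pi$ is concentrated on $\mT(\mS\times\mS)$, and because $\mM\circ\mT=\mathrm{id}$, this set lies in $\mZ_{\mS}$; hence $\nu_\pi\in\mbGM_{2n}^{\mS}$. Identity \eqref{eq: integral_identity_dbw} then gives the cost equality, exactly as in Proposition~\ref{thm: R<=L}.

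For ``$\ge$'', take $\nu$ feasible in \eqref{def:W2-mix-restricted} with mixing law $\lambda$ concentrated on $\mZ_{\mS}$. Then $\pi_\nu:=\mM_\#\lambda$ is concentrated on $\mS\times\mS$, and Lemma~\ref{lem: Gamma-mixture-to-joint-gauss} shows its marginals are mixing laws for $\mu_1,\mu_2$, so $\pi_\nu\in\Gamma_{\mS}$. The pointwise Gelbrich bound \eqref{eq: pointwise-inq}, integrated and combined with the pushforward identity \eqref{eq: d_{BW}^2(s)-to-d_{BW}^2(s_2(z))}, yields $I(\nu)\ge W^{\mathrm{prm}}_{2,\mS}$. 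This proves \eqref{eqn:Wass2-Dist-ParamSpace-restricted}.

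\textbf{Step 3 (nonemptiness).} The restricted problem is feasible only when $\Gamma_{\mS}\neq\emptyset$, so this has to come from the standing hypotheses. I would read the statement as presupposing that $\mu_1$ and $\mu_2$ admit mixing laws $\lambda_1,\lambda_2\in\mP(\mS)$, which holds, for example, when the nominal mixture's components lie in $\mS$. In that case the product $\lambda_1\otimes\lambda_2\in\Gamma_{\mS}$, and Step~2 maps it to a feasible $\nu$. I would state this reading explicitly.

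\textbf{Step 4 (existence of an optimizer).} This is the main obstacle. I would minimize on the parameter side and transfer the minimizer through Step~2.
\begin{itemize}
\item $\mP(\mS\times\mS)$ is weakly compact by Prokhorov's theorem, since $\mS\times\mS$ is compact.
\item $\Gamma_{\mS}$ is weakly closed. For fixed bounded continuous $g$ on $\mbR^n$, the map $\bs\mapsto\int g\,d\mN(\bs)$ is continuous on $\mS$ because $\bQ\succ0$ there. The constraints are therefore equivalent to $\int\!\int g\,d\mN(\bs)\,d\pi_i(\bs)=\int g\,d\mu_i$ for all $g\in C_b(\mbR^n)$, since bounded continuous test functions determine Borel measures. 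Each such constraint is preserved under weak limits.
\item The objective $\pi\mapsto\int d^2_{\mathrm{BW}}\,d\pi$ is weakly continuous, because $d^2_{\mathrm{BW}}$ is continuous and bounded on $\mS\times\mS$; the matrix square root is continuous on $\mbS^n_+$.
\end{itemize}
A minimizer $\pi^\star$ therefore exists, and $\nu_{\pi^\star}$ attains $W^{\mathrm{mix}}_{2,\mS}$ by the equality in Step~2.

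The delicate points are the continuity of $\bs\mapsto\int g\,d\mN(\bs)$, for which I would use dominated convergence via the characteristic function or density convergence when $\bQ\succ0$, and the careful definition of $\mbGM^{\mS}_{2n}$ in Step~3.
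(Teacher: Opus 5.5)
Your proof is correct, and Steps 1--3 coincide with the paper's appendix argument. Both use the same $\mZ_{\mS}$, forward containment through $\mT_\#\pi$ with $\mM\circ\mT=\mathrm{id}$, backward containment through $\mM_\#\lambda$, and nonemptiness from the product of mixing laws. That last step relies on exactly the hypothesis you flag: the paper's nonemptiness lemma assumes $\mu_1,\mu_2$ admit mixing laws on $\mS$, although the theorem statement omits it.

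The real difference is Step 4. The paper proves existence on the data side. It first shows $\mZ_{\mS}$ is compact, bounding the off-diagonal block by $|(\bQ_{12})_{ij}|^2\le(\bQ_{11})_{ii}(\bQ_{22})_{jj}$. It then extracts a weakly convergent subsequence of mixing laws to show that $\Pi(\mu_1,\mu_2)\cap\mbGM^{\mS}_{2n}$ is weakly closed, hence weakly compact. Finally it applies the direct method to the lower semicontinuous cost.

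You instead minimize over $\Gamma_{\mS}\subseteq\mP(\mS\times\mS)$, where compactness is immediate. Closedness follows by testing against $C_b(\mbR^n)$; the continuity of $\bs\mapsto\int g\,d\mN(\bs)$ that you flag as delicate is already Lemma~\ref{lem:gaussian-weak-continuity}. You then transfer the minimizer through the identity of Step 2.

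What your route buys:
\begin{itemize}
\item It is shorter, and it dispenses with both the compactness of $\mZ_{\mS}$ and the closedness of the mixture class.
\item It shows that $W^{\mathrm{prm}}_{2,\mS}$ is attained as well.
\item It yields the extra structural fact that an optimal coupling can be chosen as a mixture $\nu_{\pi^\star}$ of Gelbrich couplings.
\end{itemize}
What the paper's route buys is existence independently of the equivalence, plus a weak-compactness result for the restricted mixture-coupling set that is useful in its own right. Your route also needs joint continuity of $(\bs_1,\bs_2)\mapsto d^2_{\mathrm{BW}}(\mN(\bs_1),\mN(\bs_2))$. This is stronger than the one-argument continuity in Lemma~\ref{lem: Delta-Continuity}, but your appeal to continuity of the matrix square root on $\mbS^n_+$ correctly supplies it.

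One small point to make explicit in Step 2: applying Lemma~\ref{lem: Gamma-mixture-to-joint-gauss} requires the moment condition of $\mbGM_{2n}$. It holds because $\Tr(\bQ)=\Tr(\bQ_{11})+\Tr(\bQ_{22})$ is bounded on $\mZ_{\mS}$.
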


\section{Strong Duality of Robust CC and Semi-infinite Formulation}{\label{sec: formulation&Duality}

We now return to the distributionally robust chance-constrained problem~\eqref{prob: DR-CCP} where the ambiguity set is defined using the Wasserstein-type parameter space coupling developed in the previous section. From this section onward, we focus on the compact support set $\mS:=\Theta\times\Xi\subset\mbR^n_+\times\mbS^n_{++}$ as introduced above in Theorem~\ref{thm:Wass2-Gaussian-EquivalenceTheorem-restricted}. We first derive a strong dual form of the resultant robust chance constraint and thereby develop a semi-infinite program formulation to~\eqref{prob: DR-CCP}. We assume that the nominal distribution estimated from data is a mixture of finitely many Gaussians. In particular, the nominal distribution $\hat{P} \in 
\mbGM_n(\hat{\mS}):=\Bigl\{\sum_{k=1}^{\hat{K}} w^0_k\,\mN(\hat{\bs}_k) \mid
\hat{\bw}\in\hat{\mW}, \hat{\bs} \in \hat{\mS} \Bigr\}
$, where $\hat{\mW}:=\{\bw\in\mbR^{\hat{K}}_+ \mid \sum_{k=1}^{\hat{K}} w_k=1\}, \, \hat{\mS} := \bigl\{(\bmean, \bQ) \in \mS \mid |\hat{\mS} | = \hat{K} \bigr\}$ 
for some $\hat{K}\in\mathbb N$.  Note that the nominal distribution is continuous despite the mixing distribution being finite. 

We now define the ambiguity set of interest $\mfD$ using the coupling set $\Gamma$ and  metric $W^\mathrm{prm}_2$:
\begin{equation}\label{eq: W-type-ambiguity}
\hspace{-1.5em}\mathfrak{D}
:=\left\{\pi^{(2)} \in \mathcal P_2(\mS) \ \middle|\ 
\exists\,\pi \in \Gamma\subseteq \mP_2(\hat{\mS} \times \mS)\ \,\, \text{with}\quad
\begin{aligned}
& \sum_{k=1}^{\hat{K}} \int_{\mS} d^2_{\mathrm{BW}} \bigl(\mN(\hat{\bs}_k),  \mN(\bs) \bigr)\, d\pi(\hat{\bs}_k, \bs) \leq \rho, \\
& \pi^{(1)} := \int_{\mS} d\pi(\hat{\bs}, \bs) = \hat{\bm{w}}
\end{aligned}
\right\}, 
\end{equation}
where, each $\pi^{(2)}\in\mathcal P_2(\mS)$ induces a Gaussian mixture distribution on $\mbR^n$ via $\int \mN(\bmean,\bQ)\,d\pi^{(2)}(\bmean,\bQ)$. Observe that the mixing law $\pi^{(2)}\in\mathcal P_2(\mS)$ is allowed to freely vary over a continuum of measures on the parameter space $\mS$, unlike the restriction of marginal $\pi^{(1)}$ of $\pi$ to $\hat{\bw} \in \hat{\mW}$. Moreover, since \(\mathfrak D\) is the projection of the feasible transport plans onto the second marginal, and the function \(G(\bs;\bx)\) depends only on the second
coordinate \(\bs\), optimizing over \(\pi^{(2)}\in\mathfrak D\) is equivalent to
optimizing over the corresponding feasible plans \(\pi\in\mD\) with
\begin{align}
\hspace{-2em}    \mD:=\left\{\pi\in \Gamma \middle|  \int_{\mS} d\pi(\hat{\bs}_k, \bs) = \hat{w}_k \,\forall k \in [\hat{K}],
\int_{\hat{\mS}\times\mS}d^2_{\mathrm{BW}}\bigl(\mN(\hat{\bs}),\mN(\bs)\bigr)\,d\pi(\hat{\bs},\bs)\le \rho\right\}.\label{def: M-Wass-Ambiguity-Cont}
\end{align}
Hence, we equivalently represent distributionally robust chance-constrained program~\eqref{prob: DR-CCP} as~\eqref{prob: DR-CCP-GMM}: 
\begin{align}
 & \min_{\bx \in \mX} \;\; \bc^\top \bx \quad \mathrm{s.t.} \quad \sum_{k=1}^{\hat{K}} \int_{\bs \in \mS} G(\bs, \bx) \; d\pi \bigl(\hat{\bs}_k, \bs\bigr) \geq \theta \qquad \forall \pi \in \mD,  \label{prob: DR-CCP-GMM} \tag{CDR}  \\
&  \text{where}  \qquad  \; G(\bs, \bx) \coloneqq
    \begin{dcases*}\label{def: G}
        \mathbbm{1}_{\geq 0}(b), & if $\bx = \bzero$, \\
        \Phi \left( \frac{b - \bmean^\top \bx}{\sqrt{\bx^\top \bQ \bx}} \right) & otherwise.
    \end{dcases*}
\end{align}
An additional restriction in the definition of $G(\bs,\bx)$ is given in Assumption~\ref{assmp: compactness}. Also, the isolated instance at $\bx =\bzero$ with $\mathbbm{1}_{\geq 0}(b)$ is handled in Propositions~\ref{prop: infeasibility} and \ref{prop: infeasibility-1} below.
\begin{remark}
    We emphasize that the formulation~\eqref{prob: DR-CCP-GMM} does not take support in the data space $\mbR^n$ (also called observation or sample-value space) as in the classical Wasserstein definition. In the classical setting, the random element is $\bxi\in\mbR^n$. In contrast, the random element in~\eqref{prob: DR-CCP-GMM} is the Gaussian component parameter $\bs=(\bmean,\bQ)$, and therefore the relevant support set is $\mS\subseteq \mbR^n\times \mbS_+^n$ with $\bs\in\mS$. In the finite case, the classical model fixes a finite set of support points $\{\bxi_k\}_{k=1}^{\hat{K}}\subset\mbR^n$ and the distributional decision reduces to choosing weights on these points. Likewise, a finite counterpart of our model fixes a finite set of parameter support points $\{\bs_k\}_{k=1}^{\hat{K}}$  (typically the nominal component parameters $\{\hat{\bs}_k\}$), so the distributional decision also reduces to selecting weights on these known support points. However, for the continuous case as this study focuses on, the support points $\bs\in\mS$ are not fixed \emph{a priori}. Hence, the optimization needs to select: (i) where to place mass in $\mS$ (i.e., which Gaussian parameter values to use) and (ii) how much mass to assign.
\end{remark}
Theorem~\ref{thm: strong-duality} below provides an equivalent formulation of~\eqref{prob: DR-CCP-GMM} as a semi-infinite program  \eqref{prob: semif-infinite} by establishing strong duality for the robust chance constraint.  This result is developed under the assumption (Assumption~\ref{assmp: compactness}) that the Gaussian parameter set $\mS:=\Theta\times\Xi$ is compact and restricted to strictly positive definite covariance matrices with a lower bound on the smallest eigenvalue. Therefore, the Gaussian distributions on affine subspaces of $\mathbb{R}^n$ are excluded from the mixture components. This strict positive definiteness is used in proving the continuity result in Lemma~\ref{lem: G-continuity-mu-sigma}. Additionally, the compactness of the parameter set in Assumption~\ref{assmp: compactness} provides a condition needed for 
the (finite) convergence of the cutting surface algorithm we develop in Section~\ref{sec: Dual-formulation} to solve~\ref{prob: DR-CCP-GMM}. Problem \eqref{prob: DR-CCP-GMM} involves infinitely many decision variables, whereas its dual counterpart \eqref{prob: semif-infinite} has infinitely many constraints. 

\begin{assumption}\label{assmp: compactness}
Covariance matrices $\bQ$ appearing in the Gaussian mixture distribution belong to $\mbS^n_{\hat{\delta}} := \{\bQ \in \mbS^n_{++}: \bQ \succeq \hat{\delta} \bm{I}\}$ for some $\hat{\delta} >0$. Additionally, the Gaussian parameters $(\bmean,\bQ)$ belong to a compact set $\mS := (\Theta\times\Xi) \subseteq \mbR^n\times\mbS^n_{\hat\delta}$.
\end{assumption}


\begin{theorem}[Semi-infinite program formulation]
\label{thm: strong-duality}
Let Assumption~\ref{assmp: compactness} hold, and suppose Wasserstein radius \(\rho>0\). Then,
for any \(\bx\in\mX\), \(\bx\) satisfies the robust constraint in
\eqref{prob: DR-CCP-GMM} if and only if there exists
\(\bbeta\in\mathbb R^{\hat K}\times\mathbb R_+\) such that $\sum_{k=1}^{\hat K}\hat w_k\beta_k-\rho\beta_{\hat K+1}\ge\theta$ and $\beta_k-\beta_{\hat K+1}
d_{\mathrm{BW}}^2
\left(
\mN(\hat{\bs}_k),\mN(\bs)
\right)
\le
G(\bs,\bx),
\,\,\,
\forall \bs\in\mS,\ \forall k\in[\hat K]$. Consequently, solving~\eqref{prob: DR-CCP-GMM} is equivalent to solving the following semi-infinite program
\begin{equation}\label{prob: semif-infinite}
\renewcommand{\arraystretch}{1.15}
\begin{array}{@{}l l@{}}
\multirow{2}{*}{$\begin{aligned}
\hspace{-0.8em}\min_{\bx\in\mX,\;\bbeta\in\mathbb R^{\hat K}\times\mathbb R_+}\quad
& \bc^\top\bx \quad \mathrm{s.t.}\quad
\\
\end{aligned}$}
&
\displaystyle
\sum_{k=1}^{\hat K}\hat w_k\beta_k-\rho\beta_{\hat K+1}\ge\theta,
\\[4pt]
&
\displaystyle
\beta_k-\beta_{\hat K+1}
d_{\mathrm{BW}}^2
\left(
\mN(\hat{\bs}_k),\mN(\bs)
\right)
\le
G(\bs,\bx),
\qquad
\forall \bs\in\mS,\ \forall k\in[\hat K].
\end{array}
\tag{S-Inf}
\end{equation}
In particular, the formulations~\eqref{prob: DR-CCP-GMM} and~\eqref{prob: semif-infinite} have the same optimal objective value, and their
optimal \(\bx\)-solutions coincide whenever an optimal solution exists.
\end{theorem}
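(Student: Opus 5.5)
For fixed $\bx\in\mX$, I will treat the left-hand side of the robust constraint as the value of an inner problem
\[
v(\bx):=\inf_{\pi\in\mD}\ \sum_{k=1}^{\hat K}\int_{\mS} G(\bs,\bx)\,d\pi(\hat{\bs}_k,\bs).
\]
This is a linear program over measures on $\hat{\mS}\times\mS$. Its constraints are $\hat K$ marginal equalities, which fix $\pi^{(1)}=\hat{\bw}$, and one transport-budget inequality with radius $\rho$. Disintegrating, $\pi=\sum_k \hat w_k\,\delta_{\hat{\bs}_k}\otimes\pi_k$ with $\pi_k\in\mP(\mS)$. The Lagrangian therefore carries multipliers $\beta_k$ (free) for the marginals and $\beta_{\hat K+1}\ge 0$ for the budget. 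The Lagrangian dual is
\[
\sup_{\bbeta\in\mbR^{\hat K}\times\mbR_+}\Bigl\{\sum_k\hat w_k\beta_k-\rho\beta_{\hat K+1}\ :\ \beta_k-\beta_{\hat K+1}d^2_{\mathrm{BW}}(\mN(\hat{\bs}_k),\mN(\bs))\le G(\bs,\bx)\ \forall \bs\in\mS,\ k\in[\hat K]\Bigr\}.
\]
Equivalently, this is $\sup_{\beta_{\hat K+1}\ge0}\{-\rho\beta_{\hat K+1}+\sum_k\hat w_k\min_{\bs\in\mS}[G(\bs,\bx)+\beta_{\hat K+1}d^2_{\mathrm{BW}}]\}$, after choosing each $\beta_k$ optimally. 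Weak duality, dual value $\le v(\bx)$, is immediate: integrate the pointwise constraint against any $\pi\in\mD$ and use the budget and $\beta_{\hat K+1}\ge0$.

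The main step is strong duality with attainment. The plan is to invoke \cite[Theorem 1(a)]{blanchet2019quantifying} with nominal measure $\hat P$'s mixing law $\sum_k\hat w_k\delta_{\hat{\bs}_k}$ on the Polish space $\mS$. The cost is $c(\hat{\bs},\bs)=d^2_{\mathrm{BW}}(\mN(\hat{\bs}),\mN(\bs))$ and the loss is $-G(\cdot,\bx)$. Its hypotheses need to be checked as follows.
\begin{itemize}
\item The cost $c$ is nonnegative and lower semicontinuous. In fact it is continuous on $\mS\times\mS$, since the Gelbrich formula \eqref{def: distance-matric} is continuous in $(\bmean,\bQ)$, the matrix square root being continuous on PSD matrices.
\item $c(\bs,\bs)=0$.
\item $G(\cdot,\bx)$ is upper semicontinuous and bounded, hence integrable.
\end{itemize}
For $\bx\neq\bzero$, Assumption~\ref{assmp: compactness} gives $\bx^\top\bQ\bx\ge\hat\delta\|\bx\|^2>0$, so $G(\cdot,\bx)$ is continuous on $\mS$ (the content of Lemma~\ref{lem: G-continuity-mu-sigma}). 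For $\bx=\bzero$, $G$ is the constant $\ind_{\ge0}(b)$, so both sides are trivially equal; I would treat this case separately as in Propositions~\ref{prop: infeasibility}--\ref{prop: infeasibility-1}. The cited theorem then gives equality of primal and dual values for $\rho>0$, together with existence of a dual optimizer $\beta_{\hat K+1}^\star$.

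Two details will need care. First, $\mS$ is compact, so every $\pi$ with the given first marginal automatically has finite second moment; by Theorem~\ref{thm:Wass2-Gaussian-EquivalenceTheorem-restricted} the restriction to $\mP_2$ in $\Gamma$ is then harmless. Second, dual attainment must hold. Because $G\in[0,1]$ and $\rho>0$, any $\beta_{\hat K+1}>1/\rho$ makes the dual objective at most $1-\rho\beta_{\hat K+1}<0\le$ the value at $\beta_{\hat K+1}=0$. So $\beta_{\hat K+1}$ can be restricted to the compact interval $[0,1/\rho]$. The dual objective is concave in $\beta_{\hat K+1}$ and upper semicontinuous, being an infimum of affine functions, so a maximizer exists. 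The inner minima over compact $\mS$ are attained by continuity, which gives finite optimal $\beta_k$.

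Given strong duality with attainment, the equivalence follows. If $\bx$ is robustly feasible, then $v(\bx)\ge\theta$, and the dual maximizer $\bbeta$ satisfies both stated conditions. Conversely, any $\bbeta$ satisfying them yields $v(\bx)\ge\theta$ by weak duality. Adjoining $\bbeta$ as decision variables then shows that \eqref{prob: DR-CCP-GMM} and \eqref{prob: semif-infinite} have the same feasible $\bx$-projection. Hence they have equal optimal values and the same optimal $\bx$-solutions. I expect the main obstacle to be verifying the hypotheses of \cite{blanchet2019quantifying} on the parameter space, in particular the semicontinuity of $G$ and the Polish and compact setting with positive definiteness. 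The fallback is a direct minimax argument (Sion) over the compact convex set of $\pi_k\in\mP(\mS)$, weakly compact by Prokhorov, paired with $\beta_{\hat K+1}\in[0,1/\rho]$.
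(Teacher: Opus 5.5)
Your proposal is correct and follows essentially the same route as the paper. Both arguments get strong duality for the inner worst-case problem from \cite[Theorem~1(a)]{blanchet2019quantifying}, using continuity of $d^2_{\mathrm{BW}}$ and $G$ and the bound $0\le G\le 1$; both secure dual attainment by using $\rho>0$ to bound $\beta_{\hat K+1}\le 1/\rho$ and then invoking compactness; and both finish with the two-sided feasibility equivalence, which gives equal optimal values and the same optimal $\bx$-solutions. The differences are minor: you get the ``if'' direction from weak duality alone and prove attainment on the reduced one-dimensional dual in $\beta_{\hat K+1}$, whereas the paper uses the value identity in both directions and bounds the whole vector in $[0,1]^{\hat K}\times[0,1/\rho]$ via the cuts at $\bs=\hat{\bs}_k$.
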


The ~\eqref{prob: DR-CCP-GMM} and~\eqref{prob: semif-infinite}  formulations are infeasible at $\bx=\bzero$ whenever $b<0$, as shown in Propositions~\ref{prop: infeasibility} and~\ref{prop: infeasibility-1} respectively below. 

\begin{proposition}\label{prop: infeasibility}
If $b<0$, then $\bx=\bzero$ is infeasible for \eqref{prob: DR-CCP-GMM}. 
Additionally, if $b \geq 0$, then for  $\bx=\bzero$, \eqref{prob: robust-CC} admits an optimal solution $\check{\pi}$ with $\bw=\hat{\bw}$, i.e., $\check{\pi}$ is a self-coupling of $\hat{\bw}$.
\end{proposition}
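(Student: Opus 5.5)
The plan is to evaluate $G(\bs,\bzero)$ directly from \eqref{def: G} and exploit that it does not depend on $\bs$. At $\bx=\bzero$ we have $G(\bs,\bzero)=\mathbbm{1}_{\geq 0}(b)$ for every $\bs\in\mS$. Hence, for any feasible plan $\pi\in\mD$, the left-hand side of the robust constraint in \eqref{prob: DR-CCP-GMM} becomes
\[
\sum_{k=1}^{\hat K}\int_{\mS} G(\bs,\bzero)\,d\pi(\hat{\bs}_k,\bs)=\mathbbm{1}_{\geq 0}(b)\sum_{k=1}^{\hat K}\int_{\mS}d\pi(\hat{\bs}_k,\bs)=\mathbbm{1}_{\geq 0}(b)\sum_{k=1}^{\hat K}\hat w_k=\mathbbm{1}_{\geq 0}(b).
\]
The middle step uses the first-marginal constraint $\int_{\mS}d\pi(\hat{\bs}_k,\bs)=\hat w_k$ in \eqref{def: M-Wass-Ambiguity-Cont}, and the last step uses $\hat{\bw}\in\hat{\mW}$. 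Thus the worst-case probability is a constant, independent of $\pi$.

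For the first claim, take $b<0$. The constant above is $0<\theta$, because $\theta\in(0,1)$. Since $\mD$ is nonempty, the constraint fails for at least one $\pi$, so $\bx=\bzero$ is infeasible. To justify $\mD\neq\emptyset$, I will use the self-coupling $\check\pi:=\sum_k \hat w_k\,\delta_{(\hat{\bs}_k,\hat{\bs}_k)}$. It has first marginal $\hat{\bw}$ and second marginal $\hat P$, so it lies in $\Gamma$ and is a mixing law for both marginals. Its transport cost is $\sum_k\hat w_k d^2_{\mathrm{BW}}(\mN(\hat{\bs}_k),\mN(\hat{\bs}_k))=0\le\rho$. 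Finally, $\hat{\bs}_k\in\mS$ by the definition of $\hat{\mS}$, and $\check\pi$ trivially has finite second moment.

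For the second claim, take $b\ge 0$. The inner problem \eqref{prob: robust-CC}, i.e.\ the minimization over $\pi\in\mD$ of the same integral, then has the constant objective value $1$ on all of $\mD$. Every feasible plan is therefore optimal. In particular $\check\pi$ is optimal, its second marginal weights equal $\hat{\bw}$, and it is a self-coupling of $\hat{\bw}$, so $\bw=\hat{\bw}$.

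The only real obstacle is bookkeeping: I must confirm that $\check\pi$ belongs to $\mD$ exactly as \eqref{def: M-Wass-Ambiguity-Cont} defines it, namely that it is an element of $\Gamma$ with the nominal support $\hat{\mS}\subseteq\mS$. I must also make sure the integral against $\pi(\hat{\bs}_k,\cdot)$ is read as the $k$-th slice, whose total mass is $\hat w_k$. Everything else is immediate.
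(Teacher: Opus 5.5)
Your proof is correct and follows the paper's argument: at $\bx=\bzero$ the integrand is the constant $\mathbbm{1}_{\geq 0}(b)$, so the objective of \eqref{prob: robust-CC} is constant over $\mD$ (equal to $0<\theta$ or to $1$), and therefore every feasible plan, in particular the diagonal self-coupling, is optimal. Your version is slightly more careful than the paper's in two ways: you verify $\mD\neq\emptyset$, which the infeasibility claim needs in order not to be vacuous, and you use the zero-cost coupling $\sum_k\hat w_k\delta_{(\hat{\bs}_k,\hat{\bs}_k)}$ rather than an arbitrary element of $\Pi(\hat{\bw},\hat{\bw})$, which need not satisfy the radius constraint.
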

\begin{proof}
    See proof in Appendix~\ref{appndx:cut-alg-proof}.
\end{proof}

\begin{proposition}\label{prop: infeasibility-1}
    Let $\bbeta \in \mbR^{\hat{K}}$. Then the following holds:
    \begin{enumerate}[leftmargin=1.5em]
        \item[i)] if $\|\bbeta\| = 0$, then ~\eqref{prob: semif-infinite} is infeasible.
        \item[ii)] If $b <0$, then $\bx = 0$ cannot be a feasible solution to~\eqref{prob: semif-infinite}.
        \item[iii)] if $ b \geq 0$, for $\bx = \bzero$ and $k \in [\hat{K}]$, $\hat{\bs}_k$  is an optimal solution to the following problem: 
        $$\max_{\bs \in (\Theta \times \Xi)} \beta_k - \beta_{\hat{K}+1} d^2_{\mathrm{BW}}\bigl(\mN(\hat{\bs}_k), \mN(\bs) \bigr) - G(\bs; \bzero).$$
        \vspace{-2em}
    \end{enumerate}
\end{proposition}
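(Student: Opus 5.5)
The plan is to verify the three items directly from the constraints of \eqref{prob: semif-infinite}, using the definition of $G$ at $\bx=\bzero$ in \eqref{def: G} and two elementary facts about $d^2_{\mathrm{BW}}$. The first fact is that $d^2_{\mathrm{BW}}(\mN(\bs_1),\mN(\bs_2))\ge 0$. The second is that $d^2_{\mathrm{BW}}(\mN(\hat{\bs}_k),\mN(\hat{\bs}_k))=0$, which follows from \eqref{def: distance-matric} since $\Tr(\bQ+\bQ-2(\bQ^{1/2}\bQ\bQ^{1/2})^{1/2})=\Tr(2\bQ-2\bQ)=0$. I read $\bbeta$ as ranging over $\mbR^{\hat K}\times\mbR_+$, as in Theorem~\ref{thm: strong-duality}, so $\beta_{\hat K+1}\ge 0$ throughout. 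I also use that the nominal parameters lie in the support, i.e.\ $\hat{\bs}_k\in\hat{\mS}\subseteq\mS=\Theta\times\Xi$, as built into the definition of $\hat{\mS}$.

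For i), if $\|\bbeta\|=0$ then every $\beta_k$ and $\beta_{\hat K+1}$ vanishes. The first constraint of \eqref{prob: semif-infinite} then reads $0\ge\theta$, which contradicts $\theta\in(0,1)$. Hence no $\bx$ can be paired with $\bbeta=\bzero$, and this $\bbeta$ gives no feasible point.

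For ii), take $b<0$ and $\bx=\bzero$, so that $G(\bs,\bzero)=\ind_{\ge 0}(b)=0$ for every $\bs$. The semi-infinite constraint becomes $\beta_k\le \beta_{\hat K+1}\,d^2_{\mathrm{BW}}(\mN(\hat{\bs}_k),\mN(\bs))$ for all $\bs\in\mS$. Choosing the admissible point $\bs=\hat{\bs}_k$ forces $\beta_k\le 0$ for every $k$. Then $\sum_k\hat w_k\beta_k-\rho\beta_{\hat K+1}\le 0<\theta$, because $\hat w_k\ge0$, $\rho>0$ and $\beta_{\hat K+1}\ge 0$. So the first constraint fails for every $\bbeta$, and $\bx=\bzero$ is infeasible.

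For iii), take $b\ge 0$, so that $G(\bs;\bzero)=1$ is constant in $\bs$. The objective becomes $\beta_k-1-\beta_{\hat K+1}d^2_{\mathrm{BW}}(\mN(\hat{\bs}_k),\mN(\bs))$. Since $\beta_{\hat K+1}\ge0$ and $d^2_{\mathrm{BW}}\ge 0$, this is bounded above by $\beta_k-1$. The bound is attained at $\bs=\hat{\bs}_k\in\Theta\times\Xi$, where the distance vanishes, so $\hat{\bs}_k$ is a maximizer.

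The only delicate point is the membership $\hat{\bs}_k\in\mS$; if it were not assumed, ii) and iii) would need a limiting argument. Here it is given by the construction of $\hat{\mS}\subseteq\mS$, so I expect no real obstacle.
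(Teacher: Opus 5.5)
Your proposal is correct and follows the paper's proof item by item. In (ii) the cut at $\bs=\hat{\bs}_k$ forces $\beta_k\le 0$, so the first constraint fails. In (iii), $G(\cdot;\bzero)\equiv 1$, so the maximum is attained where $d^2_{\mathrm{BW}}$ vanishes; your explicit upper bound $\beta_k-1$ is more careful than the paper's remark that any $\bs$ is optimal, which holds only when $\beta_{\hat K+1}=0$.

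One small adjustment is needed in (i). The paper reads $\|\bbeta\|=0$ as $\beta_1=\dots=\beta_{\hat K}=0$ with $\beta_{\hat K+1}\ge 0$ left free. Your argument covers that case at once, since $-\rho\beta_{\hat K+1}\le 0$.
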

\begin{proof}
    See proof in Appendix~\ref{appndx:cut-alg-proof}.
\end{proof}

Theorem~\ref{thm: strong-duality} follows from considering a dual of: 
\begin{align}\label{prob: robust-CC}
     \min_{\pi \in \mD} \;\sum_{k=1}^{\hat{K}} \int_{\bs \in \Theta \times \Xi} G(\bs, \bx) \; d\pi \bigl(\hat{\bs}_k, \bs\bigr).
\end{align}
Lemma~\ref{lem: strong-duality} establishes this intermediate result.
\begin{lemma}[Strong Duality]\label{lem: strong-duality}
    Let us consider the following problem:
\begin{align}\label{prob: robust-CC-dual}
\max_{\bbeta \in \mbR^{\hat{K}+1}} &\quad  \sum_{k=1}^{\hat{K}} \hat{w}_k\beta_k - \rho \beta_{\hat{K}+1}, \;\; \beta_{\hat{K}+1} \geq 0, \\ 
\mathrm{s.t.} &  \quad \beta_k - \beta_{\hat{K}+1} d^2_{\mathrm{BW}}\bigl(\mN(\hat{\bs}_k), \mN(\bs) \bigr) \leq G(\bs; \bx), \qquad \forall \bs\in \mS, \;\; \forall k \in [\hat{K}] \nonumber
\end{align}
where $G(\bs; \bx)$ is as defined in~\eqref{def: G}. Then, under Assumption~\ref{assmp: compactness}, $\Val\eqref{prob: robust-CC} = \Val\eqref{prob: robust-CC-dual}$.
\end{lemma}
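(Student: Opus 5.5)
The plan is to treat \eqref{prob: robust-CC} as a moment-type linear program over measures on the compact set $\hat{\mS}\times\mS$, and to obtain the dual by Lagrangian duality. Since $\hat{\mS}=\{\hat{\bs}_1,\dots,\hat{\bs}_{\hat K}\}$ is finite, any $\pi\in\mD$ decomposes as $\pi=\sum_{k}\delta_{\hat{\bs}_k}\otimes\pi_k$, where $\pi_k$ is a nonnegative measure on $\mS$ with $\pi_k(\mS)=\hat w_k$. So \eqref{prob: robust-CC} becomes
\[
\min_{\pi_1,\dots,\pi_{\hat K}\ge 0}\ \sum_k\int_\mS G(\bs,\bx)\,d\pi_k(\bs)
\]
subject to the $\hat K$ mass equalities $\pi_k(\mS)=\hat w_k$ and the single transport-budget inequality $\sum_k\int_\mS d^2_{\mathrm{BW}}(\mN(\hat{\bs}_k),\mN(\bs))\,d\pi_k\le\rho$. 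The mixing-law constraint for $\mu_1$ in $\Gamma$ is automatic because the first marginal equals $\hat{\bw}$. The second marginal is free over $\mP(\mS)$, and its second moment is finite by compactness.

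\emph{Weak duality.} We attach multipliers $\beta_k$ to the mass constraints and $\beta_{\hat K+1}\ge0$ to the budget. For any feasible $\pi$ and any $\bbeta$ feasible for \eqref{prob: robust-CC-dual}, we integrate the pointwise constraint $\beta_k-\beta_{\hat K+1}d^2_{\mathrm{BW}}\le G$ against $\pi_k$ and sum over $k$. This gives
\[
\sum_k\hat w_k\beta_k-\beta_{\hat K+1}\sum_k\int d^2_{\mathrm{BW}}\,d\pi_k\le\sum_k\int G\,d\pi_k,
\]
and the budget constraint then yields $\Val\eqref{prob: robust-CC-dual}\le\Val\eqref{prob: robust-CC}$.

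\emph{Strong duality.} We will invoke \cite[Theorem~1(a)]{blanchet2019quantifying}, as announced in the introduction, with transport cost $c(\hat{\bs},\bs)=d^2_{\mathrm{BW}}(\mN(\hat{\bs}),\mN(\bs))$ on the Polish space $\mS$ and with the loss $-G(\cdot,\bx)$, so that we maximize worst-case violation. Their hypotheses are verified as follows.
\begin{itemize}
\item The cost $c$ is nonnegative and lower semicontinuous; in fact it is continuous, by the closed form \eqref{def: distance-matric} and continuity of the matrix square root.
\item The integrand $G(\cdot,\bx)$ must be upper semicontinuous (for the min version). For $\bx\ne\bzero$, $G$ is continuous in $\bs$ because $\bx^\top\bQ\bx\ge\hat\delta\|\bx\|^2>0$ under Assumption~\ref{assmp: compactness}; this is the continuity statement (Lemma~\ref{lem: G-continuity-mu-sigma}). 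For $\bx=\bzero$, $G$ is constant in $\bs$.
\item Growth and integrability are trivial because $0\le G\le1$ and $\mS$ is compact.
\item Since $\rho>0$ and $\pi=\sum_k\hat w_k\delta_{\hat{\bs}_k}\otimes\delta_{\hat{\bs}_k}$ has zero cost, Slater's condition holds.
\end{itemize}
Their theorem then gives
\[
\Val\eqref{prob: robust-CC}=\sup_{\beta_{\hat K+1}\ge0}\Big\{-\rho\beta_{\hat K+1}+\sum_k\hat w_k\inf_{\bs\in\mS}\big[G(\bs,\bx)+\beta_{\hat K+1}d^2_{\mathrm{BW}}(\mN(\hat{\bs}_k),\mN(\bs))\big]\Big\}.
\]
Setting $\beta_k$ equal to the inner infimum, which is attained by compactness and continuity, converts this exactly into \eqref{prob: robust-CC-dual}. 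Any larger $\beta_k$ would violate the semi-infinite constraint, and any smaller one is suboptimal. Attainment of the outer supremum follows because the dual objective is concave and tends to $-\infty$ as $\beta_{\hat K+1}\to\infty$, since its inner part stays bounded by $1$.

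\emph{Main obstacle.} The delicate step is matching the setting of \cite{blanchet2019quantifying}. Their ambiguity set is an optimal-transport ball around a nominal law, whereas $\mD$ constrains couplings in $\Gamma$. We must therefore argue that optimizing over $\pi\in\mD$ is the same as optimizing over $\pi^{(2)}$ in the OT ball $\{P:\ \mathcal{W}_c(\sum_k\hat w_k\delta_{\hat{\bs}_k},P)\le\rho\}$ on $\mS$. This holds because the objective depends only on the second coordinate, and by Theorem~\ref{thm:Wass2-Gaussian-EquivalenceTheorem-restricted} the constraints in $\Gamma_\mS$ reduce to free choice of the second mixing law. As a fallback, if the continuity of $G$ for $\bx\ne\bzero$ requires care near the boundary of $\mS$, we would instead use a direct argument: a conic LP duality over the cone of finite measures on the compact $\mS$, using weak$^*$-compactness of $\mP(\mS)$ and closedness of the moment cone, with interior point given by the zero-cost coupling above.
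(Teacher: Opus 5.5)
Your proposal is correct and follows the paper's own route. You check that the $d^2_{\mathrm{BW}}$ cost is continuous (hence lower semicontinuous) with zero diagonal, that $G(\cdot;\bx)$ is continuous on $\mS$ because $\bQ\succeq\hat\delta\bI$, and that $0\le G\le 1$ gives the integrability condition, and you then apply \cite[Theorem~1(a)]{blanchet2019quantifying} to $-G$; your rewriting of the $c$-transform dual as \eqref{prob: robust-CC-dual} (taking $\beta_k$ equal to the inner infimum) just spells out a step the paper leaves implicit. Two small corrections. First, for the minimization the theorem needs $-G$ upper semicontinuous, i.e.\ $G$ \emph{lower} semicontinuous, not upper; continuity covers both, so nothing breaks. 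Second, the reduction of $\mD$ to the ball $\{P:\mathcal{W}_c(\sum_k\hat w_k\delta_{\hat{\bs}_k},P)\le\rho\}$ does not come from Theorem~\ref{thm:Wass2-Gaussian-EquivalenceTheorem-restricted}. It follows directly because the second marginal is unconstrained in $\Gamma$, together with attainment of optimal couplings for a continuous cost on the compact set $\mS$.
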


We prove the strong-duality claim in Lemma~\ref{lem: strong-duality} using
\cite[Theorem~1(a)]{blanchet2019quantifying}. To apply this
result, we first show the continuity of both
\(d^2_{\mathrm{BW}}\bigl(\mN(\hat{\bs}),\mN(\bs)\bigr)\) and
\(G(\bs; \bx)\) in \(\bs\), respectively, in
Lemma~\ref{lem: Delta-Continuity} and
Lemma~\ref{lem: G-continuity-mu-sigma}. We then complete the proof of
Lemma~\ref{lem: strong-duality} by using the boundedness
\(0\le G(\bs;\bx)\le 1,
\, \forall \bs\in\mS,\ \forall \bx\in\mX\), which verifies the integrability
condition in \cite[Theorem~1(a)]{blanchet2019quantifying} (see (A3) in
 Appendix~A(vi)).


\begin{lemma}\label{lem: Delta-Continuity}
Let $(\Theta \times \Xi)$ be as defined in Assumption~\ref{assmp: compactness}. Fix $(\hat{\bmean}, \hat{\bQ}) \in (\Theta \times \Xi)$ and define $d^2_{\mathrm{BW}}\bigl(\bmean, \bQ; (\hat{\bmean}, \hat{\bQ})\bigr) \coloneqq \|\hat{\bmean} - \bmean\|^2 + \Tr\Bigl(\hat{\bQ}+\bQ -2\,(\bQ^{1/2}\hat{\bQ}\bQ^{1/2})^{1/2}\Bigr)$.
Then the map $(\bmean, \bQ) \mapsto d^2_{\mathrm{BW}}\big(\bmean,\bQ;(\hat{\bmean},\hat{\bQ})\big)$ is convex on $\mbR^n\times \mbS_+^n$. Moreover, $d^2_{\mathrm{BW}}\big(\bmean,\bQ;(\hat{\bmean},\hat{\bQ})\big)$ is continuous on $(\Theta \times \Xi)$.
\end{lemma}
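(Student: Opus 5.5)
The plan is to split $d^2_{\mathrm{BW}}$ into its mean part $\|\hat{\bmean}-\bmean\|^2$ and its covariance part
\[
g(\bQ):=\Tr(\hat{\bQ})+\Tr(\bQ)-2\Tr\bigl((\bQ^{1/2}\hat{\bQ}\bQ^{1/2})^{1/2}\bigr).
\]
The two parts depend on disjoint blocks of variables. Convexity of the sum then follows from convexity of each part, and joint continuity from continuity of each part. The mean part is a squared Euclidean norm composed with an affine map, so it is convex and continuous. All the work is therefore in $g$ on $\mbS^n_+$.

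For convexity of $g$, first rewrite the fidelity term. Since $\bQ^{1/2}\hat{\bQ}\bQ^{1/2}$ and $\hat{\bQ}^{1/2}\bQ\hat{\bQ}^{1/2}$ are both of the form $AA^\top$, $A^\top A$ with $A=\bQ^{1/2}\hat{\bQ}^{1/2}$, they share their nonzero eigenvalues. Hence
\[
\Tr\bigl((\bQ^{1/2}\hat{\bQ}\bQ^{1/2})^{1/2}\bigr)=\Tr\bigl((\hat{\bQ}^{1/2}\bQ\hat{\bQ}^{1/2})^{1/2}\bigr).
\]
The map $\bQ\mapsto \hat{\bQ}^{1/2}\bQ\hat{\bQ}^{1/2}$ is linear and preserves the PSD cone. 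The map $X\mapsto \Tr(X^{1/2})$ is concave on $\mbS^n_+$: $t\mapsto t^{1/2}$ is operator concave (L\"owner--Heinz), and the trace of an operator concave function is concave. Alternatively, use the variational formula $\Tr(X^{1/2})=\min_{Y\succ0}\tfrac12\Tr(XY^{-1}+Y)$, which exhibits it as an infimum of affine functions of $X$.

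Composing this concave function with a linear map gives a concave function of $\bQ$. Multiplying by $-2$ makes it convex, and adding the linear term $\Tr(\bQ)$ and the constant $\Tr(\hat{\bQ})$ shows $g$ is convex on $\mbS^n_+$. As a cross-check, I would also note the variational representation $g(\bQ)=\min_{U\,\text{orthogonal}}\|\bQ^{1/2}-\hat{\bQ}^{1/2}U\|_F^2$. I would not rely on it, though, because it is not obviously convex in $\bQ$ through $\bQ^{1/2}$.

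For continuity, the principal square root $\bQ\mapsto\bQ^{1/2}$ is continuous on $\mbS^n_+$. This holds, for instance, because it is the uniform limit of polynomials on any bounded subset of the cone (Weierstrass applied to $t\mapsto\sqrt t$ on $[0,R]$), or by the known H\"older bound $\|A^{1/2}-B^{1/2}\|\le\|A-B\|^{1/2}$. Then
\[
\bQ\mapsto \bQ^{1/2}\hat{\bQ}\bQ^{1/2}\mapsto(\cdot)^{1/2}\mapsto\Tr
\]
is a composition of continuous maps, so $g$ is continuous on all of $\mbS^n_+$, in particular on $\Xi$. Assumption~\ref{assmp: compactness} is not really needed for continuity. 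It only guarantees that $\Xi\subset\mbS^n_+$ and that no pseudoinverse issues arise.

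The main obstacle is the convexity step, specifically justifying concavity of $X\mapsto\Tr(X^{1/2})$ together with the eigenvalue identity that moves $\bQ$ into a linear position. I would cite operator concavity of $t^{1/2}$ from Bhatia (\cite{bhatia2009positive}) rather than prove it. A convex real-valued function on the relative interior of $\mbS^n_+$ is automatically continuous there, which gives a second route to continuity on $\Xi\subset\mbS^n_{\hat\delta}$, since that set lies in the interior.
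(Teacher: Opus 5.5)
Your convexity argument matches the paper's. The paper also splits off the mean term and proves the trace identity through $\bR=\bQ^{1/2}\hat{\bQ}^{1/2}$, using Sylvester's determinant identity to show that $\bR\bR^\top$ and $\bR^\top\bR$ have the same spectrum. It then applies operator concavity of the square root after the linear map $\bQ\mapsto\hat{\bQ}^{1/2}\bQ\hat{\bQ}^{1/2}$. The two arguments differ in the continuity step. The paper uses what you list as your second route: it restricts the convex function to the open convex set $\mbR^n\times\mbS^n_{++}$ and cites the fact that a finite convex function on an open convex set is continuous \cite[Corollary~2.36]{rockafellar2009variational}. That is exactly why it needs $\Xi\subseteq\mbS^n_{\hat\delta}$ to lie in the interior of the cone. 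Your primary route does not use convexity at all. You take continuity of the principal square root (by polynomial approximation or the bound $\|A^{1/2}-B^{1/2}\|\le\|A-B\|^{1/2}$) and then compose. This is strictly stronger: it gives continuity on all of $\mbS^n_+$, including singular covariances, so Assumption~\ref{assmp: compactness} is not needed for this part. It also gives joint continuity in $(\hat{\bs},\bs)$, because $(\bQ,\hat{\bQ})\mapsto\bQ^{1/2}\hat{\bQ}\bQ^{1/2}$ is jointly continuous. That is the form in which the lower-semicontinuity hypothesis on the transport cost is stated in the strong-duality step, whereas the paper's lemma only gives continuity in $\bs$ for fixed $\hat{\bs}$. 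The paper's route is shorter once convexity is in hand; yours is more general and self-contained. One small point: when $X$ is singular, your variational formula for $\Tr(X^{1/2})$ is an infimum rather than a minimum, which does not affect the concavity conclusion.
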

\begin{proof}
    See proof in Appendix~\ref{appndx:proof-of-lem: Delta-Continuity}.
\end{proof}

Note that the continuity statement in the following lemma is with respect to $\bs$, treating $\bx$ as a parameter.
\begin{lemma}\label{lem: G-continuity-mu-sigma}
Let $\bs\;\mapsto\;G(\bs; \bx)$
be a mapping where $G(\bs; \bx)$ is defined in~\eqref{def: G}. Then under Assumptions~\ref{assmp: compactness}, $G(\bs; \bx)$ is continuous on $(\Theta \times \Xi)$.
\end{lemma}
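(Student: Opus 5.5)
We treat $\bx\in\mX$ as fixed and split into the two cases of the definition~\eqref{def: G}.

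\emph{Case $\bx=\bzero$.} Here $G(\bs;\bzero)=\mathbbm{1}_{\geq 0}(b)$ for every $\bs$. This value does not depend on $\bs$, so the map is constant on $(\Theta\times\Xi)$ and therefore continuous.

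\emph{Case $\bx\neq\bzero$.} Write $G(\bs;\bx)=\Phi\bigl(h(\bs)\bigr)$ with
\[
h(\bmean,\bQ):=\frac{b-\bmean^\top\bx}{\sqrt{\bx^\top\bQ\bx}},
\]
and show that $h$ is continuous on $\Theta\times\Xi$. Since $\Phi$ is continuous on $\mbR$, the composition $G$ is then continuous. The proof of continuity of $h$ has three steps.
\begin{enumerate}[leftmargin=1.5em]
\item The numerator $\bmean\mapsto b-\bmean^\top\bx$ is affine, hence continuous, in $(\bmean,\bQ)$.
\item The map $\bQ\mapsto\bx^\top\bQ\bx=\Tr(\bQ\bx\bx^\top)$ is linear in $\bQ$, hence continuous in the Frobenius topology.
\item The denominator is bounded away from zero, so the quotient is continuous.
\end{enumerate}

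Step~3 is the main obstacle: it is the only place where Assumption~\ref{assmp: compactness} enters. Because $\bQ\succeq\hat\delta\bm I$, we have
\[
\bx^\top\bQ\bx\;\ge\;\hat\delta\|\bx\|^2\;>\;0 \qquad\text{for all }\bQ\in\Xi .
\]
Hence $\bQ\mapsto\sqrt{\bx^\top\bQ\bx}$ is a composition of continuous maps whose values lie in $[\sqrt{\hat\delta}\,\|\bx\|,\infty)$. On that interval $t\mapsto 1/t$ is continuous; in fact it is Lipschitz with constant $1/(\hat\delta\|\bx\|^2)$. The quotient $h$ is therefore continuous on $\Theta\times\Xi$.

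For completeness, we could give an explicit $\varepsilon$–$\delta$ bound. For $\bs,\bs'\in\mS$ one has
\[
\bigl|h(\bs)-h(\bs')\bigr| \;\le\; \frac{|\bmean-\bmean'|^\top|\bx|}{\sqrt{\hat\delta}\,\|\bx\|} \;+\; |b-\bmean'^\top\bx|\,\Bigl|\frac{1}{\sqrt{\bx^\top\bQ\bx}}-\frac{1}{\sqrt{\bx^\top\bQ'\bx}}\Bigr| .
\]
The factor $|b-\bmean'^\top\bx|$ is bounded, because $\Theta$ is compact. The difference of reciprocal square roots is controlled by
\[
\frac{|\bx^\top(\bQ-\bQ')\bx|}{2(\hat\delta\|\bx\|^2)^{3/2}} \;\le\; \frac{\|\bQ-\bQ'\|_F\|\bx\|^2}{2(\hat\delta\|\bx\|^2)^{3/2}} .
\]
Together these give local Lipschitz continuity of $h$. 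Combining this with the fact that $\Phi$ is $\phi(0)$-Lipschitz yields uniform continuity of $G(\cdot;\bx)$ on the compact set $\mS$.

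Finally, we note why the lower eigenvalue bound matters. Without it, the expression $\bx^\top\bQ\bx$ could vanish for singular $\bQ$. In that case $h$ jumps between $\pm\infty$ according to the sign of $b-\bmean^\top\bx$, and continuity fails. Assumption~\ref{assmp: compactness} is exactly what excludes this.
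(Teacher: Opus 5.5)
Your proposal is correct and follows essentially the same route as the paper. For $\bx=\bzero$ the map is the constant $\mathbbm{1}_{\geq 0}(b)$; the paper splits this into $b\neq 0$ and $b=0$, and your single observation covers both more cleanly. For $\bx\neq\bzero$, continuity of $\Phi$ composed with the quotient follows because $\bx^\top\bQ\bx\ge\hat\delta\|\bx\|^2>0$ under Assumption~\ref{assmp: compactness}, which is exactly what the paper uses; your explicit Lipschitz estimate is a correct strengthening (uniform continuity on $\mS$) that the paper's sequential argument does not need.
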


\begin{proof}
 Let us consider (case-1): $\bx \neq 0$ and (case-2): $\bx = 0$ with (case-2a): $b \neq 0$ and (case-2b): $b=0$.

To show the claim in (case-1), let $g(\bx) = \frac{b-\bmean^\top \bx}{\sqrt{\bx^\top \bQ \bx}}$ and $\{(\bmean_t, \bQ_t)\}^\infty_{t=1}$ be any sequence in $(\Theta \times \Xi)$ converging to $(\check{\bmean}, \check{\bQ})$. Since $\bQ, \check{\bQ} \in \mbS^n_{\hat{\delta}} \subset \mbS^n_{++}$ and hence $\bx^\top \bQ \bx \geq \hat{\delta} \|\bx\|^2 > 0$ for any $\bx \neq \bzero$,
\begin{align*}
    \lim_{t \to \infty} g(\bmean_t, \bQ_t) = \lim_{t \to \infty} \frac{b-\bmean_t^\top \bx}{\sqrt{\bx^\top \bQ_t \bx}} = \frac{\lim_{t \to \infty} (b-\bmean^\top_t \bx)}{\lim_{t \to \infty} \sqrt{\bx^\top \bQ_t \bx}} = \frac{b-\check{\bmean}^\top \bx}{\sqrt{\bx^\top \check{\bQ} \bx}} = g(\check{\bmean}, \check{\bQ}).
\end{align*}

\noindent The second equality holds by \cite[Theorem~4.4]{rudin2021principles} and the second-to-last equality follows due to the continuity of the linear $b-\bmean^\top \bx$ and quadratic functions $\bx^\top \bQ \bx$ in $(\bmean, \bQ)$, and since $\bx^\top \bQ \bx > 0$ for any $\bx \in \mbR^n \backslash \{\bzero\}$ by Assumption~\ref{assmp: compactness}. Thus $g(\bmean, \bQ)$ is continuous on $\Theta \times \Xi$. Additionally, since $\Phi(\cdot)$ is continuous on $\mbR$, by \cite[Theorem~4.7]{rudin2021principles}, composition $\Phi\big(g(\bmean, \bQ)\big)$ is also continuous at all $(\bmean, \bQ) \in (\Theta \times \Xi)$.

To prove in (case-2a),  we use $\epsilon-\delta$ definition of continuity\footnote{A function is continuous at any arbitrary point $(\bmean_{0},\bQ_0)$, if for any given $\epsilon > 0$, $\exists$ some $\delta(\epsilon) > 0$ such that for any $(\bmean, \bQ) \in (\Theta \times \Xi)$ with $\bigl\|(\bmean,\bQ)-(\bmean_{0},\bQ_0)\bigr\|<\delta(\epsilon)$ implies $|\Phi(\frac{b-\bmean^\top \bx}{\sqrt{\bx^\top \bQ \bx}}) - \Phi(\frac{b-\bmean_0^\top \bx}{\sqrt{\bx^\top \bQ_0 \bx}})| < \epsilon$.}. Observe that, since $\bx = 0$ with $b\neq 0$, $\Phi(\frac{b-\bmean^\top \bx}{\sqrt{\bx^\top \bQ \bx}})$ is defined and 
\[
|\Phi(\frac{b-\bmean^\top \bx}{\sqrt{\bx^\top \bQ \bx}})-\Phi(\frac{b-\bmean_0^\top \bx}{\sqrt{\bx^\top \bQ_0 \bx}})|=0
\quad\text{for \emph{every}}\quad (\bmean,\bQ).
\]
Thus the $\epsilon-\delta$ condition
\[
\exists\,\delta>0\;:\;
\bigl\|(\bmean,\bQ)-(\bmean_{0},\bQ_0)\bigr\|<\delta
\;\Longrightarrow\;
|\Phi(\frac{b-\bmean^\top \bx}{\sqrt{\bx^\top \bQ \bx}})-\Phi(\frac{b-\bmean_0^\top \bx}{\sqrt{\bx^\top \bQ_0 \bx}})|
   <\epsilon
\]
is satisfied by \emph{any} \(\delta > 0\). 

Finally in (case-2b), $G(\bs; \bzero)$ is a constant function in $(\bmean, \bQ)$ with value $1$ by definition~\eqref{def: G}. Hence, it is continuous. Therefore, \(G(\bmean, \bQ; \bx)\) is continuous at every point in
$(\Theta \times \Xi)$.
\end{proof}
\normalsize
\begin{remark}
We note that when $\bx \neq \bzero$, if one allows either $(\bmean, \bQ) \to (\bzero, \bzero)$ with $b = 0$ or simultaneously $\bQ\to \bzero$ and $\bmean\to\bmean_0 \neq \bzero$ so that $b - \bmean_0^\top \bx=0$, then $\lim_{(\bmean,\bQ)\to(\mu_0,\bzero)}g(\bmean,\bQ)$
is path–dependent and hence $\Phi(g(\bmean, \bQ))$ may fail to be continuous at $(\mu_0, \bzero)$ (see \cite[Chapter 9]{rudin2021principles} for counterexamples of continuity). 
We require regularity condition $\bx^\top \bQ \bx \geq \|\delta\|^2 > 0$ for some $\delta > 0$ to overcome such an issue since $\bQ \succeq \bzero, \bQ \neq \bzero$ can still lead to $\bx^\top \bQ \bx = 0$ if $\bx$ lies entirely in the null-space of $\bQ$. 
\end{remark}

Based on the continuity results and the boundedness of \(G\), the proof of Lemma~\ref{lem: strong-duality} is immediate:

\begin{proof}
 \textbf{(of Lemma~\ref{lem: strong-duality})}
 Proofs of Lemmas~\ref{lem: Delta-Continuity} and
 \ref{lem: G-continuity-mu-sigma} respectively establish the continuity of
 \(d^2_{\mathrm{BW}}\) and \(G(\bs;\bx)\) in \(\bs\). Hence, for each nominal
 support point \(\hat{\bs}_k\), the transport-cost term $d^2_{\mathrm{BW}}\bigl(\mN(\hat{\bs}_k),\mN(\bs)\bigr)$
 is lower semicontinuous on $\mS = \Theta \times \Xi$, and \(d^2_{\mathrm{BW}}\bigl(\mN(\bs),\mN(\bs)\bigr)=0\) for every
\(\bs\in\mS\).

 Since \eqref{prob: robust-CC} is a minimization problem, we apply
 \cite[Theorem~1(a)]{blanchet2019quantifying} to \(-G(\cdot;\bx)\). By
 Lemma~\ref{lem: G-continuity-mu-sigma}, \(G(\cdot;\bx)\) is continuous, and
 therefore \(-G(\cdot;\bx)\) is upper semicontinuous.

 Moreover, for any fixed \(\bx\), by definition, $ 0\le G(\bs;\bx)\le 1,
 \, \forall \bs\in\Theta\times\Xi.$
 Thus, the integrability condition of
 \cite[Theorem~1(a)]{blanchet2019quantifying} (also see condition (A3) in
 Appendix~A(vi)) is satisfied by choosing the scalar transport-cost multiplier to be zero, because \(-G(\bs;\bx)\in[-1,0]\) and hence $ -1
 \le
 \sup_{\bs\in\mS}\{-G(\bs;\bx)\}
 \le
 0.$  Therefore, all conditions of \cite[Theorem~1(a)]{blanchet2019quantifying} are satisfied, and strong duality follows.
\end{proof}


\begin{proof} \textbf{(of Theorem~\ref{thm: strong-duality})}
    See proof in Appendix~\ref{appndx:proof-of-lem: Delta-Continuity}.
\end{proof}

\noindent As a consequence of Theorem~\ref{thm: strong-duality}, we also obtain a dual formulation of a finitely supported distributionally robust chance constraint model:

\begin{corollary}
\label{cor: FDR-dual}
Let \(\mD_d
:=
\left\{
\boldsymbol{\pi}\in\mathbb R_+^{\hat K\times K}:
\sum_{k=1}^{\hat K}\sum_{l=1}^K
d^2_{\mathrm{BW}}
\left(
\mN(\hat{\bs}_k),\mN(\hat{\bs}_l)
\right)\pi_{kl}\le\rho
\right\}, \,\, \sum_{l=1}^K\pi_{kl}=\hat w_k \,\,\, \forall k\in[\hat K]\).
Consider the optimization problem
\begin{align}
 & \min_{\bx \in \mX} \;\; \bc^\top \bx \quad \mathrm{s.t.} \quad \sum_{k=1}^{\hat{K}} \sum_{l=1}^K G(\bs_l, \bx) \; \pi_{k l} \geq \theta \qquad \forall \pi \in \mD_d,  \label{prob: FDR-CCP-GMM} \tag{FDR}  
\end{align}
Then, for any \(\bx\in\mX\) satisfying the robust constraint in
\eqref{prob: FDR-CCP-GMM} if and only if there exists
\(\bbeta\in\mathbb R^{\hat K}\times\mathbb R_+\) such that $\sum_{k=1}^{\hat{K}}\hat w_k\beta_k
-
\rho\beta_{\hat K+1}
\ge \theta$
and $\beta_k
-
\beta_{\hat K+1}
d^2_{\mathrm{BW}}
\left(
\mN(\hat{\bs}_k),\mN(\hat{\bs}_l)
\right)
\le
G(\hat{\bs}_l;\bx),
\,\,
\forall k\in[\hat K], \forall l\in[K].$
Consequently, solving~\eqref{prob: FDR-CCP-GMM} is equivalent to
solving~\eqref{prob: semif-infinite-FDR}:
\begin{equation}
\label{prob: semif-infinite-FDR}
\renewcommand{\arraystretch}{1.15}
\begin{array}{@{}l l@{}}
\multirow{2}{*}{$\begin{aligned}
\min_{\bx\in\mX,\;\bbeta\in\mathbb R^{\hat K}\times\mathbb R_+} \quad
& \bc^\top\bx
\quad  \mathrm{s.t.}\quad\\
\end{aligned}$}
&
\displaystyle
\sum_{k=1}^{\hat K}\hat w_k\beta_k
-
\rho\beta_{\hat K+1}
\ge \theta,
\\[6pt]
&
\displaystyle
\beta_k
-
\beta_{\hat K+1}
d^2_{\mathrm{BW}}
\left(
\mN(\hat{\bs}_k),\mN(\bs_l)
\right)
\le
G(\bs_l;\bx),
\quad
\forall k\in[\hat K], \forall l\in[K].
\end{array}
\end{equation}
In particular, \eqref{prob: FDR-CCP-GMM} and
\eqref{prob: semif-infinite-FDR} have the same optimal value, and their optimal \(\bx\)-solutions coincide whenever an optimal solution exists.
\end{corollary}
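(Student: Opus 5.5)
The plan is to obtain Corollary~\ref{cor: FDR-dual} as the special case of Theorem~\ref{thm: strong-duality} (and of Lemma~\ref{lem: strong-duality}) in which the parameter support is the finite set $\mS_d:=\{\bs_1,\ldots,\bs_K\}\subset\Theta\times\Xi$, typically the nominal points $\{\hat{\bs}_l\}$. A finite subset of $\Theta\times\Xi$ is compact, and its covariances still satisfy $\bQ\succeq\hat\delta\bm I$, so Assumption~\ref{assmp: compactness} holds with $\mS$ replaced by $\mS_d$. Every probability measure on $\hat{\mS}\times\mS_d$ is a nonnegative matrix $(\pi_{kl})$, and every one has finite second moment. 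Under this identification the constraints defining $\mD$ in \eqref{def: M-Wass-Ambiguity-Cont} become exactly $\mD_d$: the row sums equal $\hat w_k$ and $\sum_{k,l}d^2_{\mathrm{BW}}(\mN(\hat{\bs}_k),\mN(\bs_l))\pi_{kl}\le\rho$. The integral $\sum_k\int G\,d\pi(\hat{\bs}_k,\cdot)$ becomes $\sum_{k,l}G(\bs_l;\bx)\pi_{kl}$. Thus \eqref{prob: FDR-CCP-GMM} is literally \eqref{prob: DR-CCP-GMM} posed on $\mS_d$.

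Next I would rerun the proof of Lemma~\ref{lem: strong-duality} on $\mS_d$. Lemma~\ref{lem: Delta-Continuity} and Lemma~\ref{lem: G-continuity-mu-sigma} restrict to $\mS_d$ trivially, since every function on a discrete set is continuous. The bound $0\le G\le1$ is unchanged, so the hypotheses of \cite[Theorem~1(a)]{blanchet2019quantifying} hold. This gives that the worst-case value $\min_{\pi\in\mD_d}\sum_{k,l}G(\bs_l;\bx)\pi_{kl}$ equals the maximum of $\sum_k\hat w_k\beta_k-\rho\beta_{\hat K+1}$ subject to $\beta_{\hat K+1}\ge0$ and $\beta_k-\beta_{\hat K+1}d^2_{\mathrm{BW}}(\mN(\hat{\bs}_k),\mN(\bs_l))\le G(\bs_l;\bx)$. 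As an independent check, this is simply LP duality for a finite transportation-type linear program. The primal is feasible, because $\rho>0$ and the diagonal-type coupling has zero cost when $\mS_d\supseteq\hat{\mS}$. The primal is also bounded, so the dual optimum is attained.

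The equivalence statement then follows exactly as in the proof of Theorem~\ref{thm: strong-duality}. If $\bx$ satisfies the robust constraint, the worst-case value is at least $\theta$, and an optimal dual $\bbeta$, which is attained, satisfies the stated inequalities. Conversely, if such a $\bbeta$ exists, weak duality shows that the worst-case value is at least $\sum_k\hat w_k\beta_k-\rho\beta_{\hat K+1}\ge\theta$. Substituting this equivalence into the outer minimization over $\bx\in\mX$, with $\bbeta$ lifted as extra variables, gives \eqref{prob: semif-infinite-FDR}. The two problems have the same feasible $\bx$-projections, hence the same value and the same optimal $\bx$.

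The main obstacle is the bookkeeping in the statement itself. Its notation mixes $\hat{\bs}_l$ and $\bs_l$, and it places the marginal constraint outside $\mD_d$. I would therefore fix $\mS_d$ explicitly and state that the result holds for any finite support. Dual attainment is the only step that needs genuine care. Here it is simpler to use finite LP duality, which guarantees attainment whenever the primal is feasible and bounded, than to appeal to the general theorem.
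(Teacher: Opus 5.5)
Your proposal is correct and follows essentially the paper's route. The paper also treats the corollary as a specialization of Theorem~\ref{thm: strong-duality} to a finite parameter support, and it obtains dual attainment from finite LP duality rather than from the compactness argument that needs $\rho>0$. One small tightening: primal feasibility only needs $\rho\ge 0$, because the zero-cost diagonal coupling lies in $\mD_d$ whenever the nominal points belong to the support; this is exactly why the paper remarks that the corollary does not require $\rho>0$.
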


Note that, unlike Theorem~\ref{thm: strong-duality},
Corollary~\ref{cor: FDR-dual} does not require \(\rho>0\), because its inner
worst-case problem, when feasible, is a finite-dimensional linear program with
finite optimal value, and dual attainment follows from LP duality. We also note that a special case of \eqref{prob: FDR-CCP-GMM} is when the Wasserstein ball is supported on the same finite set of parameter values $\bs_k$ that are used in defining the nominal distribution, with $K=\hat{K}$.

\section{A Cutting Surface Algorithm and its Convergence}\label{sec: Dual-formulation}

The cutting surface algorithm (CSA) framework is a general-purpose approach for solving semi-infinite programs~\cite{hettich1993semi, reemtsen1998sip, goberna2001semi, mehrotra2014cutting, luo2019decomposition}. This section focuses on developing CSA for the semi-infinite formulation~\eqref{prob: semif-infinite}. Algorithm~\ref{alg: cutting-surface} gives pseudo-code based on the main steps of the CSA, and its use to obtain a solution of the user-desired tolerance $\hat{\tau}$ for \eqref{prob: semif-infinite}. The user-desired tolerance $\hat{\tau}$ gives a parameter that is iteratively reduced to set the accuracy level $\epsilon_j$ when developing outer and inner approximations of the master and subproblems. The convergence of the detailed algorithm depends on the solvability of the master problem and the subproblems to this accuracy, which are studied in Sections~\ref{sec: Cut-S-algorithm} and~\ref{sec: subproblem}, respectively. The respective problems are solved exactly after obtaining the outer and inner approximations. The accuracy $\epsilon_j$ is iteratively set because achieving a user-defined tolerance $\hat{\tau}$ depends on an unknown constant, which is known to exist, but computing its value is not practical. 

\subsection{Cutting-Surface Algorithm}\label{sec: Cut-S-algorithm}
For a given accuracy parameter $\epsilon_j$, the CSA Algorithm \ref{alg: cutting-surface} generates solutions to the increasingly tighter relaxations of \eqref{prob: semif-infinite}. The master problem~\eqref{prob: master-DRCC-cont-dual} is an outer approximation of \eqref{prob: semif-infinite} using finitely many constraints
generated from a finite subset of elements in the set $(\Theta \times \Xi)$.
Suppose, at a master iteration $\ell$, set $\mS_\ell$ contains $l$   elements of $(\Theta \times \Xi)$ generated from previous $\ell$-iterations. At an initialization step, these elements can be the parameters of the Gaussians in the nominal mixture distribution. Let a solution of the corresponding master problem: 
\begin{equation}\label{prob: master-DRCC-cont-dual}
\renewcommand{\arraystretch}{1.0}
\begin{array}{@{}l l@{}}
\multirow{2}{*}{$\begin{aligned}
\hspace{-1em} \min_{\bx, \bbeta} \;\, &  \bc^\top \bx \quad
\mathrm{s.t.}  &
\end{aligned}$}
& \bx \in \mX, \;\;\; \bbeta \in \mfB := \Bigl\{ \bbeta \in \mbR^{\hat{K}} \times \mbR_+ \; | \;\sum_{k=1}^{\hat{K}} \hat{w}_k\beta_k - \rho \beta_{\hat{K}+1} \geq \theta \; \Bigr\},
\\[3pt]
&  J_k(\bs_l) := \beta_k - \beta_{\hat{K}+1} d^2_{\mathrm{BW}}\bigl(\mN(\hat{\bs}_k), \mN(\bs_l) \bigr) \leq G(\bs_l; \bx), \; \forall \; \bs_l \in \mS_\ell, \;  k \in [\hat{K}] \;
\end{array}
\end{equation}
be $(\bx^{(\ell)}, \bbeta^{(\ell)})$. Given this solution, CSA solves $[\hat{K}]$ subproblems~\eqref{prob: subproblem}: 
\begin{align}\label{prob: subproblem}
  \hspace{-1.25em} \max_{\bs \in (\Theta \times \Xi)} J_k\big(\bs; (\bx^{(\ell)}, \bbeta^{(\ell)}) \big) = \beta^{(\ell)}_k - \beta^{(\ell)}_{\hat{K}+1} d^2_{\mathrm{BW}}\bigl(\mN(\hat{\bs}_k), \mN(\bs) \bigr) - G(\bs; \bx^{(\ell)}) \,\,\text{for each} \;\; k \in [\hat{K}] 
\end{align}
for all $k \in [\hat{K}]$. Let the solutions to these subproblems be $\bs^{(\ell+1)}_k$. We check the termination criteria based on sufficient constraint violation (i.e., $J_k\big(\bs_k^{(\ell+1)}; (\bx^{(\ell)}, \bbeta^{(\ell)}) > \epsilon_j$). If violated constraints are identified, Step 2 in Algorithm~\ref{alg: cutting-surface} adds a set of new points $\{\bs^{(\ell+1)}_k\}$ to the existing set $\mS_\ell$, i.e., $\mS_{\ell+1} \gets \mS_\ell \cup \{\bs^{(\ell+1)}_k\}$ that provide these violated constraints. Note that prior to entering into Step 2, Algorithm~\ref{alg: cutting-surface} checks if the current iterate is small to handle the singularity at $\bx = 0$. If one or more new violated constraints are found, then the algorithm adds these constraints to the master problem and proceeds to the next master iteration.

\begin{algorithm}[t]
\scriptsize
  \caption{A cutting‐surface algorithm (CSA) to solve~\eqref{prob: DR-CCP-GMM}.}
  \label{alg: cutting-surface}
  \begin{algorithmic}[1]
    \Require Optimality Tolerance $\hat{\tau}$
    \Ensure A $\hat{\tau}$-optimal solution or infeasibility
    \State Set $\quad \mS_0 \gets \{\hat{\bs}_1,\ldots,\hat{\bs}_{\hat K}\},\quad \ell \gets 0, \quad j \gets 1, \quad \delta_\circ \gets 0 \quad lb \gets -\infty.$
    \While{\textbf{true}}
      \State Set $\epsilon_j \gets \hat{\tau}/2^j$
      \State Set $z_{\epsilon_j}
\gets\Phi^{-1}(1-\min\left\{\epsilon_j,\frac12\right\})$
      \State \textbf{Step 1. Solve Master Problem:} Solve~\eqref{prob: master-CS-algrthm} to obtain an optimal solution to
      \begin{align}\label{prob: master-CS-algrthm}
        \min\Bigl\{\bc^\top \bx \ \colon\ \bx \in \mX,\ \bbeta \in \mfB,\ 
        J_k(\bs_l)\le \epsilon_j,\ \bs_l \in \mS_\ell,\ k\in[\hat{K}]\Bigr\}.
      \end{align}
      \If{\eqref{prob: master-CS-algrthm} is infeasible}
        \State \Return Infeasibility Status
      \EndIf
      \State Denote $(\bx^{(\ell)},\bbeta^{(\ell)})$ as the $\epsilon_j$-feasible solution of~\eqref{prob: master-CS-algrthm}
      \State Set $
\delta_{\epsilon_j}
\gets
\frac{b}{\overline{m}_\Theta+\sqrt{\overline{\lambda}_\Xi}\,z_{\epsilon_j}}$ for 
\(\quad\overline{m}_\Theta:=\max_{\bmean\in\Theta}\|\bmean\|,
\quad \overline{\lambda}_\Xi
:=
\max_{\bQ\in\Xi}\lambda_{\max}(\bQ)\).
    \If{$\|\bx^{(\ell)}\|\le \delta_{\epsilon_j}$}
        \State Set $\bs^{(\ell+1)}\gets \bs^{(\ell)}$ and go to Step 3.
    \EndIf
      \State \textbf{Step 2. Solve Subproblems:}  Solve all $\hat{K}$ sub-problems~\eqref{prob: subproblem} to $\epsilon_j$-accuracy and using their solution define $\mathrm{S}_{\ell+1}$ and $\bs^{(\ell+1)}$ as follows
\begin{align*}
&   \mathrm{S}_{\ell+1}
:= \bigcup_{k\in[\hat K]}
\left\{\bs^{(\ell+1)}_k \mid \bs^{(\ell+1)}_k \, \text{is an}\, \epsilon_j\text{-accurate solution to} \,  \max_{\bs\in\mS} J_k\!\left(\bs;(\bx^{(\ell)},\bbeta^{(\ell)})\right), \,\, 
J_k\!\left(\bs^{(\ell+1)}_k;(\bx^{(\ell)},\bbeta^{(\ell)})\right)>\epsilon_j
\right\} \\
& 
\bs^{(\ell+1)}\in \arg\max_{\bs\in \mathrm{S}_{\ell+1}} 
J_k\!\left(\bs;(\bx^{(\ell)},\bbeta^{(\ell)})\right)
\quad \text{(break tie arbitrarily)}.
\end{align*}
\vspace{-0.5em}
     \State \textbf{Step 3. Check the Termination Criteria:} 
      \If{$J\bigl(\bs^{(\ell+1)};(\bx^{(\ell)},\bbeta^{(\ell)})\bigr)\le \epsilon_j$}
      \State Solve~\eqref{prob: master-DRCC-cont-dual} to $\epsilon_j$-optimality and obtain a feasible solution $(\tilde\bx, \tilde\bbeta)$ to~\eqref{prob: semif-infinite}
      \State $lb \gets \bc^\top \tilde\bx$
        \If{$\bc^\top (\tilde\bx - \bx^{(\ell)}) \leq \hat{\tau}$}
        \State \Return $\bx^{(\ell)}$
        \Else 
        \State Set $j \gets j + 1$
        \EndIf
      \EndIf
    \State $\mS_{\ell+1}\gets \mS_\ell\cup\mathrm{S}_{\ell+1}$,\quad $\ell\gets \ell+1$.
    \EndWhile
  \end{algorithmic}
\end{algorithm}

The following definitions are used in establishing the finite convergence of Algorithm~\ref{alg: cutting-surface} as stated in Theorem~\ref{thm: Cut-S-finite-convergence}.

\begin{definition}[$\hat{\tau}$-accurate and $\hat{\tau}$-optimal solutions]
Assume that an optimization problem $\min_{\bx \in \mathcal{X}} f(\bx)$ with a nonempty feasible region $\mathcal{X}$ has a finite optimal objective value $f(\bx^*)$ attained at a solution $\bx^*$.
A solution $\bx^*_{\hat{\tau}} \in\mathcal{X}$ is called \emph{$\hat{\tau}$-accurate} for some $\hat{\tau} > 0$ if it is $\hat{\tau}$-feasible and its objective discrepancy from the true optimum satisfies \(\big| f(\bx^*_{\hat{\tau}}) - f(\bx^*) \big| \leq \hat{\tau}.\) Moreover,  $\bx^*_{\hat{\tau}}$ is called \emph{$\hat{\tau}$-optimal} if it is $\hat{\tau}$-feasible and \(f(\bx^*_{\hat{\tau}}) - f(\bx^*) \leq \hat{\tau}.\)
\end{definition}

\begin{definition}\label{def: accurate-sol}
    Let $\mfB := \Bigl\{ \bbeta \in \mbR^{\hat{K}} \times \mbR_+ \; | \;\sum_{k=1}^{\hat{K}} \hat{w}_k\beta_k - \rho \beta_{\hat{K}+1} \geq \theta \; \Bigr\}$ and, for some given $\hat{\bx}$, $g(\bs; \hat{\bx}) := (b-\bmean^\top \hat{\bx})/\sqrt{{\hat{\bx}}^\top \bQ \hat{\bx}}$. Further, let us denote the constraint set of the master problem~\eqref{prob: master-DRCC-cont-dual} and the semi-infinite problem~\eqref{prob: semif-infinite} respectively as\vspace{-0.25em} 
    \[
    \mF_\ell(\mathtt{0}) := \{ (\bx, \bbeta) \; \, | \; \; \bx \in \mX, \;\; \bbeta \in \mfB, \;\; J_k(\bs_l)\le 0,\;\bs_l \in \mS_\ell, \,\, k \in [\hat{K}]\}\bigr\}
    \]
    and\vspace{-1.8em}   
    \[
    \mF(\mathtt{0}) := \{ (\bx, \bbeta) \; \, | \; \; \bx \in \mX, \;\; \bbeta \in \mfB, \;\; J_k(\bs)\le 0,\;\bs \in \mS, \,\, k \in [\hat{K}]\}\bigr\}.
    \] 
    Then we define the optimal objective value as a function of some right-hand side parameter $\tau \in \mbR$ \, for the master problem~\eqref{prob: master-DRCC-cont-dual} and subproblem~\eqref{prob: subproblem}, respectively, as:
    \begin{align*}
    & V^\star_\ell(\tau) \coloneqq \min \left\{\; \bc^\top \bx \;\; |\;\; (\bx, \bbeta) \in  \mF(\tau) := \bigl\{ (\bx, \bbeta) \; \, | \; \; \bx \in \mX, \;\; \bbeta \in \mfB, \;\; J_k(\bs_l)\le \tau,\;\bs_l \in \mS_\ell, \,\, k \in [\hat{K}] \bigr\}\right\}  \\ 
    & V^\star_{sub}(\tau) \coloneqq \max \left\{ \; J(\bs, \zeta'; \, \hat{\bx}, \hat{\bbeta}) \; | \; (\bs, \zeta') \in  \mQ(\tau) := \bigl\{(\bs, \zeta') \; | \; \bs \in \mS, \;\zeta' \in [-1, 0], \; \tau \geq \Phi(g(\bs; \hat{\bx})) + \zeta' \bigr\}\right\}, 
    \end{align*}
    with $(\bx^*(\mathtt{\tau}), \bbeta^*(\mathtt{\tau}))$ and $(\bs^*(\mathtt{\tau}), \zeta^*(\tau))$ being the solution corresponding to $V^\star_\ell(\tau)$ and $V^\star_{sub}(\tau)$.
\end{definition}

\begin{theorem}\label{thm: Cut-S-finite-convergence}
Consider the adaptive cutting-surface Algorithm~\ref{alg: cutting-surface}.
Suppose that the master problem and separation subproblems in every iteration are solved, respectively, to optimality and $\epsilon$-accuracy for some $\epsilon > 0$ as specified in Step 2 (line 14). Further assume that, for every
\(\epsilon>0\), there exists an oracle \(\mathcal O\) returning $(\widetilde{\bx},\widetilde{\bbeta}) \in \argmin\{\bc^\top\bx:(\bx,\bbeta)\in\mF(-\epsilon)\}$.  Then given a user-specified tolerance $\hat{\tau} > 0$,

\begin{enumerate}[leftmargin=0.975em]
    \item[i)] Algorithm~\ref{alg: cutting-surface} attains a \( \hat{\tau}/2 \)-feasible solution \( (\hat{\bx}, \hbbeta) \) of \eqref{prob: semif-infinite} with \( \bc^\top \hat{\bx} \leq \text{val\eqref{prob: semif-infinite}} \) after adding finitely many points from $\mS$ to $\mS_\ell$.

\item[ii)] Algorithm~\ref{alg: cutting-surface} terminates in finite number of iterations \(\ell^\star\), returns a
\(\hat{\tau}\)-accurate certificate \(\bx^{(\ell^\star)}\), and yields a
\(\hat{\tau}\)-optimal feasible solution \(\widetilde{\bx}\) to
\eqref{prob: semif-infinite}.
\end{enumerate} 
\end{theorem}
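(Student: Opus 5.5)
The plan is a standard compactness argument for cutting-plane methods. It is applied inside each accuracy phase $j$ (fixed $\epsilon_j$), and then a value-function argument bounds the number of phases. First I would obtain a priori bounds on the master iterates. Suppose $(\bx,\bbeta)$ satisfies the master constraints for $\mS_\ell\supseteq\mS_0=\{\hat\bs_1,\dots,\hat\bs_{\hat K}\}$. Taking $\bs_l=\hat\bs_k$ gives $d^2_{\mathrm{BW}}(\mN(\hat\bs_k),\mN(\hat\bs_k))=0$, hence $\beta_k\le G(\hat\bs_k;\bx)+\epsilon_j\le 1+\hat\tau$. Together with $\sum_k\hat w_k\beta_k-\rho\beta_{\hat K+1}\ge\theta$ and $\rho>0$, this yields $0\le\beta_{\hat K+1}\le(1+\hat\tau-\theta)/\rho$, and the $\beta_k$ are bounded below by $\theta-(1+\hat\tau)$ times a constant depending on $\hat w$ and $\rho$. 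So all iterates lie in a compact set $\mX\times\mfB_0$.

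Next I would handle the small-$\bx$ branch (lines 10--12). If $\|\bx^{(\ell)}\|\le\delta_{\epsilon_j}$, then for every $\bs\in\mS$ we have $|\bmean^\top\bx|\le\overline m_\Theta\delta_{\epsilon_j}$ and $\sqrt{\bx^\top\bQ\bx}\le\sqrt{\overline\lambda_\Xi}\,\delta_{\epsilon_j}$. By the choice of $\delta_{\epsilon_j}$, the argument of $\Phi$ is then at least $z_{\epsilon_j}$ (for $b\ge0$; the case $b<0$ with $\bx$ near $\bzero$ is excluded via Propositions~\ref{prop: infeasibility}--\ref{prop: infeasibility-1}). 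Hence $G(\bs;\bx)\ge1-\epsilon_j$ uniformly in $\bs$. Combined with $\beta_k\le1$ (up to the master tolerance), this gives $J_k(\bs)\le\epsilon_j$ for all $\bs$, so no separation is needed. On the complementary region $\|\bx\|\ge\delta_{\epsilon_j}$, Assumption~\ref{assmp: compactness} gives $\bx^\top\bQ\bx\ge\hat\delta\delta_{\epsilon_j}^2$. Then $g(\bs;\bx)$ is Lipschitz in $\bs$, uniformly over $\bx\in\mX$, and $\Phi$ is $1/\sqrt{2\pi}$-Lipschitz. The map $\bs\mapsto d^2_{\mathrm{BW}}(\mN(\hat\bs_k),\mN(\bs))$ is uniformly continuous on the compact $\mS$ (Lemma~\ref{lem: Delta-Continuity}), and $\beta_{\hat K+1}$ is bounded. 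Therefore the family $\{J_k(\cdot;(\bx,\bbeta))\}$ is equicontinuous on $\mS$, uniformly over the admissible iterates.

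For part i), suppose phase $j$ added infinitely many points $\bs^{(\ell)}$. By compactness of $\mS$ they have a Cauchy subsequence. For indices $\ell<\ell'$ in it, the master at iteration $\ell'$ enforces $J_k(\bs^{(\ell)};(\bx^{(\ell')},\bbeta^{(\ell')}))\le\epsilon_j$, while $\bs^{(\ell'+1)}$ was accepted because its violation exceeds $\epsilon_j$. Equicontinuity then forces the two violations to be arbitrarily close. This is the step I expect to be the main obstacle, because the inequalities ``$\le\epsilon_j$'' and ``$>\epsilon_j$'' carry no margin. My first attempt would be to use the $\epsilon_j$-accuracy of the subproblem in the form of the paper's finite piecewise-linear inner approximation of $\Phi$, which yields finitely many linear segments. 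I would show that an accepted cut has true violation at least $\epsilon_j$ plus a fixed positive gap, or else lies in one of finitely many PWL pieces. Either alternative rules out infinite accumulation. Once phase $j$ stops, every $\bs$ satisfies $J_k\le 2\epsilon_j\le\hat\tau/2$, so $(\bx^{(\ell)},\bbeta^{(\ell)})$ is $\hat\tau/2$-feasible. Since the master is a relaxation, $\bc^\top\bx^{(\ell)}\le\mathrm{val}\eqref{prob: semif-infinite}$.

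For part ii), at each termination test the oracle returns $(\tilde\bx,\tilde\bbeta)\in\mF(-\epsilon_j)\subseteq\mF(0)$, which is feasible for \eqref{prob: semif-infinite}. The gap satisfies $\bc^\top\tilde\bx-\bc^\top\bx^{(\ell)}\le V^\star(-\epsilon_j)-V^\star(\epsilon_j)$, where $V^\star(\tau)$ is the optimal value over $\mF(\tau)$. I would show that $\tau\mapsto V^\star(\tau)$ is continuous at $0$. Upper semicontinuity from the left would come from an interior-type perturbation: shift $\beta_k$ down by $\epsilon$ and increase $\beta_{\hat K+1}$ slightly. This stays in $\mfB$ because $\rho>0$ leaves slack once $\theta$ is lowered; that adjustment must be checked, and otherwise I would assume a Slater point. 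Lower semicontinuity from the right follows from compactness of $\mX\times\mfB_0$ and continuity of the constraints. Hence the gap tends to $0$ as $j\to\infty$, so only finitely many increments of $j$ occur. Each phase is finite by part i), which gives finite termination at $\ell^\star$ with the certificate $\bx^{(\ell^\star)}$ and a $\hat\tau$-optimal feasible $\tilde\bx$.
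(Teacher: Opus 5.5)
Your bounds on $\bbeta$ and your near-zero branch reproduce Lemma~\ref{lem:beta-compactification} and Lemma~\ref{lem:near-zero-certificate}. The gap bound $\bc^\top\tilde\bx-\bc^\top\bx^{(\ell)}\le V^\star(-\epsilon_j)-V^\star(\epsilon_j)$ matches the paper, and your compactness argument for $V^\star(\tau)\to V^\star(\mathtt{0})$ as $\tau\downarrow 0$ is fine. Equicontinuity of $\bs\mapsto J_k(\bs;(\bx,\bbeta))$ uniformly over iterates is a sound substitute for the paper's joint continuity on $\mS\times\mathcal Y$. (For $b=0$ your bound $\bx^\top\bQ\bx\ge\hat\delta\,\delta_{\epsilon_j}^2$ is vacuous since $\delta_{\epsilon_j}=0$, but uniformity survives because $g(\bs;\cdot)$ is then $0$-homogeneous.)

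\textbf{First gap: within-phase finiteness in part~(i), exactly where you flag it.} With old cuts enforced at level $\epsilon_j$ and new cuts accepted above $\epsilon_j$, the Cauchy argument only shows that accepted violations can accumulate at $\epsilon_j$ from above. That is no contradiction. Your repair does not close this. The finitely many linear pieces of $\underline\Phi(\cdot;\bzz)$ partition the line of values $z=g(\bs;\bx^{(\ell)})$, not the continuum $\mS$, so infinitely many distinct cut points can share a piece. Nothing yields a uniform gap above $\epsilon_j$ either. Since $J_k\le\check J_k$, a cut accepted on the surrogate test $\check J_k>\epsilon_j$ is only guaranteed a true violation above $\epsilon_j$ minus the inner-approximation error, so the PWL step erodes a margin rather than creating one.

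What is missing is a strictly positive margin $\gamma$ between the level enforced at existing cuts and the acceptance level. With it your argument closes at once: at $(\bx^{(\ell')},\bbeta^{(\ell')})$ one gets $\gamma<J_k(\bs^{(\ell'+1)})-J_k(\bs^{(\ell+1)})\le\omega(\|\bs^{(\ell'+1)}-\bs^{(\ell+1)}\|)\to0$ for the common modulus $\omega$. The paper's proof takes the master at level $\mathtt{0}$ rather than $\epsilon_j$. Concretely, it uses the PWL-outer problem over $\widetilde{\mF}_\ell(\mathtt{0})$, with $\mF_\ell(\mathtt{0})\subseteq\widetilde{\mF}_\ell(\mathtt{0})\subseteq\mF_\ell(\hat\tau/2)$, which enforces $\beta_k-\beta_{\hat K+1}d^2_{\mathrm{BW}}\le\bar\Phi(g)$ at every cut. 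It then invokes \cite[Theorem~7.2]{hettich1993semi} for continuous constraints indexed by the compact $\mS$ over the compact $\mathcal Y$. In your self-contained version this gives a margin equal to the acceptance level minus $\sup_z(\bar\Phi-\underline\Phi)(z)$. So you must choose the outer and inner breakpoint accuracies to sum to less than the acceptance level.

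Relatedly, your exit bound $J_k\le 2\epsilon_j$ gives only $\hat\tau$-feasibility at $j=1$. The paper avoids the factor $2$ because $J_k\le\check J_k$ pointwise, so $\max_{k,\bs}\check J_k\le\epsilon_j$ already certifies $(\bx^{(\ell)},\bbeta^{(\ell)})\in\mF(\epsilon_j)$.

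\textbf{Second gap: left continuity $V^\star(-\tau)\to V^\star(\mathtt{0})$ in part~(ii).} Your perturbation goes the wrong way. Substituting $\beta_k\mapsto\beta_k+\epsilon$ shows that $\mF(-\epsilon)$ is the system \eqref{prob: semif-infinite} with target $\theta+\epsilon$. Lowering every $\beta_k$ by $\epsilon$ costs $\epsilon$ in $\sum_k\hat w_k\beta_k-\rho\beta_{\hat K+1}\ge\theta$, and raising $\beta_{\hat K+1}$ only worsens that constraint. By Theorem~\ref{thm: strong-duality} applied with target $\theta+\epsilon$, consider an $\bx$ whose worst-case satisfaction probability equals $\theta$; this is the relevant case at an optimum where the robust constraint binds. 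Such an $\bx$ admits no $\bbeta$ with $(\bx,\bbeta)\in\mF(-\epsilon)$, so $\bx$ itself must move along a direction that strictly decreases all active constraints.

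A global Slater point does not supply such a direction, because $G$ is nonconvex in $\bx$. A binding optimum need not be a limit of points of $\bigcup_{\tau>0}\mF(-\tau)$, and then $V^\star$ jumps at $\mathtt{0}$. A local constraint qualification at the optimum is required. The paper uses Assumption~\ref{assmp: LICQ} (LICQ, hence MFCQ), the uniform compactness from Assumption~\ref{assmp: non-empty-interior} and Lemma~\ref{lem:beta-compactification}, and the Gauvin--Tolle theorem \cite{gauvin1977differential}, packaged in Lemma~\ref{lem: optimality-master-problem}. This gives continuity of the optimal value function at $\mathtt{0}$, and hence a finite $j$ with $|V^\star(-\tau)-V^\star(\tau)|\le 2\epsilon_j\le\hat\tau$.
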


Theorem~\ref{thm: Cut-S-finite-convergence} requires solving both the master problem~\eqref{prob: master-DRCC-cont-dual} and the subproblems~\eqref{prob: subproblem} at each master iteration. The function $G(\bx, \bs)$ defined in~\eqref{def: G} appears in both the master problem and subproblems, and it involves the function $\Phi$ for which a closed analytical expression is not known. Hence, we first develop approximate formulations of these problems together with an explicit mechanism to control the approximation accuracy of the resulting feasible set. To ensure that all constraints in~\eqref{prob: semif-infinite} are satisfied with $\hat{\tau}/2$ feasibility, we study the master and subproblem optimal-value functions by analyzing $G(\bx, \bs)$  as a function of both $\bx$ and $\bs$, and through properties of $\Phi(z)$ for $ z \in \mbR$.


The univariate function $\Phi(z)$ is convex in $z$ when $z \leq 0$ and concave when $z \geq 0$. Using convexity, concavity, and curvature properties of $\Phi(z)$, \cite{dey2025solving} recently developed outer and inner approximations of a finite Gaussian mixture-based chance-constrained optimization problem. We restate results from \cite{dey2025solving} here for completeness. We consider a piecewise-linear outer (PWL-O) approximation-based formulation for~\eqref{prob: master-DRCC-cont-dual}, which itself is a relaxation of~\eqref{prob: semif-infinite}, while the piecewise-linear inner approximation (PWL-I) is used in~\eqref{prob: subproblem}. Since~\eqref{prob: subproblem} is a maximization problem, using its inner approximation provides an overall outer approximation to the feasible set of \eqref{prob: semif-infinite}.  In the PWL-O approximation of  $\Phi(z)$, we approximate the convex part of $\Phi(z)$ with secant lines and its concave part with tangent lines. For the PWL-I approximation, the convex part is approximated by tangent lines, while the concave part is approximated by secant lines. 
 
 Let $\bzz = (\zz_{-L}, \zz_{-L+1}, \ldots, \zz_{-1},  \zz_{0}, \zz_{1}, \ldots, \zz_{R-1}, \zz_{R}) \in \mbR^{L+R+1}$ be some ordered array of breakpoints on $z-$axis, parameterized by integers $L, R >0 $. Let the set of tangent and secant lines for PWL-O be as defined below so that the curve $\bar{\Phi}(\cdot; \bzz)$ always lies above that of $\Phi$, i.e., $\bar{\Phi}(\cdot; \bzz) \geq \Phi(z), \,\, \forall z \in \mbR$ (see Figure~\ref{fig:pwl_phi_outer}):
\begin{equation}\label{def: phi_pwl_outer}
\bar{\Phi}(z; \bzz) \coloneqq
\begin{dcases*}
    \min\left\{1, \min_{i \in [R]_0} \left\{ \mathtt{g}_i z + \mathtt{g}_i^0 \right\} \right\},
    & if $z \geq 0$, $\mathtt{g}_i \coloneqq \phi(\zz_i), \;\mathtt{g}_i^0 = \Phi(\zz_i) - \phi(\zz_i) \zz_i, \;$  \text{(Tangent)}\\
    \max\left\{\Phi(\zz_{-L}), \max_{i \in [L]} \left\{ h_i z + h_i^0 \right\} \right\},
    & otherwise \; where, $ h_i \coloneqq \frac{\Phi\left(\zz_{-i+1}\right) - \Phi\left(\zz_{-i}\right)}{\zz_{-i+1} - \zz_{-i}}, \;$ \\
    & $h_i^0 = \Phi(\zz_{-i}) - \frac{\Phi\left(\zz_{-i+1}\right) - \Phi\left(\zz_{-i}\right)}{\zz_{-i+1} - \zz_{-i}} \zz_{-i}, \;$ \quad \text{(Secant).}
\end{dcases*}
\end{equation}

We now provide a relaxation of the master problem~\eqref{prob: master-DRCC-cont-dual} by replacing $\Phi$ with $\bar{\Phi}$, followed by its equivalent mixed-integer reformulation using~\eqref{eq: reform-outer-master}. Subsequent result (Theorem~\ref{thm: complexity-breakpoint}) claims that finding an optimal solution of such a relaxation is equivalent to obtaining an optimal solution to~\eqref{prob: master-CS-algrthm}. Section~\ref{sec: subproblem} then provides an approximate formulation of subproblem~\eqref{prob: subproblem} by using inner approximation $\underline{\Phi}(\cdot, \bzz)$ of $\Phi(\cdot)$ so that the desired accurate solution in Step 2 of Algorithm~\ref{alg: cutting-surface} is achieved.

\begin{proposition}\label{prop: master-prob-reform}
Let $\bzz$ be an ordered array of breakpoints of $z-$axis and $\Phi(\cdot)$ be replaced with its PWL outer approximation $\bar{\Phi}(\cdot; \bzz)$ defined in~\eqref{def: phi_pwl_outer}. Then, the resulting formulation~\eqref{prob: master-DRCC-cont-dual-O} is a outer approximation to master problem~\eqref{prob: master-DRCC-cont-dual}:\vspace{-1em}
\begin{subequations}
\begin{equation}\label{prob: master-DRCC-cont-dual-O}
\renewcommand{\arraystretch}{1.15}
\begin{array}{@{}l l@{}}
\multirow{4}{*}{$\begin{aligned}
\min \quad &  \bc^\top \bx \quad
\mathrm{s.t.} \quad &
\end{aligned}$}
&  \bx \in \mX, \qquad \sum_{k=1}^{\hat{K}} \hat{w}_k\beta_k - \beta_{\hat{K}+1} \; \rho \geq \theta, \qquad \bbeta \in \mbR^{\hat{K}} \times \mbR_+ 
\\[2pt]
& \beta_k - \beta_{\hat{K}+1} \,\, d^2_{\mathrm{BW}}\bigl(\mN(\hat{\bs}_k), \mN(\bs_l) \bigr) -\zeta_l \leq 0 \qquad  \forall  k \in [\hat{K}] \\[2pt]
& \bar{\Phi}\big(z_l; \bzz\big) \geq \zeta_l , \quad    b - \bmean_l^\top \bx \geq z_l \;\sigma_l, \quad \bx^\top \bQ_l \bx = \sigma^2_l, \\[2pt]
& \zeta_l \in [0, 1], \qquad z_l \in \mbR, \qquad \sigma_l \in \mbR_{+} \qquad \forall \; \bs_l \in \mS_\ell. \;
\end{array}
\end{equation}

Additionally (ignoring subscript $l$), $\exists (z, \zeta) \in \mbR \times \mbR$ satisfying $\zeta \leq \bar{\Phi} \left(z ; \bzz\right)$ if and only if $\mH^O(\bzz) \neq \emptyset$, where\vspace{-1em}
	\begin{equation}\label{eq: reform-outer-master}
		\mH^O(\bzz) \coloneqq 
		\Set*{
			\begin{aligned}
				&\balpha \in \mbR_{+}^{L}, \\
				& \bt \in \{0, 1\}^3, \\
				& \by \in \mbR^3, \\
				& z, \zeta \in \mbR
			\end{aligned}
		}
		{
			\begin{aligned}
				& \Phi(\zz_{-L})t_1 + t_2 + \mathtt{g}_i y_3 + \mathtt{g}_i^0 t_3  \geq \zeta \;\; i \in [R]_0, \quad \Phi(\zz_{-L})t_1 + \sum_{i=0}^{L} \alpha_{i} \Phi(\zz_{-i})  + t_3 \geq \zeta, \\
				& t_1 + t_2 + t_3 = 1, \;\;\; z = y_1 + y_2 + y_3, \;\;\;  t_1 = 0 \implies y_1 = 0, \;\;\; t_3 = 0 \implies y_3 = 0,  \\
				& y_1 \leq t_1 \zz_{-L}, \qquad y_2 = \sum_{i=0}^{L} \alpha_{i} \zz_{-i}, \quad t_2 = \sum_{i=0}^{L} \alpha_{i}, \quad \bm{\alpha} \in \mathop{\mathrm{SOS2}}
			\end{aligned}
		}.
	\end{equation}
\end{subequations}
\normalsize
\end{proposition}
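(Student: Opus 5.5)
Two claims need proof: that \eqref{prob: master-DRCC-cont-dual-O} is an outer approximation of \eqref{prob: master-DRCC-cont-dual}, and that $\zeta\le\bar\Phi(z;\bzz)$ holds exactly when $\mH^O(\bzz)$ contains a point with the given $(z,\zeta)$.

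\textbf{Outer approximation.} Take any feasible $(\bx,\bbeta)$ of \eqref{prob: master-DRCC-cont-dual} with $\bx\neq\bzero$. For each $\bs_l\in\mS_\ell$ set $\sigma_l=\sqrt{\bx^\top\bQ_l\bx}$, which is positive by Assumption~\ref{assmp: compactness}. Set $z_l=(b-\bmean_l^\top\bx)/\sigma_l$ and $\zeta_l=G(\bs_l;\bx)=\Phi(z_l)\in[0,1]$. Then $b-\bmean_l^\top\bx\ge z_l\sigma_l$ holds with equality, and the master constraints $J_k(\bs_l)\le G(\bs_l;\bx)$ become $\beta_k-\beta_{\hat K+1}d^2_{\mathrm{BW}}(\mN(\hat{\bs}_k),\mN(\bs_l))-\zeta_l\le 0$.

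It remains to show $\zeta_l\le\bar\Phi(z_l;\bzz)$, i.e.\ $\bar\Phi\ge\Phi$ pointwise.
\begin{itemize}
\item For $z\ge0$, $\Phi$ is concave there, so each tangent satisfies $\mathtt g_iz+\mathtt g_i^0\ge\Phi(z)$. Tangents taken at points $\zz_i\ge0$ dominate $\Phi$ on all of $[0,\infty)$. Also $1\ge\Phi$. Hence $\bar\Phi\ge\Phi$.
\item For $z<0$, $\Phi$ is convex. On $[\zz_{-i},\zz_{-i+1}]$ the secant lies above $\Phi$, and $\bar\Phi$ takes a max over the secants, so it is at least the relevant secant. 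Left of $\zz_{-L}$, the term $\Phi(\zz_{-L})$ dominates since $\Phi$ is increasing. This needs the breakpoints with $\zz_0=0$ to cover $[\zz_{-L},0]$, which I will state as the standing convention on $\bzz$.
\end{itemize}
The case $\bx=\bzero$ is handled by the convention for $G$ in \eqref{def: G}. There $\sigma_l=0$, and I pick $z_l$ large (when $b\ge0$) so that $\bar\Phi(z_l)=1\ge\zeta_l$. Thus every feasible point extends to a feasible point of \eqref{prob: master-DRCC-cont-dual-O} with the same objective, so the optimal value of \eqref{prob: master-DRCC-cont-dual-O} is at most that of \eqref{prob: master-DRCC-cont-dual}.

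\textbf{Mixed-integer characterization.} The binary $\bt$ selects one of three regimes, and I check both directions in each.
\begin{itemize}
\item $t_1=1$ (left tail, $z\le\zz_{-L}$). Then $y_1=z$ and $y_2=y_3=0$, with $\balpha=\bzero$. The constraints reduce to $\zeta\le\Phi(\zz_{-L})$, and the tangent rows read $\Phi(\zz_{-L})\ge\zeta$.
\item $t_2=1$ (the segment $[\zz_{-L},0]$). By SOS2, $\balpha$ is supported on two adjacent breakpoints, so $z=y_2$ is their convex combination. The row $\sum\alpha_i\Phi(\zz_{-i})\ge\zeta$ is then exactly the secant value at $z$. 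The tangent rows become $t_2=1\ge\zeta$, which is redundant.
\item $t_3=1$ ($z\ge0$). Then $y_3=z$, and the rows give $\zeta\le\min_i\{\mathtt g_iz+\mathtt g_i^0\}$. The other row gives $\zeta\le1$.
\end{itemize}

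\emph{Forward direction.} Given $\zeta\le\bar\Phi(z)$, choose the regime containing $z$ and build the certificate as above. On the negative side, $\bar\Phi$ equals the secant interpolation $\sum\alpha_i\Phi(\zz_{-i})$: by convexity of $\Phi$, the max of the secants equals the interpolant on each cell.

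\emph{Reverse direction.} In each regime the constraints imply $\zeta\le$ (the corresponding branch of $\bar\Phi$ evaluated at $z$) $\le\bar\Phi(z)$.

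A subtlety arises at boundary points such as $z\in[\zz_{-L},0]$ with $t_3=1$. The constraint set for $y_3$ there may allow $z<0$ with tangents, but tangents of the concave branch extended below $0$ still lie below $\min\{1,\cdot\}$, so no spurious $\zeta$ appears. I need to check this carefully, either by arguing that tangents at $\zz_i\ge0$ extended to $z<0$ are at most $\bar\Phi(z)$, or by noting that an implicit sign restriction is intended.

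\textbf{Main obstacle.} I expect the main obstacle to be this regime-boundary bookkeeping, notably the missing explicit $y_3\ge0$ and $y_2\in[\zz_{-L},0]$ bounds. I would first try to show these are implied by SOS2, by $t_2=\sum\alpha_i$, or harmlessly by the ordering of $\Phi$-values. If not, I would add them as implicit domain constraints.
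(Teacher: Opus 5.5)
Your route is the paper's: a case split on which component of $\bt$ equals one, checked in both directions, for the $\mH^O(\bzz)$ characterization. On the relaxation claim you go further than the appendix, which takes $\bar{\Phi}\ge\Phi$ from the construction without proof. Your lifting $(\sigma_l,z_l,\zeta_l)$ and your concave-tangent/convex-secant verification of $\bar{\Phi}\ge\Phi$ are correct.

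The point you leave open is real. The paper's reverse direction simply posits $y_3=z\ge 0$ when $t_3=1$, but nothing in $\mH^O(\bzz)$ enforces this.
\begin{itemize}
\item SOS2 and $t_2=\sum_i\alpha_i$ only control $y_2$. When $t_2=1$, $y_2$ is a convex combination of two adjacent breakpoints, so $y_2\in[\zz_{-L},0]$ is implied and you can drop that worry.
\item With $t_3=1$, however, the constraints allow $z=y_3<0$ together with $\zeta\le\min\{1,\min_{i\in[R]_0}(\mathtt{g}_i z+\mathtt{g}_i^0)\}$.
\end{itemize}
Do not add $y_3\ge 0$, since that changes the set in the statement. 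Also do not argue tangent by tangent: tangents at large $\zz_i$ are nearly horizontal at height close to $1$, so they lie far above $\Phi$ to the left of $0$. Your first option works provided you apply it to the minimum. The index $i=0$ is in the set and $\zz_0=0$, so that row reads $\zeta\le\Phi(0)+\phi(0)z$. Because $\Phi$ is convex on $(-\infty,0]$, this supporting line satisfies $\Phi(0)+\phi(0)z\le\Phi(z)$ for all $z\le 0$. Hence $\zeta\le\Phi(z)\le\bar{\Phi}(z;\bzz)$, by the inequality you already proved, and this regime produces no spurious points.

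Two smaller loose ends remain.
\begin{itemize}
\item In the forward direction for $z<\zz_{-L}$, you need $\bar{\Phi}(z;\bzz)=\Phi(\zz_{-L})$. This holds because every secant has positive slope and, by convexity, takes a value at most $\Phi(\zz_{-L})$ at $\zz_{-L}$.
\item In the relaxation claim you treat $\bx=\bzero$ only for $b\ge 0$. For $b<0$, the relaxed row $b-\bmean_l^\top\bx\ge z_l\sigma_l$ with $\sigma_l=0$ is violated, so you must check that $\bx=\bzero$ is also infeasible for the master. It is: $\hat{\bs}_k\in\mS_\ell$ forces $\beta_k\le G(\hat{\bs}_k;\bzero)=0$ for every $k$, which contradicts $\sum_k\hat w_k\beta_k-\rho\beta_{\hat K+1}\ge\theta>0$. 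This is where the inclusion relies on the nominal points being among the cuts.
\end{itemize}
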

\vspace{-1.0em}
\begin{proof}
    See proof in Appendix~\ref{appndx:cut-alg-proof}.
\end{proof}

\begin{theorem}(~\cite[Lemma~1, Theorem~1]{dey2025solving})\label{thm: complexity-breakpoint}
 Let Algorithms~\ref{alg:breakpoint_find} and ~\ref{alg:breakpoint_find_negative} be used to generate breakpoint array $\zz$ for PWL-O approximation of $\Phi(\cdot)$. Then $O(\frac{1} {\sqrt{\tau}} \sqrt{\log(\frac{1}{\tau})})$ number of breakpoints are sufficient to approximate $\Phi(z)$   so that $0 \leq \bar{\Phi}(z; \bzz) - \Phi(z) \leq \tau$ for all $z \in \mbR$. 
\end{theorem}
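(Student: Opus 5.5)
We prove Theorem~\ref{thm: complexity-breakpoint} by treating the concave branch ($z\ge 0$, tangent lines) and the convex branch ($z<0$, secant lines plus the constant floor $\Phi(\zz_{-L})$) separately. On each branch we bound the local error of one linear piece by the curvature of $\Phi$, and then count how many pieces are needed. Both $\bar{\Phi}(\cdot;\bzz)\ge\Phi$ and $\bar{\Phi}(\cdot;\bzz)-\Phi\ge 0$ follow directly from the construction in~\eqref{def: phi_pwl_outer}. On $z\ge0$, $\Phi$ is concave, so each tangent line lies above it, and $1\ge\Phi$. On $z<0$, $\Phi$ is convex, so each secant lies above it on its own interval. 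Taking the max over secants therefore dominates on $[\zz_{-L},0]$, and the constant $\Phi(\zz_{-L})$ dominates to the left of $\zz_{-L}$ because $\Phi$ is increasing. It remains to establish the upper bound $\tau$ with the stated number of breakpoints.

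\textbf{Tail truncation.} First choose the outermost breakpoints so that the tails are handled by the constants. Pick $\zz_{-L}=-\zz_R=-T$ with $1-\Phi(T)\le\tau$. The Mills-ratio bound $1-\Phi(T)\le\phi(T)/T$ gives $T=O(\sqrt{\log(1/\tau)})$. For $z\le -T$ the error is $\Phi(-T)-\Phi(z)\le\Phi(-T)\le\tau$. For $z\ge T$, the last tangent (or the cap at $1$) gives error at most $1-\Phi(z)\le 1-\Phi(T)\le\tau$.

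\textbf{Local error on $[-T,T]$.} For a secant on $[a,a+h]$, the standard interpolation bound gives error at most $\tfrac{h^2}{8}\max_{[a,a+h]}|\phi'|$. For two consecutive tangents at $a$ and $a+h$, the error at their intersection point is likewise at most $\tfrac{h^2}{8}\max|\phi'|$ (Taylor with remainder on each side). Since $|\phi'(z)|=|z|\phi(z)\le \phi(1)$ for all $z$, a uniform spacing $h=\sqrt{8\tau/\phi(1)}=\Theta(\sqrt\tau)$ makes every local error at most $\tau$. Covering $[-T,T]$ with this spacing uses $2T/h=O\big(\tau^{-1/2}\sqrt{\log(1/\tau)}\big)$ breakpoints, which is the claimed count.

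\textbf{Matching the paper's algorithms.} The main obstacle is showing that Algorithms~\ref{alg:breakpoint_find} and~\ref{alg:breakpoint_find_negative}, which presumably place breakpoints adaptively (greedily extending each piece until the error reaches $\tau$), use no more breakpoints than this uniform scheme. The plan is to show that the greedy step from any current point is at least the uniform $h$: the curvature bound above holds on every interval, so the greedy piece can always be extended at least $h$. Hence greedy uses at most $\lceil 2T/h\rceil+O(1)$ points. We also need to check the junction at $z=0$, where the secant branch meets the tangent branch. There the error stays $\le\tau$ because both the last secant and the tangent at $\zz_0=0$ satisfy the same local bound, and $\bar\Phi$ is defined branchwise.
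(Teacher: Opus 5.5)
Your uniform-grid analysis is correct. The $\tfrac{M}{8}h^2$ secant and two-tangent bounds, $\max_z|\phi'(z)|=\phi(1)$, and Mills-ratio truncation at $T=O(\sqrt{\log(1/\tau)})$ show that \emph{some} array of $O(\tau^{-1/2}\sqrt{\log(1/\tau)})$ breakpoints achieves $0\le\bar{\Phi}-\Phi\le\tau$. The gap is the bridge to the statement, which concerns the arrays output by Algorithms~\ref{alg:breakpoint_find} and~\ref{alg:breakpoint_find_negative}. These are not greedy ``extend until the true error reaches $\tau$'' rules, so error $\le\tau$ is not automatic for their output. Instead they take closed-form steps computed from the curvature at the current breakpoint only:
\begin{itemize}
\item $\sqrt{2\tau/(|\zz|\phi(\zz))}$ for tangents, from the single-tangent Taylor bound $\tfrac{M}{2}h^2\le\tau$;
\item $2\sqrt{2\tau/(|\zz|\phi(\zz))}$ for secants, from $\tfrac{M}{8}h^2\le\tau$.
\end{itemize}
Both start from the anchors $\pm1$ and march toward $0$ and toward $\pm\infty$. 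Such a step is valid only if the current breakpoint maximizes $|\phi'|$ on the interval it creates, and your proposal never shows this. The missing fact is that $u\mapsto u\phi(u)$ has derivative $\phi(u)(1-u^2)$. Hence $|\phi'(z)|=|z|\phi(z)$ increases on $|z|\le1$ and decreases on $|z|\ge1$. Marching away from $|z|=1$ therefore makes each current breakpoint the curvature maximizer on its step, which is why the algorithms are anchored at $\pm1$. Started at $0$, where $\phi'(0)=0$, the same formula would prescribe an infinite step.

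With that in hand, read the count off the step formulas rather than comparing with your grid. Every tangent step is at least $\sqrt{2\tau/\phi(1)}$ and every secant step at least $2\sqrt{2\tau/\phi(1)}$. The tangent bound is only $h/2$ for your $h=\sqrt{8\tau/\phi(1)}$, so your claim that each algorithmic step is at least $h$ is false on the tangent branch; this affects only constants. For termination, every breakpoint added before the last one still has tail mass exceeding $\tau$. It therefore lies in $(-T,T)$ with $T=\Phi^{-1}(1-\tau)$, giving $O(T/\sqrt{\tau})$ breakpoints in total. This requires two readings of the printed algorithms. First, the loop guards $1-\Phi(\zz_i)<\tau$ and $\Phi(\zz_{-i})<\tau$ must be read as ``$>\tau$''. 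Second, the updates in Algorithm~\ref{alg:breakpoint_find_negative} need $|\zz_{-i}|$ in the radicand and the opposite direction of travel. Your treatment of the regions beyond $\zz_R$ and $\zz_{-L}$, and of the branchwise junction at $0$, then carries over unchanged.
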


\subsection{Solving subproblem to a desired accuracy using approximation}\label{sec: subproblem}
This section shows that, given a tolerance $\tau > 0$, solving a $\tau$-approximate formulation of subproblem~\eqref{prob: subproblem} yields a solution whose objective value is within $\tau$ of the exact model~\eqref{prob: subproblem}. We pursue an inner approximation for~\eqref{prob: subproblem} to obtain the desired $\tau$-approximate formulation. Together with the PWL-O approximation used to solve the master problem~\eqref{prob: master-DRCC-cont-dual}, such an inner approximation ensures that the master model remains a relaxation of~\eqref{prob: semif-infinite}. In contrast to PWL-O, for PWL-I, the curve $\underline{\Phi}(\cdot, \bzz)$ lies below $\Phi$ and is defined as:
\begin{equation}\label{def: phi_pwl_inner}
	\underline{\Phi}(z; \bzz)
	=
	\begin{dcases*}
		\min\left\{\Phi(\zz_{R}), \min_{i \in [R]} \left\{ \mathtt{g}_i z + \mathtt{g}_i^0 \right\} \right\},
		& if $z \geq 0$, \quad \; \text{(Secant)}\\
		\max\left\{ 0, \max_{i \in [L]_0}\left\{ h_i z + h_i^0 \right\} \right\},
		& otherwise, \; \text{(Tangent)}
	\end{dcases*}
\end{equation}
where
\vspace{-0.5em}
\begin{equation*}
\begin{alignedat}{3}
    & \mathtt{g}_i := \frac{\Phi(\zz_i) - \Phi(\zz_{i-1})}{\zz_i - \zz_{i-1}}, \; &&  \mathtt{g}^0_i := \Phi(\zz_{i-1}) - \frac{\Phi(\zz_i) - \Phi(\zz_{i-1})}{\zz_i - \zz_{i-1}} \zz_{i-1}, \; && i \in [R], \\ 
\text{and} \quad	& h_i \coloneqq \phi(\zz_i), \; && h_i^0 = \Phi(\zz_i) - \phi(\zz_i) \zz_i, \; && i \in [L]_0.
\end{alignedat}
\end{equation*}
A counterpart of Theorem~\ref{thm: complexity-breakpoint} for PWL-I can then be stated as follows.
\begin{theorem}(~\cite[Lemma~5, Theorem~3]{dey2025solving})\label{thm: complexity-breakpoint-inner}
   Consider counterparts of Algorithms~\ref{alg:breakpoint_find} and ~\ref{alg:breakpoint_find_negative} to generate a breakpoint array $\zz$ for PWL-I approximation of $\Phi(\cdot)$. Then $O(\frac{1} {\sqrt{\tau}} \sqrt{\log(\frac{1}{\tau})})$ breakpoints are sufficient when $\Phi(z)$ is approximated as $\underline{\Phi}(z; \zz)$ so that $0 \leq \Phi(z) -\underline{\Phi}(z; \bzz)  \leq \tau$ for all $z \in \mbR$.  
\end{theorem}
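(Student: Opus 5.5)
We prove the two inequalities separately. The lower-bound property $\underline{\Phi}(z;\bzz)\le\Phi(z)$ will follow from the curvature of $\Phi$. The upper bound $\Phi(z)-\underline{\Phi}(z;\bzz)\le\tau$ will follow from an explicit placement of breakpoints: a bounded \emph{core} interval $[\zz_{-L},\zz_R]$ with spacing of order $\sqrt{\tau}$, and two \emph{tails} where the constant pieces $0$ and $\Phi(\zz_R)$ take over. The argument mirrors the PWL-O case of Theorem~\ref{thm: complexity-breakpoint}, with the roles of secants and tangents exchanged. Throughout we fix $\zz_0=0$, so that the secant pieces (for $z\ge 0$) and the tangent pieces (for $z<0$) meet at the inflection point of $\Phi$.

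\emph{Step 1 (validity).} On $z\ge 0$, $\Phi$ is concave. Each chord line through $(\zz_{i-1},\Phi(\zz_{i-1}))$ and $(\zz_i,\Phi(\zz_i))$ lies below $\Phi$ on $[\zz_{i-1},\zz_i]$ and above it outside that interval. Hence $\min_{i\in[R]}\{\mathtt{g}_iz+\mathtt{g}_i^0\}$ equals the piecewise-linear interpolant of $\Phi$ on $[0,\zz_R]$, which lies below $\Phi$ there. For $z>\zz_R$, the cap gives $\underline{\Phi}\le\Phi(\zz_R)\le\Phi(z)$ by monotonicity. On $z<0$, $\Phi$ is convex, so every tangent line $h_iz+h_i^0$ lies below $\Phi$, and so does the constant $0$. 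This proves $\Phi-\underline{\Phi}\ge 0$ everywhere.

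\emph{Step 2 (error on the core).} We use $|\Phi''(z)|=|z\,\phi(z)|\le M:=\phi(1)$.
\begin{itemize}
\item On a secant interval of width $h_i=\zz_i-\zz_{i-1}$, the standard interpolation bound gives an error of at most $Mh_i^2/8$.
\item On $[\zz_{-i},\zz_{-i+1}]$, between two consecutive tangency points, the gap between the convex function and the maximum of its two tangents is at most $Mh_i^2/2$. This follows from a Taylor expansion around whichever tangency point is closer; a crude constant suffices here.
\end{itemize}
Therefore choosing all spacings $h_i\le\sqrt{2\tau/M}$ makes the core error at most $\tau$. The tangent at $\zz_0=0$ is included in the index set $[L]_0$, so no gap opens at the junction.

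\emph{Step 3 (tails and counting).}
\begin{itemize}
\item For $z>\zz_R$, the error is at most $1-\Phi(\zz_R)\le\phi(\zz_R)/\zz_R$ by the Mills-ratio bound.
\item For $z<\zz_{-L}$, where $\underline{\Phi}\ge 0$, the error is at most $\Phi(z)-\max\{0,\text{tangent at }\zz_{-L}\}\le\Phi(\zz_{-L})$.
\end{itemize}
Taking $\zz_R=-\zz_{-L}=\sqrt{2\log(1/\tau)}$, or slightly larger, makes both tail errors at most $\tau$. The core then has length $O(\sqrt{\log(1/\tau)})$ and is covered with spacing $\Theta(\sqrt{\tau})$. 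The total number of breakpoints is therefore $L+R+1=O\!\left(\tau^{-1/2}\sqrt{\log(1/\tau)}\right)$.

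If the counterparts of Algorithms~\ref{alg:breakpoint_find} and~\ref{alg:breakpoint_find_negative} place breakpoints adaptively rather than uniformly, they only advance while the local error stays at most $\tau$. Each step is then at least as long as the uniform step above, so the same bound applies.

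\emph{Main obstacle.} The delicate point is Step 2 on the tangent side. For the maximum of tangents, the worst error is not at the midpoint of an interval but at the crossing point of the two tangents. This needs a clean bound in terms of the spacing. I would first try the two-sided Taylor argument: the error at $z$ is at most $\min\{M(z-\zz_{-i})^2/2,\ M(\zz_{-i+1}-z)^2/2\}\le Mh_i^2/8$. If needed, I would fall back on the cruder bound $Mh_i^2/2$, which only changes constants.

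A secondary check is that the tail error below $\zz_{-L}$ is controlled even though $\underline{\Phi}$ there is $\max\{0,\text{tangent}\}$ rather than exactly $0$. This is immediate, since that maximum is nonnegative and hence the error is bounded by $\Phi(\zz_{-L})$.
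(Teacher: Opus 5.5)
Your proof is correct and takes essentially the same route as the cited source, whose construction is reflected in Algorithms~\ref{alg:breakpoint_find}--\ref{alg:breakpoint_find_negative}: the curvature bound $|\Phi''(z)|=|z|\phi(z)\le\phi(1)$ forces spacing of order $\sqrt{\tau}$, Gaussian tail bounds truncate the core at $\pm O(\sqrt{\log(1/\tau)})$, and the secant and tangent roles are swapped relative to PWL-O. One small addition is needed for the adaptive counterparts: each step $\sqrt{2\tau/(|z|\phi(z))}$ uses the largest curvature on its interval, because the algorithms move away from $|z|=1$, where $|z|\phi(z)$ peaks; this secures the $\tau$ error bound, and your comparison with $\sqrt{2\tau/\phi(1)}$ then gives the count.
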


Since subproblem~\eqref{prob: subproblem} is a maximization problem, to follow the same convention for inequality direction between $\bar{\Phi}\big(z_l; \bzz\big)$ and $\zeta$ as used in~\eqref{prob: master-DRCC-cont-dual-O} and~\eqref{eq: reform-outer-master}, we use $\zeta':= -\zeta$ here. This leads to $-\underline{\Phi}(z; \zz) \geq \zeta'$ in~\eqref{prob: subproblem-1.b} since $\underline{\Phi}(z; \zz) \leq \zeta \Longleftrightarrow -\underline{\Phi}(z; \zz) \geq -\zeta = \zeta'$. Hence, the next proposition provides a reformulation of the relaxation of the subproblem~\eqref{prob: subproblem} obtained by taking a piecewise linear approximation of $\Phi$.
\begin{proposition}\label{prop: sub-prob-reform}
Let $\bzz$ be an ordered array of breakpoints of $z-$axis and $\Phi(\cdot)$ be replaced with its PWL inner approximation $\underline{\Phi}(\cdot; \bzz)$ defined in~\eqref{def: phi_pwl_inner}. Then, for any given $\bx \neq \bzero$ from $\mX$, the resulting formulation~\eqref{prob: subproblem-1.a}-\eqref{prob: subproblem-1.c} is a relaxation to subproblem~\eqref{prob: subproblem}. 
\begin{subequations}
\begin{align}
     & \max \quad J\bigl(\bs,\zeta'; (\bx, \bbeta)\bigr) \coloneqq \beta_k - \beta_{\hat{K}+1} \;d^2_{\mathrm{BW}}\bigl(\mN(\hat{\bs}_k), \mN(\bmean, \bQ) \bigr) + \zeta', \; \label{prob: subproblem-1.a} \\
    & \;\;\; \mathrm{s.t.}  \quad - \underline{\Phi}\big(z; \bzz\big) \geq \zeta',  \;\; b - \bmean^\top \bx \leq z \; \sigma, \;\; \bx^\top \bQ \bx = \sigma^2, \label{prob: subproblem-1.b}\\
    & \qquad \qquad \bmean \in \Theta, \;\; \bQ \in \Xi, \quad \zeta' \in [-1, 0], \quad z \in \mbR, \quad \sigma \geq \sqrt{\hat{\delta}} \; \|\bx\|. \label{prob: subproblem-1.c}
\end{align}
Additionally, $\exists \, (z, \zeta) \in \mbR \times \mbR$ satisfying $-\underline{\Phi} \left(z ; \bzz\right) \geq \zeta'$ if and only if $\mH^I(\bzz) \neq \emptyset$, where
	\begin{equation}\label{prob: subproblem-reform-outer}
		\mH^I(\bzz) \coloneqq 
		\Set*{
			\begin{aligned}
				&\balpha \in \mbR_{+}^{L}, \\
				& \bt \in \{0, 1\}^3, \\
				& \by \in \mbR^3, \\
				& z, \zeta' \in \mbR
			\end{aligned}
		}
		{
			\begin{aligned}
				&  - \mathtt{h}_i y_1 - \mathtt{h}_i^0 t_1 -t_2 - \Phi(\zz_{R})t_3 \geq \zeta' \;\; i \in [L_0], \;\; - t_1 -\sum_{i=0}^{R} \alpha_{i} \Phi(\zz_{i})  - \Phi(\zz_{R})t_3\geq \zeta', \\
				& t_1 + t_2 + t_3 = 1, \;\;\; z = y_1 + y_2 + y_3, \;\;\;  t_1 = 0 \implies y_1 = 0, \;\;\; t_3 = 0 \implies y_3 = 0,  \\
				& y_2 = \sum_{i=0}^{R} \alpha_{i} \zz_{i}, \quad t_2 = \sum_{i=0}^{R} \alpha_{i}, \quad \bm{\alpha} \in \mathop{\mathrm{SOS2}}, \qquad y_3 \geq t_3 \zz_{R}
			\end{aligned}
		}.
	\end{equation}
\end{subequations}
\end{proposition}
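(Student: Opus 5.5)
We prove the two claims of Proposition~\ref{prop: sub-prob-reform} separately: first the relaxation property, then the equivalence with $\mH^I(\bzz)$.

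\textbf{Relaxation property.} Fix $\bx\neq\bzero$ in $\mX$ and $\bbeta$. We embed every feasible point of~\eqref{prob: subproblem} into~\eqref{prob: subproblem-1.a}--\eqref{prob: subproblem-1.c} with objective value no smaller. Given $\bs=(\bmean,\bQ)\in\Theta\times\Xi$, we set
\[
\sigma:=\sqrt{\bx^\top\bQ\bx},\qquad z:=g(\bs;\bx)=\frac{b-\bmean^\top\bx}{\sigma},\qquad \zeta':=-\underline{\Phi}(z;\bzz).
\]
By Assumption~\ref{assmp: compactness}, $\bQ\succeq\hat\delta\bI$, so $\sigma\ge\sqrt{\hat\delta}\|\bx\|>0$. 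Hence $\sigma$ is well defined and satisfies~\eqref{prob: subproblem-1.c}.

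The constraint $b-\bmean^\top\bx\le z\sigma$ holds with equality. From the definition~\eqref{def: phi_pwl_inner}, $\underline{\Phi}$ takes values in $[0,\Phi(\zz_R)]\subseteq[0,1]$, so $\zeta'\in[-1,0]$.

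By Theorem~\ref{thm: complexity-breakpoint-inner}, $\underline{\Phi}\le\Phi$. Therefore
\[
J(\bs,\zeta')=\beta_k-\beta_{\hat K+1}d^2_{\mathrm{BW}}-\underline{\Phi}(g(\bs;\bx))\ \ge\ \beta_k-\beta_{\hat K+1}d^2_{\mathrm{BW}}-\Phi(g(\bs;\bx))=J_k(\bs),
\]
using $G(\bs;\bx)=\Phi(g(\bs;\bx))$ since $\bx\ne\bzero$. Taking the supremum over $\bs$ shows the optimal value of the approximation is at least that of~\eqref{prob: subproblem}, which is the relaxation claim.

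One subtlety concerns the direction of $b-\bmean^\top\bx\le z\sigma$. It allows $z$ to be larger than $g$. Because $-\underline{\Phi}$ is nonincreasing, this lets $\zeta'$ become smaller, never larger, which does not hurt the relaxation direction. I would remark on this but not rely on it.

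\textbf{Equivalence with $\mH^I(\bzz)$.} We follow the argument of Proposition~\ref{prop: master-prob-reform}, as in \cite{dey2025solving}, via a disjunctive partition of the $z$-axis into three regimes.
\begin{itemize}[leftmargin=1.5em]
\item $t_1=1$, i.e.\ $z\le\zz_0$: the tangent (convex) region, where $\underline{\Phi}$ is the maximum of the tangent lines. The hypograph condition $\zeta'\le-\max_i(h_iz+h_i^0)$ becomes the family of linear constraints indexed by $i\in[L]_0$, with $y_1=z$.
\item $t_2=1$, i.e.\ $z\in[\zz_0,\zz_R]$: the secant (concave) region, where $\underline{\Phi}$ is the piecewise-linear interpolant of $\Phi$ at the breakpoints $\zz_i$. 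SOS2 weights $\balpha$ represent both $z=\sum_i\alpha_i\zz_i$ and $\underline{\Phi}(z)=\sum_i\alpha_i\Phi(\zz_i)$ exactly.
\item $t_3=1$, i.e.\ $z\ge\zz_R$: $\underline{\Phi}=\Phi(\zz_R)$, with $y_3\ge\zz_R$.
\end{itemize}
For the forward direction, given $(z,\zeta')$ we select the regime containing $z$, set the corresponding $y_j=z$ and all others to zero, and check each inequality. For the converse, the indicator and sum constraints force exactly one active regime, and the active inequality reproduces $\zeta'\le-\underline{\Phi}(z)$.

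\textbf{Main obstacle.} The delicate step is the converse for inactive inequalities. When $t_1=0$, the tangent inequalities reduce to $-t_2-\Phi(\zz_R)t_3\ge\zeta'$, i.e.\ $\zeta'\le-1$ or $\zeta'\le-\Phi(\zz_R)$. These must not cut off valid points with $\zeta'\in[-1,0]$. I would verify this by using the bound $\zeta'\ge-1$ and checking each regime case by case, adjusting the constants if needed. The same check is required for the secant inequality when $t_2=0$.
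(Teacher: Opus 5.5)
Your relaxation argument is correct. It embeds each $\bs\in\Theta\times\Xi$ via $\sigma=\sqrt{\bx^\top\bQ\bx}\ge\sqrt{\hat\delta}\|\bx\|$, $z=g(\bs;\bx)$ and $\zeta'=-\underline{\Phi}(z;\bzz)\in[-1,0]$, and then uses $\underline{\Phi}\le\Phi$. This covers a part of the statement that the paper's proof does not argue at all; the paper treats only the $\mH^I(\bzz)$ claim.

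For that second claim you use the same three-regime disjunction as the paper. However, the step you defer as the ``main obstacle'' is not a detail to be checked; it fails. Read literally, the claim is vacuous, since both sides always hold (e.g.\ $t_3=1$, $z=y_3=\zz_R$, $\zeta'=-1$). So its content is the pointwise statement you set out to prove: $(z,\zeta')$ satisfies $-\underline{\Phi}(z;\bzz)\ge\zeta'$ if and only if it extends to a point of $\mH^I(\bzz)$.

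Take any $z<\zz_R$ and $\zeta':=-\underline{\Phi}(z;\bzz)$, so $\zeta'\ge-\Phi(\zz_R)>-1$.
\begin{itemize}
\item If $t_1=1$, then $t_2=t_3=0$ forces $\balpha=\bzero$, and the secant inequality reads $-1\ge\zeta'$, which is false.
\item If $t_2=1$, then $y_1=0$, and every tangent inequality reads $-1\ge\zeta'$, which is false.
\item If $t_3=1$, then $y_1=0$, $\balpha=\bzero$ and $y_2=0$, so $z=y_3\ge\zz_R$, a contradiction.
\end{itemize}
No completion exists, so the forward direction is false for the set as written. Invoking $\zeta'\ge-1$, as you propose, only pins $\zeta'=-1$ in the first two regimes; it cannot recover the points with $\zeta'\in(-1,-\underline{\Phi}(z;\bzz)]$.

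This is not cosmetic. With $\mH^I(\bzz)$ as written inside \eqref{prob: subproblem-1.a}--\eqref{prob: subproblem-1.c}, the best choice for every $\bs$ is $t_3=1$ with $z\ge\max\{g(\bs;\bx),\zz_R\}$. That caps $\zeta'$ at $-\Phi(\zz_R)$, so the resulting model is not a relaxation of \eqref{prob: subproblem}.

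The missing idea is that the inactive-regime terms must be \emph{upper} bounds on $\zeta'$. The terms $-t_2$ (tangent family) and $-t_1$ (secant inequality) are sign flips of the big-M terms $+t_2$, $+t_3$ in $\mH^O(\bzz)$, which encode $\zeta\le 1$. Under $\zeta'=-\zeta$ that bound becomes the lower bound $\zeta'\ge-1$. Redundancy of an inactive constraint instead needs the upper bound $\zeta'\le 0$, i.e.\ coefficient $0$.

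With those two coefficients replaced by $0$, your regime-by-regime verification goes through in both directions:
\begin{itemize}
\item Regime $t_1=1$ gives $\zeta'\le\min\{0,-\max_i(\mathtt{h}_iz+\mathtt{h}_i^0)\}$. This equals $-\underline{\Phi}(z;\bzz)$ for $z\le\zz_0$, and is no larger than it for $z>\zz_0$, because the tangent at $\zz_0=0$ lies above $\Phi$ there. So the absent bound $y_1\le t_1\zz_0$ does no harm.
\item Regime $t_2=1$ gives the secant interpolant through the SOS2 weights.
\item Regime $t_3=1$ is unchanged.
\end{itemize}
The paper's own proof has the same hole: in each regime it checks only the active inequality and never the inactive ones. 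Your instinct to flag this step was right, but the resolution is to correct $\mH^I(\bzz)$, not to verify it.
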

\vspace{-1.25em}
\begin{proof}
    See proof in Appendix~\ref{appndx:cut-alg-proof}.
\end{proof}
We are now in a position to formally state and establish the claim this section starts with.
\begin{proposition}\label{prop: feasibility-tau}
For a master iterate $\bx^{(\ell)}$ at iteration $\ell$, let $g(\bs;\bx^{(\ell)}) \coloneqq \frac{b-\bmean^\top \bx^{(\ell)}}{\sqrt{{\bx^{(\ell)}}^\top \bQ \bx^{(\ell)}}}$. Define
\[
\mQ \coloneqq \{(\bs,\zeta') \,|\, \bs\in\mS, \zeta'\in[-1,0], -\Phi\!(g(\bs;\bx^{(\ell)}))\ge \zeta' \}, \, \tilde{\mQ} \coloneqq \{(\bs,\zeta') \,|\, \bs\in\mS, \zeta'\in[-1,0],-\underline{\Phi}\!(g(\bs;\bx^{(\ell)}))\ge \zeta' \},
\]
and  $v^\star \coloneqq\max_{(\bs,\zeta')\in\mQ} J(\bs,\zeta')$, $\tilde v^\star \coloneqq \max_{(\bs,\zeta')\in\tilde{\mQ}} J(\bs,\zeta')$.  Then, if $ \,\, 0 \le \Phi(z)-\underline{\Phi}(z) \le \hat{\tau}/2 \,\,$ for all $z\in\mbR$,   (i) $\mQ \subseteq \Tilde{\mQ}$ and (ii) $0 \le \tilde v^\star - v^\star \le \hat\tau/2.$
\end{proposition}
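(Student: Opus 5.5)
The plan is to prove the two parts in order. Part (i) is a pointwise inequality applied inside a set description. Part (ii) comes from (i) together with a constructive feasibility argument: any point of $\tilde{\mQ}$ can be pushed into $\mQ$ by adjusting only $\zeta'$. Throughout, $J(\bs,\zeta') = \beta_k - \beta_{\hat K+1} d^2_{\mathrm{BW}}(\mN(\hat{\bs}_k),\mN(\bs)) + \zeta'$ is separable in $\bs$ and $\zeta'$, and it is strictly increasing in $\zeta'$ with unit slope. This structure drives both parts.

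For (i), take any $(\bs,\zeta')\in\mQ$. Since $\underline{\Phi}(z)\le \Phi(z)$ for all $z$ (the left inequality of the hypothesis, equivalently Theorem~\ref{thm: complexity-breakpoint-inner}), we get
\[
-\underline{\Phi}(g(\bs;\bx^{(\ell)})) \ge -\Phi(g(\bs;\bx^{(\ell)})) \ge \zeta'.
\]
Because the constraints $\bs\in\mS$ and $\zeta'\in[-1,0]$ are shared by both sets, $(\bs,\zeta')\in\tilde{\mQ}$. Hence $\mQ\subseteq\tilde{\mQ}$, and maximizing the same $J$ over a larger set gives $v^\star\le\tilde v^\star$, which is the left inequality of (ii).

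For the upper bound, we first need both maxima to be attained, or else we argue with suprema via $\varepsilon$-approximate maximizers. $\mS$ is compact by Assumption~\ref{assmp: compactness}. $d^2_{\mathrm{BW}}$ is continuous by Lemma~\ref{lem: Delta-Continuity}. $\Phi\circ g$ is continuous in $\bs$ by Lemma~\ref{lem: G-continuity-mu-sigma}, assuming $\bx^{(\ell)}\neq\bzero$, which the algorithm guarantees via the $\delta_{\epsilon_j}$ check. $\underline{\Phi}$ is piecewise linear and continuous. Consequently both feasible sets are closed subsets of the compact set $\mS\times[-1,0]$, and $J$ attains its maximum on each.

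Now let $(\tilde{\bs},\tilde\zeta')$ be a maximizer for $\tilde v^\star$, and set
\[
\zeta'' := \min\{\tilde\zeta',\,-\Phi(g(\tilde{\bs};\bx^{(\ell)}))\}.
\]
Then $\zeta''\in[-1,0]$, since $\Phi\in[0,1]$, and $(\tilde{\bs},\zeta'')\in\mQ$. The remaining step is to bound $\tilde\zeta'-\zeta''\le\hat\tau/2$. If the minimum equals $\tilde\zeta'$, this difference is $0$. Otherwise
\[
\tilde\zeta'-\zeta'' = \tilde\zeta' + \Phi(g) \le -\underline{\Phi}(g) + \Phi(g) \le \hat\tau/2,
\]
using the hypothesis. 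Therefore
\[
v^\star \ge J(\tilde{\bs},\zeta'') = \tilde v^\star - (\tilde\zeta'-\zeta'') \ge \tilde v^\star - \hat\tau/2,
\]
which gives $\tilde v^\star - v^\star\le\hat\tau/2$.

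The main obstacle is the bookkeeping around well-posedness rather than the inequality itself. One issue is the singular case $\bx^{(\ell)}=\bzero$, where $g$ is undefined; the plan is to note that the algorithm excludes it, or to handle it with $G(\bs;\bzero)$ constant, in which case both sets coincide. The other issue is attainment of the maxima. If continuity arguments become delicate, I would sidestep attainment entirely: run the same construction with a near-optimal $(\tilde{\bs},\tilde\zeta')$ satisfying $J\ge\tilde v^\star-\varepsilon$, then let $\varepsilon\to0$.
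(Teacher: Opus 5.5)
Your proof is correct and takes the paper's approach: you get (i) from $\underline{\Phi}\le\Phi$, and for (ii) you fix the maximizer $(\tilde{\bs},\tilde\zeta')$ of the approximated problem and lower only $\zeta'$, by at most $\Phi-\underline{\Phi}\le\hat\tau/2$, until the point lies in $\mQ$; your $\zeta''=\min\{\tilde\zeta',-\Phi(g(\tilde{\bs};\bx^{(\ell)}))\}$ does the job of the paper's $\zeta^\circ=\max\{-1,\tilde\zeta'-\Delta\}$. Your compactness and continuity argument for attainment of the two maxima, with the $\varepsilon$-maximizer fallback, is a small addition that the paper takes for granted.
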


\begin{proof}
   We begin by noting that $g(\bs; \bx^{(\ell)})$ is well defined since an iterate $\bx^{(\ell)}$ is always considered for the subproblem after filtering it through the near-zero case through Lines~11-13 of Algorithm~\ref{alg: cutting-surface} and hence satisfies ${\bx^{(\ell)}}^\top \bQ \bx^{(\ell)} > 0$.
   
   First part of the claim is immediate since for any $\tau > 0$ and $z \in \mbR$, we know $0 \leq -\underline{\Phi}(z) + \Phi(z) \leq \tau$.  It implies
    $- \Phi(z) \leq -\underline{\Phi}(z) \leq \tau - \Phi(z)$. Thus, $\mQ \subseteq \Tilde{\mQ}$. Now for \emph{part-(ii)}, let $(\tilde{\bs},\tilde{\zeta}')$ be optimal for the approximate problem, and define
\[
\Delta \coloneqq \Phi\!\big(g(\tilde{\bs};\bx^{(\ell)})\big)-\underline{\Phi}\!\big(g(\tilde{\bs};\bx^{(\ell)})\big)\in[0,\hat\tau/2],
\qquad
\zeta^\circ \coloneqq \max\{-1,\ \tilde{\zeta}'-\Delta\}.
\]
Since $-\underline{\Phi}(g(\tilde{\bs};\bx^{(\ell)}))\ge \tilde{\zeta}'$ and $-\Phi = -\underline{\Phi}-\Delta$, we have
\[
-\Phi\!\big(g(\tilde{\bs};\bx^{(\ell)})\big)
= -\underline{\Phi}\!\big(g(\tilde{\bs};\bx^{(\ell)})\big) - \Delta
\ge \tilde{\zeta}'-\Delta
\ge \zeta^\circ,
\]
and clearly $\zeta^\circ\in[-1,0]$. Hence, $(\tilde{\bs},\zeta^\circ)\in\mQ$. Moreover, because only $\zeta'$ changes in $J(\bs,\zeta')$, we obtain
\[
J(\tilde{\bs},\zeta^\circ) \ge J(\tilde{\bs},\tilde{\zeta}') - \Delta \ge \tilde v^\star - \hat\tau/2.
\]
Since $\mQ\subseteq \tilde{\mQ}, v^\star = \max_{(\bs,\zeta')\in\mQ} J(\bs,\zeta') \geq J(\tilde{\bs},\zeta^\circ)$. Combining the two inequalities yields $0 \leq \tilde v^\star - v^\star \le \hat\tau/2$.
\end{proof}

\subsection{Finite convergence of the Cutting-Surface Algorithm}
We make some additional assumptions (Assumptions~\ref{assmp: strict-strict-interior}-\ref{assmp: LICQ}) required in our convergence analysis. The results for the master problem, along with those in Section~\ref{sec: subproblem} for subproblems, provide the tools required to prove Theorem~\ref{thm: Cut-S-finite-convergence}. Assumption~\ref{assmp: strict-strict-interior} excludes the optimal solution from being in a neighborhood of $\bzero$. Assumption~\ref{assmp: non-empty-interior} is a regularity condition requiring that our distributionally robust chance constraint program is strictly feasible.

\begin{assumption}[Exclusion of a zero-neighborhood]\label{assmp: strict-strict-interior} Let $\mF(\mathtt{0})$ be as in Definition~\ref{def: accurate-sol}. For some $\hat{\delta}$, define $\mF_{\hat{\delta}}(\mathtt{0})
:=
\mF(\mathtt{0})\cap
\{(\bx,\bbeta):\|\bx\|\ge\hat{\delta}\}$. Then for $b=0$ and $(\bzero, \bbeta) \in \mF(\mathtt{0})$,  there exists
$\hat{\delta}>0$ such that $\min_{(\bx,\bbeta)\in\mF_{\hat{\delta}}(\mathtt{0})}\bc^\top\bx
=
\min_{(\bx,\bbeta)\in\mF(\mathtt{0})}\bc^\top\bx.$
\end{assumption}


\begin{assumption}[Local nonemptiness]
\label{assmp: non-empty-interior}
There exists \(\varepsilon>0\) such that $\mF(\tau)\neq\emptyset,
\qquad
\forall \tau\in[-\varepsilon,\varepsilon].$
\end{assumption}

\begin{assumption}[LICQ]\label{assmp: LICQ}
 Let $(\bx^*(\mathtt{0}), \bbeta^*(\mathtt{0}))$ be an optimal solution to $\min \{\;\bc^\top \bx \, |\, (\bx, \bbeta) \in \mF(\mathtt{0})\}$. Linear independence constraint qualification (LICQ) holds at $(\bx^*(\mathtt{0}), \bbeta^*(\mathtt{0}))$. 
\end{assumption}


\begin{remark} Assumption~\ref{assmp: strict-strict-interior} is needed because of the continuity and differentiability requirements in our analysis (see Lemma~\ref{lem: G-diff-continuity-x}). Together, Assumptions~\ref{assmp: non-empty-interior} and~\ref{assmp: LICQ} provide the regularity conditions commonly assumed in nonlinear optimization. Under Assumption~\ref{assmp: non-empty-interior}, one can ensure the existence of the oracle $\mathcal{O}$ assumed in part (ii) of Theorem~\ref{thm: Cut-S-finite-convergence} by taking a PWL-inner approximation for the master problem and a PWL-outer approximation for the subproblem. 
\end{remark}

Recall that $\mX$ is assumed to be compact. 
Lemma~\ref{lem:beta-compactification} ensures that when considering the master problem \eqref{prob: master-DRCC-cont-dual}, we are dealing with compact feasible sets.
\begin{lemma}
\label{lem:beta-compactification}
Suppose that in the nominal Gaussian mixture distribution, $\hat w_k>0$ for all $k\in[\hat K]$,
$\sum_{k=1}^{\hat K}\hat w_k=1$, Wasserstein radius $\rho>0$, and
$\theta\in(0,1]$. Suppose also that
$\hat{\bs}_k\in\mS$ for all $k\in[\hat K]$ and that, at master iteration
$\ell$, $\{\hat{\bs}_1,\ldots,\hat{\bs}_{\hat K}\}\subseteq \mS_\ell.$ Then every feasible solution $(\bx,\bbeta)$ of the master problem \eqref{prob: master-DRCC-cont-dual} satisfies $\bbeta\in\overline{\mathfrak B},$
where $\overline{\mathfrak B}
:=
\prod_{k=1}^{\hat K}
\left[
1-\frac{1-\theta}{\hat w_k},
1
\right]
\times
\left[
0,\frac{1-\theta}{\rho}
\right]$ 
is compact.
\end{lemma}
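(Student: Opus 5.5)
The bounds come from the master-problem constraints evaluated at the nominal support points $\hat{\bs}_k\in\mS_\ell$, together with $0\le G\le 1$ and the budget constraint defining $\mfB$. The plan has three steps.

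\textbf{Step 1 (upper bound on $\beta_k$).} Fix $k\in[\hat K]$ and use the constraint $J_k(\bs_l)\le G(\bs_l;\bx)$ with $\bs_l=\hat{\bs}_k$. Since $d^2_{\mathrm{BW}}(\mN(\hat{\bs}_k),\mN(\hat{\bs}_k))=0$, this gives
\[
\beta_k\le G(\hat{\bs}_k;\bx)\le 1 .
\]
The bound $G\le 1$ holds in both branches of \eqref{def: G}: it is an indicator when $\bx=\bzero$ and a value of $\Phi$ otherwise. So $\beta_k\le 1$ for every $k$.

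\textbf{Step 2 (upper bound on $\beta_{\hat K+1}$).} Start from $\sum_k\hat w_k\beta_k-\rho\beta_{\hat K+1}\ge\theta$ and use Step 1 together with $\sum_k\hat w_k=1$:
\[
\rho\beta_{\hat K+1}\le \sum_k\hat w_k\beta_k-\theta\le 1-\theta .
\]
Because $\rho>0$, this yields $\beta_{\hat K+1}\le(1-\theta)/\rho$. The lower bound $\beta_{\hat K+1}\ge0$ is part of the definition of $\mfB$.

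\textbf{Step 3 (lower bound on $\beta_k$).} Using $\beta_{\hat K+1}\ge0$, $\beta_j\le1$ for $j\ne k$, and $\hat w_k>0$,
\[
\theta\le \hat w_k\beta_k+\sum_{j\ne k}\hat w_j\beta_j-\rho\beta_{\hat K+1}\le \hat w_k\beta_k+(1-\hat w_k).
\]
Hence $\beta_k\ge 1-(1-\theta)/\hat w_k$.

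\textbf{Conclusion.} Each coordinate of $\bbeta$ lies in a closed bounded interval. Each interval is nonempty: the lower endpoints satisfy $1-(1-\theta)/\hat w_k\le 1$ and $0\le(1-\theta)/\rho$, since $\theta\le1$. So $\overline{\mathfrak B}$ is a finite product of compact intervals and is therefore compact, by Tychonoff or simply because it is closed and bounded in $\mbR^{\hat K+1}$.

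There is no serious obstacle. The only points needing care are that the argument relies on $\hat{\bs}_k\in\mS_\ell$ (so the $k$-th cut at $\hat{\bs}_k$ is actually present) and on $G\le1$ in the degenerate case $\bx=\bzero$. If the master problem is instead read in its $\epsilon_j$-relaxed form \eqref{prob: master-CS-algrthm}, the same argument goes through with $1$ replaced by $1+\epsilon_j$. The statement as given concerns \eqref{prob: master-DRCC-cont-dual}, so I would prove it for that form and note the relaxed version in a remark.
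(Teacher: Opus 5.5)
Your proposal is correct and matches the paper's proof step for step. The cut at $\hat{\bs}_k$, where $d^2_{\mathrm{BW}}=0$, together with $G\le 1$ gives $\beta_k\le 1$; the budget constraint in $\mathfrak B$ then gives $\beta_{\hat K+1}\le(1-\theta)/\rho$; and isolating the $k$-th term gives the lower bound $1-(1-\theta)/\hat w_k$, so the only additions are your harmless remarks on nonempty intervals and the $\epsilon_j$-relaxed variant.
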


\begin{lemma}(\cite[Lemma~2-3]{dey2025solving})\label{lem: G-diff-continuity-x}
Suppose Assumption~\ref{assmp: strict-strict-interior} holds. Then given any $\bs \in \mS$, $G(\bx, \bbeta; \bs)$ is continuously differentiable on $\mX \times \overline{\mfB}$.
\end{lemma}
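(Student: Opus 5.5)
The plan is to show that, once the singular point $\bx=\bzero$ is excluded, $G$ is a composition of smooth maps whose only possibly singular piece has a denominator bounded away from zero. Throughout, $\bs=(\bmean,\bQ)\in\mS$ is fixed.

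\textbf{Dependence on $\bbeta$.} By \eqref{def: G}, $G(\bx,\bbeta;\bs)$ does not depend on $\bbeta$ at all. Its partial derivative in $\bbeta$ is therefore identically zero, which is trivially continuous on $\mX\times\overline{\mfB}$. The whole argument thus reduces to the $\bx$-dependence.

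\textbf{Isolating the singularity.} The only obstruction is at $\bx=\bzero$. There $G$ switches to the indicator $\mathbbm{1}_{\geq 0}(b)$, and by the remark following Lemma~\ref{lem: G-continuity-mu-sigma} it need not even be continuous in $\bx$. I would use Assumption~\ref{assmp: strict-strict-interior} to replace $\mX$ by $\mX_{\hat\delta}:=\mX\cap\{\|\bx\|\ge\hat\delta\}$ without changing the optimal value. The statement is then read on $\mX_{\hat\delta}\times\overline{\mfB}$, where $\overline{\mfB}$ is the compact box from Lemma~\ref{lem:beta-compactification}. Differentiability on a closed set should be understood in the usual way, namely as the restriction of a $C^1$ function on an open neighbourhood. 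I would take the open set $U:=\{\bx:\|\bx\|>\hat\delta/2\}$, on which $G(\bx;\bs)=\Phi(g(\bx))$ with $g(\bx)=(b-\bmean^\top\bx)/\sqrt{\bx^\top\bQ\bx}$.

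\textbf{Smoothness of $g$.} By Assumption~\ref{assmp: compactness}, $\bQ\succeq\hat\delta\bI$, so on $U$
\[
\bx^\top\bQ\bx\ \ge\ \hat\delta\|\bx\|^2\ \ge\ \hat\delta^3/4\ >\ 0 .
\]
Hence $\bx\mapsto\sqrt{\bx^\top\bQ\bx}$ is $C^\infty$ on $U$: it is a smooth quadratic composed with $\sqrt{\cdot}$ away from $0$. Its gradient is $\bQ\bx/\sqrt{\bx^\top\bQ\bx}$. Differentiating $g$ by the quotient rule gives
\[
\nabla g(\bx)=-\frac{\bmean}{\sqrt{\bx^\top\bQ\bx}}-\frac{(b-\bmean^\top\bx)\,\bQ\bx}{(\bx^\top\bQ\bx)^{3/2}} .
\]
This expression is continuous on $U$, since it is built from polynomials divided by powers of a function bounded below by a positive constant.

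\textbf{Chain rule.} $\Phi$ is $C^\infty$ on $\mbR$ with $\Phi'=\phi$, so $\nabla_{\bx}G=\phi(g(\bx))\nabla g(\bx)$, and this gradient is continuous on $U$. Combined with the zero $\bbeta$-derivative, $G$ is $C^1$ on $U\times\mbR^{\hat K+1}\supseteq\mX_{\hat\delta}\times\overline{\mfB}$. Compactness of $\mX_{\hat\delta}\times\overline{\mfB}$ additionally makes the gradient uniformly bounded and uniformly continuous there, which is what the later convergence analysis is likely to need.

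\textbf{Main obstacle.} I expect the only delicate point to be the precise use of Assumption~\ref{assmp: strict-strict-interior}. It does not literally remove $\bx=\bzero$ from $\mX$; it only guarantees that restricting to $\|\bx\|\ge\hat\delta$ loses no optimality. I would make this explicit by stating the lemma on $\mX_{\hat\delta}$, noting that Lines~11--13 of Algorithm~\ref{alg: cutting-surface} filter out near-zero iterates in the same spirit. The remaining steps are then routine calculus.
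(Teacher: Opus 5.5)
The paper gives no proof of this lemma; it imports it from \cite[Lemmas~2--3]{dey2025solving}. Your calculus away from the origin is correct. The gap is the reduction to $\mX_{\hat\delta}$. Assumption~\ref{assmp: strict-strict-interior} is stated only for $b=0$, so when $b\neq0$ nothing lets you discard a neighbourhood of $\bx=\bzero$. For $b>0$ an optimal solution may well lie near the origin. Lines~11--13 of Algorithm~\ref{alg: cutting-surface} only skip the separation step; the master problem is still posed over all of $\mX$.

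The paper genuinely uses the lemma on the full set in that case. The proof of Theorem~\ref{thm: Cut-S-finite-convergence} takes $\mathcal Y=\mX\times\overline{\mfB}$ when $b>0$, and the proof of Lemma~\ref{lem: optimality-master-problem} invokes continuity on $\mX\times\overline{\mfB}$. Your argument therefore settles only the $b=0$ case. There the restriction is unavoidable, because $g$ is $0$-homogeneous and $G(\bzero;\bs)=1>\sup_{\bx\neq\bzero}G(\bx;\bs)$.

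The missing idea is that for $b\neq0$ the function $G(\cdot;\bs)$ is $C^1$ \emph{through} the origin. The factor $\phi(g)$ decays faster than any power of $\|\bx\|$, while $\nabla g$ blows up only polynomially. Concretely, take $\|\bx\|$ small enough that $\overline{m}_\Theta\|\bx\|\le|b|/2$. Then $|b-\bmean^\top\bx|\ge|b|/2$ and $\sqrt{\bx^\top\bQ\bx}\le\sqrt{\overline{\lambda}_\Xi}\,\|\bx\|$. Hence $g(\bx)$ has the sign of $b$ and $|g(\bx)|\ge c/\|\bx\|$ with $c:=|b|/(2\sqrt{\overline{\lambda}_\Xi})$. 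Since $G(\bzero;\bs)=\mathbbm{1}_{\geq 0}(b)$ by \eqref{def: G}, in both signs of $b$
\[
|G(\bx;\bs)-G(\bzero;\bs)|=\Phi\bigl(-|g(\bx)|\bigr)\le e^{-c^2/(2\|\bx\|^2)}=o(\|\bx\|),
\]
so $G(\cdot;\bs)$ is differentiable at $\bzero$ with zero gradient. Your own formula for $\nabla g$, together with $\bx^\top\bQ\bx\ge\hat\delta\|\bx\|^2$, gives $\|\nabla g(\bx)\|\le C\|\bx\|^{-2}$ near $\bzero$. Hence $\|\nabla_{\bx}G(\bx;\bs)\|=\phi(g(\bx))\|\nabla g(\bx)\|\le C\|\bx\|^{-2}e^{-c^2/(2\|\bx\|^2)}\to0$, so the gradient is continuous at $\bzero$.

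Combine this with your argument on $\{\bx\neq\bzero\}$, where $\bx^\top\bQ\bx>0$ already makes $\Phi\circ g$ smooth. The lemma then holds on $\mX\times\overline{\mfB}$ for $b\neq0$, and on $\mX_{\hat\delta}\times\overline{\mfB}$ for $b=0$. That is exactly how it is used later in the paper.
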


\begin{lemma}\label{lem: optimality-master-problem}
Let Assumptions~\ref{assmp: compactness}-\ref{assmp: LICQ} hold, and at some master iteration $\ell$, $(\bx^{(\ell)}_*, \bbeta^{(\ell)}_*)$ and $(\bx^{(\ell)}, \bbeta^{(\ell)})$ respectively be the exact optimal solutions to~\eqref{prob: master-DRCC-cont-dual} and~\eqref{prob: master-DRCC-cont-dual-O}. Then there exists a finite PWL-O approximation of~\eqref{prob: master-DRCC-cont-dual} using  $O(\frac{1} {\sqrt{\epsilon}} \sqrt{\log(\frac{1}{\epsilon})})$ breakpoints so that  solution $(\bx^{(\ell)}, \bbeta^{(\ell)})$ obtained from~\eqref{prob: master-DRCC-cont-dual-O} is $\epsilon$-feasible to~\eqref{prob: master-DRCC-cont-dual} for some tolerance $\epsilon > 0$. The corresponding optimal value function $V^\star_\ell(\epsilon)$ is continuous at $\mathtt{0}$ and \[\bc^\top \bx^{(\ell)}_* - \bc^\top \bx^{(\ell)} \leq \epsilon.\] 
\end{lemma}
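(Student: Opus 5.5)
The proof has three parts, taken in order: (a) a breakpoint count giving $\epsilon$-feasibility, (b) continuity of $V^\star_\ell(\cdot)$ at $\mathtt{0}$, and (c) the objective bound $\bc^\top \bx^{(\ell)}_* - \bc^\top \bx^{(\ell)} \le \epsilon$.

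\textbf{Breakpoint count and $\epsilon$-feasibility.} Fix $\epsilon>0$ and apply Theorem~\ref{thm: complexity-breakpoint} with $\tau=\epsilon$. This gives a breakpoint array $\bzz$ of size $O(\epsilon^{-1/2}\sqrt{\log(1/\epsilon)})$ with $0\le \bar\Phi(z;\bzz)-\Phi(z)\le\epsilon$ for every $z\in\mbR$. Since $\bar\Phi\ge\Phi$, Proposition~\ref{prop: master-prob-reform} shows that \eqref{prob: master-DRCC-cont-dual-O} is a relaxation of \eqref{prob: master-DRCC-cont-dual}.

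Take the optimal $(\bx^{(\ell)},\bbeta^{(\ell)})$ of \eqref{prob: master-DRCC-cont-dual-O}. For each $\bs_l\in\mS_\ell$, the variables $(z_l,\sigma_l,\zeta_l)$ satisfy $\zeta_l\le\bar\Phi(z_l)$ and $z_l\le g(\bs_l;\bx^{(\ell)})$. Because $\bar\Phi$ is nondecreasing (as $\Phi$ is, and the tangent/secant envelope preserves this), we get
\[
J_k(\bs_l)\le \zeta_l\le \bar\Phi\bigl(g(\bs_l;\bx^{(\ell)})\bigr)\le G(\bs_l;\bx^{(\ell)})+\epsilon .
\]
Hence $(\bx^{(\ell)},\bbeta^{(\ell)})\in\mF_\ell(\epsilon)$, which is the claimed $\epsilon$-feasibility. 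The division by $\sigma_l$ is legitimate: either $\bx^{(\ell)}\neq\bzero$, so $\sigma_l\ge\sqrt{\hat\delta}\|\bx^{(\ell)}\|>0$ by Assumption~\ref{assmp: compactness}, or the case $\bx^{(\ell)}=\bzero$ is handled separately by Propositions~\ref{prop: infeasibility}--\ref{prop: infeasibility-1}.

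The same sandwich gives the ordering of optimal values,
\[
V^\star_\ell(\epsilon)\le \bc^\top\bx^{(\ell)}\le V^\star_\ell(0)=\bc^\top\bx^{(\ell)}_*,
\]
where the first inequality holds because $\mF_\ell(\epsilon)$ contains the projection of the relaxed feasible set.

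\textbf{Continuity of $V^\star_\ell$ at $\mathtt{0}$.} I will argue through parametric optimization (Berge's maximum theorem, or a Robinson-type stability result).

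Compactness comes first. By Lemma~\ref{lem:beta-compactification}, $\bbeta$ can be restricted to the compact box $\overline{\mfB}$ without changing the problem, and $\mX$ is compact. So $\mF_\ell(\tau)$ lies in a fixed compact set for all $\tau$ near $0$. Lemma~\ref{lem: Delta-Continuity} and Lemma~\ref{lem: G-diff-continuity-x} make the constraint functions $J_k(\bs_l)-G(\bs_l;\bx)$ continuous, indeed $C^1$, in $(\bx,\bbeta)$, the latter using Assumption~\ref{assmp: strict-strict-interior} to stay away from $\bx=\bzero$. This gives upper semicontinuity of $\tau\mapsto\mF_\ell(\tau)$ and therefore lower semicontinuity of $V^\star_\ell$.

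Lower semicontinuity of the set map, and hence upper semicontinuity of $V^\star_\ell$, is where Assumption~\ref{assmp: non-empty-interior} and Assumption~\ref{assmp: LICQ} enter. Note that $\mF(\tau)\subseteq\mF_\ell(\tau)$, so local nonemptiness transfers to the master problem. LICQ at the optimum then lets me use the implicit function theorem (equivalently, Mangasarian–Fromovitz-type stability) to move a nearby feasible point into $\mF_\ell(\tau)$ for small $\tau<0$ at cost $O(|\tau|)$.

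\textbf{Main obstacle.} The delicate point is that LICQ is assumed at an optimum of the full semi-infinite problem $\mF(\mathtt{0})$, not of the finite master $\mF_\ell(\mathtt{0})$. I would first try to argue that the active constraints at the master optimum form a subset of finitely many constraints of the same type, so that independence of their gradients is inherited. If that fails, I would fall back on Slater-type perturbation using the nonempty $\mF(-\varepsilon)$: take a convex combination, in the $\tau$-parametrization, between $\bx^{(\ell)}_*$ and a point of $\mF(-\varepsilon)$, relying on the continuity of the constraints in $\bx$.

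Once continuity at $\mathtt{0}$ holds, the objective gap follows from the sandwich above:
\[
\bc^\top\bx^{(\ell)}_*-\bc^\top\bx^{(\ell)}\le V^\star_\ell(0)-V^\star_\ell(\epsilon).
\]
The right-hand side is controlled by the continuity, or, under the LICQ stability estimate, by a local Lipschitz bound $L\epsilon$. Refining the breakpoint tolerance to $\epsilon/\max\{1,L\}$ keeps the breakpoint count $O(\epsilon^{-1/2}\sqrt{\log(1/\epsilon)})$ and yields the stated bound $\le\epsilon$.
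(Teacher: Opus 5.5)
Your architecture is the paper's. Breakpoints come from Theorem~\ref{thm: complexity-breakpoint}. Continuity of $V^\star_\ell$ comes from compactness of $\mX\times\overline{\mfB}$ plus a constraint qualification; the paper uses uniform compactness in the Gauvin--Dubeau sense, MFCQ obtained from LICQ via Gordan's alternative, and then their Theorem~2.6. The gap follows from the sandwich $V^\star_\ell(\epsilon)\le\bc^\top\bx^{(\ell)}\le V^\star_\ell(\mathtt{0})$. Your $\epsilon$-feasibility step, using monotonicity of $\bar\Phi$ and $z_l\le g(\bs_l;\bx^{(\ell)})$, is correct. It is also more direct than the paper's appeal to Proposition~\ref{prop: feasibility-tau}, which concerns the subproblem.

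The real gap is the one you flag, and neither of your fallbacks closes it. Left-continuity, $\limsup_{\tau\uparrow 0}V^\star_\ell(\tau)\le V^\star_\ell(\mathtt{0})$, needs a constraint qualification at $(\bx^{(\ell)}_*,\bbeta^{(\ell)}_*)$. Assumption~\ref{assmp: LICQ} is stated at an optimum of the full problem over $\mF(\mathtt{0})$, which is a different point with a different active set, so nothing is inherited.

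The convex-combination fallback fails because $\bx\mapsto-\Phi\bigl((b-\bmean_l^\top\bx)/\sqrt{\bx^\top\bQ_l\bx}\bigr)$ is not convex. Hence $\mF_\ell(\tau)$ is not convex, and mixing $(\bx^{(\ell)}_*,\bbeta^{(\ell)}_*)$ with a point of $\mF(-\varepsilon)$ gives no control over the constraint values. Continuity plus local nonemptiness cannot produce a strict-descent direction; that is exactly what MFCQ supplies. For example, $\min x$ over $x\in[0,2]$ subject to $x^2(1-x^2)\le\tau$ is feasible for all small $|\tau|$, yet $V(\mathtt{0})=0$ while $V(\tau)\to 1$ as $\tau\uparrow 0$.

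The paper does not resolve this either. It simply applies Assumption~\ref{assmp: LICQ} at $(\bx^{(\ell)}_*,\bbeta^{(\ell)}_*)$, in effect reading it as a CQ at the master optimum, so you must adopt that reading explicitly. The same CQ, at the optimal points of the unperturbed master, is also what your Lipschitz modulus $L$ needs.

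The final inequality $\bc^\top\bx^{(\ell)}_*-\bc^\top\bx^{(\ell)}\le\epsilon$ uses only the other side, $\liminf_{\tau\downarrow0}V^\star_\ell(\tau)\ge V^\star_\ell(\mathtt{0})$. Your compactness and closed-graph argument already gives this with no CQ at all. The catch is that the breakpoint tolerance must then be some $\tau\le\tau^+(\epsilon)$ from continuity, not $\epsilon$ itself, so the $O(\epsilon^{-1/2}\sqrt{\log(1/\epsilon)})$ count is not justified without a Lipschitz bound. The paper's last line silently substitutes $\epsilon$ for $\tau$, so your refinement to $\epsilon/\max\{1,L\}$ is the right repair, but it inherits the CQ issue. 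The left-hand side of continuity is what Theorem~\ref{thm: Cut-S-finite-convergence}(ii) later relies on through $V^\star(-\tau)$.
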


\begin{proof}
  See proof in Appendix~\ref{appndx:cut-alg-proof}. 
\end{proof}

When $b>0$, a master iterate close to $\bzero$ can make the normal-CDF term approach one. In numerical computation, this may appear as $\Phi\!\left(
\frac{b-\bmean^\top \bx^{(\ell)}}
{\sqrt{{\bx^{(\ell)}}^\top\bQ\bx^{(\ell)}}}
\right) \approx 1$ for all points in $\mS_\ell$. This numerical situation, however, is not used as a standalone termination certificate. Instead, we
treat it as a \emph{near-zero case} prior to Step 2 in Algorithm~\ref{alg: cutting-surface} and use a deterministic norm-based certificate.
Lemma~\ref{lem:near-zero-certificate} shows that for a feasibility
tolerance $\epsilon,$ if the iterate $\bx^{(\ell)}$ is in the $\epsilon$-neighborhood
of $\bzero$, then it is already $\epsilon$-feasible for all the constraints of the semi-infinite constraint set in~\eqref{prob: master-DRCC-cont-dual}. Hence, the separation subproblem step (Step 2 in Algorithm~\ref{alg: cutting-surface}) is
unnecessary, and the algorithm proceeds directly to the inner-approximation-based termination criteria checking step (Step 3).

\begin{lemma}[Near-zero feasibility certificate]
\label{lem:near-zero-certificate}
Suppose $b>0$, $\mS=\Theta\times\Xi$ is compact, and define
\[
\overline{m}_\Theta:=\max_{\bmean\in\Theta}\|\bmean\|,
\qquad
\overline{\lambda}_\Xi
:=
\max_{\bQ\in\Xi}\lambda_{\max}(\bQ).
\]
Let $\epsilon \in  (0, \min\left\{\hat{\tau}/2,\frac12\right\})$, and define $z_\epsilon
:=\Phi^{-1}(1-\epsilon),
\,\,\,
\delta_{\epsilon}
:=
\frac{b}{\overline{m}_\Theta+\sqrt{\overline{\lambda}_\Xi}\,z_\epsilon}$.
If $(\bx^{(\ell)},\bbeta^{(\ell)})$ is feasible for the finite master problem
at iteration $\ell$ and $\|\bx^{(\ell)}\|\le\delta_{\epsilon},$
then $J_k(\bs;(\bx^{(\ell)},\bbeta^{(\ell)}))\le\epsilon,
\,\,\,
\forall \bs\in\mS,\; k\in[\hat K]$. 
Equivalently, $(\bx^{(\ell)},\bbeta^{(\ell)})\in\mF(\epsilon).$
\end{lemma}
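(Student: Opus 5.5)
The plan is to bound the two terms of $J_k(\bs;(\bx^{(\ell)},\bbeta^{(\ell)})) = \beta_k^{(\ell)} - \beta_{\hat K+1}^{(\ell)} d^2_{\mathrm{BW}}(\mN(\hat{\bs}_k),\mN(\bs)) - G(\bs;\bx^{(\ell)})$ separately. The first step shows that $G(\bs;\bx^{(\ell)})\ge 1-\epsilon$ uniformly over $\bs\in\mS$. The second step shows that $\beta_k^{(\ell)}\le 1$ and that the transport term is nonpositive. Combining them gives $J_k\le 1-(1-\epsilon)=\epsilon$.

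\emph{Step 1 (uniform lower bound on $G$).} If $\bx^{(\ell)}=\bzero$, then since $b>0$ the definition \eqref{def: G} gives $G(\bs;\bzero)=\mathbbm{1}_{\ge 0}(b)=1\ge 1-\epsilon$ for every $\bs$. Otherwise, fix $\bs=(\bmean,\bQ)\in\mS$ and write $x=\bx^{(\ell)}$, with $0<\|x\|\le\delta_\epsilon$. Cauchy--Schwarz gives $\bmean^\top x\le \overline m_\Theta\|x\|$, and $x^\top\bQ x\le\overline\lambda_\Xi\|x\|^2$, so
\[
g(\bs;x)=\frac{b-\bmean^\top x}{\sqrt{x^\top\bQ x}}\ \ge\ \frac{b-\overline m_\Theta\|x\|}{\sqrt{\overline\lambda_\Xi}\,\|x\|}.
\]
The right-hand side is decreasing in $\|x\|$. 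At $\|x\|=\delta_\epsilon$ it equals $z_\epsilon$, because $b=\delta_\epsilon(\overline m_\Theta+\sqrt{\overline\lambda_\Xi}z_\epsilon)$. The numerator stays positive, since $\epsilon<1/2$ gives $z_\epsilon>0$ and hence $\overline m_\Theta\delta_\epsilon<b$; this is why the sign of the bound is not an issue. Therefore $g(\bs;x)\ge z_\epsilon$, and monotonicity of $\Phi$ gives $G(\bs;x)=\Phi(g)\ge\Phi(z_\epsilon)=1-\epsilon$.

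\emph{Step 2 (bounds on $\bbeta$).} Feasibility of the finite master problem at iteration $\ell$ lets me invoke Lemma~\ref{lem:beta-compactification}. The initialization $\mS_0=\{\hat{\bs}_k\}$ guarantees its hypothesis $\{\hat{\bs}_1,\ldots,\hat{\bs}_{\hat K}\}\subseteq\mS_\ell$. The lemma yields $\beta_k^{(\ell)}\le 1$ and $\beta^{(\ell)}_{\hat K+1}\ge 0$. The direct reason is simpler still: the cut at $\bs_l=\hat{\bs}_k$ has zero transport cost and $G\le 1$. Since $d^2_{\mathrm{BW}}\ge 0$, the middle term $-\beta_{\hat K+1}^{(\ell)}d^2_{\mathrm{BW}}$ is nonpositive.

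\emph{Step 3 (conclusion).} Combining the two steps, $J_k(\bs;(\bx^{(\ell)},\bbeta^{(\ell)}))\le 1-0-(1-\epsilon)=\epsilon$ for all $\bs\in\mS$ and $k\in[\hat K]$. Since $\bx^{(\ell)}\in\mX$ and $\bbeta^{(\ell)}\in\mfB$ already hold, this is exactly $(\bx^{(\ell)},\bbeta^{(\ell)})\in\mF(\epsilon)$.

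The only delicate point is Step 2. If the master problem is the $\epsilon_j$-relaxed version \eqref{prob: master-CS-algrthm}, the nominal cut gives only $\beta_k\le 1+\epsilon_j$. Here I would read ``feasible for the finite master problem'' as feasibility for \eqref{prob: master-DRCC-cont-dual}, so that Lemma~\ref{lem:beta-compactification} applies verbatim.
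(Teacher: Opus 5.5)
Your proposal is correct and follows the paper's proof essentially step for step. You first get the uniform bound $G(\bs;\bx^{(\ell)})\ge 1-\epsilon$ from $\bmean^\top\bx^{(\ell)}\le\overline{m}_\Theta\|\bx^{(\ell)}\|$ and $\sqrt{{\bx^{(\ell)}}^\top\bQ\bx^{(\ell)}}\le\sqrt{\overline{\lambda}_\Xi}\,\|\bx^{(\ell)}\|$, handling $\bx^{(\ell)}=\bzero$ via $G(\bs;\bzero)=1$. You then get $\beta_k^{(\ell)}\le 1$ from the nominal cut at $\hat{\bs}_k$, which has zero transport cost, and conclude $J_k\le\beta_k^{(\ell)}-G\le\epsilon$. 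You also make explicit that $z_\epsilon>0$ keeps the numerator positive when dividing by the smaller denominator, which the paper uses only implicitly.

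Your caveat is fair: under the $\epsilon_j$-relaxed master \eqref{prob: master-CS-algrthm} the nominal cut gives only $\beta_k\le 1+\epsilon_j$, and the paper's proof does not address this. Like you, the paper reads the nominal cut in its unrelaxed form $\beta_k^{(\ell)}\le G(\hat{\bs}_k;\bx^{(\ell)})$.
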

\begin{proof}
    
Fix any support point $\bs=(\bmean,\bQ)\in\mS$. Since
$\mS=\Theta\times\Xi$ is compact, the quantities
\[
\overline{m}_\Theta:=\max_{\bmean\in\Theta}\|\bmean\|,
\qquad
\overline{\lambda}_\Xi
:=
\max_{\bQ\in\Xi}\lambda_{\max}(\bQ)
\]
are finite. For $\bx^{(\ell)}\neq\bzero$, the  positive definiteness Assumption
on $\bQ\in\Xi$ gives ${\bx^{(\ell)}}^\top\bQ\bx^{(\ell)}>0.$ Moreover, $b-\bmean^\top\bx^{(\ell)}
\ge
b-\overline{m}_\Theta\|\bx^{(\ell)}\|,$
and $\sqrt{{\bx^{(\ell)}}^\top\bQ\bx^{(\ell)}}
\le
\sqrt{\overline{\lambda}_\Xi}\,\|\bx^{(\ell)}\|.$ 
Since
\[
\|\bx^{(\ell)}\|\le\delta_{\epsilon}
:=
\frac{b}{\overline{m}_\Theta+\sqrt{\overline{\lambda}_\Xi}\,z_\epsilon},
\qquad
z_\epsilon:=\Phi^{-1}(1-\epsilon),
\]
\[
b-\overline{m}_\Theta\|\bx^{(\ell)}\|
\ge
z_\epsilon\sqrt{\overline{\lambda}_\Xi}\,\|\bx^{(\ell)}\|.
\]
Therefore, $\frac{b-\bmean^\top\bx^{(\ell)}}
{\sqrt{{\bx^{(\ell)}}^\top\bQ\bx^{(\ell)}}}
\ge
z_\epsilon.$ Hence, $G(\bs;\bx^{(\ell)})
=
\Phi\!\left(
\frac{b-\bmean^\top\bx^{(\ell)}}
{\sqrt{{\bx^{(\ell)}}^\top\bQ\bx^{(\ell)}}}
\right)
\ge
\Phi(z_\epsilon)
=
1-\epsilon.$
The same conclusion holds for $\bx^{(\ell)}=\bzero$ since $G(\bs;\bzero)=1$ for $b>0$ by definition~\eqref{def: G}.

Next, since the algorithm is initialized with $\mS_0=\widehat{\mS}
:=
\{\hat{\bs}_1,\ldots,\hat{\bs}_{\hat K}\}$, we have $\widehat{\mS}\subseteq\mS_\ell
\,\,
\text{for every iteration }\ell.$
Therefore, for each $k\in[\hat K]$, the finite master problem contains the cut
generated by the nominal support point $\hat{\bs}_k$: $\beta_k^{(\ell)}
-
\beta_{\hat K+1}^{(\ell)}
d^2_{\mathrm{BW}}
\left(
\mN(\hat{\bs}_k),\mN(\hat{\bs}_k)
\right)
\le
G(\hat{\bs}_k;\bx^{(\ell)})$.
Since $d^2_{\mathrm{BW}}
\left(
\mN(\hat{\bs}_k),\mN(\hat{\bs}_k)
\right)=0$
and \(G(\hat{\bs}_k;\bx^{(\ell)})\le 1\), it follows that $\beta_k^{(\ell)}\le 1,
\,\,\,
\forall k\in[\hat K]$.

Now fix arbitrary $k\in[\hat K]$ and $\bs\in\mS$. Then
\[
\begin{aligned}
J_k(\bs;(\bx^{(\ell)},\bbeta^{(\ell)}))
&=
\beta_k^{(\ell)}
-
\beta_{\hat K+1}^{(\ell)}
d^2_{\mathrm{BW}}
\left(
\mN(\hat{\bs}_k),\mN(\bs)
\right)
-
G(\bs;\bx^{(\ell)})\le
\beta_k^{(\ell)}
-
G(\bs;\bx^{(\ell)}) \le 
1-(1-\epsilon) = \epsilon,
\end{aligned}
\]
where we used $\beta_{\hat K+1}^{(\ell)}\ge0$ and $d^2_{\mathrm{BW}}
\left(
\mN(\hat{\bs}_k),\mN(\bs)
\right)\ge0$. Since $k$ and $\bs$ were arbitrary, $J_k(\bs;(\bx^{(\ell)},\bbeta^{(\ell)}))\le\epsilon,
\,\,
\forall \bs\in\mS,\; k\in[\hat K].$
Thus, $(\bx^{(\ell)},\bbeta^{(\ell)})\in\mF(\epsilon)$.
\end{proof}

\subsubsection{\texorpdfstring{Proof of the optimality and finite termination claim of Theorem~\ref{thm: Cut-S-finite-convergence}.}{Proof of the optimality and finite termination claim}}
\label{sec: finite-convergence-proof}

\begin{proof}
\emph{\underline{Part-(i).}} Proof arguments for this part are established in the following order by setting $j=1$ and hence feasibility tolerance $\hat{\tau}/2$:
\begin{itemize}[leftmargin=1.0em]
    \item[--] claim that satisfying the Step-3 condition $J\bigl(\bs^{(\ell+1)};(\bx^{(\ell)},\bbeta^{(\ell)})\bigr)\le \hat{\tau}/2$ in Line~16 of Algorithm~\ref{alg: cutting-surface} is equivalent to obtaining $\hat{\tau}/2$-feasible solution to the semi-infinite program~\eqref{prob: semif-infinite}. This is proved by separately considering the regular case and the near-zero case in Line~11 of Algorithm~\ref{alg: cutting-surface}
\item[--]  show the objective value obtained from the current iterate $\bx^{(\ell)}$ is bounded above by \(\Val\eqref{prob: semif-infinite}\) based on its relaxation relation with master model~\eqref{prob: master-DRCC-cont-dual}
\item[--]  use compactness and continuity property to conclude that only finitely many points from \(\mS\) are needed
\end{itemize} 

First, suppose that, at iteration
\(\ell\), Step~2 of Algorithm~\ref{alg: cutting-surface} is executed. For
fixed \((\bx^{(\ell)},\bbeta^{(\ell)})\), define
\[
\check J_k(\bs;(\bx^{(\ell)},\bbeta^{(\ell)}))
:=
\beta_k^{(\ell)}
-
\beta_{\hat K+1}^{(\ell)}
d_{\mathrm{BW}}^2(\mN(\hat{\bs}_k),\mN(\bs))
-
\underline{\Phi}(g(\bs;\bx^{(\ell)})),
\]
and\vspace{-2em}
\[
J_k(\bs;(\bx^{(\ell)},\bbeta^{(\ell)}))
:=
\beta_k^{(\ell)}
-
\beta_{\hat K+1}^{(\ell)}
d_{\mathrm{BW}}^2(\mN(\hat{\bs}_k),\mN(\bs))
-
\Phi(g(\bs;\bx^{(\ell)})).
\]
Because \(\underline{\Phi}\le \Phi\), we have
\(J_k(\bs;(\bx^{(\ell)},\bbeta^{(\ell)}))
\le
\check J_k(\bs;(\bx^{(\ell)},\bbeta^{(\ell)}))\) for every
\(\bs\in\mS\) and \(k\in[\hat K]\). Let
\((\bar k,\bs^{(\ell+1)})\) solve $\max_{k\in[\hat K]}\max_{\bs\in\mS}
\check J_k(\bs;(\bx^{(\ell)},\bbeta^{(\ell)})).$
If the condition in Line~16 is satisfied, then
\(\check J_{\bar k}(\bs^{(\ell+1)};(\bx^{(\ell)},\bbeta^{(\ell)}))
\le \hat{\tau}/2\). Since \((\bar k,\bs^{(\ell+1)})\) maximizes
\(\check J_k\) over \(k\in[\hat K]\) and \(\bs\in\mS\), it follows that
\(\check J_k(\bs;(\bx^{(\ell)},\bbeta^{(\ell)}))\le\hat{\tau}/2\) for all
\(\bs\in\mS\) and \(k\in[\hat K]\). Combining this with \(J_k\le\check J_k\)
gives \(J_k(\bs;(\bx^{(\ell)},\bbeta^{(\ell)}))\le\hat{\tau}/2\) for all
\(\bs\in\mS\) and \(k\in[\hat K]\). Hence
\((\bx^{(\ell)},\bbeta^{(\ell)})\in\mF(\hat{\tau}/2)\).

Now suppose that Line~11 is invoked and the subproblem step is skipped. Let
\(\delta_{\hat{\tau}/2}\) be the radius in
Lemma~\ref{lem:near-zero-certificate} with \(\epsilon=\hat{\tau}/2\). If
\(b>0\) and \(\|\bx^{(\ell)}\|\le\delta_{\hat{\tau}/2}\), then
Lemma~\ref{lem:near-zero-certificate} gives
\(J_k(\bs;(\bx^{(\ell)},\bbeta^{(\ell)}))\le\hat{\tau}/2\) for all
\(\bs\in\mS\) and \(k\in[\hat K]\). Thus
\((\bx^{(\ell)},\bbeta^{(\ell)})\in\mF(\hat{\tau}/2)\) also in this case.
This proves that the condition used to enter Step 3 is a valid feasibility certificate for the
exact semi-infinite constrained problem~\eqref{prob: semif-infinite}.

We next prove the objective bound. Let \(\mF_\ell(\mathtt{0})\) and
\(\mF_\ell(\hat{\tau}/2)\) be the finite-cut feasible sets in
Definition~\ref{def: accurate-sol}, and let
\(\widetilde{\mF}_\ell(\mathtt{0})\) be the master feasible set defined by the constraints in~\eqref{prob: master-DRCC-cont-dual-O} using \(\bar\Phi\) with an approximation error tolerance $\hat{\tau}/2$. By construction and Theorem~\ref{thm: complexity-breakpoint},
\(\mF_\ell(\mathtt{0})\subseteq
\widetilde{\mF}_\ell(\mathtt{0})\subseteq\mF_\ell(\hat{\tau}/2)\). At iteration
\(\ell\), the master problem solves
\(\min\{\bc^\top\bx:(\bx,\bbeta)\in\widetilde{\mF}_\ell(\mathtt{0})\}\).
Therefore, for an optimal solution
\((\bx^{(\ell)},\bbeta^{(\ell)})\) of the master problem~\eqref{prob: master-DRCC-cont-dual-O}, $\min\{\bc^\top\bx:(\bx,\bbeta)\in\mF_\ell(\hat{\tau}/2)\} \le \bc^\top\bx^{(\ell)}
\le
\min\{\bc^\top\bx:(\bx,\bbeta)\in\mF_\ell(\mathtt{0})\}$. 
Since \(\mS_\ell\subseteq\mS\), set
\(\mF_\ell(\mathtt{0})\) is a relaxation of the exact feasible set
\(\mF(\mathtt{0})\). Hence
\(\min\{\bc^\top\bx:(\bx,\bbeta)\in\mF_\ell(\mathtt{0})\}
\le
\Val\eqref{prob: semif-infinite}\), and consequently
\(\bc^\top\bx^{(\ell)}\le\Val\eqref{prob: semif-infinite}\).

It remains to show that only finitely many cuts are needed to reach
\(\mF(\hat{\tau}/2)\). By Lemma~\ref{lem:beta-compactification}, because all
nominal cuts are included in the initial set \(\mS_0\), every feasible master
solution satisfies \(\bbeta\in\overline{\mathfrak B}\), where
\(\overline{\mathfrak B}\) is compact. Thus, the master variables lie in the compact set \(\mathcal Y=
\begin{cases}
\mX\times\overline{\mathfrak B}, & b>0,\\[2pt]
\mX_{\hat\delta}\times\overline{\mathfrak B}, & b=0,
\end{cases}\)
where the second case uses the zero-excluded domain in
Assumption~\ref{assmp: strict-strict-interior}. By
Lemma~\ref{lem: Delta-Continuity}, Lemma~\ref{lem: G-continuity-mu-sigma}, and
Lemma~\ref{lem: G-diff-continuity-x}, \(J_k(\bs,\bx,\bbeta)\) is continuous on
\(\mS\times\mathcal Y\) for every \(k\in[\hat K]\). Since \(\mS\) is compact
and \([\hat K]\) is finite, the semi-infinite system is a finite collection of
compactly indexed continuous constraints. Therefore, \(\mF(\mathtt{0})\) is a
closed subset of the compact set \(\mathcal Y\), and each finite-cut feasible
set \(\mF_\ell(\mathtt{0})\) is compact. The hypotheses of
\cite[Theorem~7.2]{hettich1993semi} hold. Applying that result with tolerance
\(\hat{\tau}/2\), Algorithm~\ref{alg: cutting-surface} reaches
\(\mF(\hat{\tau}/2)\) after adding finitely many points from \(\mS\). This proves part-(i).

\emph{\underline{Proof of Part-(ii).}} 
Let \(\epsilon_j:=\hat{\tau}/2^j, \, j\geq 1\). Note that, up to this point, we have worked with an outer approximation of the master model toward obtaining an \(\epsilon_j\)-feasible solution to \eqref{prob: semif-infinite}; in this part, we additionally consider an inner approximation of the master model to have an \(\epsilon_j\)-optimal solution to~\eqref{prob: semif-infinite}.  Also note that the proof of part-(i) does not rely on any condition on \(\hat{\tau}/2\) except that \(\hat{\tau}/2 > 0\), so the same argument there applies
with \(\epsilon_j\) in place of \(\hat{\tau}/2\). Thus, for each fixed \(j\),
only finitely many cuts are added before the algorithm obtains an
\(\epsilon_j\)-feasible point for the exact semi-infinite constrained problem~\eqref{prob: semif-infinite}.

Now, let \(\bx^*\) be an optimal solution of~\eqref{prob: semif-infinite}, and
\(\bx_*^{(\ell)}\) be an exact optimal solution of the finite master problem~\eqref{prob: master-CS-algrthm} at
iteration \(\ell\). Since the finite master problem contains only a subset of
the constraints of~\eqref{prob: semif-infinite},
\(\bc^\top\bx_*^{(\ell)}\le \bc^\top\bx^*=\Val\eqref{prob: semif-infinite}\).
Let \(V_\ell^\star(\epsilon_j)\) denote the optimal value over
\(\mF_\ell(\epsilon_j)\), and let \(V^\star(\mathtt{0})\) be the optimal value
of~\eqref{prob: semif-infinite} over $\mF(\mathtt{0})$. The master step gives
\(V_\ell^\star(\epsilon_j)\le
\bc^\top\bx^{(\ell)}\le
\bc^\top\bx_*^{(\ell)}
\le
\bc^\top\bx^*=V^\star(\mathtt{0})\).

In step 3 of Algorithm~\ref{alg: cutting-surface}, Line~17 uses the oracle $\mathcal{O}$
to return a feasible solution \((\tilde{\bx},\tilde{\bbeta})\) of
\eqref{prob: semif-infinite} by solving~\eqref{prob: master-DRCC-cont-dual} at $\epsilon_j$ optimality. Specifically, for any prescribed \(\tau>0\), we solve~\eqref{prob: master-DRCC-cont-dual}
using a piecewise-linear inner approximation of \(\Phi\), yielding the restricted
feasible set \(\check{\mF}_{\ell}(\mathtt{0})\). Assumption~\ref{assmp: non-empty-interior} ensures that \(\min\{\bc^\top\bx:(\bx,\bbeta)\in\widetilde{\mF}_\ell(\mathtt{0})\}\) has a solution.  Moreover, similar to the piecewise-linear outer
approximation, inner approximation construction and
Theorem~\ref{thm: complexity-breakpoint-inner} imply \(\mF_\ell(-\tau) \subseteq \check{\mF}_\ell(0) \subseteq\mF_\ell(\mathtt{0})\).

Now, Lemma~\ref{lem: optimality-master-problem} guarantees that iterative refinement of such piecewise linear approximation accuracy $\tau$ achieves any desired $\epsilon_j$-optimal solution to~\eqref{prob: master-DRCC-cont-dual} under finite approximation accuracy through establishing continuity of
the optimal value function \(V^\star(\cdot)\) at the origin.
Hence, for every \(\epsilon_j>0\), there exists \(\tau^+(\epsilon_j)>0\) such
that \(0<\tau\le\tau^+(\epsilon_j)\) implies
\(|V^\star(\mathtt{0})-V^\star(\tau)|\le\epsilon_j\) and
\(|V^\star(\mathtt{0})-V^\star(-\tau)|\le\epsilon_j\). Therefore,
\[
|V^\star(-\tau)-V^\star(\tau)|
\le
|V^\star(-\tau)-V^\star(\mathtt{0})|
+
|V^\star(\mathtt{0})-V^\star(\tau)|
\le
2\epsilon_j
=
2\hat{\tau}/2^j \leq \hat{\tau} .
\]
Because \(\epsilon_j\) is halved whenever \(j\gets j+1\), a finite \(j\) is
reached for which the corresponding tolerance lies inside the required
continuity neighborhood and \(2\epsilon_j\le\hat{\tau}\). For this \(j\), the
feasible solution \(\tilde{\bx}\) and the certificate iterate
\(\bx^{(\ell^\star)}\) satisfy
\(|\bc^\top\tilde{\bx}-\bc^\top\bx^{(\ell^\star)}|\le\hat{\tau}\) at some terminating iteration $\ell^\star$.

For each fixed \(j\), the preceding paragraph and \emph{part (i)} imply finite
termination of the cut-generation step. Since only finitely many updates of
\(j\) are required, the total number of master iterations is finite. Hence
\(j<\infty\), \(\ell^\star<\infty\), and
Algorithm~\ref{alg: cutting-surface} returns a \(\hat{\tau}\)-accurate solution in finitely many iterations.
\end{proof}
\section{Case Study}
\label{sec:numerical}
We now discuss our computational experience with continuous support-based robustification of the chance constraint, specified using the Gaussian mixture distribution, as studied in the earlier sections. We discuss this in the context of a service-level planning case study for EV charging stations, developed using a public dataset from~\cite{evcharging2025figshare}.  Our case study is motivated by the broader EV charging literature on the problem of aggregate charging-profile control and planning over time, as well as service-level-based charging infrastructure planning; see, for example,~\cite{liu2022data, altaha2022aggregate}. We introduce the model in Section~\ref{sec: EV-model} and describe our computational settings in Section~\ref{sec: compt-setup}. This section also specifies Wasserstein-2 ambiguity set radii and the mean--covariance support sets used in our experiments. Section~\ref{sec: implementation} presents the implementation details for both the finite-support DR (FDR) and continuous-support DR (CDR) formulations of the EV model. Section~\ref{sec:numerical-results} compares properties of the solutions generated by the two DR approaches along with the nominal distribution in terms of objective value, out-of-sample performance, and computation time. Section~\ref{sec:compare-solution} compares the energy-allocation profile in the nominal, FDR, and CDR solutions using different metrics and visual tools to provide insights into how robustness reshapes the energy profile in these models.

\subsection{An Aggregate EV Charging Service-Level Planning Model}\label{sec: EV-model}

We consider a least-cost EV charging-station energy allocation model subject to a minimum daily demand-fulfillment requirement as follows:


\vspace{-1em}
\begin{equation}
\begin{aligned}
& \min_{\bx\in [0, 1]^T}\quad \sum_{t=1}^T\Bigl(c^f_t\,x_t + C^o_t(x_t)\Bigr)
\quad \mathrm{s.t.}\quad
\mbP\!\left(\sum_{t=1}^T \xi_t\,x_t\ \ge\ E_{\min}\right)\ \ge\ \theta.
\end{aligned}
\label{eq:ev_model_generic}
\end{equation}
Here \(\xi_t\) denotes the random charging demand in hour \(t\), and \(x_t\in[0,1]\) denotes the fraction of that demand served in that hour. \(\xi_t x_t\) is the served demand in hour \(t\), and \(\sum_{t=1}^T \xi_t x_t\) is the total demand served over the day. The parameter \(E_{\min}\) specifies a minimum daily served-energy requirement, and the chance constraint requires this requirement to be met with probability at least \(\theta\). The objective consists of a baseline linear cost \(c_t^f x_t\), where \(c_t^f\) captures the hour-\(t\) time-of-use cost, and an additional operational cost term \(C_t^o(x_t)\), which allows the cost of serving demand to vary nonlinearly with the demand fulfillment fraction. The convex piecewise-linear form provides a regime-dependent penalty across service levels.

\subsubsection{Instance Generation}. 
To instantiate the EV demand-management model~\eqref{eq:ev_model_generic} from the data in \cite{evcharging2025figshare}, each charging session was partitioned across clock hours in proportion to the time spent in each hour and then aggregated into a daily 24-dimensional demand vector.  The entire dataset was divided for training and out-of-sample testing using a 60/40 split. The setting of the cost parameters and the procedure for estimating necessary parameters for the Gaussian distribution model~\eqref{eq:ev_model_generic} from the source data and its training/testing split are elaborated in Appendix~\ref{appndx: data-preparation}.

 The nominal Gaussian mixture model fitted to the training data had $\hat{K} = 5$ components. It was determined as follows. Gaussian mixtures with \(k=1,\ldots,20\) components were fitted using the \texttt{GaussianMixture} class in the \texttt{scikit-learn} package. In the package-specific configuration, the covariance type was set to \texttt{full} and diagonal regularization was set to \(10^{-6}\). For each fixed order \(k\), the native expectation-maximization (EM) algorithm was run from ten different initializations, and the fit with the largest log-likelihood value was retained. This entire order-selection procedure was then repeated over ten independent random seeds. In each replicate, the selected order was the value of \(k\) minimizing the Bayesian information criterion (BIC). Sorting these ten replicate-specific selections, we took the lower of the two middle values as the final choice, yielding \(\hat K=5\) for the fitted demand data. The resulting nominal distribution had mixture weights \(\hat{\bw}=(0.167,0.167,0.204,0.241,0.222)\).

 \subsubsection{Data preparation for out-of-sample performance evaluation.}\label{appndx: OSS-data}\,  For the out-of-sample performance analysis, energy allocation solutions obtained from the FDR and CDR models were evaluated against demand samples generated from a Gaussian mixture model fitted on the holdout data. Gaussian mixture models with orders \(k=1,\ldots,20\) were fitted using the EM algorithm, with 15 initializations per order and up to 100 EM iterations per fit; for each \(k\), the best fit among the 15 initializations was retained.  Let \(K^\circ\) correspond to the one with the smallest BIC over \(k=1,\ldots,20\). out-of-sample satisfaction (OSS) probability was then computed from samples drawn for \(K^\circ, K^\circ-1\) and \(K^\circ+1\), using Monte Carlo sample sizes of 500 for each, and reporting the proportion of sampled demand vectors \(\bxi\) for which \(\bxi^\top \bx \ge E_{\min}\) with candidate allocation $\bx$. Such an approach is statistically motivated: finite-mixture likelihoods are non-convex, EM estimates may depend on initialization, and adjacent mixture orders often have nearly indistinguishable information-criterion values on moderate sample sizes. Evaluating OSS at \(K^\circ-1\), \(K^\circ\), and \(K^\circ+1\) therefore serves as a robustness check against model-order and local-optimum sensitivity rather than relying on a single fitted mixture.

\subsection{Experimental Settings}\label{sec: compt-setup}
All computations were performed on a server with Intel Xeon 2.80 GHz CPUs using Python 3.11. Gurobi 11.0.1 was used to solve master and subproblems that do not involve positive semi-definite (PSD) constraints; Mosek 11.1.6 was used otherwise. A thread count of 2 was set for the master problem and each subproblem. For both the finite- and continuous-support formulations, a violation tolerance of $10^{-4}$
 was used as the stopping threshold for cut generation. The target satisfaction probability \(\theta\), representing the desired service level, was set to take values in \(\{0.95,0.97,0.99\}\). 
 The cutting-surface algorithm used to solve CDR was run for four master iterations. For each of the five master problems, Gurobi’s \texttt{MIPGap} parameter was set to 0.1\%. Since solving nonlinear mixed-integer programs generated in the cutting surface algorithm to optimality is time-consuming (and not required), an iteration-dependent time limit, as specified in implementation detail Section~\ref{sec:approaches_cont}, was followed to keep the total computation time $\leq 24$ hours for each instance.  All other Gurobi and Mosek settings were kept at their default values.  

\subsubsection{Wasserstein-2 set. }\label{sec: mean-cov-spec}\, 
We experimented with Wasserstein-radius parameters \(\rho \in \{0.001,0.005,0.01\}\). For the CDR case, the compact support $\mS:=\Theta\times\Xi$ was generated as follows. The mean support set \(\Theta\) was specified by componentwise bounds \(\underline{\bmean} \le \bmean_k \le \bar{\bmean}\) for all \(k \in [K]\) which along each coordinate \(j \in [n]\) (for the case study $j$ and $[n]$ respectively correspond to $t$ and $[T]$) are obtained from the nominal component means as:\vspace{-1.5em}
\[
\underline{m}_j
=
\min_{k \in \hat K} m_{jk}
-
\varsigma\left|\min_{k \in \hat K} m_{jk}\right|,
\qquad
\bar{m}_j
=
\max_{k \in \hat K} m_{jk}
+
\varsigma\left|\max_{k \in \hat K} m_{jk}\right|.
\]
Similarly, the covariance support set \(\Xi\) was specified by imposing multiplicative semidefinite bounds around the nominal covariance matrices \(\hat{\bQ}_k\), namely, $\underline{\lambda}_f \hat{\bQ}_k \preceq \bQ_k \preceq \overline{\lambda}_f \hat{\bQ}_k,
\,\, \forall k\in[K]$. We considered two settings for the mean and covariance bounds: \(\varsigma \in \{0.05,\,0.1\}\) and \((\underline{\lambda}_f,\overline{\lambda}_f)\in \{(1/2,2),\,(1/3,3)\}\).
 
\subsection{Implementation Details of Nominal Model and DR Variants}
\label{sec: implementation}

\subsubsection{Nominal model. }\,
Given the GMM fitted to the EV-station demand data, the nominal counterpart of model~\eqref{eq:ev_model_generic} is $\min_{\bx\in[0,1]^T}\sum_{t=1}^T\bigl(c_t^f x_t + C_t^o(x_t)\bigr)
\,\,\, \mathrm{s.t.} \,\,\,
\sum_{k=1}^{\hat K} \hat w_k \,
\Phi\!\left(\frac{E_{\min}-\bmean_k^\top \bx}{\sqrt{\bx^\top \bQ_k \bx}}\right)\ge \theta,$ which we solved via a piecewise-linear approximation of \(\Phi(\cdot)\) and the corresponding chance-constraint reformulation in~\eqref{eq: reform-outer-master}. Note that obtaining the array of breakpoints for the piecewise-linear approximation of $\Phi(\cdot)$ from Algorithm~\ref{alg:breakpoint_find}-\ref{alg:breakpoint_find_negative} requires the approximation accuracy tolerance $\tau$; the same approximation tolerance $\hat{\tau}:=\tau = 10^{-4}$ was used to decide the number of points in the piecewise linear approximations of the univariate Gaussian for the nominal model and the DR instances. The outermost loop in Algorithm~\ref{alg: cutting-surface} that iteratively reduces the approximation accuracy and tolerances was not performed.

\subsubsection{Distributionally robust models. }\,
\label{sec:approaches_cont}

We solve~\eqref{prob: semif-infinite-FDR} as the FDR benchmark with the unknown support set restricted to the nominal GMM components. Since formulation~\eqref{prob: semif-infinite-FDR} contains only finitely many constraints, it can be handled directly by an off-the-shelf solver such as Gurobi and does not require iterative subproblem separation. 
By Corollary~\ref{cor: FDR-dual} in Section~\ref{sec: Dual-formulation} with $K = \hat{K}$, the iteration-0 master problem in the cutting-surface algorithm corresponds to the FDR model~\eqref{prob: semif-infinite-FDR}. An iteration-specific computational time budget was used in the cutting surface algorithm implementation.

\subparagraph{Iteration-dependent master iteration time limits.}

For the CDR model, one separation problem is solved for each \(k\in[\hat K]\). These \(\hat K\) subproblem solutions may generate multiple violated cuts, and each admitted cut introduces a nonconvex constraint together with a new block of binary variables. As a result, the master problem becomes increasingly computationally demanding across iterations. To manage this computational burden, rather than solving each master problem to a prescribed optimality gap, we impose iteration-dependent time limits of 2, 2, 4, 6, and 8 hours for iterations 0 through 4, respectively. This practical adaptation does not compromise the CSA algorithmic framework, as an optimal solution of the master problem in this framework is required when proving optimality. Sub-optimal solutions to the master problem are acceptable for generating the cutting surfaces. Since we have FDR solutions available from two separate contexts: under iteration-0 of the cutting surface algorithm as well as from its stand-alone run of 24 hours, we report both for performance comparisons.


Since the iteration-adaptive schedule allocates only up to 2 hours to iteration 0, which may be insufficient to obtain an FDR solution with  \texttt{MIPGap}=0.1\%, we solved~\eqref{prob: semif-infinite-FDR} as a standalone benchmark run with a time limit set to 24 hours, which equals the time allocated to the first four CDR iterations.

\subparagraph{Separation problem.}
Let $\bx^{(\ell)}\neq\bzero$ and $\bbeta^{(\ell)}$ be the master solution at some iteration $\ell$. For each nominal component \(k\in[\hat K]\), the separation problem searches over a support point \(\bs=(\bmean,\bQ)\in\mS\) to identify a violated semi-infinite constraint.  We suppress the subscript $k$ in the following description. Thus,  \(\hat{\bmean}\), \(\hat{\bQ}\), and \(\beta^{(\ell)}\) denote \(\hat{\bmean}_k\), \(\hat{\bQ}_k\), and \(\beta_k^{(\ell)}\), respectively. Note that \(\beta_{K+1}^{(\ell)}\) does not depend on $k \in [K]$. Additionally, after replacing \(\Phi(\cdot)\) by its PWL inner approximation and using \(\zeta:=-\zeta'\), and the mean-covaraince support set $\Theta \times \Xi$ specified in Section~\ref{sec: mean-cov-spec}, the separation problem~\eqref{prob: subproblem-1.a}-\eqref{prob: subproblem-1.c} can be written in the minimization form as\vspace{-1.0em}
\begin{equation}
\begin{aligned}
\hspace{-1em}\min_{\bmean,\bQ,\sigma,z,\zeta,\balpha,\bt,\by}
&
\hspace{-0.5em}-\beta^{(\ell)}
+
\beta_{\hat K+1}^{(\ell)}
\Big[
\|\bmean-\hat{\bmean}\|_2^2
+
\Tr(\bQ)+\Tr(\hat{\bQ})
-
2\Tr\!\left(
\big(\bQ^{1/2}\hat{\bQ}\bQ^{1/2}\big)^{1/2}
\right)
\Big]
+
\zeta
\\
\mathrm{s.t.}\quad
&
 \underline{\bmean} \, \le \bmean\le \bar{\bmean}, \qquad \underline{\lambda_f}\hat{\bQ}\preceq \bQ\preceq \overline{\lambda}_f\hat{\bQ},\\
 & {\bx^{(\ell)}}^\top\bQ\bx^{(\ell)}=\sigma^2,
\,\,\, 
b-\bmean^\top\bx^{(\ell)}\le z\,\sigma,
\,\,\,
(z,\zeta,\balpha,\bt,\by)\in\mH^I(\bzz)_+ ,
\end{aligned}
\label{prob: sep-original-form}
\end{equation}
where, \(\mH^I(\bzz)_+\) denotes the set $\mH^I(\bzz)$ except using $\zeta := -\zeta'$ for $\zeta \in [0, 1]$.

The above model is not directly representable using the algebraic modeling language of the off-the-shelf solver Mosek. Although the mean-distance term and the trace terms \(\Tr(\bQ)+\Tr(\hat{\bQ})\) are directly representable, the difficulty arises from the matrix square-root term $\Tr\!\big(
(\bQ^{1/2}\hat{\bQ}\bQ^{1/2})^{1/2}
\big),$ which couples the variable covariance matrix \(\bQ\) with the nominal covariance matrix \(\hat{\bQ}\). To express this term through semidefinite constraints, we use the identity
\[
\Tr\!\left(
\left(\bQ^{1/2}\hat{\bQ}\bQ^{1/2}\right)^{1/2}
\right)
=
\max_{\bW}
\left\{
\Tr(\bW):
\begin{bmatrix}
\bQ & \bW\\
\bW^\top & \hat{\bQ}
\end{bmatrix}\succeq0
\right\}.
\]
Substituting this representation into~\eqref{prob: sep-original-form} gives the following \(\bW\)-based form:
\begin{equation}
\begin{aligned}
\min_{\bmean,\bQ,\bW,\sigma,z,\zeta,\balpha,\bt,\by}
\quad
&
-\beta^{(\ell)}
+
\beta_{\hat K+1}^{(\ell)}
\Bigl[
d_{m,k}(\bmean)+d_{Q,k}(\bQ,\bW)
\Bigr]
+
\zeta
\\
\mathrm{s.t.}\quad
&
\underline{\bmean} \, \le \bmean\le \bar{\bmean}, \qquad \underline{\lambda_f}\hat{\bQ}\preceq \bQ\preceq \overline{\lambda}_f\hat{\bQ},\\
&
\begin{bmatrix}
\bQ & \bW\\
\bW^\top & \hat{\bQ}
\end{bmatrix}\succeq0,
\\
&
{\bx^{(\ell)}}^\top\bQ\bx^{(\ell)}=\sigma^2, \quad
b-\bmean^\top\bx^{(\ell)}\le z\,\sigma, \quad
(z,\zeta,\balpha,\bt,\by)\in\mH^I(\bzz)_+ ,
\end{aligned}
\label{prob: sep-W-form}
\end{equation}
where\vspace{-1.5em}
\[
d_{m}(\bmean):=\|\bmean-\hat{\bmean}\|_2^2,
\qquad
d_{Q}(\bQ,\bW)
:=
\Tr(\bQ)+\Tr(\hat{\bQ})-2\Tr(\bW).
\]
The resultant model~\eqref{prob: sep-W-form} is a reformulation to~\eqref{prob: sep-original-form}. Indeed, for a fixed \(\bQ\), minimizing \(d_{Q}(\bQ,\bW)\) is equivalent to maximizing \(\Tr(\bW)\) over the block PSD constraint. Since \(\beta_{\hat K+1}^{(\ell)}\ge0\), the objective selects such a maximizer whenever the covariance-distance term is active. If \(\beta_{\hat K+1}^{(\ell)}=0\), the Bures--Wasserstein distance term is absent from the objective, and the choice of \(\bW\) does not affect the separation objective.

The separation problem reformulation~\eqref{prob: sep-W-form} is a mixed-binary problem with PSD constraints, quadratic equalities ($\sigma^2=\bx^\top \bQ \bx$), and bilinear terms. Despite the simplification through reformulation~\eqref{prob: sep-W-form}, Gurobi and Mosek (and other commercial mixed-integer solvers, to our knowledge) are not advanced enough to accept this formulation in its native form. Therefore, we developed an alternating block minimization heuristic to obtain solutions to ~\eqref{prob: sep-W-form}. It is acceptable (possibly desirable) to use a heuristic in the cut generation subproblems at an earlier stage of CSA since its optimal solution is required only in proving optimality. 

\subparagraph{An alternating block minimization heuristic.} We propose an alternating block minimization heuristic by leveraging the fact that the cross-block coupling arising in the subproblem~\eqref{prob: sep-W-form} only couples through the link constraint $b(\bmean;\bx^{(\ell)})  \coloneq b - \bmean^\top \bx^{(\ell)} \le z\,\sigma,$ which decomposes in two blocks of variables: $\bmean$-block variables $(\bmean, z, \zeta)$ and $\bQ$-block variables $(\bQ, \bW, \sigma)$. All other constraints and objectives are
block-separable, and the interaction between the two blocks is confined to the
product of the scalar variables \(z\) and \(\sigma\). At iteration $r$ of this alternating block minimization heuristic, we present the formulations below for both blocks:
\begin{equation*}
\begin{minipage}[t]{0.5\linewidth}
\textbf{\(\bmean\)-block at sub-iteration \ \(r\):}\\[-5pt]
\[
\hspace{-4em}
\begin{aligned}
\min_{\bmean,z,\zeta,\,\balpha,\bt,\by}
&-\beta^{(\ell)}+\beta_{K+1}^{(\ell)}\,d_m(\bmean)
+\beta_{K+1}^{(\ell)}\,\bar c_Q^{(r)}+\zeta\\
\mathrm{s.t.}\quad
& \underline{\bmean} \, \le \bmean\le \bar{\bmean},\\
&b(\bmean;\bx^{(\ell)})\le z\,\sigma^{(r)},\\
&(z,\zeta,\balpha,\bt,\by)\in \mH^I(\bzz)_+,\\
\text{where,} \quad &\bar c_Q^{(r)}:=d_Q\!\big(\bQ^{(r)},\bW^{(r)}\big),\\
&\sigma^{(r)}:=\sqrt{{\bx^{(\ell)}}^\top \bQ^{(r)}\bx^{(\ell)}}.
\end{aligned}
\]
\end{minipage}\hfill
\hspace{-2em}
\begin{minipage}[t]{0.5\linewidth}
\textbf{\(\bQ\)-block at sub-iteration\ \(r+1\):}\\[-5pt]
\[
\begin{aligned}
\min_{\bQ,\bW,\sigma}
&-\beta^{(\ell)}+\beta^{(\ell)}_{K+1}\,\bar c_m^{(r+1)}
+\beta^{(\ell)}_{K+1}\,d_Q(\bQ,\bW)+\zeta^{(r+1)}\\
\mathrm{s.t.}\quad
&\underline{\lambda_f}\hat{\bQ}\preceq \bQ\preceq \overline{\lambda}_f\hat{\bQ},\\
&\begin{bmatrix}\bQ&\bW\\ \bW^\top&\hat{\bQ}\end{bmatrix}\succeq 0,\\
&\sigma^2\le {\bx^{(\ell)}}^\top \bQ \bx^{(\ell)},\\
& b(\bmean^{(r+1)};\bx^{(\ell)})\le z^{(r+1)}\sigma, \\
\text{where,} \,\, 
&\bar c_m^{(r+1)}:=d_m\!\big(\bmean^{(r+1)}\big).
\end{aligned}
\]
\end{minipage}
\label{eq:abm-two-blocks}
\end{equation*}

In our implementation, the heuristic was initialized on the covariance side by setting
\(\bQ^{(0)}=\hat{\bQ}\) and \(\bW^{(0)}=\hat{\bQ}\), that is, at the nominal
covariance pair of the Gaussian component under consideration. With
\((\bQ^{(0)},\bW^{(0)})\) fixed, the algorithm started by solving the
\(\bmean\)-block to obtain \((\bmean^{(1)}, z^{(1)}, \zeta^{(1)})\). Fixing these coefficients it then
solves the \(\bQ\)-block problem to obtain \((\bQ^{(1)},\bW^{(1)})\), and iterates from thereon.

Mosek was used to solve the $\bQ$-block of the subproblem containing PSD constraints with a tolerance of $10^{-6}$.  The alternating block minimization heuristic was run for up to $r=15$ iterations. A resulting candidate solution was added only if its violation exceeded the threshold $10^{-4}$.

\subsection{Numerical Findings}
\label{sec:numerical-results}

This section reports computational insights from the CDR and FDR experiments.
Across all parameter settings of the CDR model, cumulative solution times and
cut counts over the master iterations of the cutting-surface approach indicate
that the computational burden is driven primarily by repeated master-problem reoptimization rather than by separation. For both mean-support expansion values
\(\varsigma=0.05\) and \(\varsigma=0.1\), the separation heuristic accounts for
less than \(1\%\) of total computation time. By contrast, the master problems
become increasingly difficult as the algorithm proceeds, because each admitted
cut adds a nonconvex constraint together with an additional block of binary
variables. This effect becomes more pronounced at larger target satisfaction
levels \(\theta\).  The number of admitted cuts remains relatively stable across all the tested settings. After four CDR iterations, the total number of cuts is observed to be between \(21\) and \(23\). Early CDR
iterations are solved within minutes, whereas later iterations may require hours
and, in the most difficult cases, the cumulative runtime reaches the \(24\)-hour
budget. Detailed iteration-wise breakdown of solution times and cut counts is reported in Tables~\ref{tab:cumulative-time-cut-0.1}
and~\ref{tab:cumulative-time-cut-0.05} (see Appendix~\ref{appndx: visuals}) separately for
\(\varsigma=0.1\) and \(\varsigma=0.05\). Setting \(\varsigma=0.1\) generally requires more computation time than \(\varsigma=0.05\), which is noticeable for \(\theta\geq0.97\). This additional expense is mainly attributable to the larger cut space, which makes identifying highly violated cuts more costly. Runtime differences, however, are
less distinguishable for different covariance-scaling parameter \(\underline{\lambda}_f\) values across all these test settings.

The FDR model is evaluated and compared under two computational contexts. The first is a
benchmark setting that solves the stand-alone FDR formulation to the desired
optimality gap of \(10^{-4}\) within the same \(24\)-hour time budget. The
second uses the CDR iteration-0 master problem as an FDR reference, where runtime
is capped below \(2\) hours by the iteration-adaptive schedule. This distinction
is important because the stand-alone FDR solve may still consume the full
\(24\)-hour budget toward the desired optimality gap, whereas the iteration-0
CDR master is deliberately restricted to a shorter runtime. For FDR, the effect
of a larger ambiguity radius \(\rho\) on computation time is seen for
higher satisfaction targets $\theta \geq 0.97$. For CDR, this effect is further amplified by larger
mean-support uncertainty. These two factors together make master problem reoptimization more computationally demanding.

\subsubsection{Out-of-Sample Performance and the Cost-Reliability Tradeoff}. 
Table~\ref{tab:ev-objdiff-oss-theta-rho-iter7-pmax560-vsdisc} quantifies the associated cost--reliability tradeoff. The nominal objective is constant across \(\rho\) for each fixed \(\theta\) because \(\rho\) enters only through the ambiguity set. Its out-of-sample satisfaction (OSS) probability in the hold-out data is \(92.52\%\), \(94.02\%\), and \(95.20\%\), which is significantly below the desired probabilities at \(\theta=0.95\), \(0.97\), and \(0.99\), respectively. The FDR model changes this baseline only modestly.  Across all nine \((\theta,\rho)\) combinations, the FDR objective increase remains \(0.06\%\) to \(0.77\%\), and the corresponding OSS probability gain over the nominal model is at most \(1.39\%\). In none of the reported cases does FDR increase the empirical satisfaction probability to the target level irrespective of runtime settings. Also note that increasing the FDR runtime beyond the iteration-0 budget minimally improves OSS probability: OSS values under both runtime settings are identical in five of the nine \((\theta,\rho)\), with small differences in the case \(\theta=0.99\). Particularly at \(\theta=0.99\), although the additional computatonal time was about \(20\) hours, the OSS improvement was only \(0.28\%\) to \(0.33\%\) points. Hence, increasing the provable accuracy of a solution to the FDR model has some, but incremental value.


\begin{table}[!hbtp]
\caption{Nominal objective values and out-of-sample satisfaction probabilities (OSS, \%) are reported as baselines for all $\rho\in\{0.001,0.005,0.01\}$ and $\theta\in\{0.95,0.97,0.99\}$. For FDR, OSS values are reported both from a dedicated benchmark FDR run with \texttt{MIPGap}$=0.1\%$ and the same 24-hour runtime budget as CDR, and from iteration 0 of the cutting surface approach. Relative objective difference with respect to nominal are calculated as $100\frac{|\operatorname{Obj}^{\mathrm{nominal}}-\operatorname{Obj}^{\mathrm{DR}}|}{\operatorname{Obj}^{\mathrm{nominal}}}$ and represented under Obj Diff (\%) column. For the CDR approach, OSS probability and relative objective differences are reported for four different hyperparameter $\text{(mean perturbation, covariance scaling factor)} \, = \,(\varsigma,\underline{\lambda}_f)$ settings.
}
\label{tab:ev-objdiff-oss-theta-rho-iter7-pmax560-vsdisc}
\centering
\begin{threeparttable}
\resizebox{\textwidth}{!}{%
\begin{tabular}{cc|cc|ccc|cc|cc|cc|cc}
\toprule
&  & \multicolumn{2}{c|}{Nominal} & \multicolumn{3}{c|}{FDR} & \multicolumn{8}{c}{CDR} \\
\cmidrule(lr){5-7}\cmidrule(lr){8-15}
& & & & & \multicolumn{2}{c|}{OSS (\%)} 
& \multicolumn{2}{c|}{$\varsigma=0.05,\ \underline{\lambda}_f=0.333$}
& \multicolumn{2}{c|}{$\varsigma=0.05,\ \underline{\lambda}_f=0.5$}
& \multicolumn{2}{c|}{$\varsigma=0.10,\ \underline{\lambda}_f=0.333$}
& \multicolumn{2}{c}{$\varsigma=0.10,\ \underline{\lambda}_f=0.5$} \\
\cmidrule(lr){6-7}\cmidrule(lr){8-9}\cmidrule(lr){10-11}\cmidrule(lr){12-13}\cmidrule(lr){14-15}
$\theta$ & $\rho$ & Obj & OSS (\%) & \makecell{Obj\tnote{a} \\ Diff (\%)} 
& Benchmark & \makecell{From \\CDR\\Iter 0}
& \makecell{Obj \\Diff (\%)\\ (FDR)\tnote{b}} & OSS (\%)
& \makecell{Obj \\Diff (\%)\\ (FDR)\tnote{b}} & OSS (\%)
& \makecell{Obj \\Diff (\%)\\ (FDR)\tnote{b}} & OSS (\%)
& \makecell{Obj \\Diff (\%)\\ (FDR)\tnote{b}} & OSS (\%) \\
\midrule
& 0.001 & 7011.376 & 92.52 & 0.10 & 92.76 & 92.76 & 1.59 (1.49) & 95.040 & 1.59 (1.49) & 95.060 & 2.47 (2.37) & 95.867 & 2.47 (2.37) & 95.867 \\
0.95 & 0.005 & 7011.376 & 92.52 & 0.43 & 93.45 & 93.45 & 2.42 (1.98) & 95.940 & 2.42 (1.98) & 95.940 & 3.72 (3.28) & 97.187 & 3.74 (3.29) & 97.173 \\
 & 0.010 & 7011.376 & 92.52 & 0.77 & 93.91 & 93.91 & 3.10 (2.31) & 96.413 & 3.10 (2.31) & 96.413 & 4.43 (3.63) & 97.673 & 4.43 (3.63) & 97.713 \\
\midrule
 & 0.001 & 7073.915 & 94.02 & 0.08 & 94.09 & 94.09 & 1.73 (1.65) & 96.207 & 1.72 (1.64) & 96.207 & 2.62 (2.53) & 96.787 & 2.62 (2.53) & 96.787 \\
0.97 & 0.005 & 7073.915 & 94.02 & 0.37 & 94.53 & 94.53 & 2.49 (2.12) & 96.793 & 2.49 (2.12) & 96.793 & 3.98 (3.60) & 98.107 & 3.98 (3.60) & 98.107 \\
 & 0.010 & 7073.915 & 94.02 & 0.65 & 95.19 & 94.69 & 3.14 (2.48) & 97.360 & 3.14 (2.48) & 97.353 & 4.66 (3.99) & 98.453 & 4.66 (3.99) & 98.440 \\
\midrule
& 0.001 & 7176.816 & 95.20 & 0.06 & 95.57 & 95.27 & 1.99 (1.93) & 97.447 & 1.99 (1.93) & 97.447 & 2.89 (2.82) & 98.100 & 2.89 (2.82) & 98.100 \\
0.99 & 0.005 & 7176.816 & 95.20 & 0.25 & 95.60 & 95.32 & 2.86 (2.60) & 98.160 & 2.86 (2.60) & 98.173 & 4.51 (4.25) & 98.833 & 4.51 (4.25) & 98.833 \\
 & 0.010 & 7176.816 & 95.20 & 0.41 & 95.59 & 95.26 & 3.58 (3.16) & 98.487 & 3.59 (3.16) & 98.487 & 5.44 (5.00) & 99.173 & 5.44 (5.00) & 99.173 \\
\bottomrule
\end{tabular}%
}
\begin{tablenotes}
\scriptsize
\item[a] Identical across the two FDR settings up to two decimals and hence reported once.
\item[b] \parbox[t]{0.95\textwidth}{%
The non-parenthesized value is relative to the nominal solution, while the parenthesized value is relative to FDR.}
\end{tablenotes}
\end{threeparttable}
\end{table}

\begin{table}[!hbtp]
\centering
\caption{FDR results for the larger ambiguity radii $\rho\in\{5,10\}$ under the same OSS evaluation setting used in Table~\ref{tab:ev-objdiff-oss-theta-rho-iter7-pmax560-vsdisc}. For each entry, the non-parenthesized objective difference is computed relative to the nominal objective at the same $\theta$, while the parenthesized value is computed relative to the FDR baseline at $\rho=0.001$ for the same $\theta$.}
\label{tab:fdr-large-rho-supp-compact}
\setlength{\tabcolsep}{5pt}
\renewcommand{\arraystretch}{1}
\scriptsize
\begin{tabular}{l|cc|cc|cc}
\toprule
& \multicolumn{2}{c|}{$\theta=0.95$}
& \multicolumn{2}{c|}{$\theta=0.97$}
& \multicolumn{2}{c}{$\theta=0.99$} \\
\cmidrule(lr){2-3}\cmidrule(lr){4-5}\cmidrule(lr){6-7}
& $\rho=5$ & $\rho=10$ & $\rho=5$ & $\rho=10$ & $\rho=5$ & $\rho=10$ \\
\midrule
Obj Diff (\%) & 2.47 (2.37) & 2.47 (2.37) & 2.03 (1.95) & 2.03 (1.95) & 1.42 (1.36) & 1.42 (1.36) \\
OSS (\%)      & 94.88       & 94.88       & 95.30       & 95.30       & 96.04       & 96.04       \\
\bottomrule
\end{tabular}
\end{table}

By contrast, the CDR results exhibit a systematic progression toward the prescribed service-level targets as \(\rho\) increases within each fixed continuous-support setting \((\varsigma,\underline{\lambda}_f)\). For \(\theta=0.95\), all CDR settings satisfy the target already at \(\rho=0.001\), with OSS values ranging from \(0.9504\) to \(0.9587\). For \(\theta=0.97\), the \(\varsigma=0.05\) settings improve OSS from \(0.9621\) to \(0.9735\)--\(0.9736\), crossing the target at \(\rho=0.01\), while the \(\varsigma=0.10\) settings always exceed the target from \(\rho=0.005\) onward. For \(\theta=0.99\), with increasing $\rho$ the \(\varsigma=0.05\) settings increases OSS from \(0.9745\) to \(0.9849\), and the \(\varsigma=0.10\) setting reaches \(0.9917\) at \(\rho=0.01\). These reliability gains are obtained with CDR solution objective value increases of \(1.59\%\)--\(5.44\%\) relative to nominal and \(1.49\%\)--\(5.00\%\) relative to the FDR model. This increase in the objective values under the CDR model is accompanied by substantial OSS gains:
CDR exceeds the stand-alone FDR OSS by \(1.88\)--\(3.80\) percentage points
overall, and by \(2.50\)--\(3.80\), \(2.16\)--\(3.26\), and \(2.90\)--\(3.58\)
percentage points at \(\rho=0.01\) for \(\theta=0.95,0.97,\) and \(0.99\),
respectively. Therefore, CDR not only produces higher OSS values than FDR, but
also exhibits a consistent response to the ambiguity set radii \(\rho\): under the same
\((\varsigma,\underline{\lambda}_f)\) setting, OSS moves closer to the prescribed service level and often exceeds it, whereas FDR remains below the targets throughout the reported settings. The effect is particularly evident for the larger mean-support with parameter \(\varsigma=0.10\), under which CDR either attains the target or remains closest to it across all three service levels.


Within the continuous-support family of solutions, two features stand out. First, increasing \(\rho\) raises both cost and OSS monotonically. Second, \(\varsigma\) is the economically meaningful hyperparameter, whereas \(\underline{\lambda}_f\) is secondary across our settings. For example, at \((\theta,\rho)=(0.99,0.01)\), moving from \(\varsigma=0.05\) to \(\varsigma=0.10\) increases the objective difference from about \(3.58\%\) to \(5.44\%\) and OSS from \(0.9849\) to \(0.9917\). By contrast, changing \(\underline{\lambda}_f\) from \(0.333\) to \(0.5\) leaves the reported objective and OSS values almost unchanged. In these EV instances, the extent of uncertainty in the mean matters much more than the uncertainty in the covariance. 

Additional FDR runs with $\rho=5$ and $\rho=10$ are reported in Table~\ref{tab:fdr-large-rho-supp-compact} to assess whether, at $\theta=0.95$ and $\theta=0.97$, enlarging the FDR ambiguity radius improves OSS at a lower computational burden than CDR. However, at both radii, the FDR runs exhaust the full $24.00$-hour runtime budget while delivering only marginal OSS improvement. At $\theta=0.95$, FDR OSS probability increases from $0.9391$ at $\rho=0.01$ to $0.9488$ at both $\rho=5$ and $\rho=10$, but could not reach the target $\theta=0.95$. At the same time, the FDR objective premium increases from $0.77\%$ to $2.47\%$. That premium is larger than the smallest target-attaining CDR premium in Table~\ref{tab:ev-objdiff-oss-theta-rho-iter7-pmax560-vsdisc}, which is $1.59\%$, while that CDR solution already exceeds the target with $0.9504$ and $0.9506$ OSS values. The same pattern persists at the higher service levels. At $\theta=0.97$ and $\theta=0.99$, increasing the FDR radius from $\rho=0.01$ to $\rho \in \{5,10\}$ raises benchmark OSS only marginally, from $0.9519$ to $0.9530$ and from $0.9560$ to $0.9604$, respectively, while always consuming $24.00$ hours. The corresponding objective premiums are $2.03\%$ and $1.42\%$, yet the prescribed targets are still missed by $1.70$ and $2.96$ percentage points. Finally, Table~\ref{tab:fdr-large-rho-supp-compact} reports identical FDR objective premiums and identical OSS values at $\rho=5$ and $\rho=10$ for each $\theta$. Under the imposed $24.00$-hour budget, increasing the radius from $5$ to $10$ therefore does not produce a stronger reported FDR outcome.

\subsection{Solution Attributes and Managerial Insights}
\label{sec:compare-solution}

The out-of-sample performance and cost--reliability trade-off of the CDR solutions provide aggregate evidence of improved robustness, but they do not reveal how robustness is implemented in the energy allocation profile. This distinction is important in the EV demand planning problem because two solutions can have similar objective values or OSS probabilities while reallocating energy across different hours. In order to understand the source of improvement exhibited in the CDR solutions, we compared the nominal, FDR, and CDR allocation vectors directly. Let \(\bx^{\mathrm N}\) denote the nominal allocation and let \(\bx^{\mathrm{DR}}\) denote the allocation returned by a DR formulation. For each hour \(t\), define the signed allocation difference $\Delta_t:=x_t^{\mathrm{DR}}-x_t^{\mathrm N}.$

We use three diagnostic plots, each designed to capture and explain \(\boldsymbol \Delta\) from different aspects. The first diagnostic is a violin plot of the absolute deviations \(\{|\Delta_t|\}_t\), which displays the empirical distribution of hourly change magnitudes; it is used to distinguish whether robustness is achieved through broad small-scale adjustments or through a few large reallocations. The second diagnostic is a heatmap of the signed deviations \(\Delta_t\), which preserves the hour index and the direction of change; it is used to identify when the DR solution increases or decreases allocation relative to the nominal schedule. The third diagnostic is a JS--cosine scatter plot, which compares each DR allocation profile with the nominal profile using two full-vector similarity measures: cosine similarity captures directional alignment between \(\bx^{\mathrm N}\) and \(\bx^{\mathrm{DR}}\), while Jensen--Shannon divergence, computed after normalizing the nonnegative allocation profiles, captures distributional separation in the allocation pattern. Together, the three diagnostics connect aggregate robustness outcomes to operational changes in the charging schedule: the violin plot measures how large the hourly changes are, the heatmap identifies where and in which direction they occur through hour-by-hour signed differences \(\Delta_t\), and the JS--cosine plot summarizes the overall distance of the DR profile from the nominal profile. Figure~\ref{fig:combined_three_rows-0.01-0.005} reports these diagnostics for \(\rho=0.01\), while Figure~\ref{fig:combined_three_rows-0.001} in Appendix~\ref{appndx: visuals} reports the corresponding results for \(\rho=0.005\) and \(\rho=0.001\) in top-down order.


\subsubsection{Discussion on violin plot, thermal map, and scatter plot}. 
Across all nine \((\theta,\rho)\) settings, the FDR solution remains much more similar to the nominal benchmark than any CDR solution. Its violin plots have the tightest spread, its heatmap column stays close to zero in most hours, and its JS--cosine point lies nearest to the upper-left corner of the scatter plots. Quantitatively, the FDR solution does not move beyond about \(0.0045\) in Jensen- Shannon divergence and remains between about \(0.994\) and \(1.000\) in cosine similarity. The CDR solutions, on the other hand, are uniformly farther away, with JS divergences roughly staying between \(0.003\) and \(0.018\) and cosine similarities between about \(0.973\) and \(0.998\). The cost and reliability differences in Table~\ref{tab:ev-objdiff-oss-theta-rho-iter7-pmax560-vsdisc} therefore correspond to genuinely different allocation policies rather than to small perturbations of the nominal schedule.

The heatmaps show that these allocation-profile differences are highly structured. The robust reallocations are concentrated in a small number of recurring time blocks. Positive shifts appear repeatedly around hours \(11\)–\(12\), \(17\), and \(19\)–\(21\), often extending to hour \(22\); in the more stringent \(\theta=0.99\) cases, these increases are frequently offset by reductions at around hour \(10\). The CDR model thus improves reliability primarily through temporal redistribution. It shifts energy allocation away from hours whose nominal protection is less contributive and toward hours that repeatedly emerge as critical under distributional perturbation. This is also why the violin plots broaden mainly through their upper tails. Thus, CDR policy need not be uniformly different from the nominal schedule, but selectively different where protection matters most.

The similarity plots further indicate that a larger ambiguity radius does not necessarily correspond to the greatest global dissimilarity from the nominal allocation. The major difference in the allocation profile appears between the nominal and the CDR solutions. Within the CDR family, however, the ordering in JS-cosine space is not uniform. In particular, at \(\theta=0.99\) and \(\rho=0.005\), the \(\varsigma=0.10\) solutions are closer to the nominal profile than the \(\varsigma=0.05\) solutions under both similarity measures, even though both of them are more conservative in objective value and attain higher OSS. The heatmaps help explain this distinction: a more conservative model may obtain its gain through concentrated changes in a small set of influential hours, whereas a less conservative model may spread smaller adjustments over a broader portion of the day. Allocation profile similarity and robustness cost, therefore, measure related but distinct aspects of the solution.

\begin{figure}[!hbtp]
\centering
\begin{subfigure}[t]{0.30\textwidth}
    \centering
    \includegraphics[width=\linewidth,height=8em]{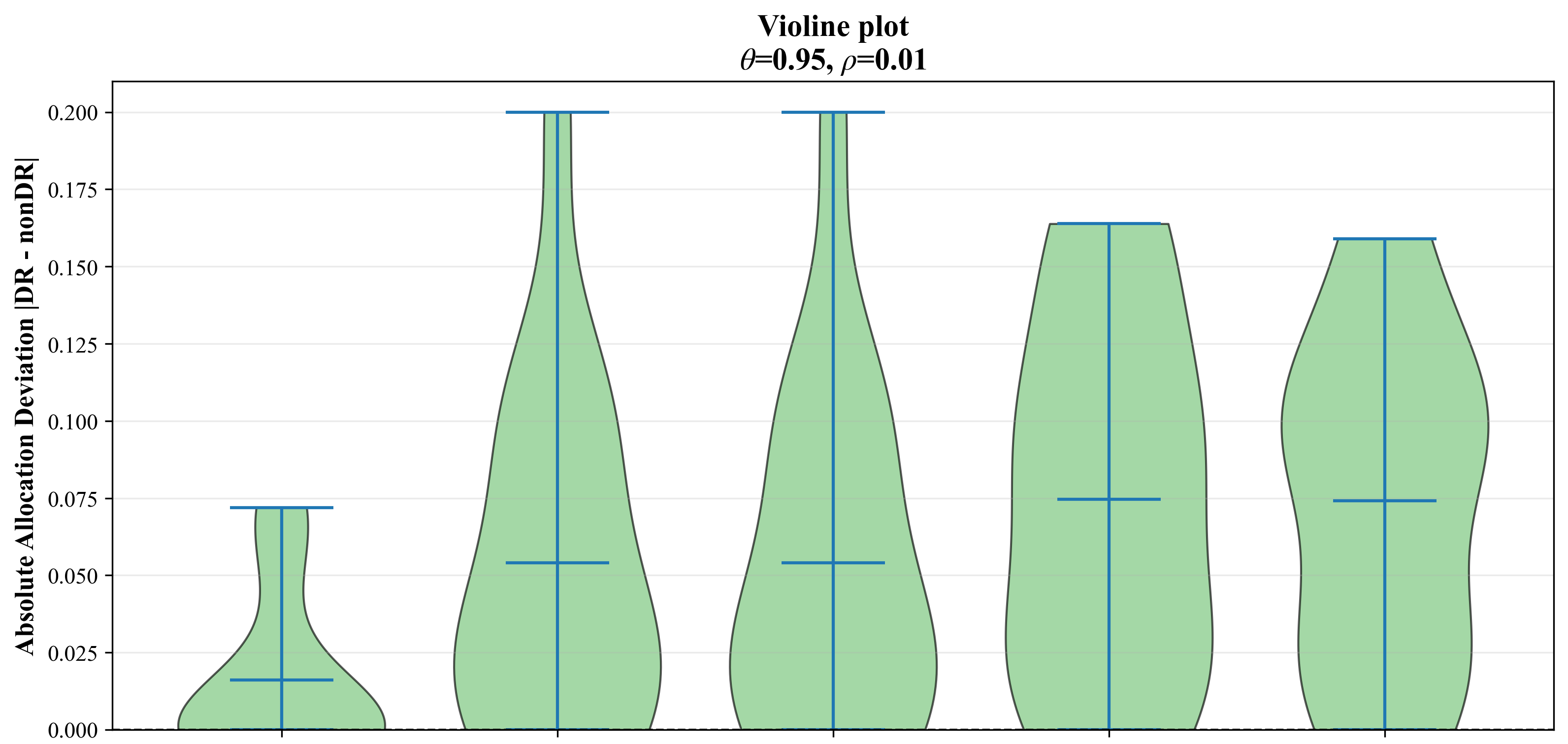}
    \label{fig:violin-0.95-0.01}
\end{subfigure}\hfill
\begin{subfigure}[t]{0.345\textwidth}
    \centering
    \includegraphics[width=\linewidth,height=8em]{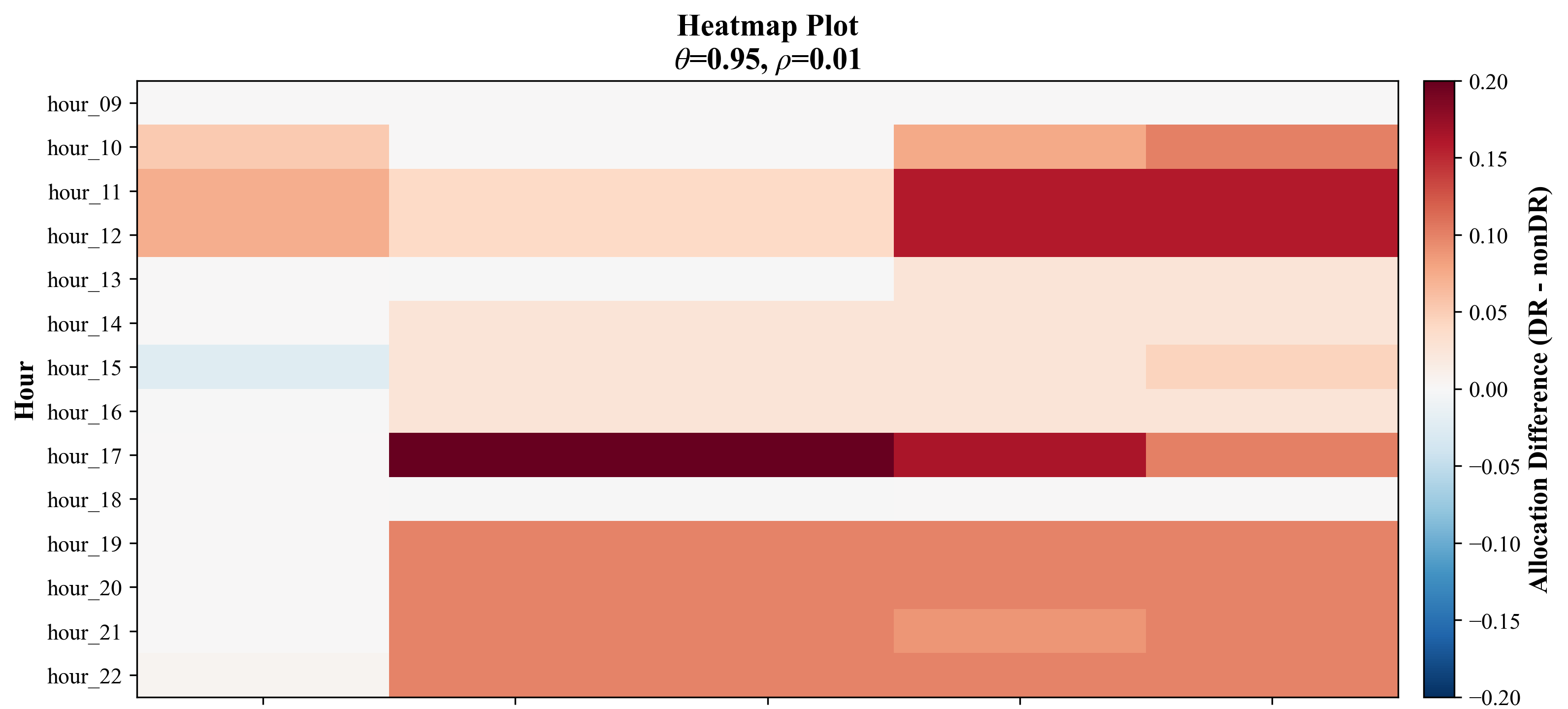}
    \label{fig:heatmap-0.95-0.01}
\end{subfigure}\hfill
\begin{subfigure}[t]{0.345\textwidth}
    \centering
    \includegraphics[width=0.95\linewidth,height=8em]{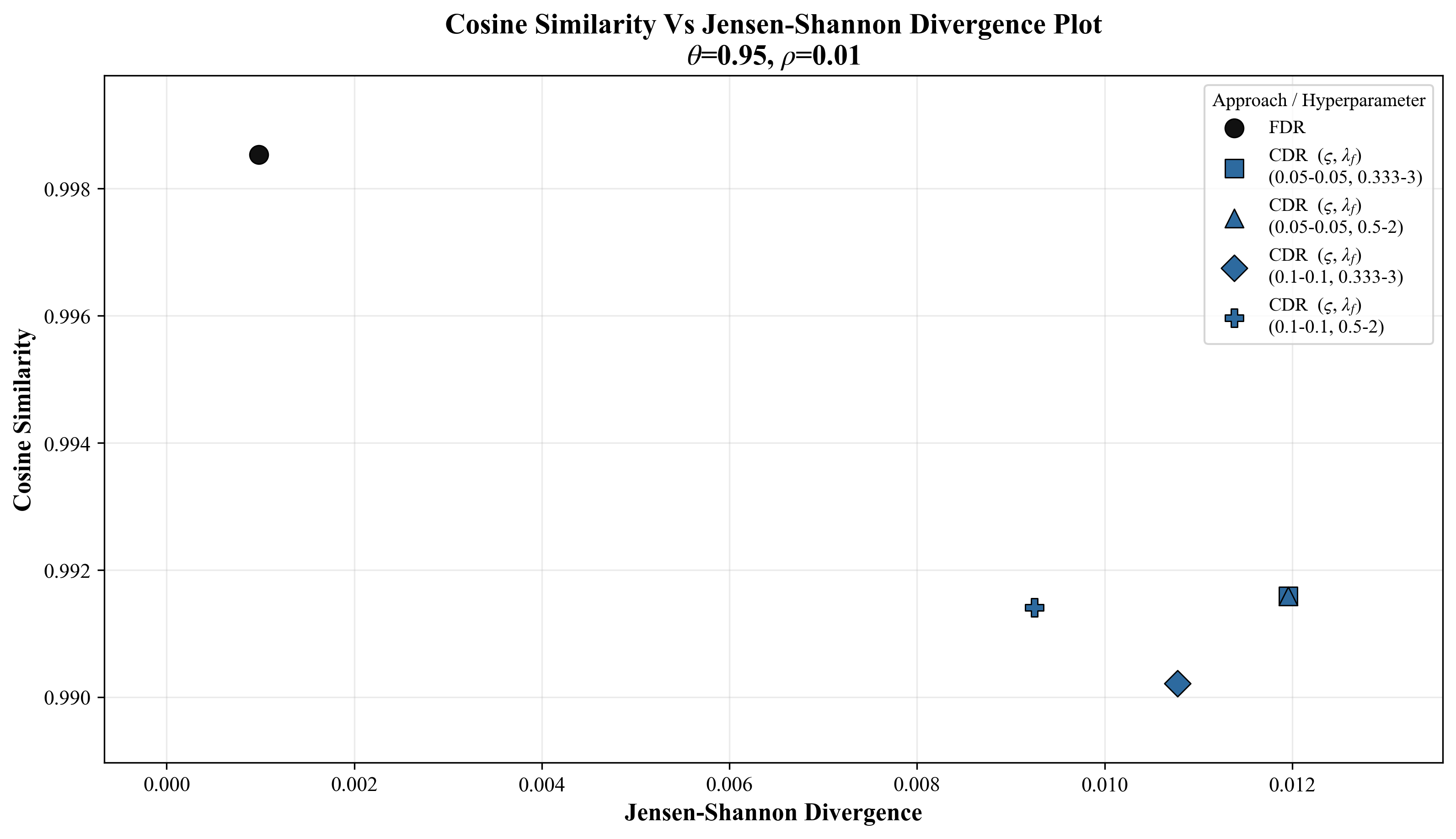}
    \label{fig:scatter-0.95-0.01}
\end{subfigure}

\vspace{-10pt}
\begin{subfigure}[t]{0.30\textwidth}
    \centering
    \includegraphics[width=0.9\linewidth,height=8em]{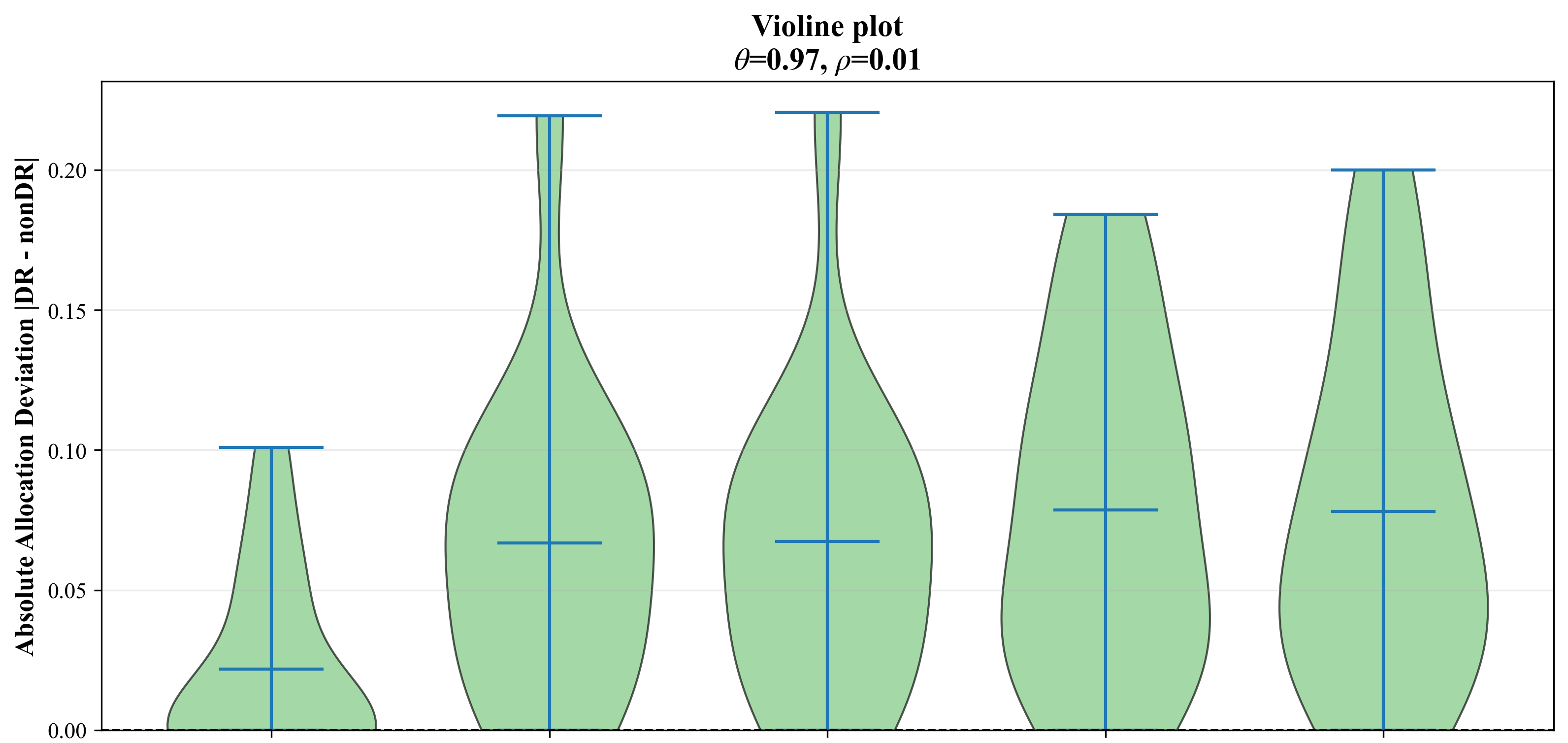}
     \label{fig:violin-0.97-0.01}
\end{subfigure}\hfill
\begin{subfigure}[t]{0.345\textwidth}
    \centering
    \includegraphics[width=\linewidth,height=8em]{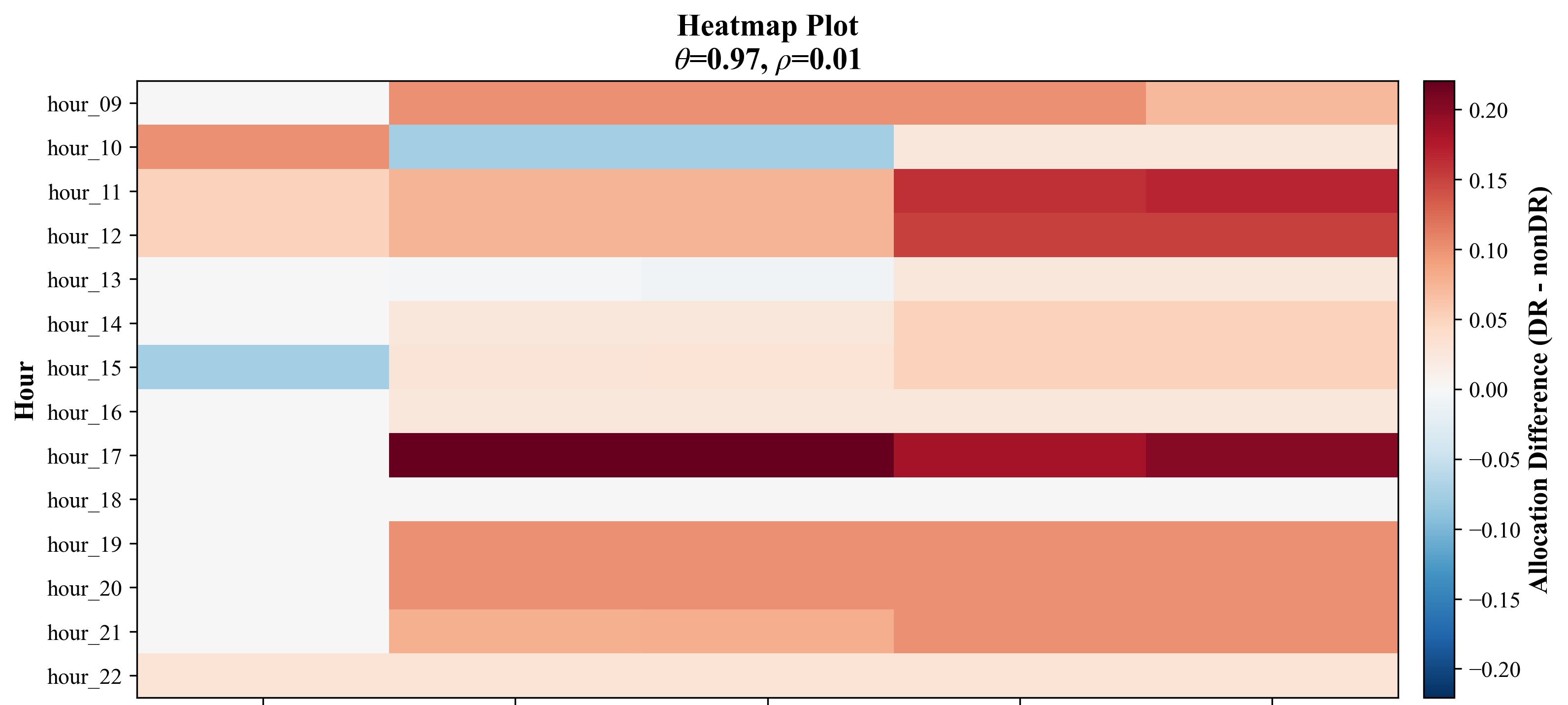}
    \label{fig:heatmap-0.97-0.01}
\end{subfigure}\hfill
\begin{subfigure}[t]{0.345\textwidth}
    \centering
    \includegraphics[width=0.95\linewidth,height=8em]{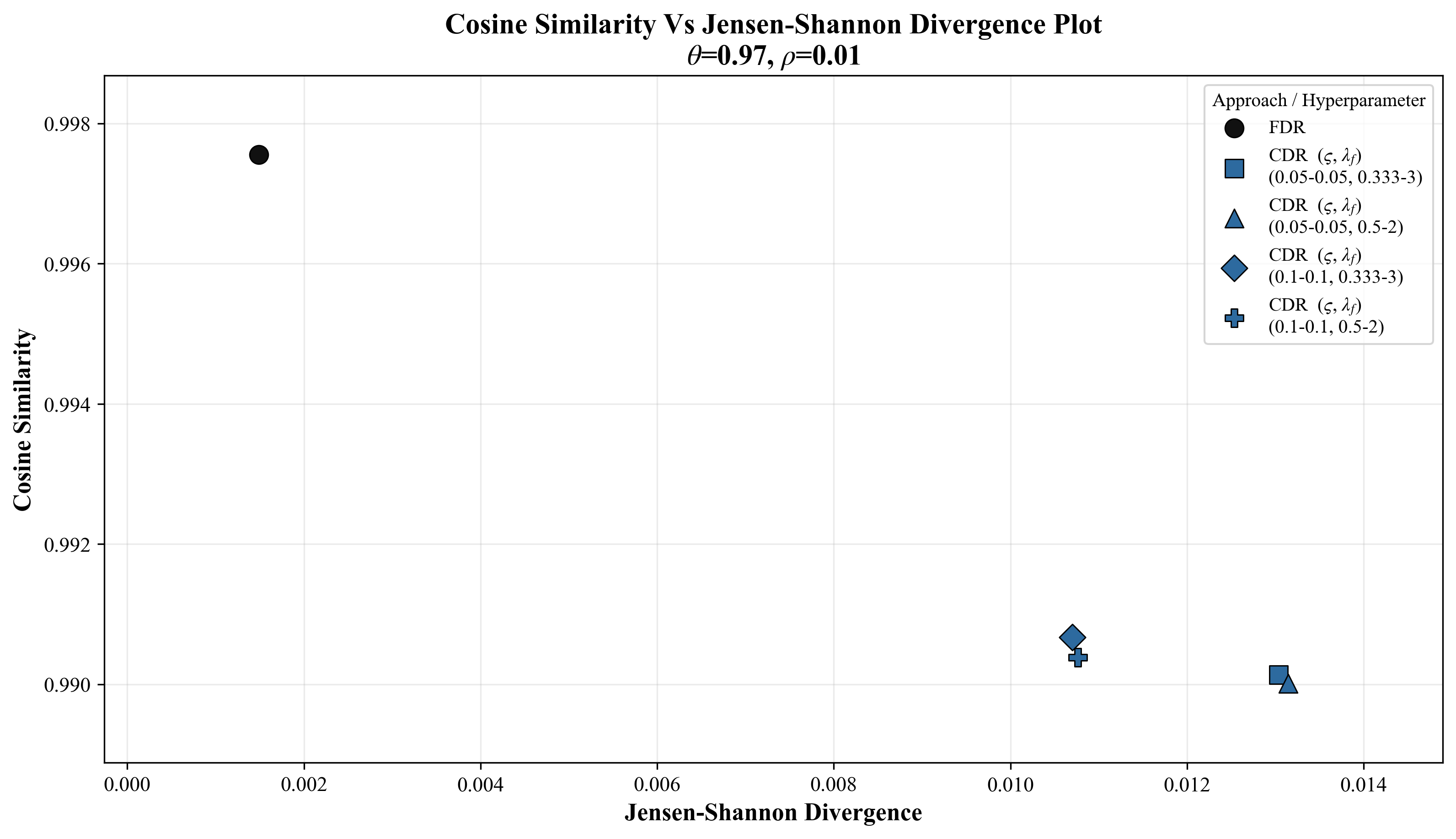}
    \label{fig:scatter-0.97-0.01}
\end{subfigure}

\vspace{-10pt}
\begin{subfigure}[t]{0.30\textwidth}
    \centering
    \includegraphics[width=0.9\linewidth,height=9.5em]{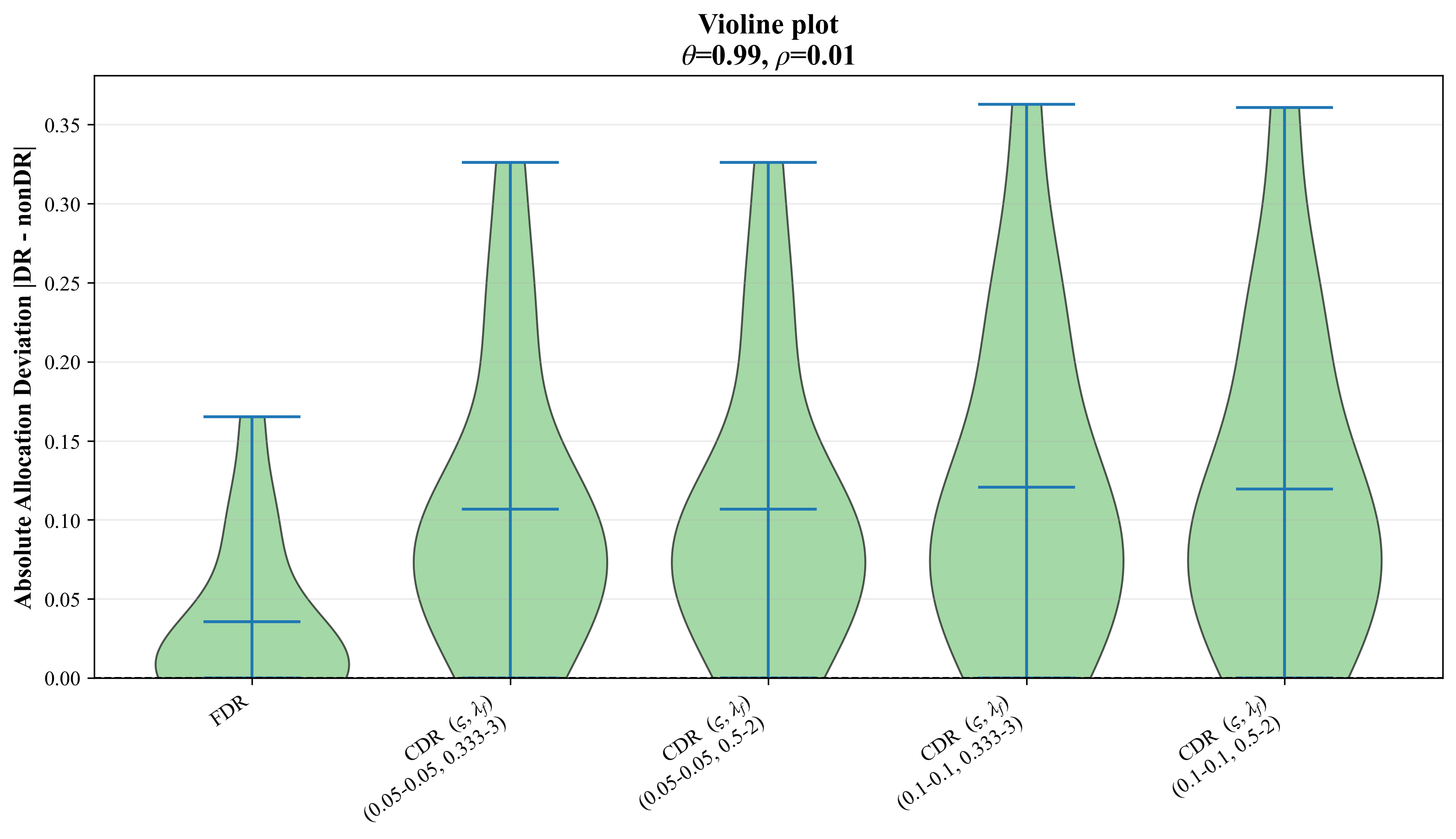}
     \label{fig:violin-0.99-0.01}
\end{subfigure}\hfill
\begin{subfigure}[t]{0.345\textwidth}
    \centering
    \includegraphics[width=\linewidth,height=9.5em]{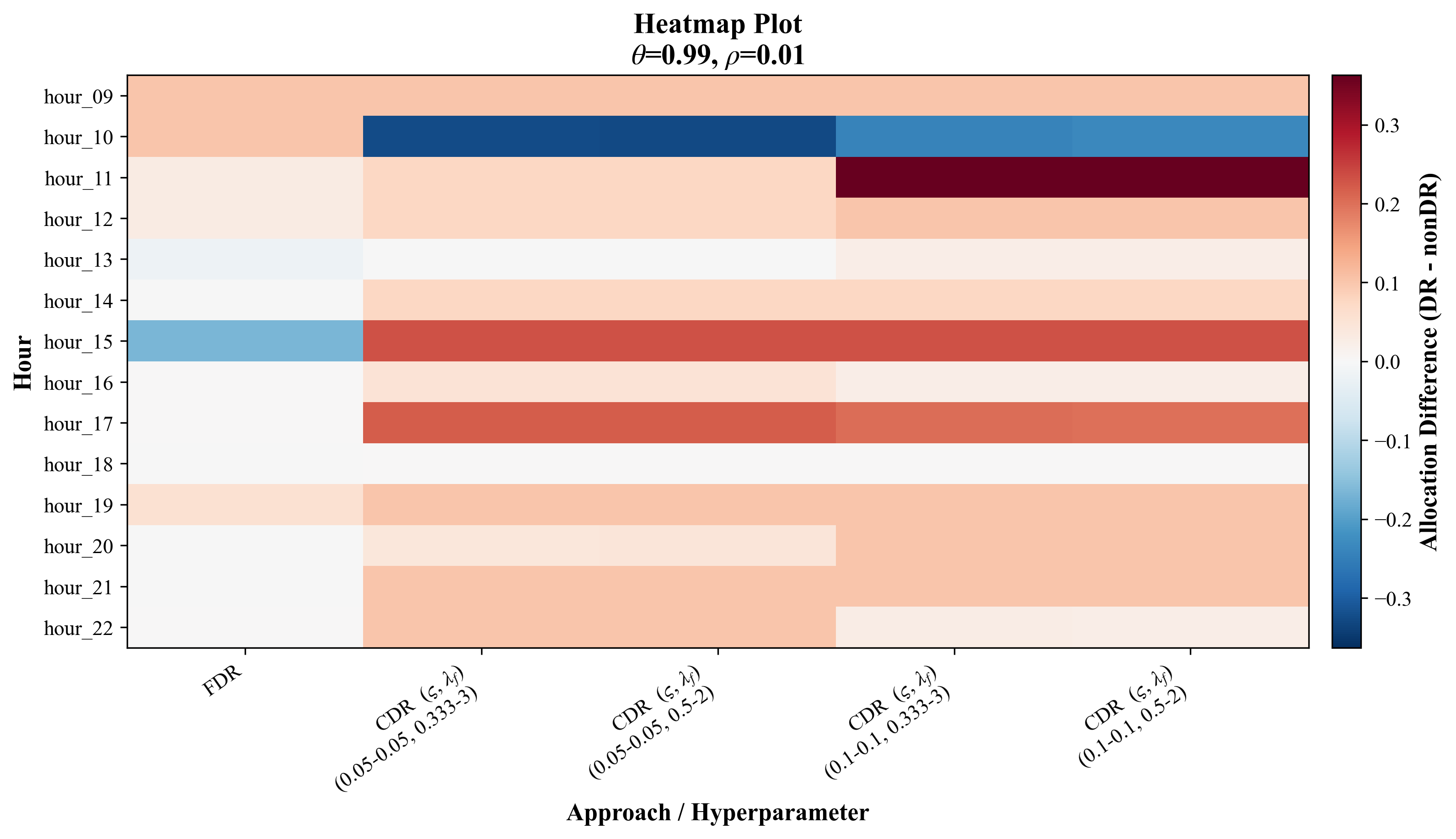}
    \label{fig:heatmap-0.99-0.01}
\end{subfigure}\hfill
\begin{subfigure}[t]{0.345\textwidth}
    \centering
    \includegraphics[width=0.95\linewidth,height=9.5em]{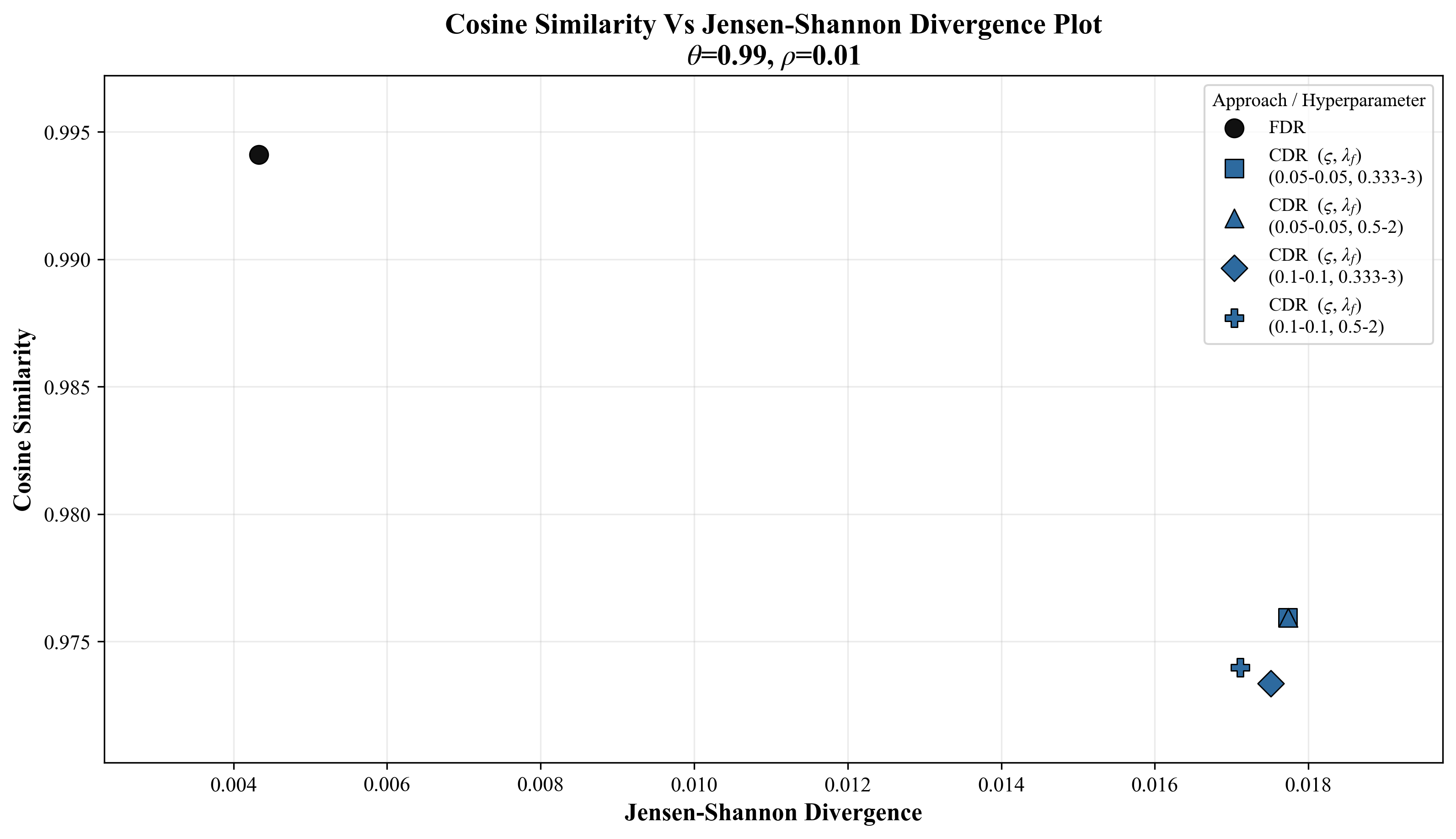}
    \label{fig:scatter-0.99-0.01}
\end{subfigure}
\vspace{-1em}
\caption{\scriptsize 
Comparison of finite- and continuous-support solutions with $(\varsigma,\lambda_{\mathrm{min}}) \in \{(0.05, 0.333), (0.05, 0.5), (0.1, 0.333), (0.1, 0.5)\}$ for $\rho = 0.01$, with rows ordered from top to bottom by $\theta=0.95,0.97,0.99$. 
Within each row, the left panel shows the violin plot, i.e., the distribution of absolute deviations from the nominal solution; the middle panel shows the heatmap of allocation differences relative to the nominal solution, and the right panel shows cosine similarity versus Jensen-Shannon divergence.}
\label{fig:combined_three_rows-0.01-0.005}
\end{figure}

\vspace{-1.25em}
\subsubsection{Managerial insights.}\, For the EV demand planning problem, a clear managerial message is that robustness is fundamentally a timing decision. The CDR model does not only recommend ``more'' protection; it identifies when protection is most valuable. Since the main changes are concentrated in a limited subset of hours, higher service reliability need not be pursued through a uniform expansion of hourly allocations. It can instead be achieved by redesigning the charging schedule toward the hours that matter most for maintaining feasibility under distributional misspecification.

The results also clarify the distinct roles of the two DR formulations. FDR is best viewed as a low-cost hedge around the empirical support of the fitted model. It is computationally attractive and remains close to the nominal solution, but its gains in empirical reliability are limited. CDR is the more relevant model when the planner is also concerned about mis-specification in the fitted mixture parameters themselves. In that case, robustness protects not only against variation in realized demand but also against error in the estimated distributional representation used to construct the plan.

Within the continuous-support model, the mean perturbation parameter \(\varsigma\) acts as a practical robustness control. In the examined instances, \(\varsigma=0.05\) produces a moderate-robustness regime: it increases OSS probability materially above FDR while keeping the additional objective premium over FDR between about \(1.5\%\) and \(3.2\%\). The setting \(\varsigma=0.10\) defines a high-protection regime: it yields the strongest empirical service levels, including the only cases that exceed \(99\%\) OSS when \(\theta=0.99\), but it can require up to about a \(5\%\) cost premium over FDR. The computational results thus suggest a corresponding workflow: FDR can serve as a fast screening model, while CDR is most valuable when the benefit of higher reliability justifies the additional solve time and cost premium.

Taken together, the numerical evidence shows that CDR adds a distinct robustness layer beyond FDR. It improves empirical satisfaction markedly; it does so through interpretable changes in the timing of the charging plan, and it provides a transparent cost-reliability tradeoff through the design of the parameter-support set.

\section{Conclusion}
We studied a distributionally robust chance-constrained framework with continuous support in Gaussian-mixture parameters and developed a cutting-surface-based solution approach for its implementation. Relative to the nominal and finite-support DR models, the continuous-support formulation delivers stronger out-of-sample reliability and can induce a meaningfully different allocation pattern, showing that protection against broader mean--covariance misspecification may require more than a mild perturbation of the nominal plan. The computational results (not shown here) also indicate that the implemented approach is practically meaningful, with early iterations identifying the dominant violating distributions and later iterations performing refinement.

An important direction for future work is computational acceleration. In the current implementation, the master problem relies on off-the-shelf solvers, with limited control over the piecewise-linear approximation and the associated branching behavior on binary variables, and it is re-solved from scratch after each newly added cut. These features create a significant runtime burden. Developing warm-started master reoptimization, cut bundling, adaptive management of the PWL approximation, and more tailored branch-and-cut or decomposition strategies would substantially improve scalability and strengthen the practical applicability of the proposed framework. Overall, the study shows that continuous-support robustification is both practically relevant and computationally approachable, while also highlighting the need for more specialized algorithms to fully unlock its benefits.

\bibliography{validated_references}
\bibliographystyle{plain}

\begin{APPENDICES}

\section{Known Results and Tools}\label{appndx: known-results}

\begin{itemize}[leftmargin=1em]
\item[\textbf{A.1}] \textbf{Integration interchangeability.}
\begin{theorem}[Tonelli-Fubini Theorem for product measure\cite{folland1999real}]
Let $(\mbX,\mB(\mbX), \mu)$ and $(\mbY,\mB(\mbY), \nu)$ be probability measure spaces
with probability measures $\mu$ and $\nu$ respectively. If
$f:\mbX\times \mbY\to[0,\infty]$ be a nonnegative measurable function (Tonelli Condition) or 
$f:\mbX\times\mbY\to\mbR$ be integrable, i.e.
$\displaystyle \int_{\mbX\times \mbY} |f|\,d(\mu\times\nu)<\infty$ (Fubini Condition),
Then:

\begin{enumerate}
[label=\textit{\roman*)},leftmargin=1em]
\item In the Tonelli case \(f\ge 0\), the maps
      \[
        \msml{x} \mapsto \int_\mbY f(\msml{x},\msml{y})\,d\nu(\msml{y}), \qquad
        \msml{y} \mapsto \int_\mbX f(\msml{x}, \msml{y})\,d\mu(\msml{x})
      \]
      are well-defined as \([0,\infty]\)-valued measurable functions. In the
      Fubini case \(f\in L^1(\mu\otimes\nu)\), these maps are finite almost
      everywhere and integrable with respect to \(\mu\) and \(\nu\), respectively.
  \item the iterated integrals and the product-space integral satisfy
  \[
     \int_{\mbX\times \mbY} f(\msml{x},\msml{y})\,(\mu\otimes\nu)(d\msml{x},d\msml{y})
     \;=\;
     \int_\mbX \Bigl( \int_\mbY f(\msml{x},\msml{y})\, d\nu(\msml{y}) \Bigr) d\mu(\msml{x})
     \;=\;
     \int_\mbY \Bigl( \int_\mbX f(\msml{x},\msml{y})\, d\mu(\msml{x}) \Bigr) d\nu(\msml{y}).
  \]
\end{enumerate}
\end{theorem}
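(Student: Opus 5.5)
The plan is the standard bootstrapping argument: prove the claim for indicators of sets, extend to simple functions, then to nonnegative measurable functions, and finally to integrable ones. Since $\mu$ and $\nu$ are probability measures, hence finite, no $\sigma$-finiteness exhaustion is needed. Throughout, $\mu\otimes\nu$ denotes the unique measure on $\mB(\mbX)\otimes\mB(\mbY)$ with $(\mu\otimes\nu)(A\times B)=\mu(A)\nu(B)$.

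\textbf{Step 1: indicators.} Let $\mathcal C$ be the class of sets $E\in\mB(\mbX)\otimes\mB(\mbY)$ with three properties. First, every section $E_{\msml{x}}$ and $E^{\msml{y}}$ is measurable. Second, $\msml{x}\mapsto\nu(E_{\msml{x}})$ and $\msml{y}\mapsto\mu(E^{\msml{y}})$ are measurable. Third,
\[
\int_\mbX\nu(E_{\msml{x}})\,d\mu=\int_\mbY\mu(E^{\msml{y}})\,d\nu=(\mu\otimes\nu)(E).
\]
For a rectangle $A\times B$ we have $\nu((A\times B)_{\msml{x}})=\ind_A(\msml{x})\nu(B)$, so rectangles lie in $\mathcal C$. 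By additivity, so does the algebra $\mathcal A$ of finite disjoint unions of rectangles. Next I will show $\mathcal C$ is a monotone class:
\begin{itemize}
\item for increasing unions, use the monotone convergence theorem on $\nu(E_{n,\msml{x}})\uparrow\nu(E_{\msml{x}})$;
\item for decreasing intersections, use dominated convergence with the constant bound $1$, which is valid because the measures are finite.
\end{itemize}
The monotone class lemma then gives $\mathcal C\supseteq\sigma(\mathcal A)=\mB(\mbX)\otimes\mB(\mbY)$.

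\textbf{Step 2: Tonelli.} By linearity, Step 1 extends to nonnegative simple functions, including measurability of both section-integral maps and equality of the three integrals. For a general measurable $f\ge 0$, I take simple $0\le\varphi_n\uparrow f$. For each $\msml{x}$, monotone convergence gives $\int_\mbY\varphi_n(\msml{x},\cdot)\,d\nu\uparrow\int_\mbY f(\msml{x},\cdot)\,d\nu$. These maps are therefore $[0,\infty]$-valued measurable functions as pointwise limits of measurable ones. A second application of monotone convergence, at the level of $\mu$, passes the identity to the limit. The symmetric statement in $\msml{y}$ follows identically, proving (i) and (ii) in the Tonelli case.

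\textbf{Step 3: Fubini.} For $f\in L^1(\mu\otimes\nu)$, write $f=f^+-f^-$ and apply Step 2 to $|f|$. This gives $\int_\mbX\int_\mbY|f|\,d\nu\,d\mu<\infty$, so $\int_\mbY|f(\msml{x},\cdot)|\,d\nu<\infty$ for $\mu$-a.e.\ $\msml{x}$. On the exceptional null set, I define the section integral to be $0$. Elsewhere, $\int_\mbY f\,d\nu=\int_\mbY f^+\,d\nu-\int_\mbY f^-\,d\nu$ is a difference of finite measurable functions, each integrable by Step 2. Subtracting the Tonelli identities for $f^+$ and $f^-$ yields (ii).

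I expect the main obstacle to be Step 1, specifically checking that measurability of $\msml{x}\mapsto\nu(E_{\msml{x}})$ survives the monotone limits. The decreasing case needs finiteness of $\nu$, which is supplied here by the probability assumption. Once the monotone class step is in place, the remaining steps are routine applications of the monotone convergence theorem and linearity.
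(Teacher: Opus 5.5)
Your proposal is correct. It is the standard monotone-class proof found in the cited Folland reference: sections of rectangles, then the monotone class lemma with finiteness handling decreasing limits, then simple-function and monotone-convergence bootstrapping, and finally the split $f=f^+-f^-$ with an a.e.\ modification on a null set. The paper gives no proof of its own and simply invokes that reference. You read ``measurable'' as $\mB(\mbX)\otimes\mB(\mbY)$-measurable, which is the reading under which the statement holds, and on the Euclidean/Polish spaces where the paper applies it this coincides with Borel measurability on the product.
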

\begin{theorem}[Tonelli--Fubini Theorem for Markov Kernels]\label{thm: kernel-fubini-tonelli} \srevision{(\cite[Section~10.7]{bogachev_measure_2007};\cite[Section 1.6]{ccinlar2011probability};\cite[Section~8.3]{klenke2008probability})}
Let \((\mbX,\mB(\mbX))\) and \((\mbY,\mB(\mbY))\) be measurable spaces, and let
\(\nu_\msml{x}\) be a probability measure on \((\mbX,\mB(\mbX))\). Let
\(\mK:\mbX\times\mB(\mbY)\to[0,1]\) be a transition kernel, that is:
\begin{enumerate}[label=\textit{\roman*)},leftmargin=1em]
\item[a)] For every \(\msml{x}\in \mbX\), the map \(C\mapsto \mK(\msml{x},C)\) is a probability measure on \((\mbY,\mB(\mbY))\), and
\item[b)] For every \(C\in\mB(\mbY)\), the map
\(\msml{x}\mapsto \mK(\msml{x},C)\) is \(\mB(\mbX)\)-measurable, hence
\(\nu_\msml{x}\)-measurable for any probability measure \(\nu_\msml{x}\) on
\((\mbX,\mB(\mbX))\).
\end{enumerate}
Define the induced probability measure \(\nu_\msml{y}\) on
\((\mbY,\mB(\mbY))\) by
\[
\nu_\msml{y}(C):=\int_{\mbX}\mK(\msml{x},C)\,d\nu_\msml{x}(\msml{x}),
\qquad C\in\mB(\mbY).
\]
Then, if \(\Psi:\mbY\to\mbR\) is measurable, the following two integral rules hold.

\medskip

\noindent\textbf{(Tonelli case).}
If \(\Psi\ge 0\), then
\begin{equation}\label{eq:kernel-tonelli}
\int_{\mbY} \Psi(\msml{y})\,d\nu_\msml{y}(\msml{y})
=
\int_{\mbX}
     \left(\int_{\mbY} \Psi(\msml{y})\,\mK(\msml{x},d\msml{y})\right)
     d\nu_\msml{x}(\msml{x}).
\end{equation}
Both sides are equal in \([0,\infty]\).

\medskip

\noindent\textbf{(Fubini case).}
If \(\Psi\) is integrable with respect to \(\nu_\msml{y}\), i.e.,
\(\int_\mbY |\Psi(\msml{y})|\,d\nu_\msml{y}(\msml{y}) < \infty\), then:
\begin{enumerate}[label=\textit{\roman*)},leftmargin=1em]
\item for \(\nu_\msml{x}\)-almost every \(\msml{x}\in \mbX\), the map
      \(\msml{y}\mapsto \Psi(\msml{y})\) is
      \(\mK(\msml{x},\cdot)\)-integrable, i.e.,
      \(\int_\mbY |\Psi(\msml{y})| \, \mK(\msml{x}, d\msml{y}) < \infty\),
\item the map \(\msml{x} \longmapsto \int_\mbY \Psi(\msml{y})\,\mK(\msml{x},d\msml{y})\)
is integrable with respect to \(\nu_\msml{x}\), (since
\[
\int_\mbX
\left|
\int_\mbY \Psi(\msml{y})\,\mK(\msml{x},d\msml{y})
\right|
d\nu_\msml{x}(\msml{x})
\le
\int_\mbX\int_\mbY
|\Psi(\msml{y})|\,\mK(\msml{x},d\msml{y})\,d\nu_\msml{x}(\msml{x})
=
\int_\mbY|\Psi(\msml{y})|\,d\nu_\msml{y}(\msml{y})
<\infty).
\]
\item and identity~\eqref{eq:kernel-tonelli} holds with finite values.
\end{enumerate}
\end{theorem}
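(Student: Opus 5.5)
The plan is the standard measure-theoretic induction: prove the identity \eqref{eq:kernel-tonelli} first for indicators, then for simple functions, then for nonnegative measurable functions by monotone convergence, and finally for integrable functions by splitting into positive and negative parts.

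\textbf{Preliminary step: $\nu_\msml{y}$ is a probability measure.} Since $\mK(\msml{x},\cdot)$ is a probability measure, $\nu_\msml{y}(\mbY)=\int_\mbX 1\,d\nu_\msml{x}=1$. For disjoint $C_1,C_2,\ldots\in\mB(\mbY)$ we have $\mK(\msml{x},\bigcup_i C_i)=\sum_i\mK(\msml{x},C_i)$ pointwise in $\msml{x}$. Each map $\msml{x}\mapsto\mK(\msml{x},C_i)$ is measurable by property b), and the partial sums increase. The monotone convergence theorem with respect to $\nu_\msml{x}$ then gives countable additivity of $\nu_\msml{y}$.

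\textbf{Tonelli case.} I proceed in three stages.
\begin{enumerate}
\item For $\Psi=\ind_C$ with $C\in\mB(\mbY)$, the inner integral is $\int_\mbY\ind_C(\msml{y})\,\mK(\msml{x},d\msml{y})=\mK(\msml{x},C)$. This is measurable in $\msml{x}$ by b), and \eqref{eq:kernel-tonelli} reduces to the definition of $\nu_\msml{y}(C)$.
\item For nonnegative simple functions $\Psi=\sum_j a_j\ind_{C_j}$, both measurability of the inner integral and the identity follow by linearity of the integral on each side.
\item For a general measurable $\Psi\ge 0$, take simple functions $0\le\Psi_m\uparrow\Psi$. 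By monotone convergence under each fixed $\mK(\msml{x},\cdot)$, the inner integrals increase pointwise in $\msml{x}$ to $\int_\mbY\Psi\,\mK(\msml{x},d\msml{y})$. That limit is therefore measurable as a pointwise limit of measurable $[0,\infty]$-valued maps. A second application of monotone convergence, now under $\nu_\msml{x}$, together with monotone convergence under $\nu_\msml{y}$ on the left-hand side, passes the identity from $\Psi_m$ to $\Psi$ in $[0,\infty]$.
\end{enumerate}

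\textbf{Fubini case.} Write $\Psi=\Psi^+-\Psi^-$ and apply the Tonelli case to $|\Psi|$. This yields $\int_\mbX\int_\mbY|\Psi|\,\mK(\msml{x},d\msml{y})\,d\nu_\msml{x}=\int_\mbY|\Psi|\,d\nu_\msml{y}<\infty$. Hence the inner integral of $|\Psi|$ is finite for $\nu_\msml{x}$-a.e.\ $\msml{x}$, which is item i).
\begin{itemize}
\item On the exceptional null set I define the inner integral to be $0$. The resulting function $\msml{x}\mapsto\int_\mbY\Psi^+\,\mK(\msml{x},d\msml{y})-\int_\mbY\Psi^-\,\mK(\msml{x},d\msml{y})$ is measurable, being a difference of measurable $[0,\infty]$-valued maps that are finite a.e.
\item This function is integrable by the triangle-inequality bound displayed in ii).
\item Applying the Tonelli identity separately to $\Psi^+$ and $\Psi^-$, both sides of which are finite, and subtracting gives iii).
\end{itemize}

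\textbf{Main obstacle.} The only genuinely delicate point is measurability of $\msml{x}\mapsto\int_\mbY\Psi\,\mK(\msml{x},d\msml{y})$. It is handled entirely by building up from indicators, where property b) applies directly, and using the fact that pointwise monotone limits of measurable functions are measurable; no $\pi$–$\lambda$ argument is needed, since the indicators are already covered by b) for every Borel $C$. The remaining care is bookkeeping of the null set in the Fubini case, to avoid $\infty-\infty$ expressions.
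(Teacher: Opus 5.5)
Your proposal is correct and follows the same route the paper sketches in its remark after the theorem. For indicators the identity is the defining relation of the induced measure; it then extends to nonnegative measurable $\Psi$ via simple functions and monotone convergence, and to integrable $\Psi$ via $\Psi=\Psi^+-\Psi^-$. Your additional checks, namely countable additivity of the induced measure, measurability of the inner integral, and the null-set convention in the Fubini case, simply fill in details the paper leaves to its cited references.
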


\begin{remark}[Reading~\eqref{eq:kernel-tonelli}]
For \(\Psi=\ind_C\), the definition of \(\nu_\msml{y}\) gives
\[
\int_{\mbY}\ind_C(\msml{y})\,d\nu_\msml{y}(\msml{y})
=
\int_{\mbX}
\left(\int_{\mbY}\ind_C(\msml{y})\,\mK(\msml{x},d\msml{y})\right)
d\nu_\msml{x}(\msml{x}),
\]
i.e., identity~\eqref{eq:kernel-tonelli} reduces exactly to the
defining relation
\[
\nu_\msml{y}(C)
=
\int_{\mbX}\mK(\msml{x},C)\,d\nu_\msml{x}(\msml{x}).
\]
Thus, \eqref{eq:kernel-tonelli} is the function-level extension of the setwise
definition of \(\nu_\msml{y}\); the extension from indicators to nonnegative
measurable functions follow from the standard simple-function/monotone
convergence argument, and the integrable case follows from the decomposition
\(\Psi=\Psi^+-\Psi^-\)~\cite{folland1999real}.
\end{remark}

\begin{remark}[Interchanging the kernel integration order]
The kernel \(\mK(\msml{x},d\msml{y})\) defines the joint measure
\[
\eta(d\msml{x},d\msml{y})
=
\nu_\msml{x}(d\msml{x})\,\mK(\msml{x},d\msml{y}).
\]
If a reverse conditional kernel \(\mL(\msml{y},d\msml{x})\) exists, i.e.,
\(\eta(d\msml{x},d\msml{y})
=
\nu_\msml{y}(d\msml{y})\,\mL(\msml{y},d\msml{x})\), then for every nonnegative
measurable \(F:\mbX\times\mbY\to[0,\infty]\),
\[
\int_{\mbX\times\mbY}F\,d\eta
=
\int_{\mbX}\!\int_{\mbY}F(\msml{x},\msml{y})\,\mK(\msml{x},d\msml{y})\,d\nu_\msml{x}(\msml{x})
=
\int_{\mbY}\!\int_{\mbX}F(\msml{x},\msml{y})\,\mL(\msml{y},d\msml{x})\,d\nu_\msml{y}(\msml{y}).
\]
Thus, the reverse-order expression requires the reverse kernel \(\mL\), not
\(\mK(d\msml{x},\msml{y})\).
\end{remark}

\item[\textbf{A.2}]\textbf{Gaussian instantiation of transition kernel.}\,  
We start with a standard result on preservation of measurability under integration, which will be invoked throughout the sequel.
\begin{proposition}\label{prop: measurability}
Let \((\mbX,\mB(\mbX))\) and \((\mfZ,\mB(\mfZ))\) be measurable spaces, let \(\gamma\) be a measure on \((\mfZ,\mB(\mfZ))\), and let $g:\mbX\times \mfZ\to [0,\infty]$ be \(\mB(\mbX)\otimes \mB(\mfZ)\)-measurable. If
\[
\mathscr{G}(\msml{x}):=\int_{\mfZ} g(\msml{x},\bz)\,d\gamma(\bz),
\]
then \(\mathscr{G}:\mbX\to[0,\infty]\) is \(\mB(\mbX)\)-measurable.
\end{proposition}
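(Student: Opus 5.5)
The plan is to use the standard measurable-section / Tonelli argument for product $\sigma$-algebras: first settle indicator functions by a monotone class argument, then pass to nonnegative measurable $g$ through simple functions and monotone convergence. I will assume, as the paper does implicitly, that $\gamma$ is $\sigma$-finite. In all uses here $\gamma$ is a probability measure or a Gaussian law, so this is harmless.

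\emph{Step 1 (rectangles and finite $\gamma$).} Suppose first that $\gamma$ is finite, and let $g=\ind_{A\times B}$ with $A\in\mB(\mbX)$ and $B\in\mB(\mfZ)$. Then $\mathscr{G}(\msml{x})=\ind_A(\msml{x})\,\gamma(B)$, which is clearly measurable. Now let $\mathcal{C}$ be the class of sets $E\in\mB(\mbX)\otimes\mB(\mfZ)$ for which $\msml{x}\mapsto\gamma(E_{\msml{x}})$ is measurable, where $E_{\msml{x}}$ is the section at $\msml{x}$. The class $\mathcal{C}$ contains the $\pi$-system of measurable rectangles. It is also a $\lambda$-system:
\begin{itemize}
\item it is closed under proper differences, because $\gamma((E\setminus F)_{\msml{x}})=\gamma(E_{\msml{x}})-\gamma(F_{\msml{x}})$ for $F\subseteq E$, and this is where finiteness of $\gamma$ is needed;
\item it is closed under increasing unions, by continuity from below.
\end{itemize}
By Dynkin's $\pi$--$\lambda$ theorem, $\mathcal{C}$ is the whole product $\sigma$-algebra. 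For $\sigma$-finite $\gamma$, write $\mfZ=\bigcup_j Z_j$ as an increasing union with $\gamma(Z_j)<\infty$. Apply the finite case to $\gamma(\cdot\cap Z_j)$, and take the increasing limit, which preserves measurability.

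\emph{Step 2 (simple functions and limits).} A nonnegative simple function is a finite combination $g=\sum_i c_i\ind_{E_i}$ with $c_i\ge0$. For such $g$, $\mathscr{G}=\sum_i c_i\,\gamma((E_i)_{\msml{x}})$ by linearity of the integral, and this is measurable by Step 1. For general measurable $g\ge0$, choose simple functions $0\le g_m\uparrow g$ pointwise. The monotone convergence theorem, applied for each fixed $\msml{x}$, gives $\int g_m(\msml{x},\cdot)\,d\gamma\uparrow\mathscr{G}(\msml{x})$ in $[0,\infty]$. A pointwise supremum of measurable $[0,\infty]$-valued functions is measurable, so $\mathscr{G}$ is $\mB(\mbX)$-measurable.

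The only delicate point is the $\lambda$-system closure under differences, since it requires $\gamma$ to be finite or $\sigma$-finite. I will handle it with the exhaustion by the sets $Z_j$ above. In the applications (Gaussian kernels $\mN(\bz)$ and mixing laws $\lambda$), $\gamma$ is a probability measure, so this step is immediate.
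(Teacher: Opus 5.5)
Your proof is correct. It is the standard Tonelli section argument: Dynkin's $\pi$--$\lambda$ theorem on measurable rectangles, then simple functions and monotone convergence. The paper gives no proof of its own here and simply invokes the result as standard, so there is no separate route to compare against.

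The $\sigma$-finiteness hypothesis you add is not just harmless bookkeeping: for an arbitrary measure $\gamma$, as printed, the proposition is false. Here is a counterexample.
\begin{itemize}
\item Take $\mbX=\mfZ=[0,1]$ with Borel $\sigma$-algebras, so that $\mB(\mbX)\otimes\mB(\mfZ)$ is the Borel $\sigma$-algebra of $[0,1]^2$.
\item Let $\gamma$ be counting measure.
\item Let $E\subseteq[0,1]^2$ be a Borel set whose first-coordinate projection is analytic but not Borel (Suslin).
\end{itemize}
For $g=\ind_E$ one gets $\{\mathscr{G}\ge 1\}=\{x: E_x\neq\emptyset\}$, which is exactly that projection, so $\mathscr{G}$ is not Borel measurable. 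Some finiteness condition must therefore be part of the statement. $\sigma$-finiteness suffices, as you prove. More generally s-finiteness suffices: apply your finite case to each finite summand and add. In the paper's only use, $\gamma=\mN(\bzero,\bI)$ is a probability measure, so nothing downstream is affected.

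You also use one small fact without stating it. Sections $E_x$ of sets $E\in\mB(\mbX)\otimes\mB(\mfZ)$ lie in $\mB(\mfZ)$, because the sets with this property form a $\sigma$-algebra containing the rectangles. This is needed even to write $\gamma(E_x)$ and to make $\mathscr{G}$ well defined.
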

Now, leveraging the above Proposition~\ref{prop: measurability}, we show that Gaussian is a special instantiation of the Markov kernel.
Let
\[
\mbX := \mbR^n \times \mbS_+^n, 
\qquad
\mfZ := \mbR^n,
\qquad
\mbY := \mbR^n,
\]
equipped with their Borel \(\sigma\)-algebras. Let $\bA(\bQ):=\bQ^{1/2},$
where \(\bQ^{1/2}\) is the principal symmetric positive semidefinite square root
of \(\bQ\). The map \(\bQ\mapsto \bQ^{1/2}\) is Borel measurable on
\(\mbS_+^n\), and
\[
\bA(\bQ)\bA(\bQ)^\top=\bQ\qquad\text{for all }\bQ\in\mbS_+^n.
\]
Fix \(\gamma := \mN(\bzero,\bI)\) with identity matrix $\bI$ on \((\mfZ,\mB(\mfZ))\). For \(\msml{x}=(\bmean,\bQ)\in\mbX\) and \(C\in\mB(\mbY)\), define the map
\(\mK:\mbX\times\mB(\mbY)\to[0,1]\) by
\[
\mK(\msml{x},C)\;:=\;\int_{\mfZ} g_C(\msml{x},\bz)\,d\gamma(\bz),
\]
where the integrand \(g_C\) is $g_C(\msml{x},\bz)\;:=\;\ind_C\!\big(\bmean + \bA(\bQ)z\big).$

We now verify the two Markov Kernel conditions $(a)$-$(b)$ stated in Theorem~\ref{thm: kernel-fubini-tonelli} for the Gaussian instantiation:

\begin{enumerate}[leftmargin=0.5em]
\item \textbf{Condition (a) of Theorem~\ref{thm: kernel-fubini-tonelli}: For each fixed \(\msml{x}\), \(C\mapsto \mK(\msml{x},C)\) is a probability measure on \((\mbY,\mB(\mbY))\).} 

Define \(\mN(\bmean,\bQ)\) as a pushforward of \(\gamma=\mN(0,\bI)\) as follows
\[
\mN(\bmean,\bQ):=(T_\msml{x})_\#\gamma,
\qquad T_\msml{x}(\bz):=\bmean + \bA(\bQ)\bz.
\]
Then for every \(C\in\mB(\mbY)\),
\[
\mN(\bmean,\bQ)(C)=(T_\msml{x})_\#\gamma(C)
=\gamma\big(T_\msml{x}^{-1}(C)\big)
=\int_{\mfZ}\ind_C\big(T_\msml{x}(\bz)\big)\,d\gamma(\bz)
=\int_{\mfZ}\ind_C\big(\bmean + \bA(\bQ)z\big)\,d\gamma(\bz) = \mK(\msml{x}, C).
\]

Clearly, $\mN(\bmean, \bQ)$ is a probability measure and $\mN(\bmean, \bQ)(C)$ is the evaluation of that measure on the set $C$. So \(C\mapsto \mK(\msml{x},C) \bigl(= \mN(\bmean, \bQ)(C)\bigr)\) is the pushforward measure \((T_\msml{x})_\#\gamma\). Pushforwards of probability measures are probability measures. Concretely:
\begin{itemize}[leftmargin=1.3em]
\item[--] \(\mK(\msml{x},\emptyset)=0\), \(\mK(\msml{x},\mbY)=1\),
\item[--] countable additivity holds because \(C\mapsto \gamma(T_\msml{x}^{-1}(C))\) is a measure.
\end{itemize}
So condition (a) in Theorem~\ref{thm: kernel-fubini-tonelli} is satisfied.

\begin{remark}
    We emphasize that no Lebesgue density was used, so a singular \(\bQ\) is allowed. Also, \(\mN(\bmean,\bQ)\) is a probability measure on \((\mbY,\mB(\mbY))\) while \(\mK(\msml{x},C)\) is the evaluation of that measure on the set \(C\). Hence, $\mK(\msml{x},C)=\mN(\bmean,\bQ)(C),
\qquad \msml{x}=(\bmean,\bQ)\in\mbX,\; C\in\mB(\mbY).$
\end{remark}

\item \noindent\textbf{Condition (b) of Theorem~\ref{thm: kernel-fubini-tonelli}: For each fixed \(C\), \(\msml{x}\mapsto \mK(\msml{x},C)\) is \(\nu_\msml{x}\)-measurable.}

For each fixed
\(C\in\mB(\mbY)\), we want to prove $\nu_\msml{x}$ measurability of $\msml{x}\longmapsto \mK(\msml{x},C)=\int_{\mfZ} g_C(\msml{x},\bz)\,d\gamma(\bz).$
By Proposition~\ref{prop: measurability}, it is enough to show that $g_C:\mbX\times\mfZ\to[0,1],\quad g_C(\msml{x},\bz)=\ind_C(\bmean + \bA(\bQ)\bz)$
is \(\mB(\mbX)\otimes\mB(\mfZ)\)-measurable.

But the map $h:\mbX\times\mfZ\to \mbY,\,\,\, h((\bmean,\bQ),\bz)=\bmean + \bA(\bQ)\bz$
is measurable because it is built from measurable operations (addition, matrix–vector multiplication) and \(\bA(\cdot)\) is measurable by construction. Then $g_C = \ind_C\circ h$
is measurable since \(\ind_C\) is Borel measurable on \(\mbY\). Therefore, by the lemma, \(\msml{x}\mapsto\int g_C(\msml{x},\bz)\,d\gamma(\bz)\) is measurable. This proves condition (b).
\end{enumerate}
\item[\textbf{A.3}]\textbf{Notation correspondence of Markov transition kernel for the proof $W_2^\mathrm{prm} = W_2^\mathrm{mix}$.}\,
Let \(\Psi\) be a measurable map. Then the following allows us to apply
the Markov-kernel Tonelli--Fubini theorem, Theorem~\ref{thm: kernel-fubini-tonelli}:
\[
\begin{array}{c|c}
\hline
\text{Kernel-based Theorem~\ref{thm: kernel-fubini-tonelli} Notation} & \text{Mapped Notation given $\pi \in \Gamma$} \\[0.3em]
\hline
\mbX & \mS \times \mS \\[0.3em]
\mbY & \mbR^{2n}\\[0.3em]
\mK(\msml{x},\cdot) & \mG(\bs_1, \bs_2)(\cdot) \\[0.3em]
\nu_\msml{x} & \pi \in \mP(\mS \times \mS) \\[0.3em]
\nu_\msml{y}(C) & \displaystyle \nu_{\pi}(C)
  = \int_{\mS\times\mS}
        \mG(\bs_1, \bs_2)(C)\,d\pi(\bs_1,\bs_2) \\[0.3em]
\Psi(\msml{y}) & \Psi(\by_1,\by_2) =  \ind_A(\by_1) \\
\hline
\end{array}
\]

Similarly, for the other directions:
\[
\begin{array}{c|c}
\hline
\text{Kernel-based Theorem~\ref{thm: kernel-fubini-tonelli} Notation} & \text{Mapped Notation given $\nu \in \Pi^\mathrm{mix}(\mu_1, \mu_2)$} \\ \hline
\mbX & \mZ = \mbR^n \times \mbR^n \times \mbS^{2n}_+ \\[0.3em]
\mbY & \mbR^{2n}\\[0.3em]
\mK(\msml{x},\cdot) & \mN(\bz)(\cdot) \\[0.3em]
\nu_\msml{x} & \lambda \in \mP(\mZ) \\[0.3em]
\nu_\msml{y}(C) & \displaystyle \nu(C) = \int_{\mZ} \mN(\bz)(C)\,d\lambda(\bz) \\[1em]
\Psi(\msml{y}) & \Psi(\by_1, \by_2) = \ind_B(\by_2) \\
\hline
\end{array}
\]
For objective identities, the same mapping is used with
\(\Psi(\by_1,\by_2)=\|\by_1-\by_2\|^2\), which is nonnegative, so the Tonelli case applies. For the first set of mappings, intuition to define $\nu_\pi$ comes from the Gelbrich OT map Theorem~\ref{thm: Gelbrich}, while for the second set of maps, it is a Gaussian mixture measure defined in~\eqref{def: GMMmix}.

\item[\textbf{A.4}] \textbf{Optimal-transport discrepancy and ambiguity set\srevision{~\cite[Theorem 1]{blanchet2019quantifying}.}}
Let $(\mS,\mathcal{B}(\mS))$ be a Polish space with its Borel $\sigma$-algebra,
and let $\mP(\mS)$ be the set of Borel probability measures on $\mS$. Assume $c: \mS \times \mS \to [0,\infty]$ is a measurable transportation cost function. Define
the optimal transport cost between $\mu_1,\mu_2\in \mP(\mS)$ by
\[
W_c(\mu_1,\mu_2)
:=
\inf_{\pi\in \Pi(\mu_1,\mu_2)}
\int_{\mS \times \mS} c(\bs_1,\bs_2)\,\pi(d\bs_1,d\bs_2),
\]
where $\Pi(\mu_1,\mu_2)$ is the set of all couplings of $\mu_1$ and $\mu_2$.

Fix a nominal distribution $\hat{\mu}\in \mP(\mS)$ and a radius $\rho \ge 0$. The corresponding OT ambiguity set is
\[
\mathfrak{D}(\hat{\mu},\rho)
:=
\left\{\mu\in \mP(\mS): W_c(\hat{\mu},\mu)\le \rho \right\}.
\]
Let $G: \mS\to \overline{\mbR}:=\mbR\cup\{+\infty\}$ be a measurable payoff (loss) function. Consider the worst-case expectation
\[
\sup_{\mu \in\mathfrak{D}(\hat{\mu},\rho)} \int_{\bs \in \mS} G(\bs)\, \mu(d\bs).
\]

\noindent\textbf{(A1) Regularity of transport cost.} $c$ is lower semicontinuous on
$\mS\times \mS$ and satisfies $c(\bs,\bs)=0$ for every $\bs\in \mS$.

\noindent\textbf{(A2) Upper semicontinuity of the payoff.} $G$ is upper semicontinuous on $\mS$.

\noindent\textbf{(A3) Integrability condition.} There exists some scalar $\lambda\ge 0$ such that the following is finite:
\[
\int_\mS \sup_{\bs\in \mS}\left\{G(\bs)-\lambda\,c(\hbs,\bs)\right\}\ \hat{\mu}(d\hbs)\ <\ \infty.
\]
Then strong duality holds, i.e.,
\[
\sup_{\mu\in\mathfrak{D}(\hat{\mu},\rho)} \int_\mS G(\bs)\, \mu(d\bs)
\ =\ 
\inf_{\lambda\ge 0}\left\{\lambda\,\rho\ +\ \int_\mS \sup_{\bs\in \mS}\left(G(\bs)-\lambda\,c(\hbs,\bs)\right)\ \hat{\mu}(d\hbs)\right\}.
\]
Moreover, whenever the integral is well-defined, the right-hand side is a
convex optimization problem in the scalar dual variable $\lambda\ge 0$,
and the inner term
\[
\sup_{\bs\in \mS}\left(G(\bs)-\lambda\,c(\hbs,\bs)\right)
\]
is the $c$-transform of $G$ (up to sign conventions).

\noindent\textbf{Empirical nominal distribution $\widehat P$ with point-mass support}

If $\hat{\mu}=\sum_{k=1}^{\hat{K}}\hat{w}_k\ \delta_{\hbs_k}$, where $\delta_{\hbs_k}$ denotes the Dirac measure, i.e., a point mass at support point $\hbs_k$, then the dual becomes
\[
\sup_{\mu\in\mathfrak{D}(\hat{\mu},\rho)} \int_{\mS}G(\bs)\,\mu(d\bs)
=
\inf_{\lambda\ge 0}
\left\{
\lambda\rho
+
\sum_{k=1}^{\hat{K}}\hat{w}_k
\sup_{\bs\in \mS}
\left(G(\bs)-\lambda\,c(\hbs_k,\bs)\right)
\right\}.
\]
\textbf{Special case:} If $c(\hbs,\bs)=d(\hbs,\bs)^p$ for a metric $d$ on
\(\mS\), then \(W_c(\hat{\mu},\mu)=W_p(\hat{\mu},\mu)^p\). The dual above remains the same, given that the radius $\rho$ remains consistent with the convention adopted.

\end{itemize}

\section{Omitted Proofs}\label{appndx: ommited-proof}

\subsection{Proof of Proposition~\ref{lem: Gamma-mixture-to-joint-gauss}}\label{appndx:very-first-proof}

Let $\bmean:=(\bmean_1,\bmean_2)\in\mbR^{2n}$ and define $X:=Z-\bmean$. Then $X$ is an $\mbR^{2n}$-valued random vector with
\[
\mbE[X]=0
\qquad\text{and}\qquad
\mbE[XX^\top]=\bQ.
\]
Using $\|\bu\|^2=\bu^\top \bu$ for $\bu\in\mbR^{2n}$, we expand
\begin{align}
\mbE\bigl[\|\bZ\|^2\bigr]
&=\mbE\bigl[\|\bmean+X\|^2\bigr] 
=\mbE\bigl[(\bmean+X)^\top(\bmean+X)\bigr] \notag\\
&=\mbE\bigl[\bmean^\top\bmean + 2\bmean^\top X + X^\top X\bigr] \notag\\
&= \bmean^\top\bmean + 2\bmean^\top\mbE[X] + \mbE[X^\top X] \notag\\
&= \|\bmean\|^2 + \mbE[X^\top X]. \label{eq:Gaussian-moment-proof}
\end{align}
Next, note that $X^\top X=\Tr(X^\top X)=\Tr(X X^\top)$, hence by linearity of
$\Tr(\cdot)$ and $\mbE[\cdot]$,
\[
\mbE[X^\top X]
=\mbE\bigl[\Tr(X X^\top)\bigr]
=\Tr\bigl(\mbE[X X^\top]\bigr)
=\Tr(\bQ).
\]
Substituting this into~\eqref{eq:Gaussian-moment-proof} yields \eqref{eq: joint-gaussian-second-moment}.
\hfil \qed

\subsection{Attainability of the Optimal Solution}
We use $\mX:=\mbR^n\times \mbR^n$ with $\mB(\mX)$ its Borel $\sigma$-algebra throughout this section toward establishing the attainability claim in Theorem~\ref{thm: Wass2-Gaussian-EquivalenceTheorem}. On $\mX$, the Gaussian-mixture class we define is:
\begin{equation}\label{eq:def-GMMmix-2n}
\mbGM_{2n}
:=
\Bigl\{
\nu\ \Big|\ 
\exists\,\lambda\in\mP(\mZ)\ \mathrm{s.t.}\ 
\nu(A)= \hspace{-0.35em}\int_{\mZ}\mN(\bz)(A)\,\lambda(d\bz)\,\, \forall A\in\mB(\mX),
\ \ \text{and} \hspace{-0.25em}
\int_{\mZ}\!\bigl(\|\bmean(\bz)\|^2+\Tr(\bQ(\bz))\bigr)\,\lambda(d\bz)<\infty
\Bigr\}.
\end{equation}

\begin{lemma}[Moment identity for Gaussian mixtures]\label{lem:mixture-moment-identity}
Let $\lambda\in\mP(\mZ)$ and define $\nu\in\mP(\mX)$ by
\[
\nu(A)=\int_{\mZ}\mN(\bz)(A)\,\lambda(d\bz)\qquad \forall A\in\mB(\mX).
\]
If $\int_{\mZ}(\|\bmean(\bz)\|^2+\Tr(\bQ(\bz)))\,\lambda(d\bz)<\infty$, then
\begin{equation}\label{eq:mixture-moment-identity}
\int_{\mX}\|\bu\|^2\,\nu(d\bu)
=
\int_{\mZ}\bigl(\|\bmean(\bz)\|^2+\Tr(\bQ(\bz))\bigr)\,\lambda(d\bz)
<\infty.
\end{equation}
\end{lemma}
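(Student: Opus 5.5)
The plan is to apply the Tonelli case of the kernel Fubini--Tonelli Theorem~\ref{thm: kernel-fubini-tonelli} with the Gaussian kernel $\mK(\bz,\cdot):=\mN(\bz)(\cdot)$ on $\mZ$. This is the second notation correspondence in A.3: there $\mbX=\mZ$, $\mbY=\mX=\mbR^{2n}$, and $\nu_{\msml{x}}=\lambda$. The kernel conditions (a)--(b) were verified in A.2. That verification was written for $\mbR^n\times\mbS^n_+$, but it transfers verbatim to $\mbR^{2n}\times\mbS^{2n}_+$: use the measurable map $\bQ\mapsto\bQ^{1/2}$ and the pushforward of $\mN(\bzero,\bI_{2n})$ under $\bu\mapsto\bmean+\bQ^{1/2}\bu$. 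Nothing here needs a density, so singular $\bQ(\bz)$ causes no trouble.

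\textbf{Step 1: exchange the integrals.} I take $\Psi(\bu):=\|\bu\|^2$. It is nonnegative and continuous, hence measurable, so Tonelli applies with no integrability hypothesis and gives, in $[0,\infty]$,
\[
\int_{\mX}\|\bu\|^2\,\nu(d\bu)=\int_{\mZ}\Bigl(\int_{\mX}\|\bu\|^2\,\mN(\bz)(d\bu)\Bigr)\lambda(d\bz).
\]

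\textbf{Step 2: evaluate the inner integral.} For each fixed $\bz=(\bmean(\bz),\bQ(\bz))$, Proposition~\ref{prop: joint-gaussian-second-moment} gives $\int\|\bu\|^2\,d\mN(\bz)=\|\bmean(\bz)\|^2+\Tr(\bQ(\bz))$. That proof only uses $\mbE[X]=0$ and $\mbE[XX^\top]=\bQ$, so it holds for PSD, possibly singular, $\bQ$.

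\textbf{Step 3: conclude.} Substituting Step 2 into Step 1 gives identity~\eqref{eq:mixture-moment-identity} as an equality in $[0,\infty]$. The right-hand side is finite by hypothesis, so the left-hand side is finite as well.

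\textbf{Main obstacle: measurability.} The only delicate point is that the integrand $\bz\mapsto\|\bmean(\bz)\|^2+\Tr(\bQ(\bz))$ must be measurable so that the outer integral is meaningful. Tonelli already supplies this, since the inner integral is a measurable function of $\bz$ (Proposition~\ref{prop: measurability} applied to $g(\bz,\bu)=\|\bmean+\bQ^{1/2}\bu\|^2$). It also follows directly, because this map is continuous on $\mZ$. No truncation or monotone-convergence argument should be needed beyond what is built into Theorem~\ref{thm: kernel-fubini-tonelli}.
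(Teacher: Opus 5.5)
Your proposal is correct and follows the paper's proof: kernel Tonelli (Theorem~\ref{thm: kernel-fubini-tonelli}) swaps the integrals, and Proposition~\ref{prop: joint-gaussian-second-moment} evaluates the inner Gaussian second moment. The only difference is that the paper first truncates to $\min\{\|\bu\|^2,M\}$ and then passes to the limit by monotone convergence; as you observe, this step is redundant, because the Tonelli case of Theorem~\ref{thm: kernel-fubini-tonelli} already gives the equality in $[0,\infty]$ for nonnegative measurable integrands.
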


\begin{proof}
Let $f_M(\bu):=\min\{\|\bu\|^2,M\}$ for $M\in\mbN$. Then $0\le f_M\uparrow \|\bu\|^2$ pointwise.
Using the probability kernel Tonelli's theorem (nonnegative integrand),
\[
\int_{\mX} f_M(\bu)\,\nu(d\bu)
=
\int_{\mX} f_M(\bu)\left(\int_{\mZ}\mN(\bz)(d\bu)\,\lambda(d\bz)\right)
=
\int_{\mZ}\left(\int_{\mX} f_M(\bu)\,\mN(\bz)(d\bu)\right)\lambda(d\bz).
\]
Letting $M\to\infty$ and applying monotone convergence theorem\srevision{~\cite[Theorem 2.14]{folland1999real}} to the inner integral (in $u$) and then the outer
integral (in $z$) gives
\[
\int_{\mX}\|\bu\|^2\,\nu(d\bu)
=
\int_{\mZ}\left(\int_{\mX}\|\bu\|^2\,\mN(\bz)(d\bu)\right)\lambda(d\bz).
\]
By Proposition~\ref{prop: joint-gaussian-second-moment}, $\int_{\mX}\|\bu\|^2\,\mN(\bz)(d\bu)=\|\bmean(\bz)\|^2+\Tr(\bQ(\bz))$,
yielding \eqref{eq:mixture-moment-identity}.
\end{proof}

\begin{lemma}[Weak compactness of $\Pi(\mu_1,\mu_2)$]\label{lem:Pi-weak-compact}
Let $\mu_1,\mu_2\in\mP(\mbR^n)$. Then $\Pi(\mu_1,\mu_2)$ is weakly compact in $\mP(\mX)$.
\end{lemma}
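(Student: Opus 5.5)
The argument is the standard one for couplings: establish tightness of $\Pi(\mu_1,\mu_2)$ via Prokhorov's theorem, then show that the set is closed under weak convergence. Because $\mX=\mbR^n\times\mbR^n$ is Polish, Prokhorov's theorem says that a family of probability measures on $\mX$ is relatively weakly compact if and only if it is tight. So the whole proof reduces to two claims: (i) $\Pi(\mu_1,\mu_2)$ is tight, and (ii) $\Pi(\mu_1,\mu_2)$ is weakly closed.

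For tightness, I would use that each single probability measure on a Polish space is tight. Fix $\varepsilon>0$ and pick compact sets $K_1,K_2\subset\mbR^n$ with $\mu_1(\mbR^n\setminus K_1)\le\varepsilon/2$ and $\mu_2(\mbR^n\setminus K_2)\le\varepsilon/2$. Then $K_1\times K_2$ is compact in $\mX$, and for every $\nu\in\Pi(\mu_1,\mu_2)$ the union bound gives
\[
\nu\bigl(\mX\setminus(K_1\times K_2)\bigr)\le \nu\bigl((\mbR^n\setminus K_1)\times\mbR^n\bigr)+\nu\bigl(\mbR^n\times(\mbR^n\setminus K_2)\bigr)=\mu_1(\mbR^n\setminus K_1)+\mu_2(\mbR^n\setminus K_2)\le\varepsilon.
\]
The bound is uniform in $\nu$, so the family is tight.

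For closedness, let $\nu_t\in\Pi(\mu_1,\mu_2)$ with $\nu_t\to\nu$ weakly. For any $f\in C_b(\mbR^n)$, the function $(\by_1,\by_2)\mapsto f(\by_1)$ lies in $C_b(\mX)$. Hence
\[
\int f\,d\nu^{(1)}=\int f(\by_1)\,d\nu=\lim_t\int f(\by_1)\,d\nu_t=\int f\,d\mu_1 .
\]
Two Borel probability measures on $\mbR^n$ that agree on $C_b(\mbR^n)$ are equal, so $\nu^{(1)}=\mu_1$. The same argument gives $\nu^{(2)}=\mu_2$, and therefore $\nu\in\Pi(\mu_1,\mu_2)$. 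Combined with Prokhorov, this yields weak compactness. Nonemptiness is immediate from the product coupling $\mu_1\otimes\mu_2$.

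None of the steps is a serious obstacle. The only point needing care is to state that the weak topology on $\mP(\mX)$ is metrizable (for example by the Lévy–Prokhorov metric). That justifies using sequences in the closedness argument, and it means relative compactness together with closedness gives sequential compactness, which is the form needed later to extract optimal couplings in Proposition~\ref{prop:intersection-weak-compact}.
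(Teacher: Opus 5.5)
Your proposal is correct and follows essentially the same route as the paper: you get uniform tightness from compact sets $K_1\times K_2$ and the union bound, you get weak closedness by testing the marginals of the limit against $f(\by_1)$ and $f(\by_2)$ with $f\in C_b(\mbR^n)$, and you conclude with Prokhorov's theorem on the Polish space $\mX$. Your explicit remark that the weak topology is metrizable, which justifies the sequential closedness argument, is a small clarification that the paper leaves implicit here and only invokes later, in Proposition~\ref{prop:intersection-weak-compact}.
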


\begin{proof}
\textit{\underline{(i) Tightness.}}
Fix $\varepsilon>0$. Choose compact sets $K_1,K_2\subset \mbR^n$ such that
$\mu_1(K_1)\ge 1-\varepsilon/2$ and $\mu_2(K_2)\ge 1-\varepsilon/2$ (see\srevision{~\cite[Vol. II, Theorem 7.4.3]{bogachev_measure_2007}} for the existence of such compact high-probability sets). Let $K^c_1$ and $K_2^c$ respectively be the complement set of $K_1$ and $K_2$ with respect to $\mbR^n$.
For any $\nu\in\Pi(\mu_1,\mu_2)$,
\[
\nu\bigl((K_1\times K_2)^c\bigr)
\le \nu(K_1^c\times \mbR^n)+\nu(\mbR^n\times K_2^c)
= \mu_1(K_1^c)+\mu_2(K_2^c)\le \varepsilon,
\]
so $\Pi(\mu_1,\mu_2)$ is tight.

\textit{\underline{(ii) Closedness.}}
Let $\nu_k\xrightarrow{\mathcal D}\nu$ weakly in $\mP(\mX)$ with $\nu_k\in\Pi(\mu_1,\mu_2)$.
For any bounded continuous $\varphi:\mbR^n\to\mbR$, the map $(\bu_1, \bu_2)\mapsto\varphi(\bu_1)$ is bounded continuous on $\mX$,
hence
\[
\int_{\mX}\varphi(\bu_1)\,\nu(d\bu_1,d\bu_2)
=\lim_{k\to\infty}\int_{\mX}\varphi(\bu_1)\,\nu_k(d\bu_1,d\bu_2)
=\int_{\mbR^n}\varphi(\bu_1)\,\mu_1(d\bu_1),
\]
so $\nu^{(1)}=\mu_1$. Similarly $\nu^{(2)}=\mu_2$. Thus $\Pi(\mu_1,\mu_2)$ is weakly closed.

Since $\mX$ is Polish, tightness and weak closedness together establish weak compactness of $\Pi(\mu_1,\mu_2)$.
\end{proof}

\begin{lemma}[Weak continuity of Gaussian laws in parameters]\label{lem:gaussian-weak-continuity}
Let $d\in\mathbb N$. If $\bmean_k\to \bmean$ in $\mbR^d$ and $\bQ_k\to \bQ$ in $\mbS^{d}_+$ (entry-wise, equivalently in any matrix norm),
then
\[
\mN(\bmean_k,\bQ_k)\ \xrightarrow{\mathcal D} \, \mN(\bmean, \bQ)
\quad\text{in }\, \mP(\mbR^d).
\]
Consequently, for every bounded continuous $f\in C_b(\mbR^d)$, the map
\[
(\bmean, \bQ)\longmapsto \int_{\mbR^d} f(\bu)\,\mN(\bmean, \bQ)(d\bu)
\]
is continuous on $\mbR^d\times \mbS^{d}_+$.
\end{lemma}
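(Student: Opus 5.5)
The plan is to prove weak convergence through characteristic functions and Lévy's continuity theorem. This route needs no Lebesgue density, so it covers singular limits $\bQ\in\mbS^d_+$ as well; this matters because the paper allows degenerate Gaussians. The continuity of $(\bmean,\bQ)\mapsto\int f\,d\mN(\bmean,\bQ)$ will then follow from the sequential characterization of continuity on the metric space $\mbR^d\times\mbS^d_+$.

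\emph{Step 1 (characteristic functions).} For any $(\bmean,\bQ)\in\mbR^d\times\mbS^d_+$, the law $\mN(\bmean,\bQ)$ is the pushforward of $\gamma=\mN(\bzero,\bI)$ under $\bz\mapsto\bmean+\bQ^{1/2}\bz$, as in the Gaussian kernel instantiation of Appendix~A.2. Its characteristic function is therefore
\[
\varphi_{\bmean,\bQ}(\bt)=\exp\!\bigl(i\bt^\top\bmean-\tfrac12\bt^\top\bQ\bt\bigr).
\]
This formula is obtained by evaluating the standard Gaussian characteristic function at $\bQ^{1/2}\bt$ and using $\bQ^{1/2}\bQ^{1/2}=\bQ$. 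It is valid whether or not $\bQ$ is invertible.

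\emph{Step 2 (pointwise convergence).} Fix $\bt\in\mbR^d$. Since $\bmean_k\to\bmean$ and $\bQ_k\to\bQ$ entrywise, we have $\bt^\top\bmean_k\to\bt^\top\bmean$ and $\bt^\top\bQ_k\bt\to\bt^\top\bQ\bt$. Continuity of $\exp$ then gives $\varphi_{\bmean_k,\bQ_k}(\bt)\to\varphi_{\bmean,\bQ}(\bt)$. The limit is itself the characteristic function of a probability measure, namely $\mN(\bmean,\bQ)$. By Lévy's continuity theorem, $\mN(\bmean_k,\bQ_k)\xrightarrow{\mathcal D}\mN(\bmean,\bQ)$.

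\emph{Alternative to Lévy.} A coupling argument avoids the theorem. Take $Z\sim\gamma$ and set $Y_k=\bmean_k+\bQ_k^{1/2}Z$. Continuity of the principal square root on $\mbS^d_+$ gives $\bQ_k^{1/2}\to\bQ^{1/2}$, so $Y_k\to\bmean+\bQ^{1/2}Z$ almost surely. Almost-sure convergence implies convergence in distribution. If I use this route, the continuity of $\bQ\mapsto\bQ^{1/2}$ is the only nontrivial input, and I would cite it from \cite{bhatia2009positive}.

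\emph{Step 3 (consequence).} Fix $f\in C_b(\mbR^d)$ and a sequence $(\bmean_k,\bQ_k)\to(\bmean,\bQ)$. By the definition of weak convergence,
\[
\int f\,d\mN(\bmean_k,\bQ_k)\to\int f\,d\mN(\bmean,\bQ).
\]
Since $\mbR^d\times\mbS^d_+$ is a metric space, sequential continuity is equivalent to continuity, which proves the second claim.

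The main subtlety is the degenerate case, where $\bQ$ is singular or $\bQ_k$ loses rank in the limit. Density-based arguments, such as Scheffé's lemma, fail there. Both routes above avoid densities entirely, so the lemma holds on all of $\mbS^d_+$ as stated.
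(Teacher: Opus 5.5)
Your proposal is correct and follows the paper's proof. Both show that the characteristic functions $\exp\!\bigl(i\bt^\top\bmean_k-\tfrac12\bt^\top\bQ_k\bt\bigr)$ converge pointwise and apply Lévy's continuity theorem, which is valid for singular $\bQ$, and then both deduce continuity of the integral map from convergence against $C_b$ test functions. Your coupling alternative via continuity of $\bQ\mapsto\bQ^{1/2}$ is also valid, though not needed.
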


\begin{proof}
Fix $t\in\mbR^d$. The characteristic function of $\mN(\bmean_k,\bQ_k)$ equals
\[
\varphi_k(t)
=\exp\!\Bigl(i\,t^\top \bmean_k-\tfrac12\, t^\top \bQ_k t\Bigr).
\]
and $\varphi(t):=\exp\!\bigl(i\,t^\top \bmean-\tfrac12\, t^\top \bQ t\bigr)$ is the characteristic function of $\mN(\bmean, \bQ)$, possibly degenerate Gaussian law since \(\bQ\succeq0\). 
Since $\bmean_k\to \bmean$ and $\bQ_k\to Q$, we have $\varphi_k(t)\to \varphi(t)$ for every $t$.
By \srevision{Lévy's continuity theorem~\cite[Theorem 15.23]{klenke2008probability}}, $\mN(\bmean_k,\bQ_k)\xrightarrow{\mathcal D}\mN(\bmean, \bQ)$.
Therefore, the claimed continuity of $(\bmean, \bQ)\mapsto \int f\,d\mN(\bmean, \bQ)$ follows because weak convergence implies
convergence of integrals against all $f\in C_b$.
\end{proof}

\begin{proposition}[Weak compactness of $\Pi(\mu_1,\mu_2)\cap \mbGM_{2n}$]\label{prop:intersection-weak-compact}
The set $\{\Pi(\mu_1,\mu_2)\cap \mbGM_{2n}\}$ is weakly compact in $\mP(\mX)$.
\end{proposition}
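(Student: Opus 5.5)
The plan is to combine Lemma~\ref{lem:Pi-weak-compact} with a closedness argument for the Gaussian-mixture constraint. The weak topology on $\mP(\mX)$ is metrizable because $\mX$ is Polish. Hence weak compactness of $\Pi(\mu_1,\mu_2)\cap\mbGM_{2n}$ follows once we show that every sequence in it has a subsequence converging weakly to an element of the same set. Take $\nu_k\in\Pi(\mu_1,\mu_2)\cap\mbGM_{2n}$ with mixing laws $\lambda_k\in\mP(\mZ)$. By Lemma~\ref{lem:Pi-weak-compact} we may pass to a subsequence with $\nu_k\to\nu$ weakly and $\nu\in\Pi(\mu_1,\mu_2)$. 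It remains to show $\nu\in\mbGM_{2n}$.

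\textbf{Step 1 (uniform moment bound).} By Lemma~\ref{lem:mixture-moment-identity} and the fact that $\nu_k$ has marginals $\mu_1,\mu_2$,
\[
\int_{\mZ}\bigl(\|\bmean(\bz)\|^2+\Tr(\bQ(\bz))\bigr)\,\lambda_k(d\bz)=\int_{\mX}\|\bu\|^2\,\nu_k(d\bu)=\int\|\by_1\|^2 d\mu_1+\int\|\by_2\|^2 d\mu_2=:C .
\]
This constant does not depend on $k$. I will take $\mu_1,\mu_2$ to have finite second moments; this is implicit, since otherwise the intersection is empty and the claim is trivial.

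\textbf{Step 2 (tightness of the mixing laws).} Let $h(\bz):=\|\bmean\|^2+\Tr(\bQ)$. For $\bQ\succeq0$ we have $\|\bQ\|_F\le\Tr(\bQ)$, so each sublevel set $\{h\le R\}$ is bounded. It is also closed, because the PSD cone is closed; here I use the convention of Lemma~\ref{lem:gaussian-weak-continuity}, under which $\mZ$ carries the closed cone $\mbS^{2n}_+$ and degenerate Gaussians are allowed. So each sublevel set is compact. Markov's inequality gives $\lambda_k(\{h>R\})\le C/R$, hence $\{\lambda_k\}$ is tight, and by Prokhorov a further subsequence converges weakly, $\lambda_k\to\lambda\in\mP(\mZ)$.

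\textbf{Step 3 (identifying the limit).} Fix $f\in C_b(\mX)$ and set $F_f(\bz):=\int f\,d\mN(\bz)$. By Lemma~\ref{lem:gaussian-weak-continuity}, $F_f$ is continuous, and it is bounded by $\|f\|_\infty$. The kernel Fubini--Tonelli Theorem~\ref{thm: kernel-fubini-tonelli} gives $\int f\,d\nu_k=\int F_f\,d\lambda_k$. Passing to the limit on both sides yields $\int f\,d\nu=\int F_f\,d\lambda=\int f\,d\bigl(\int\mN(\bz)\,d\lambda(\bz)\bigr)$. Since this holds for all $f\in C_b(\mX)$, we conclude $\nu=\int_{\mZ}\mN(\bz)\,d\lambda(\bz)$. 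Finally, $h$ is continuous and nonnegative, so the portmanteau/Fatou inequality for lower semicontinuous functions gives $\int h\,d\lambda\le\liminf_k\int h\,d\lambda_k\le C<\infty$. Therefore $\nu\in\mbGM_{2n}$, which proves sequential compactness.

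I expect Step~2 to be the main obstacle. Tightness of the mixing laws hinges on the moment identity, which controls $\Tr\bQ$ and hence $\|\bQ\|_F$. It also hinges on working over the closed PSD cone. If $\mZ$ were restricted to positive definite matrices, mass could escape to singular covariances, and the limit mixing law would then live only on the closure. In that case I would enlarge $\mZ$ to $\mbS^{2n}_+$, which the definition of Gaussian measures via pushforward already permits.
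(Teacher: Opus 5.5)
Your proposal is correct and follows the paper's proof essentially step for step. The shared steps are the uniform bound on $\int(\|\bmean\|^2+\Tr(\bQ))\,d\lambda_k$ from the mixture moment identity, tightness of the mixing laws via compact sublevel sets and Markov's inequality, Prokhorov's theorem, identification of the limit through the bounded continuous map $\bz\mapsto\int f\,d\mN(\bz)$, and the portmanteau inequality for the moment condition. The only difference is cosmetic: you argue sequential compactness directly, while the paper phrases it as tightness of $\Pi(\mu_1,\mu_2)$ plus sequential weak closedness of the intersection.
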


\begin{proof}
Let \(\mathcal F:=\Pi(\mu_1,\mu_2)\cap\mbGM_{2n}\). Note that $\mF \neq \emptyset$ since the product coupling \(\mu_1\otimes\mu_2\in\Pi(\mu_1,\mu_2)\) belongs to \(\mbGM_{2n}\) through the point-mass Gaussian representation allowed by \(\mbS^{2n}_+\).   
Hence, throughout the proof we consider \(\mathcal F\neq\varnothing\). 
By Lemma~\ref{lem:Pi-weak-compact}, $\Pi(\mu_1,\mu_2)$ is tight. Hence, $\mathcal F$  is also tight. Therefore, it suffices to show that $\mathcal F$ is weakly closed. Now, since $\mathcal F\neq\varnothing$, choose $\bar\nu\in\mathcal F$. Since
$\bar\nu\in\mbGM_{2n}$, Lemma~\ref{lem:mixture-moment-identity} implies
\[
\int_{\mX}\|\bu\|^2\,\bar\nu(d\bu)<\infty.
\]
Because $\bar\nu\in\Pi(\mu_1,\mu_2)$, we have
\[
\int_{\mX}\|\bu\|^2\,\bar\nu(d\bu)
=
\int_{\mbR^n}\|\bu_1\|^2\,\mu_1(d\bu_1)
+
\int_{\mbR^n}\|\bu_2\|^2\,\mu_2(d\bu_2)<\infty.
\]
Therefore, $\mu_1,\mu_2\in\mathcal P_2(\mbR^n)$. Consequently, every
$\nu\in\Pi(\mu_1,\mu_2)$ satisfies
\[
\int_{\mX}\|\bu\|^2\,\nu(d\bu)
=
\int_{\mbR^n}\|\bu_1\|^2\,\mu_1(d\bu_1)
+
\int_{\mbR^n}\|\bu_2\|^2\,\mu_2(d\bu_2)
=:M<\infty.
\]

Since $\mX$ is Polish, the weak topology on $\mathcal P(\mX)$ is metrizable. Therefore, to prove weak closedness of $\mathcal F$, it suffices to prove sequential weak closedness \srevision{by~\cite[Vol II, Theorem 8.6.2]{bogachev_measure_2007}.} Let $\nu_k\in\mathcal F$ and suppose
$\nu_k\xrightarrow{\mathcal D}\nu$ weakly in $\mathcal P(\mX)$. Because $\nu_k\in\mathcal F\subseteq\Pi(\mu_1,\mu_2)$ for every $k$, and because
$\Pi(\mu_1,\mu_2)$ is weakly closed by Lemma~\ref{lem:Pi-weak-compact}, the weak
limit satisfies $\nu\in\Pi(\mu_1,\mu_2)$. Therefore, it remains to prove that
$\nu\in\mbGM_{2n}$.

For each $k$, pick $\lambda_k\in\mP(\mZ)$ such that $\nu_k\in\mbGM_{2n}$, i.e.,
\begin{equation}\label{eq:nu-k-mixture}
\nu_k(A)=\int_{\mZ}\mN(\bz)(A)\,\lambda_k(d\bz)\qquad \forall A\in\mB(\mX),
\end{equation}
and
\begin{equation}\label{eq:lambda-k-moment}
\int_{\mZ}\bigl(\|\bmean(\bz)\|^2+\Tr(\bQ(\bz))\bigr)\,\lambda_k(d\bz)<\infty.
\end{equation}
Moreover, since $\nu_k\in\Pi(\mu_1,\mu_2)$ and $\|(\bu_1, \bu_2)\|^2=\|\bu_1\|^2+\|\bu_2\|^2$, by the preceding derivation of \(M<\infty\),
\begin{equation}\label{eq:uniform-second-moment-nu-k}
\int_{\mX}\|\bu\|^2\,\nu_k(d\bu)
=
\int_{\mbR^n}\|\bu_1\|^2\,\mu_1(d\bu_1)+\int_{\mbR^n}\|\bu_2\|^2\,\mu_2(d\bu_2)
=:M<\infty.
\end{equation}
By Lemma~\ref{lem:mixture-moment-identity} applied to \eqref{eq:lambda-k-moment}, we obtain
\begin{equation}\label{eq:uniform-moment-lambda-k}
\int_{\mZ}\bigl(\|\bmean(\bz)\|^2+\Tr(\bQ(\bz))\bigr)\,\lambda_k(d\bz)
=
\int_{\mX}\|\bu\|^2\,\nu_k(d\bu)
\le M
\qquad \forall k.
\end{equation}

\textit{\underline{Step 1: Tightness of $\{\lambda_k\}$.}}
Let $g(\bz):=\|\bmean(\bz)\|^2+\Tr(\bQ(\bz))$. For $R>0$, the sublevel set $K_R:=\{z\in\mZ:\ g(\bz)\le R\}$ is compact in $\mZ$. \(K_R\) is closed because \(\mZ=\mbR^{2n}\times\mbS^{2n}_+\) is closed in the finite-dimensional Euclidean space \(\mbR^{2n}\times\mbS^{2n}\), and \(g\) is continuous on \(\mZ\). Moreover, if \(\bz=(\bmean,\bQ)\in K_R\), then \(|\bmean|^2\le R\) and \(\operatorname{tr}(\bQ)\le R\). Since \(\bQ\succeq0\), its eigenvalues are nonnegative, and hence \[ |\bQ|_F=\left(\sum_i\lambda_i(\bQ)^2\right)^{1/2} \le \sum_i\lambda_i(\bQ)=\operatorname{tr}(\bQ)\le R. \] Thus \(K_R\) is bounded. Hence, \(K_R\) is closed and bounded in a finite-dimensional Euclidean space, and therefore compact by the \srevision{Heine--Borel theorem~\cite{rudin2021principles}.}
By Markov's inequality and \eqref{eq:uniform-moment-lambda-k},
\[
\lambda_k(K_R^c)=\lambda_k(\{g>R\}) \le \frac{1}{R}\int_{\mZ} g(\bz)\,\lambda_k(d\bz)\le \frac{M}{R},
\]
uniformly in $k$. Given \(\varepsilon>0\), choosing \(R>M/\varepsilon\) yields \(\lambda_k(K_R)\ge 1-\varepsilon\) for all \(k\). Since \(K_R\) is compact, \(\{\lambda_k\}\) is tight on $\mZ$.
By \srevision{Prokhorov's theorem~\cite[Theorem 13.29]{klenke2008probability},} there exists a subsequence $\lambda_k$ and $\lambda\in\mP(\mZ)$ such that
\[
\lambda_k\xrightarrow{\mathcal D}\lambda \quad\text{weakly in }\mP(\mZ).
\]


\textit{\underline{Step 2: Identify the weak limit $\nu$ as a Gaussian mixture.}}
Fix any bounded continuous \(f\in C_b(\mX)\). Let
\[
\Psi_f:\mZ\to\mbR,\qquad
\Psi_f(\bz):=\int_{\mX} f(\bu)\,\mN(\bz)(d\bu).
\]
By Lemma~\ref{lem:gaussian-weak-continuity} applied with \(d=2n\), if \(\bz_k\to \bz\) in \(\mZ\), then
\[
\mN(\bz_k)\xrightarrow{\mathcal D}\mN(\bz).
\]
Therefore,
\[
\Psi_f(\bz_k)
=
\int_{\mX} f(\bu)\,\mN(\bz_k)(d\bu)
\longrightarrow
\int_{\mX} f(\bu)\,\mN(\bz)(d\bu)
=
\Psi_f(\bz).
\]
Hence, \(\Psi_f\) is continuous on \(\mZ\). Moreover, since
\[
|\Psi_f(\bz)|\le \|f\|_\infty
\qquad \forall z\in\mZ,
\]
we have \(\Psi_f\in C_b(\mZ)\).

Hence, using \eqref{eq:nu-k-mixture} and the kernel Fubini theorem for bounded
measurable functions,
\[
\int_{\mX} f(\bu)\,\nu_k(d\bu)
=
\int_{\mZ}\left(\int_{\mX} f(\bu)\,\mN(\bz)(d\bu)\right)\lambda_k(d\bz)
=
\int_{\mZ}\Psi_f(\bz)\,\lambda_k(d\bz).
\]
Since \(\lambda_k\xrightarrow{\mathcal D}\lambda\) and
\(\Psi_f\in C_b(\mZ)\), we have
\[
\int_{\mZ}\Psi_f(\bz)\,\lambda_k(d\bz)
\longrightarrow
\int_{\mZ}\Psi_f(\bz)\,\lambda(d\bz).
\]
On the other hand, since \(\nu_k\xrightarrow{\mathcal D}\nu\) and
\(f\in C_b(\mX)\), \srevision{by~\cite[Vol II, Corollary~8.2.10]{bogachev_measure_2007}}
\[
\int_{\mX}f(\bu)\,\nu_k(d\bu)
\longrightarrow
\int_{\mX}f(\bu)\,\nu(d\bu).
\]
Therefore,
\begin{equation}\label{eq:limit-identity-f}
\int_{\mX} f(\bu)\,\nu(d\bu)
=
\int_{\mZ}\Psi_f(\bz)\,\lambda(d\bz)
\qquad \forall f\in C_b(\mX).
\end{equation}

Now define
\[
\tilde\nu(A)
:=
\int_{\mZ}\mN(\bz)(A)\,\lambda(d\bz),
\qquad A\in\mB(\mX).
\]
Because \(\bz\mapsto\mN(\bz)(\cdot)\) is a probability kernel and
\(\lambda\in\mathcal P(\mZ)\), the set function \(\tilde\nu\) is a probability
measure on \((\mX,\mB(\mX))\), i.e., \(\tilde\nu\in\mathcal P(\mX)\).
Moreover, by the kernel Fubini theorem again, for every \(f\in C_b(\mX)\),
\[
\int_{\mX} f(\bu)\,\tilde\nu(d\bu)
=
\int_{\mZ}
\left(\int_{\mX} f(\bu)\,\mN(\bz)(d\bu)\right)\lambda(d\bz)
=
\int_{\mZ}\Psi_f(\bz)\,\lambda(d\bz).
\]
Combining this identity with \eqref{eq:limit-identity-f} yields
\begin{align}\label{eq:nu-mixture-intermediate}
    \int_{\mX}f(\bu)\,\nu(d\bu)
=
\int_{\mX}f(\bu)\,\tilde\nu(d\bu)
\qquad \forall f\in C_b(\mX).
\end{align}
Let
\[
\bar{C}_b(\mX)
:=
\{f:\mX\to\mathbb R:\ f \text{ is bounded and uniformly continuous}\}.
\]
Since $\bar{C}_b(\mX)\subseteq C_b(\mX)$,~\eqref{eq:nu-mixture-intermediate} implies
\[
\int_{\mX}f(\bu)\,\nu(d\bu)
=
\int_{\mX}f(\bu)\,\tilde\nu(d\bu)
\qquad \forall f\in \bar{C}_b(\mX).
\]
Since \(\mX=\mathbb R^n\times\mathbb R^n\) is a metric space, the class
\(\bar{C}_b(\mX)\) of bounded uniformly continuous test functions is
measure-determining for Borel probability measures on \(\mX\)~\cite[Vol.~II, Remark~8.3.1; see also Theorem~8.2.3 and Corollary~8.2.4]{bogachev_measure_2007}. Therefore, equality of integrals against all functions in
\(\bar{C}_b(\mX)\) implies equality of the underlying probability measures.
Therefore, \(\nu=\tilde\nu\). Equivalently,
\begin{equation}\label{eq:nu-mixture}
\nu(A)
=
\int_{\mZ}\mN(\bz)(A)\,\lambda(d\bz)
\qquad \forall A\in\mB(\mX).
\end{equation}

\textit{\underline{Step 3: verify the moment condition for $\lambda$.}}
Let $g(\bz):=\|\bmean(\bz)\|^2+\Tr(\bQ(\bz))$. Since $g$ is lower semicontinuous on $\mZ$ and
$\lambda_k\xrightarrow{\mathcal D}\lambda$, \srevision{Portmanteau theorem~\cite[Theorem~13.16]{klenke2008probability}} yields
\[
\int_{\mZ} g(\bz)\,\lambda(d\bz)\le \liminf_{k\to\infty}\int_{\mZ} g(\bz)\,\lambda_k(d\bz)\le M<\infty.
\]
Together with \eqref{eq:nu-mixture}, this shows $\nu\in\mbGM_{2n}$.
Hence, $\nu\in\mathcal{F}$, so $\mathcal{F}$ is weakly closed.

\noindent Therefore, $\mathcal{F}$ is a weakly closed subset of the weakly compact set $\Pi(\mu_1,\mu_2)$,
and is thus weakly compact.
\end{proof}

\begin{corollary}[Attainment of solution in Theorem~\ref{thm: Wass2-Gaussian-EquivalenceTheorem}]\label{cor:attainment-intersection}
Let $c:\mX\to(-\infty,\infty]$ be lower semicontinuous and bounded below. Then the optimization problem
\[
\inf_{\nu\in \Pi(\mu_1,\mu_2)\cap \mbGM_{2n}} \int_{\mX} c(\bu)\,\nu(d\bu)
\]
admits a minimizer in $\Pi(\mu_1,\mu_2)\cap \mbGM_{2n}$.
In particular, this holds for $c(\bu_1,\bu_2)=\|\bu_1-\bu_2\|^2$.
\end{corollary}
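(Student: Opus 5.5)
The plan is the direct method of the calculus of variations: combine the weak compactness of $\mathcal F:=\Pi(\mu_1,\mu_2)\cap\mbGM_{2n}$ from Proposition~\ref{prop:intersection-weak-compact} with weak lower semicontinuity of the linear functional $I_c(\nu):=\int_{\mX}c(\bu)\,\nu(d\bu)$.

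First I dispose of the degenerate cases. The proof of Proposition~\ref{prop:intersection-weak-compact} shows $\mathcal F\neq\varnothing$. If $\inf_{\nu\in\mathcal F}I_c(\nu)=+\infty$, then every element of $\mathcal F$ is a minimizer. Since $c$ is bounded below, $I_c(\nu)$ is well defined in $(-\infty,+\infty]$ for every $\nu$, so the infimum is not $-\infty$.

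Second, assume the infimum $m$ is finite and pick a minimizing sequence $\nu_k\in\mathcal F$ with $I_c(\nu_k)\downarrow m$. Because $\mX$ is Polish, the weak topology on $\mathcal P(\mX)$ is metrizable, so the weakly compact set $\mathcal F$ is sequentially compact. Hence a subsequence satisfies $\nu_{k_j}\xrightarrow{\mathcal D}\nu^\star$, and $\nu^\star\in\mathcal F$ by the closedness part of Proposition~\ref{prop:intersection-weak-compact}.

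Third, and this is the only step with real content, I show $I_c(\nu^\star)\le\liminf_j I_c(\nu_{k_j})=m$. After adding a constant I may assume $c\ge0$. I would invoke the Portmanteau theorem~\cite[Theorem~13.16]{klenke2008probability} for lower semicontinuous functions bounded below, exactly as in Step~3 of the proof of Proposition~\ref{prop:intersection-weak-compact}. If a self-contained argument is preferred, I approximate $c$ from below by the bounded Lipschitz functions
\[
c_M(\bu):=\min\Bigl\{M,\ \inf_{\bv\in\mX}\bigl(c(\bv)+M\|\bu-\bv\|\bigr)\Bigr\}.
\]
Lower semicontinuity of $c$ gives $c_M\uparrow c$ pointwise. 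For each fixed $M$, since $c_M\in C_b(\mX)$ and $c_M\le c$,
\[
\int c_M\,d\nu^\star=\lim_j\int c_M\,d\nu_{k_j}\le\liminf_j I_c(\nu_{k_j}).
\]
Letting $M\to\infty$ and applying monotone convergence yields the claim. Therefore $I_c(\nu^\star)=m$, and $\nu^\star$ is a minimizer.

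Finally, $c(\bu_1,\bu_2)=\|\bu_1-\bu_2\|^2$ is continuous and nonnegative, so it falls under the general statement; this also justifies the existence claim made after~\eqref{eqn: WassGausianMix}. I expect the only delicate point to be the lower semicontinuity step when $c$ is unbounded above, and the monotone Lipschitz approximation above resolves it.
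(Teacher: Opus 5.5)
Your proof is correct and follows the same direct-method route as the paper: weak compactness of $\Pi(\mu_1,\mu_2)\cap\mbGM_{2n}$ from Proposition~\ref{prop:intersection-weak-compact}, combined with weak lower semicontinuity of $\nu\mapsto\int c\,d\nu$. Your monotone Lipschitz approximation $c_M\uparrow c$ simply spells out the step that the paper delegates to \cite[Theorem~4.1]{villani2008optimal}. One caveat you share with the paper is the nonemptiness argument via the product coupling $\mu_1\otimes\mu_2$, written as a mixture of degenerate Gaussians $\mN((\bu_1,\bu_2),\bzero)$: it silently requires $\mu_1,\mu_2\in\mP_2(\mbR^n)$. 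Since every element of $\mbGM_{2n}$ has finite second moment (Lemma~\ref{lem:mixture-moment-identity}), the feasible set is otherwise empty, so this should be stated as a standing assumption; it does hold in the intended setting where $\mu_1,\mu_2\in\mbGM_n$.
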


\begin{proof}
By Proposition~\ref{prop:intersection-weak-compact}, the feasible set $\Pi(\mu_1,\mu_2)\cap\mbGM_{2n}$ is weakly compact. The feasible set is nonempty because the product coupling \(\nu:=\mu_1\otimes\mu_2\in\Pi(\mu_1,\mu_2)\) belongs to \(\mbGM_{2n}\) through the point-mass Gaussian representation allowed by \(\mbS^{2n}_+\). Additionally, since
\(c\) is lower semicontinuous and bounded below, the map \(\nu\mapsto\int_{\mX}c\,d\nu\) is weakly lower semicontinuous (cf. the direct-method argument in\srevision{~\cite[Theorem~4.1]{villani2008optimal}).} Hence, the infimum is attained over the nonempty weakly compact feasible set.
\end{proof}

\subsection{Second Moment Finiteness Preservation}\label{appndx: second-moment-preservation}

\begin{lemma}[Cross-covariance Frobenius bounds]
\label{lem:Sigma-bound}
Let $\bQ_1,\bQ_2\in \mbS^n_+$ and let $\bSigma$ be defined by~\eqref{def: gelbrich-off-diagonal}. Then
\begin{equation}
\label{eq:Sigma-Fbound}
\|\bSigma\|_F
\le
\sqrt{\|\bQ_1\|_F\,\|\bQ_2\|_F}
\le
\frac{\|\bQ_1\|_F+\|\bQ_2\|_F}{2} \,\,\, \text{and} \,\,\, \|\bar{\bQ}\|_F^2
\le
2\bigl(
\|\bQ_1\|_F^2+\|\bQ_2\|_F^2
\bigr).
\end{equation}
where $\bar{\bQ}
=
\begin{psmallmatrix}
\bQ_1 & \bSigma\\
\bSigma^\top & \bQ_2
\end{psmallmatrix}
\in \mbS^{2n}_+.$
\end{lemma}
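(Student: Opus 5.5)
The plan is to bound $\|\bSigma\|_F$ through the structure of the Gelbrich cross-covariance, and then add the three block contributions to get $\|\bar{\bQ}\|_F^2$. Set $\bA:=\bQ_1^{1/2}$ and $\bB:=(\bA\bQ_2\bA)^{1/2}\succeq 0$, so that $\bSigma=\bA\bB\bA^{\dagger}$ under the pseudoinverse convention fixed after Theorem~\ref{thm: Gelbrich}.

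The simplest route avoids the explicit formula and uses only that $\bar{\bQ}\succeq 0$, which Theorem~\ref{thm: Gelbrich} asserts since $\bar{\bQ}$ is the covariance of the optimal Gaussian coupling. Positive semidefiniteness of a $2\times 2$ block matrix gives the standard factorization $\bSigma=\bQ_1^{1/2}\bm{K}\bQ_2^{1/2}$ with a contraction $\|\bm{K}\|_2\le 1$. This is the step I will cite from \cite{bhatia2009positive} or \cite{horn2012matrix} rather than reprove.

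Taking Frobenius norms and applying Cauchy--Schwarz in Hilbert--Schmidt form, I expect
\[
\|\bSigma\|_F^2=\Tr\bigl(\bSigma^\top\bSigma\bigr)\le \Tr\bigl(\bQ_2^{1/2}\bm{K}^\top\bQ_1\bm{K}\bQ_2^{1/2}\bigr).
\]
Bounding this by $\|\bQ_1\|_F\|\bQ_2\|_F$ needs $|\Tr(\bm{X}\bm{Y})|\le\|\bm{X}\|_F\|\bm{Y}\|_F$ together with $\|\bm{K}\|_2\le 1$. Concretely, $\Tr(\bQ_2^{1/2}\bm{K}^\top\bQ_1\bm{K}\bQ_2^{1/2})=\Tr(\bQ_1\,\bm{K}\bQ_2\bm{K}^\top)\le\|\bQ_1\|_F\|\bm{K}\bQ_2\bm{K}^\top\|_F\le\|\bQ_1\|_F\|\bQ_2\|_F$. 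This yields the first inequality, and AM--GM gives the second.

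Alternatively, the explicit formula gives $\bSigma^\top\bSigma$ in terms of $\bB$. One can check $\bA\bB\bA^\dagger$ has the same Frobenius norm issue, so I will prefer the contraction argument. This is the main obstacle: the pseudoinverse makes the direct computation delicate when $\bQ_1$ is singular, and the factorization sidesteps it.

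For the last claim,
\[
\|\bar{\bQ}\|_F^2=\|\bQ_1\|_F^2+\|\bQ_2\|_F^2+2\|\bSigma\|_F^2\le \|\bQ_1\|_F^2+\|\bQ_2\|_F^2+2\|\bQ_1\|_F\|\bQ_2\|_F.
\]
Applying $2ab\le a^2+b^2$ then gives $2(\|\bQ_1\|_F^2+\|\bQ_2\|_F^2)$.
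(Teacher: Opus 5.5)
Your proposal is correct and follows the paper's proof essentially step for step. Both use the contraction factorization $\bSigma=\bQ_1^{1/2}\bm{K}\bQ_2^{1/2}$ obtained from $\bar{\bQ}\succeq 0$, then the bound $\|\bSigma\|_F^2=\Tr(\bQ_1\bm{K}\bQ_2\bm{K}^\top)\le\|\bQ_1\|_F\,\|\bm{K}\bQ_2\bm{K}^\top\|_F\le\|\bQ_1\|_F\,\|\bQ_2\|_F$, then AM--GM, and finally the block expansion of $\|\bar{\bQ}\|_F^2$.

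One correction to your side remark: the factorization does not sidestep the singular case. $\bar{\bQ}\succeq 0$ is guaranteed only for $\bQ_1\succ 0$, and it can fail under the pseudoinverse convention. For example, $\bQ_1=\diag(1,0)$ and $\bQ_2=\bigl(\begin{smallmatrix}1&1\\1&1\end{smallmatrix}\bigr)$ give $\bSigma=\diag(1,0)$ and an indefinite $\bar{\bQ}$. For singular $\bQ_1$, run the same argument on the block matrix with $\bQ_2$ replaced by $\bm{P}\bQ_2\bm{P}$, where $\bm{P}$ is the orthogonal projector onto the range of $\bQ_1$. That matrix has the same $\bSigma$, is positive semidefinite, and satisfies $\|\bm{P}\bQ_2\bm{P}\|_F\le\|\bQ_2\|_F$.
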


\begin{proof}

A standard characterization of positive semidefinite block matrices states that $\begin{pmatrix}
A & X\\
X^\top & B
\end{pmatrix}
\succeq 0$ if and only if there exists a contraction $K$ such that $X=A^{1/2}KB^{1/2},
\,\,
\|K\|_{\mathrm{op}}\le 1,$
where $\|K\|_{\mathrm{op}}
:=
\sup_{\|x\|_2=1}\|Kx\|_2
=
\sigma_{\max}(K)$
denotes the operator norm induced by the Euclidean norm; see, e.g.,
\cite[Proposition~1.3.2]{bhatia2009positive}. 
Applying this result to $\bar{\bQ}\succeq 0$, there exists a matrix $K$
such that 
\[
\bSigma
=
\bQ_1^{1/2}K\bQ_2^{1/2},
\qquad
\|K\|_{\mathrm{op}}\le 1.
\]
Hence, we obtain
\begin{align*}
\|\bSigma\|_F^2
=
\Tr(\bSigma\bSigma^\top)=
\Tr\!\left(
\bQ_1^{1/2}
K\bQ_2K^\top
\bQ_1^{1/2}
\right)=
\Tr\!\left(
\bQ_1K\bQ_2K^\top
\right),
\end{align*}
where the last equality follows from the cyclicity of the trace.

Using the Frobenius inner product $\langle A,B\rangle_F
:=
\Tr(A^\top B),$ we write $\|\bSigma\|_F^2
=
\left\langle
\bQ_1,
K\bQ_2K^\top
\right\rangle_F.$
By the Cauchy--Schwarz inequality for the Frobenius inner product, $\|\bSigma\|_F^2
\le
\|\bQ_1\|_F
\,
\|K\bQ_2K^\top\|_F.$ 

We now use the standard mixed-norm inequality
\[
\|AB\|_F
\le
\|A\|_{\mathrm{op}}\|B\|_F.
\]
Applying it twice yields
\begin{align*}
\|K\bQ_2K^\top\|_F
\le
\|K\|_{\mathrm{op}}
\|\bQ_2K^\top\|_F\le
\|K\|_{\mathrm{op}}
\|\bQ_2\|_F
\|K^\top\|_{\mathrm{op}}=
\|K\|_{\mathrm{op}}^2
\|\bQ_2\|_F\le
\|\bQ_2\|_F.
\end{align*}
Consequently, $\|\bSigma\|_F^2
\le
\|\bQ_1\|_F
\|\bQ_2\|_F,$ and therefore $\|\bSigma\|_F
\le
\sqrt{
\|\bQ_1\|_F
\|\bQ_2\|_F
}.$ 
The scalar arithmetic-geometric mean inequality gives
\[
\sqrt{
\|\bQ_1\|_F
\|\bQ_2\|_F
}
\le
\frac{
\|\bQ_1\|_F+\|\bQ_2\|_F
}{2},
\]
which proves~\eqref{eq:Sigma-Fbound}.

Finally, since $\bar{\bQ}
=
\begin{pmatrix}
\bQ_1 & \bSigma\\
\bSigma^\top & \bQ_2
\end{pmatrix},$ the Frobenius norm of a block matrix gives
\begin{align*}
    \|\bar{\bQ}\|_F^2 &=
\|\bQ_1\|_F^2
+
\|\bQ_2\|_F^2
+
\|\bSigma\|_F^2
+
\|\bSigma^\top\|_F^2 =
\|\bQ_1\|_F^2
+
\|\bQ_2\|_F^2
+
2\|\bSigma\|_F^2 \\
&\le
\|\bQ_1\|_F^2
+
\|\bQ_2\|_F^2
+
2\|\bQ_1\|_F\|\bQ_2\|_F=
\bigl(
\|\bQ_1\|_F+\|\bQ_2\|_F
\bigr)^2 \leq 2\bigl(
\|\bQ_1\|_F^2+\|\bQ_2\|_F^2
\bigr).
\end{align*}
\end{proof}

\subsubsection{Proof of Proposition~\ref{prop: finite-GMM-mmnt}.}
Let $f\bigl(\bz\bigr)
:=\|\bmean_1(\bz)\|^2+\|\bmean_2(\bz)\|^2+\|\bar{\bQ}(\bz)\|_F^2.$
Similar to~\eqref{eq: push-integral}, by the pushforward (change-of-variables) identity for nonnegative measurable map $f$,
\begin{equation}\label{eq:pushforward-id}
\int_{\mZ} f(\bz)\,d \lambda(\bz) = \int_{\mZ} f(\bz)\,d(\mT_\#\pi)(\bz)
=\int_{\mS\times \mS} f(\mT(\bs_1,\bs_2))\,d\pi(\bs_1,\bs_2),
\end{equation}
where the first term is the one we have in the claim~\eqref{eq:concl-output}.

Now, for $(\bs_1,\bs_2)=((\bmean_1,\bQ_1),(\bmean_2,\bQ_2))$ we have
$f(\mT(\bs_1,\bs_2))=\|\bmean_1(\bs_1)\|^2+\|\bmean_2(\bs_2)\|^2+\|\bar{\bQ}(\bs_1, \bs_2)\|_F^2$.
By Lemma~\ref{lem:Sigma-bound}, $\|\bar{\bQ}(\bs_1, \bs_2)\|_F^2 \le 2\bigl(\|\bQ_1(\bs_1)\|_F^2+\|\bQ_2(\bs_2)\|_F^2\bigr).$
Therefore,
\[
f(\mT(\bs_1,\bs_2))
\le
\|\bmean_1(\bs_1)\|^2+\|\bmean_2(\bs_2)\|^2
+2\|\bQ_1(\bs_1)\|_F^2+2\|\bQ_2(\bs_2)\|_F^2.
\]
Integrating both sides with respect to $\pi$ yields
\[
\int_{\mS\times\mS} f\bigl(\mT(\bs_1, \bs_2)\bigr) \, d\pi(\bs_1, \bs_2)
\le
\int_{\mS\times\mS}
\Bigl(\|\bmean_1(\bs_1)\|^2+\|\bmean_2(\bs_2)\|^2+2\|\bQ_1(\bs_1)\|_F^2+2\|\bQ_2(\bs_2)\|_F^2\Bigr)\,d\pi(\bs_1, \bs_2).
\]
Combining this with~\eqref{eq:pushforward-id} gives us:
\[
\int_{\mZ} f(\bz)\,d\lambda(\bz)
\le
\int_{\mS\times\mS}
\Bigl(\|\bmean_1(\bs_1)\|^2+\|\bmean_2(\bs_2)\|^2+2\|\bQ_1(\bs_1)\|_F^2+2\|\bQ_2(\bs_2)\|_F^2\Bigr)\,d\pi(\bs_1, \bs_2).
\]
The right-hand side is finite by assumption~\eqref{eq:assump-input}. Hence~\eqref{eq:concl-output} holds. \hfill \qed

\subsubsection{Proof of Proposition~\ref{prop: finite-2nd-moment}.}
Let $\,\, f\bigl(\bz):=\|\bmean_1(\bz)\|^2+\|\bmean_2(\bz)\|^2+\|\bQ(\bz)\|_F^2,
\quad
g\bigl(\bs_1, \bs_2\bigr):=\|\bmean_1(\bs_1)\|^2+\|\bmean_2(\bs_2)\|^2+\|\bQ_{11}(\bs_1)\|_F^2+\|\bQ_{22}(\bs_2)\|_F^2.$
 Since the measurable map $g$ is nonnegative, similar to the pushforward identity~\eqref{eq: push-integral-M},
\begin{equation}\label{eq:pushforward-M-generalQ}
\int_{\mS\times \mS} g(\bs_1, \bs_2) d \pi(\bs_1, \bs_2) = \int_{\mS\times \mS} g(\bs_1, \bs_2)\,d(\mM_\#\lambda)(\bs_1, \bs_2)
=
\int_{\mZ} g(\mM(\bz))\,d\lambda(\bz),
\end{equation}
where the left-most term is exactly the one desired in our claim~\eqref{eq:concl-SxS-generalQ}. Now fix $\bz=((\bmean_1,\bmean_2),\bQ)\in\mZ$ and write $\bQ$ in blocks as defined in~\eqref{def: Q-block}. Then,
\[
g(\mM(\bz))=\|\bmean_1(\bz)\|^2+\|\bmean_2(\bz)\|^2+\|\bQ_{11}(\bz)\|_F^2+\|\bQ_{22}(\bz)\|_F^2.
\]

Additionally, the following inequality holds pointwise since by Frobenius norm definition $\|\bQ\|_F^2=\|\bQ_{11}\|_F^2+\|\bQ_{22}\|_F^2+\|\bQ_{12}\|_F^2+\|\bQ_{21}\|_F^2$\srevision{,}
\begin{align*}
& \|\bQ_{11}(\bz)\|_F^2+\|\bQ_{22}(\bz)\|_F^2 \le \|\bQ(\bz)\|_F^2 \\ 
\implies  \|\bmean_1(\bz)\|^2+\|\bmean_2(\bz)\|^2 + &  \|\bQ_{11}(\bz)\|_F^2+\|\bQ_{22}(\bz)\|_F^2 \le  \|\bQ(\bz)\|_F^2 +\|\bmean_1(\bz)\|^2+\|\bmean_2(\bz)\|^2\srevision{.}
\end{align*}
Hence, for all $z \in \mZ$,  $g(\mM(\bz))\le f(\bz)$. Integrating both sides with respect to $\lambda$,
\[ \quad \int_\mZ g(\mM(\bz)) d \lambda(\bz) \leq \int_\mZ f(\bz) d\lambda(\bz).\]
Then, combining the above inequality with~\eqref{eq:pushforward-M-generalQ} we obtain
\[
\int_{\mS\times \mS} g(\bs_1, \bs_2)\,d\pi(\bs_1, \bs_2)
\le
\int_{\mZ} f(\bz)\,d\lambda(\bz).
\]
The right-hand side is finite by assumption~\eqref{eq:assump-Z-generalQ}, thus, \eqref{eq:concl-SxS-generalQ} holds. \hfill \qed

\subsection{Proof of Wasserstein-Metric Equivalence with Restricted Parameter Support Set}\label{appndx: restricted}

Throughout this subsection, we assume \(\mX:=\mbR^n\times\mbR^n\) and a compact support set $\mathcal S:=\Theta\times\Xi\subset \mbR^n\times\mbS^n_{++}$. For a joint covariance matrix\vspace{-1.9em}
\begin{align}\label{def:Q-def-appndx}
    \bQ=
\begin{pmatrix}
\bQ_{11} & \bQ_{12}\\
\bQ_{21} & \bQ_{22}
\end{pmatrix}
\in\mbS^{2n}_+,
\end{align} define the restricted joint-parameter set
\[
\mathcal Z_{\mathcal S}
:=
\left\{
\bz=((\bmean_1,\bmean_2),\bQ)\in
\mbR^n\times\mbR^n\times\mbS^{2n}_+
:
(\bmean_1,\bQ_{11})\in\mathcal S,\ 
(\bmean_2,\bQ_{22})\in\mathcal S
\right\}.
\]
We define the restricted \(2n\)-variate Gaussian-mixture class by
\[
\mbGM_{2n}^{\mathcal S}
:=
\left\{
\nu\in\mathcal P(\mbR^{2n})
:
\nu(A)=\int_{\mathcal Z_{\mathcal S}}\mN(\bz)(A)\,\lambda(d\bz)
\ \forall A\in\mathcal B(\mbR^{2n})
\text{ for some }\lambda\in\mathcal P(\mathcal Z_{\mathcal S})
\right\}.
\]

\begin{remark}
    We note that \(\mathcal Z_{\mathcal S}\) is not the full set of all joint PSD
covariances; it is the set of valid joint Gaussian parameters whose two marginal
Gaussian parameters lie in \(\mathcal S\). Thus, for
\(\bz=((\bmean_1,\bmean_2),\bQ)\in\mathcal Z_{\mathcal S}\), the full joint
covariance \(\bQ\) is required only to satisfy \(\bQ\succeq0\), while its diagonal blocks satisfy
\[
\bQ_{11},\bQ_{22}\in\Xi\subseteq\mathbb S^n_{\hat\delta}.
\]
The joint covariance may therefore be singular even though the marginal
covariances are uniformly positive definite. For example, if
\((\bmean,\bQ_0)\in\mathcal S\), then
\[
\begin{pmatrix}
\bQ_0 & \bQ_0\\
\bQ_0 & \bQ_0
\end{pmatrix}
\succeq0
\]
is singular, while both diagonal blocks equal \(\bQ_0\in\Xi\). Hence
\[
((\bmean,\bmean),
\begin{pmatrix}
\bQ_0 & \bQ_0\\
\bQ_0 & \bQ_0
\end{pmatrix})
\in\mathcal Z_{\mathcal S}.
\]

On the other hand, \(\mathcal Z_{\mathcal S}\) also contains the independent
Gaussian couplings. Indeed, if \(s_i=(\bmean_i,\bQ_i)\in\mathcal S\), \(i=1,2\),
then
\[
\bQ^{\mathrm{ind}}
:=
\begin{pmatrix}
\bQ_1 & 0\\
0 & \bQ_2
\end{pmatrix}
\]
satisfies \(\bQ^{\mathrm{ind}}\succeq0\), and
\[
((\bmean_1,\bmean_2),\bQ^{\mathrm{ind}})
\in\mathcal Z_{\mathcal S}.
\]
Thus \(\mathcal Z_{\mathcal S}\) contains at least the independent Gaussian
couplings of pairs of Gaussian components in \(\mathcal S\).
\end{remark}

\begin{lemma}[Compactness of the restricted joint-parameter set]
\label{lem:ZS-compact}
Under Assumption~\ref{assmp: compactness}, the set \(\mathcal Z_{\mathcal S}\)
is compact.
\end{lemma}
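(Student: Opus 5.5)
We will prove compactness of \(\mathcal Z_{\mathcal S}\) by Heine--Borel: \(\mathcal Z_{\mathcal S}\) lives in the finite-dimensional Euclidean space \(\mbR^n\times\mbR^n\times\mbS^{2n}\), so it suffices to show that it is closed and bounded there.

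\emph{Closedness.} We write \(\mathcal Z_{\mathcal S}\) as an intersection of two closed sets,
\[
\mathcal Z_{\mathcal S}=\bigl(\mbR^n\times\mbR^n\times\mbS^{2n}_+\bigr)\cap \mM^{-1}(\mathcal S\times\mathcal S),
\]
where \(\mM\) is the marginal-parameter extraction map \eqref{def: OT-map-backward}, extended verbatim to all of \(\mbR^n\times\mbR^n\times\mbS^{2n}\).
\begin{itemize}
\item The first set is closed, since \(\mbS^{2n}_+\) is closed in \(\mbS^{2n}\) (it is an intersection of the closed half-spaces \(\{\bQ: v^\top\bQ v\ge 0\}\)).
\item The extended map \(\mM\) is linear (coordinate and block extraction), hence continuous.
\item Under Assumption~\ref{assmp: compactness}, \(\mathcal S=\Theta\times\Xi\) is compact, hence closed, so \(\mM^{-1}(\mathcal S\times\mathcal S)\) is closed.
\end{itemize}

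\emph{Boundedness.} Take \(\bz=((\bmean_1,\bmean_2),\bQ)\in\mathcal Z_{\mathcal S}\).
\begin{itemize}
\item Compactness of \(\mathcal S\) gives \(\|\bmean_i\|\le \overline m_\Theta\) and \(\|\bQ_{ii}\|_F\le C_\Xi:=\max_{\bQ'\in\Xi}\|\bQ'\|_F<\infty\) for \(i=1,2\).
\item The off-diagonal block is the only part of \(\bz\) not controlled directly by \(\mathcal S\). To control it, we apply the block-PSD characterization already used in Lemma~\ref{lem:Sigma-bound}: \(\bQ\succeq0\) implies \(\bQ_{12}=\bQ_{11}^{1/2}K\bQ_{22}^{1/2}\) with \(\|K\|_{\mathrm{op}}\le1\).
\item The same Cauchy--Schwarz and mixed-norm argument as in Lemma~\ref{lem:Sigma-bound} then gives \(\|\bQ_{12}\|_F^2\le\|\bQ_{11}\|_F\|\bQ_{22}\|_F\le C_\Xi^2\). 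That argument uses only that \(\bar{\bQ}\) is PSD, not the Gelbrich form of the off-diagonal block, so it applies here unchanged.
\item Consequently \(\|\bQ\|_F^2\le 4C_\Xi^2\), and \(\|\bz\|^2\le 2\overline m_\Theta^2+4C_\Xi^2\) uniformly on \(\mathcal Z_{\mathcal S}\).
\end{itemize}

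\emph{Main obstacle.} The only delicate step is this uniform bound on the unconstrained cross-covariance \(\bQ_{12}\). If one prefers to avoid the contraction representation, an elementary alternative is to use that every \(2\times2\) principal minor of a PSD matrix is nonnegative, which gives \(|Q_{ij}|^2\le Q_{ii}Q_{jj}\) entrywise. This bounds the entries of \(\bQ_{12}\) by \(\max_{\bQ'\in\Xi}\lambda_{\max}(\bQ')\) and equally suffices. Closedness combined with boundedness then yields compactness by Heine--Borel.
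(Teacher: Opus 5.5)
Your proof is correct and takes the same approach as the paper: Heine--Borel, with closedness coming from the closedness of $\mbS^{2n}_+$ and of $\mathcal S$, and with the unconstrained off-diagonal block bounded using positive semidefiniteness of $\bQ$. The paper uses exactly the entrywise principal-minor bound $|(\bQ_{12})_{ij}|^2\le(\bQ_{11})_{ii}(\bQ_{22})_{jj}$ that you give as your alternative, your contraction-based Frobenius bound is an equally valid variant, and your preimage-under-$\mM$ formulation of closedness just makes explicit what the paper states tersely.
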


\begin{proof}
Since \(\mathcal S\) is compact, the sets of admissible marginal means and
diagonal covariance blocks are bounded. Thus \(\bmean_1,\bmean_2\),
\(\bQ_{11}\), and \(\bQ_{22}\) are bounded over
\(\mathcal Z_{\mathcal S}\). Moreover, if $\bQ\succeq0$,
then every entry of the off-diagonal block satisfies
\[
|(\bQ_{12})_{ij}|^2
\le
(\bQ_{11})_{ii}(\bQ_{22})_{jj}.
\]
Hence, \(\bQ_{12}\) and \(\bQ_{21}\) are also bounded. Therefore,
\(\mathcal Z_{\mathcal S}\) is bounded.

It remains to show that \(\mathcal Z_{\mathcal S}\) is closed. Indeed,
\(\mbS_+^{2n}\) is closed, and the conditions
\((\bmean_1,\bQ_{11})\in\mathcal S\) and
\((\bmean_2,\bQ_{22})\in\mathcal S\) are closed because
\(\mathcal S\) is compact, hence closed. Thus, \(\mathcal Z_{\mathcal S}\) is closed and bounded in a finite-dimensional Euclidean space. By \srevision{Heine--Borel~\cite{rudin2021principles},}
\(\mathcal Z_{\mathcal S}\) is compact.
\end{proof}

\begin{lemma}[Nonemptiness under restricted Gaussian-mixture support]
\label{lem:nonempty-restricted-GM-coupling}
Suppose that \(\mu_1,\mu_2\in\mathcal P(\mbR^n)\) admit mixing laws over
\(\mathcal S\); that is, there exist \(\eta_1,\eta_2\in\mathcal P(\mathcal S)\)
such that, for every \(C\in\mathcal B(\mbR^n)\),
\[
\mu_1(C)=\int_{\mathcal S}\mN(\bs)(C)\,\eta_1(d\bs),
\qquad
\mu_2(C)=\int_{\mathcal S}\mN(\bs)(C)\,\eta_2(d\bs).
\]
Then $\quad \Pi(\mu_1,\mu_2)\cap \mbGM_{2n}^{\mathcal S}\neq\varnothing.$
\end{lemma}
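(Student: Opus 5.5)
The plan is to build an explicit coupling from the two given mixing laws. Use the product mixing law $\eta_1\otimes\eta_2\in\mathcal P(\mathcal S\times\mathcal S)$, and send each pair of components to an independent Gaussian coupling. Concretely, define the measurable map $\mathcal I:\mathcal S\times\mathcal S\to\mathcal Z_{\mathcal S}$ by
\[
\mathcal I(\bs_1,\bs_2):=\Bigl((\bmean_1,\bmean_2),\begin{pmatrix}\bQ_1&0\\0&\bQ_2\end{pmatrix}\Bigr).
\]
By the preceding Remark, $\mathcal I$ indeed takes values in $\mathcal Z_{\mathcal S}$. It is continuous, hence Borel. Set $\lambda:=\mathcal I_\#(\eta_1\otimes\eta_2)\in\mathcal P(\mathcal Z_{\mathcal S})$ and $\nu(A):=\int_{\mathcal Z_{\mathcal S}}\mN(\bz)(A)\,\lambda(d\bz)$. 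By construction, $\nu\in\mbGM^{\mathcal S}_{2n}$. Section A.2 shows that $\bz\mapsto\mN(\bz)$ is a Markov kernel, so $\nu$ is a well-defined probability measure.

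The remaining step is to check the marginals, and I would follow the proof of Lemma~\ref{lem: forward}. Apply the kernel Tonelli--Fubini Theorem~\ref{thm: kernel-fubini-tonelli} with $\Psi(\by_1,\by_2)=\ind_A(\by_1)$, and then the pushforward identity, in the same way as \eqref{eq: push-integral}. This gives
\[
\nu^{(1)}(A)=\int_{\mathcal S\times\mathcal S}\mN\bigl(\mathcal I(\bs_1,\bs_2)\bigr)\bigl(\mathrm{pr}_1^{-1}(A)\bigr)\,d(\eta_1\otimes\eta_2)=\int_{\mathcal S\times\mathcal S}\mN(\bs_1)(A)\,d(\eta_1\otimes\eta_2).
\]
Here the second equality is the marginalization property \eqref{def: proj-param}: the first block of an independent Gaussian coupling is $\mN(\bmean_1,\bQ_1)$. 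Integrating out $\bs_2$, which is allowed because $\eta_2$ is a probability measure, gives $\int_{\mathcal S}\mN(\bs_1)(A)\,\eta_1(d\bs_1)=\mu_1(A)$. The same argument gives $\nu^{(2)}=\mu_2$. Hence $\nu\in\Pi(\mu_1,\mu_2)\cap\mbGM^{\mathcal S}_{2n}$.

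Alternatively, one can use the Gelbrich map $\mT$ from \eqref{def: OT-map-forward} instead of $\mathcal I$, since $\mT$ also preserves the diagonal blocks. The independent coupling is simpler, though, because it avoids pseudo-inverses.

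The only delicate point is measurability, both of $\mathcal I$ and of the kernel in the joint parameter. These follow from continuity of $\mathcal I$ and from the Gaussian kernel verification in A.2 applied in dimension $2n$. No moment condition is needed in the restricted class. Even so, finiteness is automatic here, because $\mathcal Z_{\mathcal S}$ is compact by Lemma~\ref{lem:ZS-compact}.
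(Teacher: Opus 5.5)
Your proof is correct and follows the paper's argument. Both take the product mixing law $\eta_1\otimes\eta_2$, push it forward to a law on $\mathcal Z_{\mathcal S}$ through a map that keeps the diagonal covariance blocks $\bQ_1,\bQ_2$, and check the marginals by integrating out the other component; the only difference is that the paper uses the Gelbrich map $\mathcal T$ where you use the block-diagonal (independent) map, which makes no difference here because $\mathcal Z_{\mathcal S}$ constrains only the means and the diagonal blocks.
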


\begin{proof}
Let\vspace{-2em}
\[
\pi:=\eta_1\otimes\eta_2\in\mathcal P(\mathcal S\times\mathcal S).
\]
For \(\bs_i=(\bmean_i,\bQ_i)\in\mathcal S\), \(i=1,2\), let
\(\mathcal G(\bs_1,\bs_2)\) denote the Gelbrich coupling between
\(\mN(\bs_1)\) and \(\mN(\bs_2)\). Thus
\[
\mathcal G(\bs_1,\bs_2)
=
\mN\!\left(
(\bmean_1,\bmean_2),
\overline{\bQ}(\bs_1,\bs_2)
\right),
\]
where\vspace{-1.75em}
\[
\overline{\bQ}(\bs_1,\bs_2)
=
\begin{pmatrix}
\bQ_1 & \bSigma(\bQ_1,\bQ_2)\\
\boldsymbol\Sigma(\bQ_1,\bQ_2)^\top & \bQ_2
\end{pmatrix}
\in\mbS^{2n}_+.
\]
Since the diagonal covariance blocks of \(\overline{\bQ}(\bs_1,\bs_2)\) are
\(\bQ_1\) and \(\bQ_2\), the map $\mathcal T(\bs_1,\bs_2)
:=
\left(
(\bmean_1,\bmean_2),
\overline{\bQ}(\bs_1,\bs_2)
\right)$ satisfies $\mathcal T(\bs_1,\bs_2)\in\mathcal Z_{\mathcal S}
\quad
\forall (\bs_1,\bs_2)\in\mathcal S\times\mathcal S$. Hence, $\lambda:=\mathcal T_{\#}\pi\in\mathcal P(\mathcal Z_{\mathcal S})$. 

Define \(\nu\in\mathcal P(\mbR^{2n})\) by
\[
\nu(A):=\int_{\mathcal Z_{\mathcal S}}\mN(\bz)(A)\,\lambda(d\bz),
\qquad
A\in\mathcal B(\mbR^{2n}).
\]
Equivalently,
\[
\nu(A)
=
\int_{\mathcal S\times\mathcal S}
\mathcal G(\bs_1,\bs_2)(A)\,\pi(ds_1,ds_2).
\]
Therefore, \(\nu\in\mbGM_{2n}^{\mathcal S}\).

It remains to show that \(\nu\in\Pi(\mu_1,\mu_2)\). Let
\(B\in\mathcal B(\mbR^n)\). Since \(\mathcal G(\bs_1,\bs_2)\) is a coupling of
\(\mN(\bs_1)\) and \(\mN(\bs_2)\),
\[
\mathcal G(\bs_1,\bs_2)(B\times\mbR^n)
=
\mN(\bs_1)(B).
\]
Thus\vspace{-1.85em}
\[
\nu(B\times\mbR^n)
=
\int_{\mathcal S\times\mathcal S}
\mN(\bs_1)(B)\,(\eta_1\otimes\eta_2)(ds_1,ds_2)
=
\int_{\mathcal S}\mN(\bs_1)(B)\,\eta_1(ds_1)
=
\mu_1(B).
\]
Hence, \(\nu^{(1)}=\mu_1\). The same argument gives
\[
\nu(\mbR^n\times B)
=
\int_{\mathcal S}\mN(\bs_2)(B)\,\eta_2(ds_2)
=
\mu_2(B),
\]
so \(\nu^{(2)}=\mu_2\). Therefore, \(\nu\in\Pi(\mu_1,\mu_2)\).

Combining \(\nu\in\Pi(\mu_1,\mu_2)\) and
\(\nu\in\mbGM_{2n}^{\mathcal S}\), we obtain
\[
\nu\in\Pi(\mu_1,\mu_2)\cap\mbGM_{2n}^{\mathcal S}.
\]
Hence, the intersection is nonempty.
\end{proof}

\begin{corollary}[Weak compactness under restricted support]
\label{cor:restricted-intersection-weak-compact}
Under Assumption~\ref{assmp: compactness}, $\Pi(\mu_1,\mu_2)\cap\mbGM_{2n}^{\mathcal S}$ is weakly compact.
\end{corollary}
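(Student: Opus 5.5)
The plan is to follow the proof of Proposition~\ref{prop:intersection-weak-compact} step by step. The compactness of $\mathcal Z_{\mathcal S}$ from Lemma~\ref{lem:ZS-compact} should let us skip the moment-based tightness argument for the mixing laws. Write $\mathcal F_{\mathcal S}:=\Pi(\mu_1,\mu_2)\cap\mbGM_{2n}^{\mathcal S}$. If $\mathcal F_{\mathcal S}=\varnothing$ the claim is trivial. In the setting of interest, where $\mu_1,\mu_2$ admit mixing laws over $\mathcal S$, Lemma~\ref{lem:nonempty-restricted-GM-coupling} already gives $\mathcal F_{\mathcal S}\neq\varnothing$. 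By Lemma~\ref{lem:Pi-weak-compact}, $\Pi(\mu_1,\mu_2)$ is tight and weakly closed, so $\mathcal F_{\mathcal S}$ is tight. Since $\mX=\mbR^n\times\mbR^n$ is Polish, weak compactness therefore reduces to sequential weak closedness of $\mathcal F_{\mathcal S}$.

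For closedness, I would take $\nu_k\in\mathcal F_{\mathcal S}$ with $\nu_k\xrightarrow{\mathcal D}\nu$. Weak closedness of $\Pi(\mu_1,\mu_2)$ immediately gives $\nu\in\Pi(\mu_1,\mu_2)$. Next, choose mixing laws $\lambda_k\in\mathcal P(\mathcal Z_{\mathcal S})$ with $\nu_k=\int_{\mathcal Z_{\mathcal S}}\mN(\bz)\,\lambda_k(d\bz)$. Because $\mathcal Z_{\mathcal S}$ is compact by Lemma~\ref{lem:ZS-compact}, the family $\{\lambda_k\}$ is automatically tight, and $\mathcal P(\mathcal Z_{\mathcal S})$ is weakly compact by Prokhorov's theorem. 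We may therefore pass to a subsequence with $\lambda_k\xrightarrow{\mathcal D}\lambda\in\mathcal P(\mathcal Z_{\mathcal S})$. The limit is still carried by $\mathcal Z_{\mathcal S}$: either view everything on the compact space $\mathcal Z_{\mathcal S}$ itself, or note that $\mathcal Z_{\mathcal S}$ is closed in $\mZ$ and apply the Portmanteau theorem. No analogue of the moment bound in Step~1 and Step~3 of Proposition~\ref{prop:intersection-weak-compact} is needed. Indeed, $\|\bmean(\bz)\|^2+\Tr(\bQ(\bz))$ is continuous on the compact set $\mathcal Z_{\mathcal S}$, hence bounded, so every $\lambda\in\mathcal P(\mathcal Z_{\mathcal S})$ has finite second moment. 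This also gives the "every $\pi\in\Gamma_{\mS}$ has finite second moment" part of Theorem~\ref{thm:Wass2-Gaussian-EquivalenceTheorem-restricted}.

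The remaining step is to identify $\nu$ as the mixture $\tilde\nu:=\int_{\mathcal Z_{\mathcal S}}\mN(\bz)\,\lambda(d\bz)$. This is Step~2 of Proposition~\ref{prop:intersection-weak-compact} restricted to $\mathcal Z_{\mathcal S}$. For $f\in C_b(\mX)$, Lemma~\ref{lem:gaussian-weak-continuity} with $d=2n$ shows that
\[
\Psi_f(\bz):=\int_{\mX} f\,d\mN(\bz)
\]
is bounded and continuous on $\mathcal Z_{\mathcal S}$. This continuity holds even though joint covariances in $\mathcal Z_{\mathcal S}$ may be singular, since that lemma allows $\bQ\succeq 0$. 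Applying the kernel Fubini theorem (Theorem~\ref{thm: kernel-fubini-tonelli}) together with $\lambda_k\to\lambda$ and $\nu_k\to\nu$ then gives
\[
\int_{\mX} f\,d\nu=\int_{\mathcal Z_{\mathcal S}}\Psi_f\,d\lambda=\int_{\mX} f\,d\tilde\nu .
\]
Since bounded uniformly continuous functions are measure-determining on $\mX$, this yields $\nu=\tilde\nu\in\mbGM_{2n}^{\mathcal S}$. Hence $\nu\in\mathcal F_{\mathcal S}$, which proves closedness.

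I expect the only delicate point to be this identification step, specifically the continuity of $\Psi_f$ at singular joint covariances and the interchange of integrals. Both are already handled by Lemma~\ref{lem:gaussian-weak-continuity} and the kernel Fubini theorem, so the argument should go through essentially verbatim.
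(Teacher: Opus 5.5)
Your proposal is correct and follows essentially the same route as the paper. The compactness of $\mathcal Z_{\mathcal S}$ (Lemma~\ref{lem:ZS-compact}) makes the mixing laws automatically tight, with subsequential limits in $\mathcal P(\mathcal Z_{\mathcal S})$. The weak continuity of Gaussian laws (Lemma~\ref{lem:gaussian-weak-continuity}) together with kernel Fubini then identifies the limit as a restricted Gaussian mixture, so the set is weakly closed inside the weakly compact $\Pi(\mu_1,\mu_2)$. The paper only sketches this adaptation of Proposition~\ref{prop:intersection-weak-compact}, and your write-up fills in the details it leaves implicit.
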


\begin{proof}
The set \(\mathcal Z_{\mathcal S}\) is compact. Indeed, \(\mathcal S\) is compact,
so the diagonal blocks \(\bQ_{11}\) and \(\bQ_{22}\), as well as the
means \(\bmean_1,\bmean_2\), are bounded. Moreover, for any
\(\bQ\succeq0\),
\[
|(\bQ_{12})_{ij}|^2
\le
(\bQ_{11})_{ii}(\bQ_{22})_{jj},
\]
so the off-diagonal block \(\bQ_{12}\) is also bounded. The defining
conditions of \(\mathcal Z_{\mathcal S}\) are closed, because
\(\mathcal S\) and \(\mbS^{2n}_+\) are closed. Hence,
\(\mathcal Z_{\mathcal S}\) is closed and bounded in a finite-dimensional Euclidean space and therefore compact.

Now the proof of Proposition~\ref{prop:intersection-weak-compact} applies with
\(\mathcal Z\) replaced by \(\mathcal Z_{\mathcal S}\). In fact, the tightness
of the mixing laws is immediate because every \(\lambda_k\) is supported on the
compact set \(\mathcal Z_{\mathcal S}\). The weak limit of a subsequence of
\(\lambda_k\)'s remains in \(\mathcal P(\mathcal Z_{\mathcal S})\), and the
weak continuity of Gaussian laws again identifies the weak limit as a Gaussian
mixture supported on \(\mathcal Z_{\mathcal S}\). Hence,
\(\Pi(\mu_1,\mu_2)\cap\mbGM_{2n}^{\mathcal S}\) is weakly closed inside the
weakly compact set \(\Pi(\mu_1,\mu_2)\), and is therefore weakly compact.
\end{proof}

\begin{corollary}[Attainment of solution in Theorem~\ref{thm:Wass2-Gaussian-EquivalenceTheorem-restricted}]
\label{cor:attainment-restricted-intersection}
Assume the setting of Lemma~\ref{lem:nonempty-restricted-GM-coupling}. Let
\(c:\mX\to(-\infty,\infty]\) be lower semicontinuous and bounded below. Then
\[
\inf_{\nu\in \Pi(\mu_1,\mu_2)\cap \mbGM_{2n}^{\mathcal S}}
\int_{\mX} c(\bu)\,\nu(d\bu)
\]
admits a minimizer in
\(\Pi(\mu_1,\mu_2)\cap \mbGM_{2n}^{\mathcal S}\). In particular, this holds for
\(c(\bu_1,\bu_2)=\|\bu_1-\bu_2\|^2\).
\end{corollary}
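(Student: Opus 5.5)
Corollary~\ref{cor:attainment-restricted-intersection} follows from the direct method of the calculus of variations. I will combine nonemptiness, weak compactness, and weak lower semicontinuity of the objective. Let $\mathcal F_{\mathcal S}:=\Pi(\mu_1,\mu_2)\cap\mbGM_{2n}^{\mathcal S}$.

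First, Lemma~\ref{lem:nonempty-restricted-GM-coupling} applies directly in the assumed setting, where $\mu_1,\mu_2$ admit mixing laws over $\mathcal S$. It gives $\mathcal F_{\mathcal S}\neq\varnothing$. Second, Corollary~\ref{cor:restricted-intersection-weak-compact} gives weak compactness of $\mathcal F_{\mathcal S}$. Since $\mX$ is Polish, the weak topology on $\mP(\mX)$ is metrizable, so $\mathcal F_{\mathcal S}$ is also sequentially weakly compact.

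Third, I show that $J(\nu):=\int_{\mX}c\,d\nu$ is weakly lower semicontinuous on $\mP(\mX)$. The function $c$ is bounded below, say $c\ge -C$, so $c+C\ge0$ is lower semicontinuous. I write $c+C=\sup_m c_m$ for an increasing sequence of bounded Lipschitz functions $c_m$; the inf-convolution $c_m(\bu)=\min\{m,\inf_{\bv}(c(\bv)+C+m\|\bu-\bv\|)\}$ works. Then $\nu\mapsto\int c_m\,d\nu$ is weakly continuous for each $m$. Monotone convergence gives $J(\nu)+C=\sup_m\int c_m\,d\nu$, a supremum of continuous functionals, hence lower semicontinuous. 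Equivalently, I can cite the Portmanteau theorem for lower semicontinuous functions bounded below, as in \cite[Theorem~4.1]{villani2008optimal}.

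Finally, I take a minimizing sequence $\nu_k\in\mathcal F_{\mathcal S}$ with $J(\nu_k)\to\inf_{\mathcal F_{\mathcal S}}J$. The infimum may be $+\infty$, in which case any feasible point is a minimizer. Sequential compactness gives a subsequence $\nu_{k_j}\to\nu^\star\in\mathcal F_{\mathcal S}$. Lower semicontinuity then yields $J(\nu^\star)\le\liminf_j J(\nu_{k_j})=\inf J$, so $\nu^\star$ is a minimizer.

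For the special case, $c(\bu_1,\bu_2)=\|\bu_1-\bu_2\|^2$ is continuous and nonnegative, so the hypotheses hold. Its value at the minimizer is finite, because every element of $\mathcal F_{\mathcal S}$ has finite second moment: the mixing laws are supported on the compact set $\mathcal Z_{\mathcal S}$ of Lemma~\ref{lem:ZS-compact}, and Lemma~\ref{lem:mixture-moment-identity} then applies.

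The main obstacle is the lower semicontinuity step, since $c$ is unbounded and only lower semicontinuous. The bounded-below truncation/approximation argument above handles this, and no uniform integrability is needed.
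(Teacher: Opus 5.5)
Your proposal is correct and follows the same route as the paper's proof: nonemptiness from Lemma~\ref{lem:nonempty-restricted-GM-coupling}, weak compactness from Corollary~\ref{cor:restricted-intersection-weak-compact}, weak lower semicontinuity of $\nu\mapsto\int_{\mX}c\,d\nu$, and then the direct method. The only differences are that you prove the lower semicontinuity explicitly, via increasing bounded Lipschitz (inf-convolution) approximations of $c+C$, and you add a finiteness check for the quadratic cost; the paper instead simply cites the direct-method argument in \cite{villani2008optimal}.
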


\begin{proof}
By Corollary~\ref{cor:restricted-intersection-weak-compact}, the feasible set $\Pi(\mu_1,\mu_2)\cap \mbGM_{2n}^{\mathcal S}$ is weakly compact. By Lemma~\ref{lem:nonempty-restricted-GM-coupling}, it is nonempty. Since \(c\) is lower semicontinuous and bounded below, the map $\nu\mapsto \int_{\mX}c(\bu)\,\nu(d\bu)$ is weakly lower semicontinuous. Therefore, the infimum is attained over the nonempty weakly compact feasible set.
\end{proof}

\subsubsection{Second-moment finiteness preservation under restricted parameter support. }\, 
In the restricted-support setting, the two parameter maps $\mT$ and $\mM$ from the full-support case will be used with restricted domains. First, for
\(\bs_i=(\bmean_i,\bQ_i)\in\mathcal S\), the Gelbrich joint covariance
has diagonal blocks \(\bQ_1\) and \(\bQ_2\). Therefore,  $\mathcal T(\mathcal S\times\mathcal S)\subseteq\mathcal Z_{\mathcal S},$
and the forward map is $\mathcal T:\mathcal S\times\mathcal S\to\mathcal Z_{\mathcal S}$. Second, for
\(\bz=((\bmean_1,\bmean_2),\bQ)\in\mathcal Z_{\mathcal S}\)
with $\bQ$ as denoted in~\eqref{def:Q-def-appndx}, 
define $\mathcal M(\bz)
:=
\bigl((\bmean_1,\bQ_{11}),
      (\bmean_2,\bQ_{22})\bigr)$. By the definition of \(\mathcal Z_{\mathcal S}\), this is well-defined as $\mathcal M:\mathcal Z_{\mathcal S}\to\mathcal S\times\mathcal S$ and  $\mathcal M(\mathcal Z_{\mathcal S})\subseteq\mathcal S\times\mathcal S.$

Because \(\mathcal S\) is compact in the finite-dimensional space
\(\mathbb R^n\times\mathbb S^n\), the product set
\(\mathcal S\times\mathcal S\) is compact and hence bounded. Therefore, under the product gauge $\|((\bmean_1,\bQ_{11}),(\bmean_2,\bQ_{22}))\|_{\mS\times \mS}^2$, 
every
\(\pi\in\mathcal P(\mathcal S\times\mathcal S)\) satisfies
\[
\int_{\mathcal S\times\mathcal S}
\left(
\|\bmean_1\|^2+\|\bQ_1\|_F^2
+\|\bmean_2\|^2+\|\bQ_2\|_F^2
\right)
\,d\pi(s_1,s_2)<\infty.
\]
Similarly, by Lemma~\ref{lem:ZS-compact}, \(\mathcal Z_{\mathcal S}\) is
compact. Hence, every \(\lambda\in\mathcal P(\mathcal Z_{\mathcal S})\) has a finite second moment with respect to a gauge on
\(\mathcal Z_{\mathcal S}\). We formally state the claim below.

\begin{corollary}[Restricted moment preservation]
\label{cor:restricted-moment-preservation}
Under Assumption~\ref{assmp: compactness}, the moment-preservation statements
of Propositions~\ref{prop: finite-GMM-mmnt} and
\ref{prop: finite-2nd-moment} remain valid after replacing
\(\mathcal Z\) by \(\mathcal Z_{\mathcal S}\). In particular,
\[
\pi\in\mathcal P(\mathcal S\times\mathcal S)
\quad\Longrightarrow\quad
\mathcal T_{\#}\pi\in\mathcal P(\mathcal Z_{\mathcal S}),
\]
and
\[
\lambda\in\mathcal P(\mathcal Z_{\mathcal S})
\quad\Longrightarrow\quad
\mathcal M_{\#}\lambda\in\mathcal P(\mathcal S\times\mathcal S).
\]
Moreover, all parameter second moments appearing in
Propositions~\ref{prop: finite-GMM-mmnt} and
\ref{prop: finite-2nd-moment} are finite.
\end{corollary}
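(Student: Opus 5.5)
The two implications follow from where the maps $\mT$ and $\mM$ land, and the moment claims follow from compactness. No integrability hypothesis of the form \eqref{eq:assump-input} or \eqref{eq:assump-Z-generalQ} has to be assumed; each one will be derived from boundedness of the relevant support.

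\emph{Pushforward under $\mT$.} For $\bs_i=(\bmean_i,\bQ_i)\in\mS$, the Gelbrich matrix $\bar{\bQ}$ in \eqref{def: gelbrich-off-diagonal} is positive semidefinite and has diagonal blocks exactly $\bQ_1$ and $\bQ_2$. Hence $\mT(\bs_1,\bs_2)\in\mathcal Z_{\mathcal S}$, as already noted before the statement. Since $\mT$ is Borel measurable (Section~\ref{sec: map-construct}), for any $\pi\in\mP(\mS\times\mS)$ we get
\[
(\mT_\#\pi)(\mathcal Z_{\mathcal S})=\pi\bigl(\mT^{-1}(\mathcal Z_{\mathcal S})\bigr)=\pi(\mS\times\mS)=1 .
\]
So $\mT_\#\pi$ is a probability measure concentrated on $\mathcal Z_{\mathcal S}$, i.e.\ an element of $\mP(\mathcal Z_{\mathcal S})$.

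\emph{Pushforward under $\mM$.} For $\bz\in\mathcal Z_{\mathcal S}$, the definition of $\mathcal Z_{\mathcal S}$ gives $\mM(\bz)\in\mS\times\mS$. The map $\mM$ is continuous, so the same preimage argument shows $\mM_\#\lambda\in\mP(\mS\times\mS)$ for every $\lambda\in\mP(\mathcal Z_{\mathcal S})$.

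\emph{Finite moments.} First, $\mS\times\mS$ is compact, so the gauge $g(\bs_1,\bs_2)=\|\bmean_1\|^2+\|\bmean_2\|^2+\|\bQ_1\|_F^2+\|\bQ_2\|_F^2$ is continuous and bounded there by some $C_{\mS}$. Thus $\int g\,d\pi\le C_{\mS}$ for every $\pi\in\mP(\mS\times\mS)$, which is exactly \eqref{eq:assump-input}. Proposition~\ref{prop: finite-GMM-mmnt} then gives \eqref{eq:concl-output} for $\mT_\#\pi$. Alternatively, Lemma~\ref{lem:Sigma-bound} bounds $f\circ\mT$ directly by $2g$.

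Second, Lemma~\ref{lem:ZS-compact} shows $\mathcal Z_{\mathcal S}$ is compact. So $f(\bz)=\|\bmean_1\|^2+\|\bmean_2\|^2+\|\bQ\|_F^2$ is bounded on it by some $C_{\mathcal Z}$, which yields \eqref{eq:assump-Z-generalQ} for every $\lambda\in\mP(\mathcal Z_{\mathcal S})$. Proposition~\ref{prop: finite-2nd-moment}, whose proof uses only the pointwise inequality $g\circ\mM\le f$, then gives \eqref{eq:concl-SxS-generalQ}.

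\emph{Checks.} Both propositions were proved on the full spaces $\mZ$ and $\mS\times\mS$. Restricting them is harmless, because a measure concentrated on a Borel subset can be integrated on the ambient space with identical values. Borel measurability of $\mathcal Z_{\mathcal S}$ holds because it is closed by Lemma~\ref{lem:ZS-compact}.

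The one point needing a little care is that $\mathcal Z_{\mathcal S}$ allows singular joint covariances. The bound on $\|\bQ\|_F$ therefore cannot come from the block structure alone; it rests on the entrywise inequality $|(\bQ_{12})_{ij}|^2\le(\bQ_{11})_{ii}(\bQ_{22})_{jj}$ used in Lemma~\ref{lem:ZS-compact}. Beyond that, the statement is bookkeeping, with no serious obstacle.
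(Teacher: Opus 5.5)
Your proposal is correct and follows the paper's own argument: the two pushforward inclusions come from $\mathcal T(\mathcal S\times\mathcal S)\subseteq\mathcal Z_{\mathcal S}$ and $\mathcal M(\mathcal Z_{\mathcal S})\subseteq\mathcal S\times\mathcal S$, and all the moment integrals are finite because the gauges are bounded on the compact sets $\mathcal S\times\mathcal S$ and $\mathcal Z_{\mathcal S}$ (Lemma~\ref{lem:ZS-compact}). The explicit preimage computation and the check that the propositions' hypotheses hold automatically only make the same reasoning more careful. Your closing remark links singularity to the off-diagonal bound, but singularity plays no role there: for $\bQ\succeq 0$ one has $\|\bQ\|_F\le\Tr(\bQ)=\Tr(\bQ_{11})+\Tr(\bQ_{22})$ regardless.
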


\begin{proof}
The two push-forward inclusions follow from the restricted domains of
\(\mathcal T\) and \(\mathcal M\):
\[
\mathcal T(\mathcal S\times\mathcal S)\subseteq\mathcal Z_{\mathcal S},
\qquad
\mathcal M(\mathcal Z_{\mathcal S})\subseteq\mathcal S\times\mathcal S.
\]
By Assumption~\ref{assmp: compactness}, \(\mathcal S\) is compact. By
Lemma~\ref{lem:ZS-compact}, \(\mathcal Z_{\mathcal S}\) is compact. Therefore, the gauges used in Propositions~\ref{prop: finite-GMM-mmnt} and
\ref{prop: finite-2nd-moment} are bounded on the relevant supports. Hence, the
corresponding moment integrals are finite for every probability measure
supported on these compact sets.
\end{proof}

\subsubsection{Containment under restricted support}

For clarity, let
\[
\mathcal S_{\rm full}:=\mathbb R^n\times\mathbb S^n_+,
\qquad
\mathcal Z_{\rm full}:=\mathbb R^n\times\mathbb R^n\times\mathbb S^{2n}_+.
\]
Then \(\mathcal S\subseteq\mathcal S_{\rm full}\) and
\(\mathcal Z_{\mathcal S}\subseteq\mathcal Z_{\rm full}\).

Define
\[
\Gamma_{\mathcal S}
:=
  \Bigl\{
    \pi \in \mP_2(\mathcal S \times \mathcal S)
    \ \Big|\ 
   \int_{\mathcal S} \mN(\bs)(A)\,d\pi_1(\bs) = \mu_1(A),\quad
   \int_{\mathcal S} \mN(\bs)(B)\,d\pi_2(\bs) = \mu_2(B),
   \ \forall A,B\in\mB(\mbR^n)
  \Bigr\}.
\]
We keep the notation \(\pi\in\mathcal P_2(\mathcal S\times\mathcal S)\) only to remain consistent with \eqref{def: coupling-param}.

\begin{corollary}
\label{cor:restricted-forward-containment}
For every \(\pi\in\Gamma_{\mathcal S}\), define
\[
\nu_\pi(A)
:=
\int_{\mathcal S\times\mathcal S}
\mathcal G(\bs_1,\bs_2)(A)\,d\pi(\bs_1,\bs_2),
\qquad A\in\mathcal B(\mbR^{2n}).
\]
Then $\nu_\pi\in\Pi(\mu_1,\mu_2)\cap\mbGM_{2n}^{\mathcal S}.$
\end{corollary}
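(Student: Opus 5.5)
The plan is to mirror the proof of Lemma~\ref{lem: forward}, with two restricted-support adjustments: the marginal identities must use $\Gamma_{\mathcal S}$, and the mixing law must live on $\mathcal Z_{\mathcal S}$ rather than on $\mZ$.

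First, fix $\pi\in\Gamma_{\mathcal S}$ and verify that $\nu_\pi$ is a well-defined probability measure on $\mbR^{2n}$. The map $(\bs_1,\bs_2)\mapsto \mathcal G(\bs_1,\bs_2)(A)=\mN(\mathcal T(\bs_1,\bs_2))(A)$ is a Markov kernel from $\mathcal S\times\mathcal S$ to $\mbR^{2n}$. To see this, compose the Borel map $\mathcal T$ (restricted to $\mathcal S\times\mathcal S$) with the Gaussian kernel $\bz\mapsto\mN(\bz)(A)$ of Appendix~A.2, instantiated in dimension $2n$. This lets us invoke the kernel Tonelli--Fubini Theorem~\ref{thm: kernel-fubini-tonelli}.

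Second, establish the marginals. Take $\Psi(\by_1,\by_2)=\ind_A(\by_1)$, exactly as in the computation for \eqref{eq:nu1-equality}. Interchanging integrals and using $\mathcal G(\bs_1,\bs_2)(A\times\mbR^n)=\mN(\bs_1)(A)$ gives
\[
\nu_\pi^{(1)}(A)=\int_{\mathcal S}\mN(\bs_1)(A)\,d\pi_1(\bs_1)=\mu_1(A).
\]
The last equality holds by the defining constraint of $\Gamma_{\mathcal S}$. The symmetric argument gives $\nu_\pi^{(2)}=\mu_2$, so $\nu_\pi\in\Pi(\mu_1,\mu_2)$.

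Third, show $\nu_\pi\in\mbGM_{2n}^{\mathcal S}$. Set $\lambda:=\mathcal T_\#\pi$. By the restricted-domain observation preceding Corollary~\ref{cor:restricted-moment-preservation}, the Gelbrich joint covariance $\bar\bQ(\bs_1,\bs_2)$ has diagonal blocks $\bQ_1,\bQ_2\in\Xi$ and is PSD by Theorem~\ref{thm: Gelbrich}. Hence $\mathcal T(\mathcal S\times\mathcal S)\subseteq\mathcal Z_{\mathcal S}$, and so $\lambda\in\mathcal P(\mathcal Z_{\mathcal S})$. The pushforward identity \eqref{eq: push-integral}, applied to the bounded measurable integrand $\bz\mapsto\mN(\bz)(A)$, then yields
\[
\nu_\pi(A)=\int_{\mathcal Z_{\mathcal S}}\mN(\bz)(A)\,d\lambda(\bz)\quad\text{for all } A.
\]
This is precisely the defining representation of $\mbGM_{2n}^{\mathcal S}$.

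Finiteness of second moments is not part of the restricted class definition, but it follows anyway. By Corollary~\ref{cor:restricted-moment-preservation}, $\lambda$ is supported on the compact set $\mathcal Z_{\mathcal S}$ (Lemma~\ref{lem:ZS-compact}), and Lemma~\ref{lem:mixture-moment-identity} then bounds $\int\|\by\|^2d\nu_\pi$.

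The only delicate point is the inclusion $\mathcal T(\mathcal S\times\mathcal S)\subseteq\mathcal Z_{\mathcal S}$ together with the measurability of $\mathcal T$ on the restricted domain. Since $\Xi\subseteq\mbS^n_{\hat\delta}$, the inverse square root in \eqref{def: gelbrich-off-diagonal} is a genuine inverse, so $\mathcal T$ is in fact continuous there. Measurability therefore needs no appeal to the pseudoinverse convention.
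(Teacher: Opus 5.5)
Your proposal is correct and follows essentially the paper's route: push $\pi$ forward by $\mathcal T$, use $\mathcal T(\mathcal S\times\mathcal S)\subseteq\mathcal Z_{\mathcal S}$ to get $\lambda\in\mathcal P(\mathcal Z_{\mathcal S})$, and apply the pushforward identity to $\bz\mapsto\mN(\bz)(A)$. The one minor difference is that you re-derive the marginal identities directly through the kernel Tonelli--Fubini computation, whereas the paper views $\pi$ as an element of the full-support $\Gamma$ (finite second moment by compactness) and invokes Lemma~\ref{lem: forward} to get $\nu_\pi\in\Pi(\mu_1,\mu_2)\cap\mbGM_{2n}$.
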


\begin{proof}
Fix \(\pi\in\Gamma_{\mathcal S}\).  Since \(\pi\in\Gamma_{\mathcal S}\), its
marginals satisfy
\[
\int_{\mathcal S}\mN(\bs)(A)\,d\pi_1(\bs)=\mu_1(A),
\qquad
\int_{\mathcal S}\mN(\bs)(B)\,d\pi_2(\bs)=\mu_2(B), \quad \forall \, A,B\in\mathcal B(\mbR^n)
\]
Since \(\pi_1\) and \(\pi_2\) are
supported on \(\mathcal S\), these are the same marginal mixing-law identities
used in Lemma~\ref{lem: forward} viewing \(\pi\) as a probability measure on
\(\mathcal S_{\rm full}\times\mathcal S_{\rm full}\) supported on \(\mathcal S\times\mathcal S\). Moreover, since \(\mathcal S\) is compact, the second moment of \(\pi\) is finite. Therefore Lemma~\ref{lem: forward} applies and
gives $\nu_\pi\in\Pi(\mu_1,\mu_2)\cap\mbGM_{2n}.$

It remains to check that the Gaussian-mixture representation of \(\nu_\pi\) is
supported on \(\mathcal Z_{\mathcal S}\). Let $\lambda:=\mathcal T_{\#}\pi.$ Since $\mathcal T(\mathcal S\times\mathcal S)\subseteq\mathcal Z_{\mathcal S}$, $\lambda\in\mathcal P(\mathcal Z_{\mathcal S})$. Now fix any \(A\in\mathcal B(\mbR^{2n})\), and define 
\[\Phi_A(\bz):=\mathcal N(\bz)(A).
\]
The map \(\Phi_A\) is measurable because
\(\bz\mapsto\mathcal N(\bz)(\cdot)\) is a Gaussian transition kernel. Additionally, since
\(\mathcal T(\bs_1,\bs_2)\) is the joint Gaussian parameter corresponding to the Gelbrich coupling \(\mathcal G(\bs_1,\bs_2)\), we have
\[
\Phi_A(\mathcal T(\bs_1,\bs_2))
=
\mathcal N(\mathcal T(\bs_1,\bs_2))(A)
=
\mathcal G(\bs_1,\bs_2)(A).
\]
Since \(\lambda=\mathcal T_{\#}\pi\), the pushforward identity gives
\[
\int_{\mathcal S\times\mathcal S}
\Phi_A(\mathcal T(\bs_1,\bs_2))\,d\pi(\bs_1,\bs_2)
=
\int_{\mathcal Z_{\mathcal S}}
\Phi_A(\bz)\,d\lambda(\bz).
\]
Then for $A\in\mathcal B(\mbR^{2n})$, we obtain
\[
\begin{aligned}
\nu_\pi(A)
&=
\int_{\mathcal S\times\mathcal S}
\mathcal G(\bs_1,\bs_2)(A)\,d\pi(\bs_1,\bs_2)\\
&=
\int_{\mathcal S\times\mathcal S}
\Phi_A(\mathcal T(\bs_1,\bs_2))\,d\pi(\bs_1,\bs_2)\\
&=
\int_{\mathcal Z_{\mathcal S}}
\Phi_A(\bz)\,d\lambda(\bz)\\
&=
\int_{\mathcal Z_{\mathcal S}}
\mathcal N(\bz)(A)\,d\lambda(\bz).
\end{aligned}
\]
Since this identity holds for every \(A\in\mathcal B(\mbR^{2n})\), we have $\nu_\pi\in\mbGM_{2n}^{\mathcal S}$. Together with the conclusion from Lemma~\ref{lem: forward} that
\(\nu_\pi\in\Pi(\mu_1,\mu_2)\), this proves $\nu_\pi\in\Pi(\mu_1,\mu_2)\cap\mbGM_{2n}^{\mathcal S}.$
\end{proof}

\begin{corollary}
\label{cor:restricted-backward-containment}
Let $\nu\in\Pi(\mu_1,\mu_2)\cap\mbGM_{2n}^{\mathcal S},$
and let \(\lambda\in\mathcal P(\mathcal Z_{\mathcal S})\) be a restricted
mixing law for \(\nu\), i.e.,
\[
\nu(A)
=
\int_{\mathcal Z_{\mathcal S}}\mN(\bz)(A)\,\lambda(d\bz),
\qquad A\in\mathcal B(\mbR^{2n}).
\]
If $\pi_\nu:=\mathcal M_{\#}\lambda$, then $\pi_\nu\in\Gamma_{\mathcal S}.$
\end{corollary}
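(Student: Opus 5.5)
The plan is to mirror the proof of Lemma~\ref{lem: Gamma-mixture-to-joint-gauss}, restricting every object to the compact sets $\mathcal Z_{\mathcal S}$ and $\mathcal S\times\mathcal S$. The argument has three parts: show that $\pi_\nu$ is a probability measure on $\mathcal S\times\mathcal S$, show that its two marginals are mixing laws for $\mu_1$ and $\mu_2$, and show that it has finite second moment.

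For support and measurability, note that $\mathcal M$ is continuous on $\mathcal Z_{\mathcal S}$, hence Borel. By definition of $\mathcal Z_{\mathcal S}$ we have $\mathcal M(\mathcal Z_{\mathcal S})\subseteq\mathcal S\times\mathcal S$. Therefore $\pi_\nu=\mathcal M_{\#}\lambda$ is a Borel probability measure on $\mathcal S\times\mathcal S$, as already recorded in Corollary~\ref{cor:restricted-moment-preservation}.

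For the marginal identities, fix $B\in\mathcal B(\mbR^n)$ and write
\[
\mu_1(B)=\nu(B\times\mbR^n)=\int_{\mathcal Z_{\mathcal S}}\mN(\bz)(B\times\mbR^n)\,\lambda(d\bz).
\]
The first equality holds because $\nu\in\Pi(\mu_1,\mu_2)$, and the second is the restricted mixture representation of $\nu$; this is just the kernel Tonelli theorem (Theorem~\ref{thm: kernel-fubini-tonelli}) applied with the indicator $\ind_B(\by_1)$. By the Gaussian marginalization property~\eqref{def: proj-param}, $\mN(\bz)(B\times\mbR^n)=\mN(\mM_1(\bz))(B)$. The pushforward identity~\eqref{eq: push-integral-M}, applied to the bounded measurable function $(\bs_1,\bs_2)\mapsto\mN(\bs_1)(B)$, then gives
\[
\int_{\mathcal Z_{\mathcal S}}\mN(\mM_1(\bz))(B)\,\lambda(d\bz)=\int_{\mathcal S\times\mathcal S}\mN(\bs_1)(B)\,d\pi_\nu(\bs_1,\bs_2)=\int_{\mathcal S}\mN(\bs)(B)\,d\pi_{\nu,1}(\bs).
\]
Measurability of $\bs\mapsto\mN(\bs)(B)$ follows from the Gaussian kernel construction in Appendix~A.2. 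The identical argument with $\mbR^n\times B$ and $\mM_2$ gives the second marginal identity for $\mu_2$.

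For the second moment, the gauge $\|\bmean_1\|^2+\|\bQ_{11}\|_F^2+\|\bmean_2\|^2+\|\bQ_{22}\|_F^2$ is continuous, hence bounded, on the compact set $\mathcal S\times\mathcal S$. So $\pi_\nu\in\mathcal P_2(\mathcal S\times\mathcal S)$, exactly as in Corollary~\ref{cor:restricted-moment-preservation}. Together with the two marginal identities, this yields $\pi_\nu\in\Gamma_{\mathcal S}$.

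The only delicate point I expect is bookkeeping. The integrals must be taken over $\mathcal Z_{\mathcal S}$ rather than $\mathcal Z$, and I must check that the marginal integrals of $\pi_\nu$ live on $\mathcal S$ and not on the full parameter space. Both follow directly from the inclusion $\mathcal M(\mathcal Z_{\mathcal S})\subseteq\mathcal S\times\mathcal S$, so no genuine analytic obstacle is anticipated.
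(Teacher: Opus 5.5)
Your proposal is correct and follows essentially the paper's argument. The paper regards $\lambda$ as a measure on $\mathcal Z$ supported on $\mathcal Z_{\mathcal S}$ and cites Lemma~\ref{lem: Gamma-mixture-to-joint-gauss} as a black box for the two mixing-law identities, whereas you redo that lemma's marginalization/pushforward computation directly on $\mathcal Z_{\mathcal S}$; both then use $\mathcal M(\mathcal Z_{\mathcal S})\subseteq\mathcal S\times\mathcal S$ and compactness of $\mathcal S$ to obtain $\pi_\nu\in\mathcal P_2(\mathcal S\times\mathcal S)$, hence $\pi_\nu\in\Gamma_{\mathcal S}$.
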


\begin{proof}
Since \(\mathcal Z_{\mathcal S}\subseteq\mathcal Z\), we can regard
\(\lambda\in\mathcal P(\mathcal Z_{\mathcal S})\) as a probability measure on
the joint-parameter space \(\mathcal Z\), supported on
\(\mathcal Z_{\mathcal S}\). By Corollary~\ref{cor:restricted-moment-preservation}, the parameter second-moment requirements stated in Proposition~\ref{prop: finite-2nd-moment} are finite under the restricted support. Hence, Lemma~\ref{lem: Gamma-mixture-to-joint-gauss} applies and shows
that the first marginal of \(\pi_\nu:=\mathcal M_{\#}\lambda\) is a mixing law
for \(\mu_1\), and the second marginal is a mixing law for \(\mu_2\).
Moreover, Corollary~\ref{cor:restricted-moment-preservation} gives
\(\pi_\nu\in\mathcal P(\mathcal S\times\mathcal S)\), and compactness of
\(\mathcal S\) gives \(\pi_\nu\in\mathcal P_2(\mathcal S\times\mathcal S)\).
Therefore, by definition, \(\pi_\nu\in\Gamma_{\mathcal S}\).
\end{proof}

\subsubsection{Objective identity under restricted parameter support. } \,
We now show the objective identity~\eqref{eqn:Wass2-Dist-ParamSpace-restricted} of Theorem~\ref{thm:Wass2-Gaussian-EquivalenceTheorem-restricted}.

\noindent\textit{\textbf{Proof of Theorem~\ref{thm:Wass2-Gaussian-EquivalenceTheorem-restricted}.}}
We prove the two inequalities.

First, let \(\pi\in\Gamma_{\mathcal S}\). By
Corollary~\ref{cor:restricted-forward-containment}, the measure
\[
\nu_\pi(A)
:=
\int_{\mathcal S\times\mathcal S}
\mathcal G(\bs_1,\bs_2)(A)\,d\pi(\bs_1,\bs_2),
\qquad A\in\mathcal B(\mathbb R^{2n}),
\]
belongs to $\Pi(\mu_1,\mu_2)\cap\mbGM_{2n}^{\mathcal S}.$
Therefore, by the definition of \(W_{2,\mathcal S}^{\mathrm{mix}}\),
\[
W_{2,\mathcal S}^{\mathrm{mix}}
\le
\int_{\mathbb R^n\times\mathbb R^n}
\|\by_1-\by_2\|^2\,d\nu_\pi(\by_1,\by_2).
\]
Since \(\|\by_1-\by_2\|^2\ge0\), the kernel Tonelli theorem gives
\[
\int_{\mathbb R^n\times\mathbb R^n}
\|\by_1-\by_2\|^2\,d\nu_\pi(\by_1,\by_2)
=
\int_{\mathcal S\times\mathcal S}
\left[
\int_{\mathbb R^n\times\mathbb R^n}
\|\by_1-\by_2\|^2\,d\mathcal G(\bs_1,\bs_2)(\by_1,\by_2)
\right]
d\pi(\bs_1,\bs_2).
\]
For each fixed \((\bs_1,\bs_2)\), \(\mathcal G(\bs_1,\bs_2)\) is the
Wasserstein-2 optimal Gaussian coupling between \(\mathcal N(\bs_1)\) and
\(\mathcal N(\bs_2)\). Hence,
\[
\int_{\mathbb R^n\times\mathbb R^n}
\|\by_1-\by_2\|^2\,d\mathcal G(\bs_1,\bs_2)(\by_1,\by_2)
=
d^2_{\mathrm{BW}}\bigl(\mathcal N(\bs_1),\mathcal N(\bs_2)\bigr).
\]
Consequently,
\[
W_{2,\mathcal S}^{\mathrm{mix}}
\le
\int_{\mathcal S\times\mathcal S}
d^2_{\mathrm{BW}}\bigl(\mathcal N(\bs_1),\mathcal N(\bs_2)\bigr)\,d\pi(\bs_1,\bs_2).
\]
Taking the infimum over \(\pi\in\Gamma_{\mathcal S}\) yields $W_{2,\mathcal S}^{\mathrm{mix}}
\le
W_{2,\mathcal S}^{\mathrm{prm}}.$

Conversely, let $\nu\in\Pi(\mu_1,\mu_2)\cap\mbGM_{2n}^{\mathcal S},$
and \(\lambda\in\mathcal P(\mathcal Z_{\mathcal S})\) be a restricted
mixing law for \(\nu\), so that 
\[\nu(A)
=
\int_{\mathcal Z_{\mathcal S}}\mathcal N(\bz)(A)\,\lambda(d\bz),
\,\,
A\in\mathcal B(\mathbb R^{2n}).
\]
By Corollary~\ref{cor:restricted-backward-containment}, $\pi_\nu:=\mathcal M_{\#}\lambda\in\Gamma_{\mathcal S}.$

For each \(\bz\in\mathcal Z_{\mathcal S}\), the Gaussian law
\(\mathcal N(\bz)\) has marginals $\mathcal N(\mathcal M_1(\bz))
\,\,\text{and}\,\,
\mathcal N(\mathcal M_2(\bz)).$
Therefore, \(\mathcal N(\bz)\) is a feasible coupling between these two
Gaussian marginals. By the optimality of the Gelbrich coupling,
\[
\int_{\mathbb R^n\times\mathbb R^n}
\|\by_1-\by_2\|^2\,d\mathcal N(\bz)(\by_1,\by_2)
\ge
d^2_{\mathrm{BW}}\bigl(
\mathcal N(\mathcal M_1(\bz)),
\mathcal N(\mathcal M_2(\bz))
\bigr).
\]
Since \(\|\by_1-\by_2\|^2\ge0\), the kernel Tonelli theorem and the restricted
mixture representation of \(\nu\) give
\[
\begin{aligned}
\int_{\mathbb R^n\times\mathbb R^n}
\|\by_1-\by_2\|^2\,d\nu(\by_1,\by_2)
&=
\int_{\mathcal Z_{\mathcal S}}
\left[
\int_{\mathbb R^n\times\mathbb R^n}
\|\by_1-\by_2\|^2\,d\mathcal N(\bz)(\by_1,\by_2)
\right]
d\lambda(\bz)\\
&\ge
\int_{\mathcal Z_{\mathcal S}}
d^2_{\mathrm{BW}}\bigl(
\mathcal N(\mathcal M_1(\bz)),
\mathcal N(\mathcal M_2(\bz))
\bigr)
\,d\lambda(\bz).
\end{aligned}
\]
Now use the pushforward identity for $\pi_\nu=\mathcal M_{\#}\lambda.$
With $\Phi(\bs_1,\bs_2)
:=
d^2_{\mathrm{BW}}\bigl(\mathcal N(\bs_1),\mathcal N(\bs_2)\bigr),$
we have
\[
\int_{\mathcal Z_{\mathcal S}}
\Phi(\mathcal M(\bz))\,d\lambda(\bz)
=
\int_{\mathcal S\times\mathcal S}
\Phi(\bs_1,\bs_2)\,d\pi_\nu(\bs_1,\bs_2).
\]
Equivalently,
\[
\int_{\mathcal Z_{\mathcal S}}
d^2_{\mathrm{BW}}\bigl(
\mathcal N(\mathcal M_1(\bz)),
\mathcal N(\mathcal M_2(\bz))
\bigr)
\,d\lambda(\bz)
=
\int_{\mathcal S\times\mathcal S}
d^2_{\mathrm{BW}}\bigl(\mathcal N(\bs_1),\mathcal N(\bs_2)\bigr)
\,d\pi_\nu(\bs_1,\bs_2).
\]
Therefore,
\[
\int_{\mathbb R^n\times\mathbb R^n}
\|\by_1-\by_2\|^2\,d\nu(\by_1,\by_2)
\ge
\int_{\mathcal S\times\mathcal S}
d^2_{\mathrm{BW}}\bigl(\mathcal N(\bs_1),\mathcal N(\bs_2)\bigr)
\,d\pi_\nu(\bs_1,\bs_2).
\]
Since \(\pi_\nu\in\Gamma_{\mathcal S}\), the right-hand side is bounded below by
\(W_{2,\mathcal S}^{\mathrm{prm}}\). Hence
\[
\int_{\mathbb R^n\times\mathbb R^n}
\|\by_1-\by_2\|^2\,d\nu(\by_1,\by_2)
\ge
W_{2,\mathcal S}^{\mathrm{prm}}.
\]
Taking the infimum over
\(\nu\in\Pi(\mu_1,\mu_2)\cap\mbGM_{2n}^{\mathcal S}\) yields $W_{2,\mathcal S}^{\mathrm{mix}}
\ge
W_{2,\mathcal S}^{\mathrm{prm}}.$
Combining the two inequalities proves identity~\eqref{eqn:Wass2-Dist-ParamSpace-restricted} \hfill\qed

\subsection{\texorpdfstring{Properties of Burer-Wasserstein Distance $d^2_{\mathrm{BW}}$ and Duality}{Properties of Burer-Wasserstein Distance and Duality}}\label{appndx:proof-of-lem: Delta-Continuity}

\subsubsection{Proof of Lemma~\ref{lem: Delta-Continuity}. } 
Fix \((\hat{\bmean},\hat{\bQ})\in\Theta\times\Xi\). For
\(\bQ\in\mbS^n_+\), define
\begin{align}\label{eq:def-F-trace}
    F(\bQ)
:=
\Tr(\hat{\bQ})+\Tr(\bQ)
-
2\Tr\!\left((\bQ^{1/2}\hat{\bQ}\bQ^{1/2})^{1/2}\right).
\end{align}
Hence,$ d^2_{\mathrm{BW}}\bigl(\bmean,\bQ;(\hat{\bmean},\hat{\bQ})\bigr)
=
\|\bmean-\hat{\bmean}\|_2^2+F(\bQ)$. We prove the two claims in the order stated: first, convexity on
\(\mbR^n\times\mbS^n_+\), and then continuity on
\(\Theta\times\Xi\).

\medskip
\noindent\textbf{Convexity proof.} Since the map \(\bmean\mapsto \|\bmean-\hat{\bmean}\|_2^2\) is convex on
\(\mbR^n\), it remains to prove that \(F\) is convex on
\(\mbS^n_+\). We first justify the following trace identity
\begin{equation}\label{eq:BW-trace-switch}
\Tr\!\left((\bQ^{1/2}\hat{\bQ}\bQ^{1/2})^{1/2}\right)
=
\Tr\!\left((\hat{\bQ}^{1/2}\bQ\hat{\bQ}^{1/2})^{1/2}\right).
\end{equation}
Let $\bR:=\bQ^{1/2}\hat{\bQ}^{1/2}$. Then,\vspace{-2em}
\begin{align}\label{eq:trace-identity}
\quad \bR\bR^\top=\bQ^{1/2}\hat{\bQ}\bQ^{1/2},
\qquad
\bR^\top \bR=\hat{\bQ}^{1/2}\bQ\hat{\bQ}^{1/2}
\end{align}
Both \(\bR\bR^\top\) and \(\bR^\top \bR\) are symmetric positive semidefinite. We show that they have the same characteristic polynomial. For every \(t\neq0\),
Sylvester's determinant identity~\cite{horn2012matrix} gives
\[
\begin{aligned}
\det(t \bI-\bR\bR^\top) \,\,=\,\,t^n\det\!\left(\bI-\frac{1}{t}\bR\bR^\top\right) \,\,=\,\, t^n\det\!\left(\bI-\frac{1}{t}\bR^\top \bR\right) \,\,=\,\,\det(t \bI-\bR^\top \bR).
\end{aligned}
\]
The two sides are polynomials in \(t\). Since they agree for every \(t\neq0\),
they agree for every \(t\in\mbR\). Hence,
\[
\det(t \bI-\bR\bR^\top)=\det(t \bI-\bR^\top \bR)
\qquad \forall t\in\mbR.
\]
Therefore, \(\bR\bR^\top\) and \(\bR^\top \bR\) have the same characteristic polynomial.

Since \(\bR\bR^\top\) and \(\bR^\top \bR\) are symmetric positive semidefinite, the spectral theorem~\cite{horn2012matrix} gives orthogonal matrices \(U,V\) and scalars
\(\lambda_1,\ldots,\lambda_n\ge0\) such that
\[
\bR\bR^\top
=
U\operatorname{diag}(\lambda_1,\ldots,\lambda_n)U^\top,
\qquad
\bR^\top \bR
=
V\operatorname{diag}(\lambda_1,\ldots,\lambda_n)V^\top.
\]
By the definition of the principal symmetric positive semidefinite square root,
\[
(\bR\bR^\top)^{1/2}
=
U\operatorname{diag}(\sqrt{\lambda_1},\ldots,\sqrt{\lambda_n})U^\top, \quad \text{and} \quad (\bR^\top \bR)^{1/2}
=
V\operatorname{diag}(\sqrt{\lambda_1},\ldots,\sqrt{\lambda_n})V^\top.
\]
Using the invariance of trace under orthogonal similarity, $\Tr\!\left((\bR\bR^\top)^{1/2}\right)
=
\sum_{i=1}^n\sqrt{\lambda_i}
=
\Tr\!\left((\bR^\top \bR)^{1/2}\right).$
Substitution with~\eqref{eq:trace-identity} gives \eqref{eq:BW-trace-switch}.

We next prove that  $H(\bQ)
:=
\Tr\!\left((\hat{\bQ}^{1/2}\bQ\hat{\bQ}^{1/2})^{1/2}\right)$ is concave on \(\mbS^n_+\).  
Let \(\bQ^{(1)},\bQ^{(2)}\in\mbS^n_+\), \(\alpha\in[0,1]\) and 
\begin{align} \label{eq:linear-comb}
   \bQ^{(\alpha)}
:=
(1-\alpha)\bQ^{(1)}+\alpha\bQ^{(2)}.
\end{align}
Since the map $\mathcal L:\mbS^n\to\mbS^n,
\,\,
\mathcal L(\bQ):=\hat{\bQ}^{1/2}\bQ\hat{\bQ}^{1/2},$ is linear, for any \(\bQ^{(1)},\bQ^{(2)}\in\mbS^n\) and any
\(a,b\in\mbR\),
\[
\begin{aligned}
\mathcal L(a\bQ^{(1)}+b\bQ^{(2)})
&=
\hat{\bQ}^{1/2}(a\bQ^{(1)}+b\bQ^{(2)})\hat{\bQ}^{1/2}=
a\,\hat{\bQ}^{1/2}\bQ^{(1)}\hat{\bQ}^{1/2}
+
b\,\hat{\bQ}^{1/2}\bQ^{(2)}\hat{\bQ}^{1/2}=
a\,\mathcal L(\bQ^{(1)})+b\,\mathcal L(\bQ^{(2)}).
\end{aligned}
\]
Therefore, following \eqref{eq:linear-comb} we obtain $\hat{\bQ}^{1/2}\bQ^{(\alpha)}\hat{\bQ}^{1/2}
=
(1-\alpha)\hat{\bQ}^{1/2}\bQ^{(1)}\hat{\bQ}^{1/2}
+
\alpha\hat{\bQ}^{1/2}\bQ^{(2)}\hat{\bQ}^{1/2}.$ Additionally, since the principal square-root map is operator concave on \(\mbS^n_+\)~\cite[Theorem~4.2.3]{bhatia2009positive},
\[
\begin{aligned}
&\left(\hat{\bQ}^{1/2}\bQ^{(\alpha)}\hat{\bQ}^{1/2}\right)^{1/2} \,\, \succeq
(1-\alpha)
\left(\hat{\bQ}^{1/2}\bQ^{(1)}\hat{\bQ}^{1/2}\right)^{1/2}
+
\alpha
\left(\hat{\bQ}^{1/2}\bQ^{(2)}\hat{\bQ}^{1/2}\right)^{1/2}.
\end{aligned}
\]
Taking traces preserves the Löwner order~\cite{bhatia2009positive}: if \(\bA\succeq\bB\), then
\(\Tr(\bA)\ge\Tr(\bB)\). Hence, $H(\bQ^{(\alpha)})
\ge
(1-\alpha)H(\bQ^{(1)})+\alpha H(\bQ^{(2)}).$ Thus \(H\) is concave on \(\mbS^n_+\). 

Finally, we show the convexity of \(F\) on \(\mbS^n_+\). Using \eqref{eq:BW-trace-switch}, \eqref{eq:def-F-trace} gives us $F(\bQ)=\Tr(\hat{\bQ})+\Tr(\bQ)-2H(\bQ),$
where \(\Tr(\hat{\bQ})\) is constant in \(\bQ\) and \(\Tr(\bQ)\) is linear in
\(\bQ\) since $\Tr(\bQ^{(\alpha)})
=
(1-\alpha)\Tr(\bQ^{(1)})+\alpha\Tr(\bQ^{(2)})$.
Also, concavity of \(H\) implies \(-2H(\bQ)\) is convex. Hence, $F(\bQ^{(\alpha)})
\le
(1-\alpha)F(\bQ^{(1)})+\alpha F(\bQ^{(2)}),$
i.e., \(F\) is convex on \(\mbS^n_+\). Combining this with the convexity of
\(\bmean\mapsto\|\bmean-\hat{\bmean}\|_2^2\), the map
\begin{align}\label{eq:d-BW-map}
    (\bmean,\bQ)\mapsto
d^2_{\mathrm{BW}}\bigl(\bmean,\bQ;(\hat{\bmean},\hat{\bQ})\bigr)
\end{align}
is convex on \(\mbR^n\times\mbS^n_+\).

\noindent\textbf{Continuity proof.}\, Let $\mathcal U:=\mbR^n\times\mbS^n_{++}$. The set \(\mathcal U\) is open and convex in the finite-dimensional vector space
\(\mbR^n\times\mbS^n\). Since $\mbS^n_{\hat\delta}
=
\{\bQ\in\mbS^n_{++}:\bQ\succeq \hat\delta I\}$, we have $\Theta\times\Xi\subseteq \mathcal U$. From the convexity result above, the map~\eqref{eq:d-BW-map} is convex on \(\mbR^n\times\mbS^n_+\). Hence, its restriction to
\(\mathcal U\) is a finite convex function on the open convex set \(\mathcal U\).
By the standard continuity theorem for finite convex functions on open convex
sets \cite[Corollary~2.36]{rockafellar2009variational}, this restriction is
continuous on \(\mathcal U\). Since \(\Theta\times\Xi\subseteq\mathcal U\), the further restriction to
\(\Theta\times\Xi\) is continuous. Hence, $d^2_{\mathrm{BW}}\bigl(\bmean,\bQ;(\hat{\bmean},\hat{\bQ})\bigr)$ is continuous on \(\Theta\times\Xi\).

\subsubsection{Convexity on restricted domains. }
\begin{corollary}[Convexity on restricted domains]
\label{cor:Delta-convex-restricted-domains}
Fix \((\hat{\bmean},\hat{\bQ})\in\Theta\times\Xi\), and define
\[
d^2_{\mathrm{BW}}\bigl(\bmean,\bQ;(\hat{\bmean},\hat{\bQ})\bigr)
=
\|\hat{\bmean}-\bmean\|_2^2
+
\Tr\!\left(
\hat{\bQ}+\bQ
-2(\bQ^{1/2}\hat{\bQ}\bQ^{1/2})^{1/2}
\right).
\]
Then the following holds.
\begin{enumerate}[label=\textit{\roman*)},leftmargin=1.5em]
\item The map $(\bmean,\bQ)\mapsto
d^2_{\mathrm{BW}}\bigl(\bmean,\bQ;(\hat{\bmean},\hat{\bQ})\bigr)$ is convex on \(\mbR^n_+\times\mbS^n_{++}\).

\item If \(\Theta\times\Xi\) is convex, then the same map is convex on
\(\Theta\times\Xi\).
\end{enumerate}
\end{corollary}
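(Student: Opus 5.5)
Both parts follow from the convexity already established in Lemma~\ref{lem: Delta-Continuity}, by restricting a convex function to a convex subset of its domain. That lemma shows the map $(\bmean,\bQ)\mapsto d^2_{\mathrm{BW}}\bigl(\bmean,\bQ;(\hat{\bmean},\hat{\bQ})\bigr)$ is convex on $\mbR^n\times\mbS^n_+$. The argument was the decomposition $\|\bmean-\hat{\bmean}\|_2^2+F(\bQ)$, with $F(\bQ)=\Tr(\hat{\bQ})+\Tr(\bQ)-2H(\bQ)$. Here $H$ is concave by the trace-switch identity~\eqref{eq:BW-trace-switch} and operator concavity of the principal square root. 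So the only thing to check in each part is that the stated domain is a convex subset of $\mbR^n\times\mbS^n_+$.

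For part (i), I would check that $\mbR^n_+$ is a convex cone and that $\mbS^n_{++}$ is convex. For the latter, if $\bQ^{(1)},\bQ^{(2)}\succ 0$ and $\alpha\in[0,1]$, then for any $\bv\neq\bzero$ we have $\bv^\top\bigl((1-\alpha)\bQ^{(1)}+\alpha\bQ^{(2)}\bigr)\bv=(1-\alpha)\bv^\top\bQ^{(1)}\bv+\alpha\bv^\top\bQ^{(2)}\bv>0$. Hence the product $\mbR^n_+\times\mbS^n_{++}$ is convex, and it is contained in $\mbR^n\times\mbS^n_+$. The convexity inequality $d^2_{\mathrm{BW}}(\bs^{(\alpha)})\le(1-\alpha)d^2_{\mathrm{BW}}(\bs^{(1)})+\alpha d^2_{\mathrm{BW}}(\bs^{(2)})$ holds for all pairs in the larger domain, so it holds for pairs in the subset. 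The convex combination also stays inside the subset, so the restricted function is convex.

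For part (ii), the same argument applies verbatim, using the hypothesis that $\Theta\times\Xi$ is convex. By Assumption~\ref{assmp: compactness}, $\Theta\times\Xi\subseteq\mbR^n\times\mbS^n_{\hat\delta}\subseteq\mbR^n\times\mbS^n_+$. The inequality is therefore inherited from Lemma~\ref{lem: Delta-Continuity}.

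There is no real obstacle here. The one point needing care is that convexity on a subset requires the subset to be convex, so that convex combinations of admissible points stay admissible. This is why part (ii) must assume convexity of $\Theta\times\Xi$, since compactness alone does not give it. In part (i), it is why I verify explicitly that $\mbS^n_{++}$ is closed under convex combinations.
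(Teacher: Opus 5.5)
Your proposal is correct and takes essentially the same approach as the paper. Both arguments invoke the convexity of $d^2_{\mathrm{BW}}$ on $\mbR^n\times\mbS^n_+$ from Lemma~\ref{lem: Delta-Continuity}, check that $\mbR^n_+\times\mbS^n_{++}$ (respectively, the assumed-convex $\Theta\times\Xi$) is a convex subset of that domain, and conclude by restriction.
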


\begin{proof}
By Lemma~\ref{lem: Delta-Continuity}, the map is convex on
\(\mbR^n\times\mbS^n_+\).

We first show that \(\mbR^n_+\times\mbS^n_{++}\) is convex. Let $(\bmean^{(1)},\bQ^{(1)}),(\bmean^{(2)},\bQ^{(2)})
\in
\mbR^n_+\times\mbS^n_{++},
\,\,
\alpha\in[0,1].$
Since \(\mbR^n_+\) is convex, $(1-\alpha)\bmean^{(1)}+\alpha\bmean^{(2)}\in\mbR^n_+.$
Moreover, for every \(\bx\neq0\),
\[
x^\top\bigl((1-\alpha)\bQ^{(1)}+\alpha\bQ^{(2)}\bigr)x
=
(1-\alpha)x^\top\bQ^{(1)}x+\alpha x^\top\bQ^{(2)}x>0.
\]
Thus, $(1-\alpha)\bQ^{(1)}+\alpha\bQ^{(2)}
\in
\mbS^n_{++}.$ Therefore, \(\mbR^n_+\times\mbS^n_{++}\) is a convex subset of
\(\mbR^n\times\mbS^n_+\). The restriction of a convex function to a
convex subset of its domain is convex, so the first claim follows.

For the second claim, suppose \(\Theta\times\Xi\) is convex. Then it is a convex
subset of \(\mbR^n\times\mbS^n_+\). Therefore, the restriction of the
convex function from Lemma~\ref{lem: Delta-Continuity} to \(\Theta\times\Xi\) is
convex. Finally, if \(\Theta\) and \(\Xi\) are convex, then
\(\Theta\times\Xi\) is convex because convex combinations are taken componentwise.
\end{proof}

\begin{remark}[A convex compact specification of \(\Theta\times\Xi\)]
\label{rem:convex-compact-support-specification}
A way to specify the compact mean--covariance support is to take $\Theta
:=
\left\{
\bmean\in\mbR^n:
|\bmean_i-\hat{\bmean}_i|
\le
\varsigma |\hat{\bmean}_i|,
\quad i=1,\ldots,n
\right\},$
for some \(\varsigma\ge0\), and $\Xi
:=
\left\{
\bQ\in\mbS^n:
\ubar \lambda_f\,\hat{\bQ}
\preceq
\bQ
\preceq
\overline \lambda_f\,\hat{\bQ}
\right\},$
where \(\hat{\bQ}\succ0\) and
\(0<\ubar \lambda_f\le\overline \lambda_f\). Under this specification,
\(\Theta\) is a Cartesian product of closed intervals and is therefore compact
and convex. The set \(\Xi\) is also compact and convex. Indeed, if
\(\bQ^{(1)},\bQ^{(2)}\in\Xi\) and \(\alpha\in[0,1]\), then $\ubar \lambda_f\,\hat{\bQ}
\preceq
(1-\alpha)\bQ^{(1)}+\alpha\bQ^{(2)}
\preceq
\overline \lambda_f\,\hat{\bQ},$
so \((1-\alpha)\bQ^{(1)}+\alpha\bQ^{(2)}\in\Xi\). Therefore,
\(\Theta\times\Xi\) is compact and convex. Moreover, with $\hat\delta:=\ubar \lambda_f\,\lambda_{\min}(\hat{\bQ})>0$, $\bQ\succeq \ubar \lambda_f\,\hat{\bQ}
\succeq
\ubar \lambda_f\,\lambda_{\min}(\hat{\bQ})I.$ Hence, \(\Xi\subseteq\mbS^n_{\hat\delta}\).
\end{remark}

\subsubsection{Proof of Theorem~\ref{thm: strong-duality}.}\, 
Fix any \(\bx\in\mX\).  Since \(\mD\neq\emptyset\), the robust constraint in~\eqref{prob: DR-CCP-GMM}, $\sum_{k=1}^{\hat K}
\int_{\mS}G(\bs,\bx)\,d\pi(\hat{\bs}_k,\bs)
\ge\theta,\,
\forall \pi\in\mD.$ is equivalent to $\inf_{\pi\in\mD}
\sum_{k=1}^{\hat K}
\int_{\mS}G(\bs,\bx)\,d\pi(\hat{\bs}_k,\bs)
\ge\theta$.
By  Lemma~\ref{lem: Delta-Continuity}, for any fixed outer decision \(\bx\), the strong duality result for this inner problem  provides the following objective-value identity
\begin{align}\label{eq: strong-dual-eq}
&
\inf_{\pi\in\mD}
\sum_{k=1}^{\hat K}
\int_{\mS}G(\bs,\bx)\,d\pi(\hat{\bs}_k,\bs)
\\
&=\hspace{-1em}
\sup_{\bbeta\in\mathbb R^{\hat K},\;\beta_{\hat{K}+1}\ge0}
\Big\{
\sum_{k=1}^{\hat K}\hat w_k\beta_k-\rho\beta_{\hat{K}+1}:
\beta_k-\beta_{\hat{K}+1}
d_{\mathrm{BW}}^2
\left(
\mN(\hat{\bs}_k),\mN(\bs)
\right)
\le
G(\bs,\bx),\
\forall \bs\in\mS,\ \forall k\in[\hat K]
\Big\}. \nonumber
\end{align}


We next show that the supremum on the right-hand side of~\eqref{eq: strong-dual-eq} is attained. This
attainment is needed because the semi-infinite formulation uses an actual
decision vector \(\bbeta\), not a supremum operator. The point
\((\bbeta,\beta_{\hat{K}+1})=(\bzero,0)\) is feasible because \(G(\bs,\bx)\ge0\) on
\(\mS\), so the supremum is at least zero. No component \(\beta_k\) needs to be
negative: if \(\beta_k<0\), replacing it by \(0\) preserves feasibility, since
\(0-\beta_{\hat{K}+1} d_{\mathrm{BW}}^2(\mN(\hat{\bs}_k),\mN(\bs))\le0\le G(\bs,\bx)\),
and weakly improves the objective. Hence a maximizing sequence may be chosen
with \(\beta_k\ge0\) for every \(k\). Evaluating the dual constraint at
\(\bs=\hat{\bs}_k\) gives \(\beta_k\le G(\hat{\bs}_k,\bx)\le1\), since
\(d_{\mathrm{BW}}^2(\mN(\hat{\bs}_k),\mN(\hat{\bs}_k))=0\). Thus
\(0\le\beta_k\le1\) for all \(k\). Along any maximizing sequence with
nonnegative objective value, $0\le
\sum_{k=1}^{\hat K}\hat w_k\beta_k-\rho\beta_{\hat{K}+1}
\le
1-\rho\beta_{\hat{K}+1},$
where \(\hat{\bw}\in\Delta_{\hat K}
:=
\left\{
\hat{\bw}\in\mathbb R_+^{\hat K}:
\sum_{k=1}^{\hat K}\hat w_k=1
\right\}\)
and \(\beta_k\le1\) were used. Since
\(\rho>0\), we obtain \(0\le\beta_{\hat{K}+1}\le1/\rho\). Therefore the supremum can be
taken over the compact set \([0,1]^{\hat K}\times[0,1/\rho]\). The feasible
set is closed because the constraint functions are continuous in \(\bs\) on
compact \(\mS\), and the dual objective is continuous. Hence the supremum is a
maximum.

We now prove the feasibility equivalence. Suppose first that \(\bx\) satisfies
the robust constraint in~\eqref{prob: DR-CCP-GMM}. Then $\inf_{\pi\in\mD}
\sum_{k=1}^{\hat K}
\int_{\mS}G(\bs,\bx)\,d\pi(\hat{\bs}_k,\bs)
\ge\theta$.
By the objective-value equality above and the attained maximum on the dual
side, there exist \(\bbeta\in\mathbb R^{\hat K}\) and \(\beta_{\hat{K}+1}\ge0\) such
that $\sum_{k=1}^{\hat K}\hat w_k\beta_k-\rho\beta_{\hat{K}+1}\ge\theta$ 
and $\beta_k-\beta_{\hat{K}+1}
d_{\mathrm{BW}}^2
\left(
\mN(\hat{\bs}_k),\mN(\bs)
\right)
\le
G(\bs,\bx),
\,\,
\forall \bs\in\mS,\ \forall k\in[\hat K]$. 
Hence we obtain
\(\bbeta\in\mathbb R^{\hat K}\times\mathbb R_+\) satisfying the constraints of
\eqref{prob: semif-infinite}.

Conversely, suppose there exists
\(\bbeta\in\mathbb R^{\hat K}\times\mathbb R_+\) satisfying the constraints of
\eqref{prob: semif-infinite}. Then
\((\beta_1,\ldots,\beta_{\hat K},\beta_{\hat{K}+1})\) is feasible for the dual problem
above, and its objective value is at least \(\theta\). Therefore, the dual
supremum is at least \(\theta\). By the objective-value equality,
\(
\inf_{\pi\in\mD}
\sum_{k=1}^{\hat K}
\int_{\mS}G(\bs,\bx)\,d\pi(\hat{\bs}_k,\bs)
\ge\theta\).
This is exactly the robust constraint in~\eqref{prob: DR-CCP-GMM}. Hence, for
every \(\bx\in\mX\), \(\bx\) is feasible for~\eqref{prob: DR-CCP-GMM} if and
only if there exists \(\bbeta\in\mathbb R^{\hat K}\times\mathbb R_+\) such
that \((\bx,\bbeta)\) is feasible for~\eqref{prob: semif-infinite}.

Since both formulations minimize the same objective \(\bc^\top\bx\), the
feasibility equivalence gives equality of the optimal objective values. It also
gives the stated optimal-solution correspondence. If \(\bx^\star\) is optimal
for~\eqref{prob: DR-CCP-GMM}, then the feasibility equivalence gives some
\(\bbeta^\star\) such that \((\bx^\star,\bbeta^\star)\) is feasible for
\eqref{prob: semif-infinite}. If this pair were not optimal for
\eqref{prob: semif-infinite}, then some feasible
\((\bar{\bx},\bar{\bbeta})\) of~\eqref{prob: semif-infinite} would satisfy
\(\bc^\top\bar{\bx}<\bc^\top\bx^\star\). The feasibility equivalence would make
\(\bar{\bx}\) feasible for~\eqref{prob: DR-CCP-GMM}, contradicting optimality
of \(\bx^\star\). Thus \((\bx^\star,\bbeta^\star)\) is optimal for
\eqref{prob: semif-infinite}. Conversely, if
\((\bx^\star,\bbeta^\star)\) is optimal for~\eqref{prob: semif-infinite}, then
\(\bx^\star\) is feasible for~\eqref{prob: DR-CCP-GMM}. If \(\bx^\star\) were
not optimal for~\eqref{prob: DR-CCP-GMM}, there would exist a feasible
\(\bar{\bx}\) of~\eqref{prob: DR-CCP-GMM} with
\(\bc^\top\bar{\bx}<\bc^\top\bx^\star\). The feasibility equivalence would give
some \(\bar{\bbeta}\) such that \((\bar{\bx},\bar{\bbeta})\) is feasible for
\eqref{prob: semif-infinite}, contradicting optimality of
\((\bx^\star,\bbeta^\star)\). Therefore the optimal \(\bx\)-solutions coincide
whenever an optimal solution exists. \hfill \qedsymbol

\begin{remark}
    The assumption \(\rho>0\) is used only to guarantee attainment of the dual
supremum in the semi-infinite reformulation. It is not needed for finiteness of
the primal inner value, since \(0\le G(\bs,\bx)\le1\) and \(\mD\neq\emptyset\).
When \(\rho>0\), the term \(-\rho\beta_{\hat K+1}\) bounds
\(\beta_{\hat K+1}\) along any maximizing sequence, which yields compactness of
the relevant dual level set. If \(\rho=0\), this coercive term disappears; the
dual supremum may remain finite but may fail to be attained because
\(\beta_{\hat K+1}\) can diverge while relaxing all positive-distance
constraints. Thus, without an additional dual-attainment argument, the value
equality still supports an optimal-value statement, but the existential
semi-infinite reformulation with a finite \(\bbeta\) requires \(\rho>0\).
\end{remark}
\subsection{Reformulated DR-Chance Constrained Program}\label{appndx:cut-alg-proof}

\subsubsection{Proof of Proposition~\ref{prop: infeasibility}. }
    By definition~\eqref{def: G}, $G(\bs, \bx) = 0$. As a result, since $\theta \in (0, 1]$, constraint~\eqref{prob: robust-CC} always fails to satisfy, i.e.,  
    $\min_{\pi \in \mP} \;\sum_{k=1}^{\hat{K}} \int_{\bs \in (\Theta \times \Xi)} 0 \; d\pi \bigl(\hat{\bs}_k, \bs\bigr) = 0 \ngeq \theta$.
    
    Now, for $\bx = \bzero$ with $b \geq 0$, $G(\bs, \bx) = 1$, which translates the worst-case distribution finding problem~\eqref{prob: robust-CC} to a feasibility problem. In particular, any value of $\pi \in \Pi(\hat{\bw}, \bw)$ with some $\bw \in \mathcal P_2(\mS)$  yields $\min_{\pi \in \mP} \;\sum_{k=1}^{\hat{K}} \int_{\bs \in (\Theta \times \Xi)} \; d\pi \bigl(\hat{\bs}_k, \bs\bigr) = 1 > \theta$. Therefore, since $\bw = \hat{\bw} \in \mathcal P_2(\mS)$, $\pi \in \Pi(\hat{\bw}, \hat{\bw})$ is also a solution to the problem~\eqref{prob: robust-CC} yielding same objective values. \hfill \qed

\subsubsection{Proof of Proposition~\ref{prop: infeasibility-1}. }
    If $\|\bbeta\| = 0$, i.e., $\beta^k = 0 \,\, \forall k \in [\hat{K}]$, $\beta_{K+1} \geq 0$ implies that $\sum_{k=1}^{\hat{K}} \hat{w}_k \beta_k - \beta_{\hat{K}+1} \; \rho \leq 0 \ngeq \theta$; a contradiction since $\theta \in (0, 1]$. Hence, $\|\bbeta\| > 0$. 
    
    We also prove \emph{part-(ii)} by contradiction. When $b <0$, for $\bx = 0$, $G(\bs; \bx) = \mathbbm{1}_{\geq 0}(b) = 0$ by definition~\eqref{def: G}. Then for $\hat{\bbeta}\in\mbR^{\hat{K}}\times\mathbb R_+$ be feasible it must satisfy all the semi-infinite constraints:
\[
\beta_k-\beta_{\hat{K}+1}d^2_{\mathrm{BW}}\bigl(\mN(\hbs_k),\mN(\bs)\bigr) - G(s;\bx) \leq 0,
\qquad \forall s\in\Theta\times\Xi,\ \forall k\in[\hat{K}].
\]
Hence, nominal distribution parameter also belongs to the set $\Theta \times \Xi$, i.e., $\bs_k=\hat{\bs}_k \in \Theta \times \Xi$ which leads $d^2_{\mathrm{BW}}\big( \mN(\bs_k), \mN(\hat{\bs}_k)) \big) = 0, \; \forall k \in [\hat{K}]$. To satisfy such a constraint, there must be a solution $\hat{\bbeta}$ such that  $\hat{\beta}_k \leq 0, \; \forall k \in [\hat{K}]$. However, since $\beta_{\hat{K}+1} \in \mbR_+$, we obtain $\sum_{k=1}^{\hat{K}} \hat{w}_k \hat{\beta}_k - \hat{\beta}_{\hat{K}+1} \; \rho \leq 0 \ngeq \theta \in (0, 1]$, which contradicts the feasibility of $\hat{\bbeta}$. 

For \emph{part-iii}, since $ b \geq 0$, given $\hat{\bbeta} \in \mbR^{\hat{K}} \times \mbR_+$ and $\bx = \bzero$, $G(\bs; \bx) = G(\bs; \bzero) = 1$. Hence, any element $\bs \in \Theta \times \Xi$ can be a solution to $\max_{\bs} \hat{\beta}_k - \hat{\beta}_{\hat{K}+1} d^2_{\mathrm{BW}} \big(\mN(\hat{\bs}_k), \mN(\bs) \big) -  1 $, so does the solution $\bs_k = \hat{\bs}_k $ for any $ k \in [\hat{K}]$. \hfill \qed

\subsubsection{Proof of Proposition~\ref{prop: master-prob-reform}. }
First, consider the forward direction of the claim starting from $\bar{\Phi} \left(z ; \bzz\right)  \geq \zeta$. Let us take some $z\geq 0, \; \zeta$ such that $\bar{\Phi}(z; \bzz) \geq \zeta$. Then, $z = y_3 \geq 0$ with $t_3 = 1$ implies that $t_1, t_2 = 0, \, g_i z + g_i^0 \geq \zeta, \; i \in [R]_0$ and $1 \geq \zeta$. Thus, for a given solution $z \geq 0$ satisfying $\bar{\Phi}(z; \bzz) \geq \zeta$, one can construct a feasible solution to the set $\mH^O(\bzz)$ using $t_1, t_2 = 0$, $t_3 = 1$, $\balpha = \bzero$, $y_1, y_2 = 0, z = y_3 \geq 0$. 

Next we consider some $z \in [\check{z}_{-L},  0)$. Then, $z$ will be in $[\check{z}_{-i}, \check{z}_{-i+1}]$ for some $i \in [L]$ which indicates $t_2 = 1, \, t_1 = t_3 = 0$, and therefore, $y_1 = y_3 = 0$. Then there exists some weights $\hat{\alpha}_i, \hat{\alpha}_{i-1} \geq 0$ such that $\hat{\alpha}_i + \hat{\alpha}_{i-1} = 1 = t_2$. Furthermore, from constraint $y_2 = \sum_{i=0}^L \alpha_i \zz_{-i}$ we obtain $y_2 = \hat{\alpha}_i \check{z}_{-i} +  \hat{\alpha}_{i-1} \check{z}_{-i+1}$. Hence, $ \alpha_{i} \Phi(\zz_{-i}) + \alpha_{i-1} \Phi(\zz_{-i+1}) \geq \zeta$. Thus for any given $z \in [\check{z}_{-L},  0)$, one can find an index $i \in [L]$ such that $ \alpha_i = \hat{\alpha}_i, \, \alpha_{i-1} = \hat{\alpha}_{i-1}$ with $y_2 = \hat{\alpha}_i \check{z}_{-i} +  \hat{\alpha}_{i-1} \check{z}_{-i+1}, \, y_1 = y_3 = 0$, and  $\alpha_j = 0 \;\forall j \in [L] \backslash \{i-1, i\}$, which satisfy all the constraints of $\mH^O(\bzz)$. Similarly, if we take some $z < \bzz_{-L}$, then $t_1 = 1,$ and $y_1 = z$. This implies $t_2, t_3, y_2, y_3 = 0, \Phi(\check{z}_{-L}) \geq  \zeta$. Therefore, $\mH^O(\bzz) \neq \emptyset$.

\vspace{1em}
Now we prove that any point in the set $\mH^O(\bzz)$ also satisfies $\bar{\Phi} \left(z ; \bzz\right)  \geq \zeta$ based on the definition of  $\bar{\Phi} \left(z ; \bzz\right)$ in~\eqref{def: phi_pwl_outer}. It is formally shown below using three separate cases:

\begin{enumerate}
    \item[i)] Let us choose $ y_3 = z \geq 0$ with $t_3 = 1$ and $ \, t_1, t_2, y_1, y_2 = 0$ such that $ g_i y_3 + g_i^0 \geq \zeta$ for all $i \in [R]_0$. Then  $\bar{\Phi} \left(z ; \, \bzz\right) = \min_{i \in [R]_0} \{ g_i y_3 + g_i^0\} \geq \zeta$.
    \item[ii)] Let $\zz_{-L} \leq y_2 = z < 0$ with $ t_2=1$ and  $t_1, t_3, y_1, y_3 = 0$ such that $y_2 = \alpha_{\check{i}} \zz_{-\check{i}} + \alpha_{\check{i} - 1} \zz_{-\check{i} + 1}, \quad$  $\alpha_{\check{i}} + \alpha_{\check{i} - 1} = 1 = t_2, $ and $ \alpha_{\check{i}} \Phi(\zz_{-\check{i}}) + \alpha_{\check{i} - 1}  \Phi(\zz_{-\check{i} + 1}) \geq \zeta $ for some $\check{i} \in [L]$ and $\alpha_{j} = 0$ for all $j \in [L] \backslash \{\check{i} - 1, \check{i} \}$. Thus there exists an index $\check{i} \in [L]$ that satisfies $\bar{\Phi} \left(z ; \bzz\right) =  h_{\check{i}} y_2 + h_{\check{i}}^0  \geq \zeta$. Hence, $\bar{\Phi} \left(z ; \bzz\right) =  \max_{i \in [L]}h_{i} y_2 + h_{i}^0  \geq \zeta$ is satisfied.
    \item[iii)] Finally, $ y_1 = z < \check{z}_{-L}$ with $ t_1 = 1, \, t_2, t_3, y_2, y_3 = 0$ indicates that $\Phi(\zz_{-L}) = \bar{\Phi} \left(z ; \bzz\right) \geq \zeta$. 
\end{enumerate}
Additionally, since all of these cases allow $\zeta$ to be at most 1, using the definition of $\bar{\Phi}(z; \bzz)$ in~\eqref{def: phi_pwl_outer}, $\bar{\Phi}(z; \bzz)  \geq \zeta$ is satisfied. \hfill\qedsymbol

\subsubsection{Proof of Proposition~\ref{prop: sub-prob-reform}.}\, First, consider the forward direction of the claim starting from $-\underline{\Phi} \left(z ; \bzz\right)  \geq \zeta'$. Let us take some $z\leq 0, \; \zeta'$ such that $-\underline{\Phi}(z; \bzz) \geq \zeta'$. Then, $z = y_1 \leq 0$ with $t_1 = 1$ implies that $t_2, t_3 = 0, \, -h_i z - h_i^0 \geq \zeta', \; i \in [L]_0$ and $\zeta' \leq 0$. Thus, for a given solution $z \leq 0$ satisfying $-\underline{\Phi}(z; \bzz) \geq \zeta'$, one can construct a feasible solution to the set $\mH^I(\bzz)$ using $t_3, t_2 = 0$, $t_1 = 1$, $\balpha = \bzero$, $y_2, y_3 = 0, z = y_1 \leq 0$. 

Next we consider some $z \in (0, \check{z}_{R}]$. Then, $z$ will be in $[\check{z}_{i-1}, \check{z}_{i}]$ for some $i \in [R]$ which indicates $t_2 = 1, \, t_1 = t_3 = 0$, and therefore, $y_1 = y_3 = 0$. Then there exists some weights $\hat{\alpha}_{i-1}, \hat{\alpha}_i \geq 0$ such that $\hat{\alpha}_{i-1} + \hat{\alpha}_i = 1 = t_2$. Furthermore, from constraint $y_2 = \sum_{i=0}^R \alpha_i \zz_{i}$ we obtain $y_2 = \hat{\alpha}_{i-1} \check{z}_{i-1} + \hat{\alpha}_i \check{z}_{i}$. 
Hence, $  -\alpha_{i-1} \Phi(\zz_{i-1}) - \alpha_{i} \Phi(\zz_{i}) \geq \zeta'$. Thus for any given $z \in (0, \check{z}_{R}]$, one can find an index $i \in [R]$ such that $\alpha_{i-1} = \hat{\alpha}_{i-1}, \;\; \alpha_i = \hat{\alpha}_i$ with $y_2 =  \hat{\alpha}_{i-1} \check{z}_{i-1} + \hat{\alpha}_i \check{z}_{i}, \, y_1 = y_3 = 0$, and  $\alpha_j = 0 \;\forall j \in [R_0] \backslash \{i-1, i\}$, which satisfy all the constraints of $\mH^I(\bzz)$. Similarly, if we take some $z > \bzz_{R}$, then $t_3 = 1$, and $y_3 = z$. This implies $t_1, t_2, y_1, y_2 = 0, -\Phi(\check{z}_{R}) \geq  \zeta'$. Therefore, $\mH^I(\bzz) \neq \emptyset$.

\vspace{1em}
Now we prove that any point in the set $\mH^I(\bzz)$ also satisfies $-\underline{\Phi} \left(z ; \bzz\right)  \geq \zeta'$ based on the definition of  $\underline{\Phi} \left(z ; \bzz\right)$ in~\eqref{def: phi_pwl_inner}. It is formally shown below using three separate cases:

\begin{itemize}
    \item[i)] Let us choose $ y_1 = z \leq 0$ with $t_1 = 1$ and $ \, t_2, t_3, y_2, y_3 = 0$ such that $ -h_i y_1 - h_i^0 \geq \zeta'$ for all $i \in [L]_0$. Then  $-\underline{\Phi} \left(z ; \, \bzz\right) = \min_{i \in [L]_0} \{ -h_i y_1 - h_i^0\} \geq \zeta'$.
    \item[ii)] Let $0 < y_2 = z \leq \zz_{R}$ with $ t_2=1$ and  $t_1, t_3, y_1, y_3 = 0$ such that $y_2 = \alpha_{\check{i}-1} \zz_{\check{i}-1} + \alpha_{\check{i}} \zz_{\check{i}}, \quad$  $\alpha_{\check{i} - 1} + \alpha_{\check{i}} = 1 = t_2, $ and $-\alpha_{\check{i} - 1}  \Phi(\zz_{\check{i} - 1}) - \alpha_{\check{i}} \Phi(\zz_{\check{i}})  \geq \zeta' $ for some $\check{i} \in [R]$ and $\alpha_{j} = 0$ for all $j \in [R_0] \backslash \{\check{i} - 1, \check{i} \}$. Thus there exists an index $\check{i} \in [R]$ that satisfies $-\underline{\Phi} \left(z ; \bzz\right) =  -g_{\check{i}} y_2 - g_{\check{i}}^0  \geq \zeta'$. Hence, $-\underline{\Phi} \left(z ; \bzz\right) =  \max_{i \in [R]} - g_{i} y_2 - g_{i}^0  \geq \zeta'$ is satisfied.
    \item[iii)] Finally, $ y_3 = z > \check{z}_{R}$ with $ t_3 = 1, \, t_1, t_2, y_1, y_2 = 0$ indicates that $-\Phi(\zz_{R}) = -\underline{\Phi} \left(z ; \bzz\right) \geq \zeta'$. 
\end{itemize}
Additionally, since all of these cases allow $\zeta'$ to be at most 0, using the definition of $\underline{\Phi}(z; \bzz)$ in~\eqref{def: phi_pwl_inner}, $-\underline{\Phi}(z; \bzz)  \geq \zeta'$ is satisfied. \hfill\qedsymbol

\subsubsection{Proof of Lemma~\ref{lem:beta-compactification}. }\,

Fix any feasible solution $(\bx,\bbeta)$ of the finite master problem at
iteration $\ell$. Since $\{\hat{\bs}_1,\ldots,\hat{\bs}_{\hat K}\}\subseteq \mS_\ell,$
the master problem contains, for each $k\in[\hat K]$, the constraint generated
by $\bs_l=\hat{\bs}_k$. Hence, $\beta_k
-
\beta_{\hat K+1}
d^2_{\mathrm{BW}}
\bigl(
\mN(\hat{\bs}_k),\mN(\hat{\bs}_k)
\bigr)
\le
G(\hat{\bs}_k;\bx).$
Because $d^2_{\mathrm{BW}}
\bigl(
\mN(\hat{\bs}_k),\mN(\hat{\bs}_k)
\bigr)=0,$ we get $\beta_k\le G(\hat{\bs}_k;\bx).$
Additionally, since $G(\bs;\bx)$ corresponds to a cumulative probability term, $0\le G(\bs;\bx)\le 1$. Together they imply  $\beta_k\le 1,
\,\,\,
\forall k\in[\hat K].$

Next, because $\bbeta\in\mathfrak B$, $\sum_{k=1}^{\hat K}\hat w_k\beta_k-\rho\beta_{\hat K+1}\ge \theta.$ Using $\beta_k\le 1$ for all $k$ and
$\sum_{k=1}^{\hat K}\hat w_k=1$, we obtain $\theta
\le
\sum_{k=1}^{\hat K}\hat w_k\beta_k-\rho\beta_{\hat K+1}
\le
1-\rho\beta_{\hat K+1}.$
Hence, $0\le \beta_{\hat K+1}\le \frac{1-\theta}{\rho},$ 
where the lower bound follows from
$\bbeta\in\mathbb R^{\hat K}\times\mathbb R_+$.

It remains to lower-bound each $\beta_k$. Since $\beta_{\hat K+1}\ge0$ and $\sum_{k=1}^{\hat K}\hat w_k\beta_k-\rho\beta_{\hat K+1}\ge \theta$, we have $-\rho\beta_{\hat K+1}\le0$. Fix any $i \in[\hat K]$. Also, for every $k\neq i$, $\beta_k\le1$. Therefore,
\[
\theta
\le
\hat w_i\beta_i
+
\sum_{k\neq i}\hat w_k\beta_k
-
\rho\beta_{\hat K+1}
\le
\hat w_i\beta_i
+
\sum_{k\neq i}\hat w_k.
\]
Because $\sum_{k\neq i}\hat w_k=1-\hat w_i,$ 
we get $\theta
\le
\hat w_i\beta_i+1-\hat w_i.$
Since $\hat w_i>0$, $\beta_i
\ge
\frac{\theta-(1-\hat w_i)}{\hat w_i}.$ As $i\in[\hat K]$ was arbitrary, the bound holds for all
$k\in[\hat K]$.

Combining these inequalities gives $\frac{\theta-(1-\hat w_k)}{\hat w_k}
\le \beta_k \le 1, \,\, k\in[\hat K],
\,\, \text{and} \,\, 0\le \beta_{\hat K+1} \le \frac{1-\theta}{\rho}.$ Thus, $\bbeta\in\overline{\mathfrak B}$. Since $\overline{\mathfrak B}$ is a finite product of closed and bounded intervals, it is compact. \hfill\qedsymbol

\subsubsection{Proof of Lemma~\ref{lem: optimality-master-problem}. }

     To prove the claim, following ~\cite[Definition~1.3]{gauvin1977differential} we first show that $\mF(\mathtt{0}) := \{ (\bx, \bbeta) \; | \bx \in \mX, \bbeta \in \overline{\mfB}, J_k(\bs_l)\le 0,\;\bs_l \in \mS_\ell, k \in [\hat{K}]\}$ is uniformly compact near $\mathtt{0}$ if the following condition holds: there is a neighborhood $[- \varepsilon, \varepsilon]$ of $0$ for some $\varepsilon > 0$ such that the closure of the set $\bigcup_{\tau' \in [- \varepsilon, \varepsilon]} \mF(\tau')$ is compact, i.e., closed and bounded. Note that by Assumption~\ref{assmp: non-empty-interior}, $\bigcup_{\tau' \in [-\varepsilon, \varepsilon]} \mF(\tau')$ is not empty. Additionally, by Lemma~\ref{lem: G-diff-continuity-x}, $J_k(\bs_l)$ is continuous everywhere in $(\bx, \bbeta) \in \mX \times \overline{\mfB}$ for any $\bs_l \in \mS_\ell, k \in [\hat{K}]$. Then,
given any $\tau' \in [-\varepsilon, \varepsilon]$, each convergent sequence  $\{(\bx_t, \bbeta_t)\}$ with $(\bx_t, \bbeta_t) \in \mF(\tau') \; \forall t$ implies that $(\bar{\bx}, \bar{\bbeta}) := \lim_{t \to \infty} (\bx_t, \bbeta_t) \in \mF(\tau')$.  
Thus, $\mF(\tau')$ also includes all its limit points $(\bar{\bx}, \bar{\bbeta}) $. Therefore, $\bigcup_{\tau' \in [-\varepsilon, \varepsilon]} \mF(\tau')$ is closed. Additionally, since $\mX$ and $\overline{\mfB}$ are bounded (since compact) set, $\mX \cap \{ (\bx, \bbeta) | \, \bbeta \in \overline{\mfB}, J_k(\bs_l) \le \tau',\;\bs_l \in \mS_\ell, k \in [\hat{K}]\} \subseteq \mX \times \overline{\mfB}$ is also bounded and this is true for every $\tau' \in [-\varepsilon, \varepsilon]$. Therefore,  $\bigcup_{\tau' \in [-\varepsilon, \varepsilon]} \mF(\tau')$ is compact.

Next, we show that the MFCQ condition holds at the optimal solution \((\bx^{(\ell)}_*, \bbeta^{(\ell)}_*)\) and thereby continuity of the optimal value function $V^\star_\ell(\mathtt{0})$ at $\mathtt{0}$. Let us define the indicator 
$\delta_{\mX}(\bx)$ and $\delta_{\overline{\mfB}}(\bbeta)$ respectively for the set \(\mX\) and $\overline{\mfB}$ 
and the Lagrangian for some $\bq \in \mbR^{\hat{K} \times |\mS_\ell|}$:
\[
\mL(\bx, \bbeta, u, \bq)=
\bc^\top \bx +\delta_{\mX}(\bx) + \delta_{\overline{\mfB}}(\bbeta) + \bq^\top \, \bH(\bx, \bbeta).
\]
Since \((\bx^{(\ell)}_*, \bbeta^{(\ell)}_*)\) is a local minimizer, continuous differentiability of $G(\bx; \cdot )$ implies continuous differentiability of $J\left((\bx, \bbeta); \bs\right)$ for all $(\bx, \bbeta) \in \mX \times \overline{\mfB}$ by Lemma~\ref{lem: G-diff-continuity-x},
$\exists$ \(\check{\bq} \geq \bzero\), \emph{normal-cone multipliers} \(\check{\bv}\in \partial_{\bx} \delta_{\mX}(\bx^{(\ell)}_*) = \mN_{\mX}(\bx^{(\ell)}_*)\) and \(\check{\bu}\in \partial_{\bbeta} \delta_{\overline{\mfB}}(\bbeta^{(\ell)}_*) = \mN_{\mS}(\bbeta^{(\ell)}_*)\) 
such that primal feasibility, dual feasibility, and complementary conditions hold with $ \bzero=(\bc, \bzero) + (\check{\bv}, \check{\bu})
+\nabla_{\bx, \bbeta}\bH^\top(\bx^{(\ell)}_*, \bbeta^{(\ell)}_*) \, \check{\bq}.$ Additionally, by Assumption~\ref{assmp: LICQ}, there are \emph{no} non-zero \((\bq, \bv)\) with  
\(\bv\in\mN_{\mX}(\check{\bx})\), \(\bq \geq \bzero\) satisfying $ \bzero\;=\;(\check{\bv}, \check{\bu}) 
+\nabla_{\bx, \bbeta}\bH^\top(\bx^{(\ell)}_*, \bbeta^{(\ell)}_*) \, \check{\bq}.$ Then Gordan’s alternative for cones (Thm.\;6.4 in \cite{rockafellar2009variational};  
 Thm.\;2.4.9 in \cite{bazaraa2006nonlinear}) gives
\[
\;
\exists\; \; \mathfrak{z} \in\mT(\bx^{(\ell)}_*, \bbeta^{(\ell)}_*)\;\text{such that}\;
\nabla_{\bx, \bbeta}\bH(\bx^{(\ell)}_*, \bbeta^{(\ell)}_*)^{\!\top}\mathfrak{z}<0,
\]
where \(\mT(\bx^{(\ell)}_*, \bbeta^{(\ell)}_*):=\operatorname{cl}\bigl\{\alpha(\bt-(\bx^{(\ell)}_*, \bbeta^{(\ell)}_*))\mid
\alpha>0,\;\bt\in (\mX \times \bbeta)\) is the Bouligand tangent cone of \(\mX \times \overline{\mfB} \) at \((\bx^{(\ell)}_*, \bbeta^{(\ell)}_*)\).
Thus, the Mangasarian–Fromovitz constraint qualification (MFCQ) holds at \((\bx^{(\ell)}_*, \bbeta^{(\ell)}_*)\). Then, by Theorem 2.6 of~\cite{gauvin1977differential}, optimal value function $V^\star_\ell(\mathtt{0})$ corresponding to master problem is continuous $\mathtt{0}$. 

Finally, Theorem~\ref{thm: complexity-breakpoint} and Proposition~\ref{prop: feasibility-tau} ensure that using $O(\frac{1} {\sqrt{\epsilon}} \sqrt{\log(\frac{1}{\epsilon})})$ breakpoints, a $\epsilon$-feasible solution $\bigl(\bx^{(\ell)}, \bbeta^{(\ell)}\bigr)$ can be obtained. Furthermore, continuity of $V^\star_\ell(\mathtt{0})$ at $\mathtt{0}$ implies that for any given $\epsilon > 0$, $\exists \, \tau^+(\epsilon)$ such that  $|V^\star_\ell(\mathtt{0}) - V^\star_\ell(\tau)| \leq \epsilon $ can be achieved for some $\tau$ such that $0 < \tau \leq \tau^+(\epsilon)$, i.e., for some $\tau \in (0, \epsilon]$. Since $V^\star_\ell(\epsilon) \leq \bc^\top \bx^{(\ell)}$, $\bc^\top \bx^{(\ell)}_* - \bc^\top \bx^{(\ell)}= V^\star_\ell(\mathtt{0}) - \bc^\top \bx^{(\ell)} \leq V^\star_\ell(\mathtt{0}) - V^\star_\ell(\epsilon) \leq \epsilon$. \hfill \qed
\begin{remark}
    We note that strict interior, hence, MFCQ can also be shown by considering a perturbation to $\check{\bs}$ (see Bonnans and Shapiro \cite{bonnans2013perturbation} for details).
\end{remark}

\section{Piecewise linear approximation of a Gaussian CDF}
Here, we present a breakpoint construction approach in Algorithm~\ref{tab:phi-breakpoints} that we followed in our numerical study. Such a construction scheme guarantees that $O(\frac{1} {\sqrt{\tau}} \sqrt{\log(\frac{1}{\tau})})$ breakpoints suffice to achieve piecewise-linear approximation accuracy \(\tau\); see Theorem~\ref{thm: complexity-breakpoint}. We also illustrate the resulting outer and inner piecewise-linear approximations of the standard normal CDF \(\Phi(z)\) in Figure~\ref{fig:pwl_phi_both}

\begin{table}[ht]
\scriptsize
  \centering
  \captionsetup{type=algorithm} 
  \caption{Breakpoints for approximating $\Phi(z)$}
  \label{tab:phi-breakpoints}

  \begin{subtable}[t]{0.48\linewidth}
  \vspace{-6pt}
    \centering
    \caption{\scriptsize Tangent approximation of $\Phi(z), z \ge 0$}
    \label{alg:breakpoint_find}
    \begin{algorithmic}[1]
      \Require{$\tau$}
      \Ensure{$\mathcal{A}^R$}

      \State \textbf{Part 1:} $0\leq z \leq 1$
      \State $i \gets 1, \check{z}_{i} \gets 1, \mathcal{A}^{R} \gets \mathcal{A}^{R} \cup \{\check{z}_{i}\}$
      \State $ \check{z}_{i+1} =  \check{z}_i - \sqrt{\frac{2\tau}{\frac{e^{-0.5}}{\sqrt{2 \pi}}}}$
      \While{$\check{z}_{i+1} > 0$}
      \vspace{5pt}
      \State $\mathcal{A}^{R} \gets \mathcal{A}^{R} \cup \{\check{z}_{i+1}\}, i \gets i + 1$
      \State $\check{z}_{i+1} = \check{z}_{i} -  \sqrt{\frac{2\tau}{\phi(\check{z}_{i})\check{z}_{i}}}$
      \EndWhile
      \State $\mathcal{A}^R \gets \mathcal{A}^{R} \cup \{0\}, i \gets i+1$
      \State Reverse the order of $\mathcal{A}^{R}$
      \State $\mathcal{A}^{R} \gets \mathcal{A}^{R} \backslash \{1\}$
      \State \textbf{Part 2:} $z \geq 1$
      \State $i \gets i + 1, \check{z}_{i-1} \gets 1, \mathcal{A}^{R} \gets \mathcal{A}^{R} \cup \{\check{z}_{i-1}\}$
      \State $\check{z}_i = \check{z}_{i-1} + \sqrt{\frac{2\tau}{\phi(\check{z}_{i-1})
      \check{z}_{i-1}}}$
      \While{$1 - \Phi(\check{z}_i) < \tau$}
      \vspace{5pt}
      \State $\mathcal{A}^{R} \gets \mathcal{A}^{R} \cup \{\check{z}_i\}, i \gets i + 1$
      \State $\check{z}_i = \check{z}_{i-1} + \sqrt{\frac{2\tau}{\phi(\check{z}_{i-1})
      \check{z}_{i-1}}}$
      \EndWhile
      \State $\zz_R \gets \zz_i$
      \State $\mathcal{A}^{R} \gets \mathcal{A}^{R} \cup \{\zz_{R}\}$
    \end{algorithmic}
  \end{subtable}\hfill
  \begin{subtable}[t]{0.48\linewidth}
    \vspace{-6pt}
    \centering
    \caption{\scriptsize Secant approximation of $\Phi(z)$, $z < 0$}
    \label{alg:breakpoint_find_negative}
    \begin{algorithmic}[1]
      \Require{$\tau$}
      \Ensure{$\mathcal{A}^{L}$}

      \State \textbf{Part 1:} $-1\leq z \leq 0$
      \State $i \gets 1, \check{z}_{-i} \gets -1, \mathcal{A}^{L} \gets \mathcal{A}^{L} \cup \{\check{z}_{-i}\}$
      \State $ \check{z}_{-i-1} =  \check{z}_{-i} - 2\sqrt{\frac{2\tau}{\phi(\check{z}_{-i})
      \check{z}_{-i}}}$
      \While{$\check{z}_{-i-1} < 0$}
      \vspace{5pt}
      \State $\mathcal{A}^{L} \gets \mathcal{A}^{L} \cup \{\check{z}_{-i-1}\}, i \gets -i - 1$
      \State $\check{z}_{-i-1} = \check{z}_{-i} - 2\sqrt{\frac{2\tau}{\phi(\check{z}_{-i})
      \check{z}_{-i}}}$
      \EndWhile
      \State Reverse the order of $\mathcal{A}^{L}$
      \State $\mathcal{A}^{L} \gets \mathcal{A}^{L} \backslash {-1}$
      \State \textbf{Part 2:} $z \leq -1$
      \State $i \gets i + 1, \check{z}_{-i+1} \gets -1$,
      
       $\mathcal{A}^{L} \gets \mathcal{A}^{L} \cup \{\check{z}_{-i+1}\}$
      \State $\check{z}_{-i} = \check{z}_{-i+1} + 2 \sqrt{\frac{2\tau}{\phi(\check{z}_{-i+1})
      \check{z}_{-i+1}}}$
      \While{$\Phi(\check{z}_{-i}) < \tau$}
      \vspace{5pt}
      \State $\mathcal{A}^{L} \gets \mathcal{A}^{L} \cup \{\check{z}_{-i}\}, i \gets i + 1$
      \State $\check{z}_{-i} = \check{z}_{-i+1} + 2 \sqrt{\frac{2\tau}{\phi(\check{z}_{-i+1})
      \check{z}_{-i+1}}}$
      \EndWhile
      \State $\zz_{-L} \gets \zz_{-i}$
      \State $\mathcal{A}^{L} \gets \mathcal{A}^{L} \cup \{\zz_{-L}\}$
    \end{algorithmic}
  \end{subtable}
\hfill
\end{table}
\vspace{3em}
\begin{figure}[ht]
\centering
\begin{subfigure}[t]{0.49\textwidth}
    \centering
    \includegraphics[width=\linewidth]{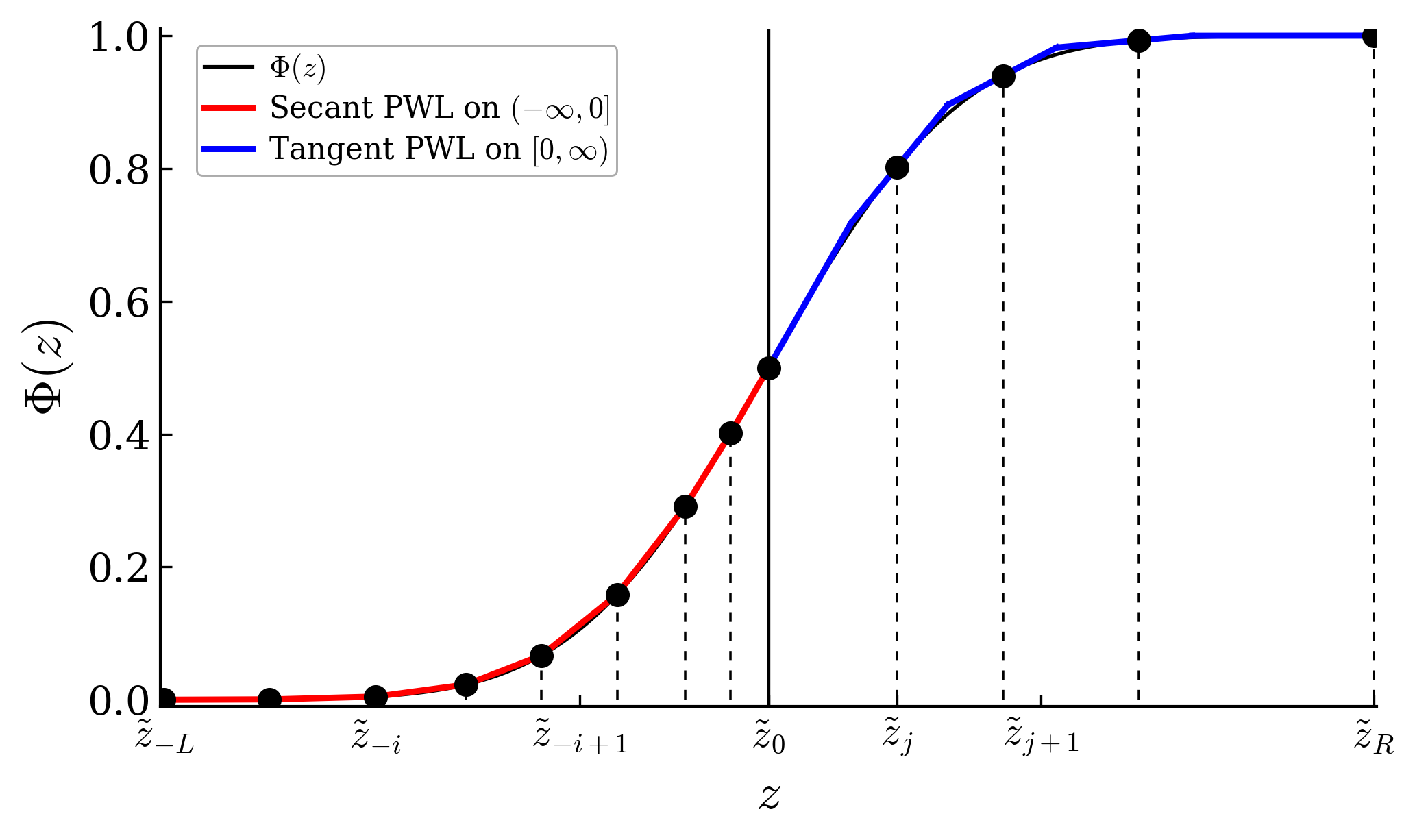}
    \caption{Piecewise linear outer approximation of $\Phi$.}
    \label{fig:pwl_phi_outer}
\end{subfigure}\hfill
\begin{subfigure}[t]{0.49\textwidth}
    \centering
    \includegraphics[width=\linewidth]{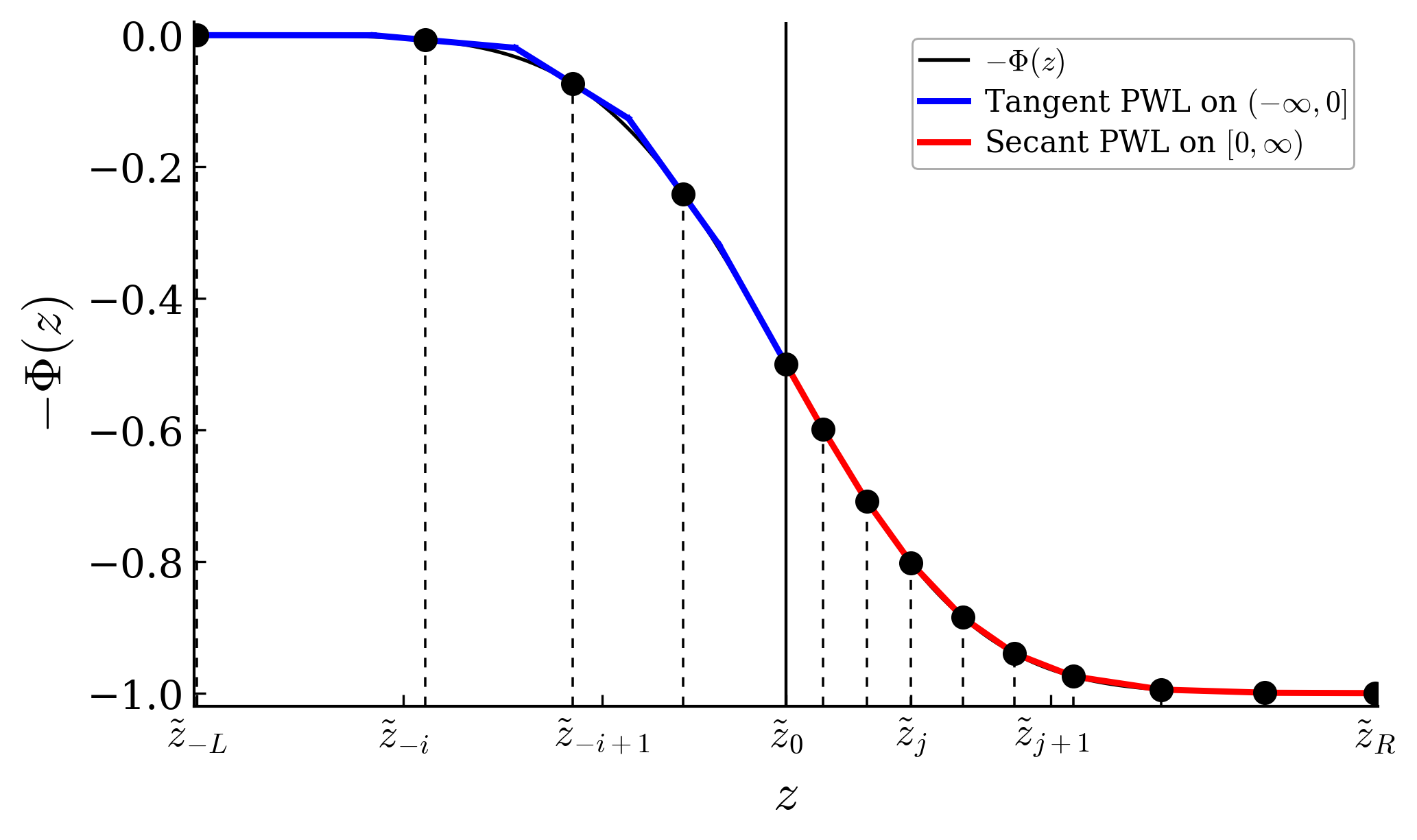}
    \caption{Negation of the piecewise linear inner approximation of $\Phi$.}
    \label{fig:pwl_phi_inner-negative}
\end{subfigure}
\caption{\centering Piecewise linear approximations used in the formulation.}
\label{fig:pwl_phi_both}
\end{figure}

\section{Computational Study Details}\label{appndx: comp-detail}

\subsection{Data Preparation Details}\label{appndx: data-preparation}

For the EV application, the nominal distribution represents daily hourly charging demand and is estimated from session-level charging records. The dataset, sourced from Figshare~\cite{evcharging2025figshare}, contains one record per charging session, including start and end times, delivered energy in kWh, user and charging-post identifiers, location or district name, charging costs in Yuan, termination reason, and weather attributes such as temperature, relative humidity, and precipitation. Its structure is consistent with the previously published high-resolution Jiaxing EV charging transaction dataset~\cite{evcharging2025nature}. Since aggregating the full two-year sample can obscure temporal heterogeneity and collapse the demand distribution toward a dominant mode, we partition the data by quarter and use Q1 observations, i.e., January--March of \(2020\) and \(2021\), for the study.

The session-level records are converted into daily \(24\)-dimensional demand vectors. For a session \(s\) with delivered energy \(E_s\), let \(\Delta_{st}\) denote its overlap duration with hour \(t\in[24]\). The energy allocated to hour \(t\) is \(E_s\Delta_{st}/\sum_{t'\in[24]}\Delta_{st'}\). Summing these allocated contributions over all sessions occurring on the same day yields a per-day hourly-demand vector. The data are processed in Python using \texttt{pandas}: start and end timestamps are parsed as date-time variables, the energy field is converted to numeric values, and records with missing or nonpositive delivered energy are removed. After a fixed random shuffle, \(60\%\) of the daily vectors are used for estimation, while the remaining days are reserved as holdout data for evaluating out-of-sample satisfaction probability.


The objective $\min \sum_{t=1}^{24} \bigl( C^o(x_t) + c_t x_t \bigr)$ having a \emph{linear (fixed) term} and a \emph{piecewise-linear (operational) term} are built as follows. The linear term is $\sum_{t=1}^{24} c_t x_t$, where $c_t$ is the time-of-use cost (Yuan/kWh) in hour $t$; the 24 values $c_t$ are drawn uniformly from $[300,\,350]$. The operational cost in hour $t$ is a convex piecewise-linear function $C^o(x_t)$ of the capacity fraction $x_t$, built from a pivot $(\tau_0, y_0) = (0.8,\,200)$, 16 segments (8 left of the pivot, 8 right), base slope $\eta = 1$, and growth factor $\gamma = 2$: left-segment slopes are $-\eta \gamma^i$ for $i=0,\ldots,7$ (reversed so the steepest negative slope is near $\tau_0$), right-segment slopes are $\eta \gamma^i$ for $i=0,\ldots,7$; knot positions are even-spaced on $[0,\tau_0]$ and $[\tau_0,1]$; intercepts are determined by continuity at the pivot. Thus $C^o(x_t) = \max_{t=1}^{16}(a_t x_t + b_t)$ with $(a_t,b_t)$ from this construction. Model parameters of the considered EV model are further summarized in Table~\ref{tab:ev-master-params} below.

\begin{table}[!hbtp]
\centering
\scriptsize
\caption{Polytopic constraints and coefficients for the EV chance-constrained model.}
\label{tab:ev-master-params}
\renewcommand{\arraystretch}{1.0}
\begin{tabular}{p{2.0cm} p{4.5cm} p{8.5cm}}
\toprule
\textbf{Parameter} & \textbf{Value} & \textbf{Description} \\
\midrule

\multicolumn{3}{l}{\textit{Variable bounds}} \\[2pt]
$x_{\min}$, $x_{\max}$ 
    & $0$, $1$ 
    & Bounds on each $x_t$ (fraction of capacity in hour $t$). \\
\midrule
\multicolumn{3}{l}{\textit{Chance constraint and throughput}} \\[2pt]
$P_{\max}$ 
    & $560.18$
    & Peak capacity (0.98 quantile of daily total demand). \\[6pt]

$E_{Total}$ 
    & $1{,}151{,}359.45$
    & Total energy delivered (0.98 quantile of daily total delivered energy)\\

$E_{min}$
    & $E_{Total}/P_{max}$ 
    &  Minimum daily energy delivery (throughput target); \\

\midrule
\multicolumn{3}{l}{\textit{Objective (two-term structure)}} \\[2pt]
Linear term
    & $\sum_{t=1}^{24} c_t x_t$
    & $c_t\in[300,350]$\,Yuan/kWh (time–of–use); 24 values per instance. \\[6pt]

PWL term
 & \makecell[l]{
 $\displaystyle
C^o(x)=\max_{a\in\mathcal{A}_-\cup\mathcal{A}_+}f(a)$ where\\
 $f(a) = \{\,a\,x + (y_0 - a x_0)\,\}$
 }
    & 
    \makecell[l]{Pivot $(\tau_0,y_0)=(0.8,200)$; $
\,\,
\mathcal{A}_\pm=\{\pm\,2^i : i=0,\dots,7\}.
$ \\
 8 left + 8 right segments;
} \\
\bottomrule
\end{tabular}
\end{table}

\subsection{Master Iteration Runtime Scheduling Strategy}


Because every admitted cut increases the mixed-integer master problem constraint pool, the master solver uses a highly customized stopping rule that balances tractability and solution quality across iterations. Early termination is never allowed before a feasible incumbent has been found. Once feasibility is achieved, the solver may stop only after a preset soft time budget has elapsed and the incumbent has reached the prescribed mixed-integer optimality gap of 0.1\%; otherwise, the solve continues until an iteration-dependent hard cap is reached. This hard cap increases with the iteration count, particularly 2 hours for iterations 0 and 1, then 4, 6, and 10 hours. The attained incumbent gap is recorded at the end of each master solve and carried into the next iteration as a state variable for the stopping logic. This schedule is designed to reflect the systematic growth of the master problem as cuts accumulate: early iterations are kept relatively inexpensive, whereas substantially more time is allowed for the later iterations so that the performance gain attributable to additional cuts can be assessed under a solve budget commensurate with the enlarged constraint set. 
Because each CDR master solve was stopped at its iteration-wise time budget regardless of whether the desired optimality gap of 0.1\% was met, the zeroth CDR solution can provide a solution to the FDR problem. Accordingly, the zeroth iteration problem is also solved separately outside the scheduled CDR iterations. For this standalone solve, we impose the same 0.1\% gap target and a 24-hour time limit, equal to the cumulative budget used by CDR across four iterations, to obtain a fair benchmark for comparison with the CDR objective and OSS probability achieved at its fourth-iteration solution.

\subsection{Tables and Plots from Numerical Findings}\label{appndx: visuals}

\begin{table}[!htbp]
\centering
\caption{\scriptsize Finite-support DR (FDR) solution time, cumulative completion time, and cuts across master iterations of the cutting surface algorithm under continuous-support DR (CDR) for all prescribed service-level targets $\theta \in \{0.95, 0.97, 0.99\}$ and Wasserstein ambiguity radius $\rho \in \{0.001, 0.005, 0.01\}$, shown for mean-support expansion parameter $\varsigma=0.10$.}
\label{tab:cumulative-time-cut-0.1}
\setlength{\tabcolsep}{3pt}
\renewcommand{\arraystretch}{0.5}
\scriptsize
\begin{tabular}{cc|cc|cccccc|cccccc}
\toprule
& & \multicolumn{2}{c|}{FDR} & \multicolumn{12}{c}{CDR} \\
\cmidrule(lr){3-4}\cmidrule(lr){5-16}
& & & & \multicolumn{6}{c|}{Covariance-scaling factor $\underline{\lambda}_f = 0.333$} & \multicolumn{6}{c}{Covariance-scaling factor $\underline{\lambda}_f = 0.5$} \\
\cmidrule(lr){5-10}\cmidrule(lr){11-16}
& &Benchmark & \makecell{From CDR Iter 0} & \multicolumn{2}{c}{Iter 2} & \multicolumn{2}{c}{Iter 3} & \multicolumn{2}{c|}{Iter 4}
& \multicolumn{2}{c}{Iter 2} & \multicolumn{2}{c}{Iter 3} & \multicolumn{2}{c}{Iter 4} \\
\cmidrule(lr){5-6}\cmidrule(lr){7-8}\cmidrule(lr){9-10}
\cmidrule(lr){11-12}\cmidrule(lr){13-14}\cmidrule(lr){15-16}
$\theta$ & $\rho$ & Time (hr) & Time (hr)
& Time (hr) & Cut & Time (hr) & Cut & Time (hr) & Cut
& Time (hr) & Cut & Time (hr) & Cut & Time (hr) & Cut \\
\midrule
     & 0.001 & 0.022 & 0.009 & 0.015 & 15 & 0.700 & 19 & 4.662 & 22 & 0.014 & 15 & 1.077 & 19 & 3.637 & 22 \\
0.95 & 0.005 & 0.026 & 0.006 & 3.968 & 14 & 9.968 & 17 & 19.910 & 21 & 2.770 & 15 & 8.770 & 18 & 18.770 & 22 \\
     & 0.010 & 0.024 & 0.009 & 6.009 & 15 & 12.009 & 18 & 22.009 & 21 & 6.009 & 14 & 12.009 & 18 & 22.009 & 22 \\
\midrule
     & 0.001 & 0.669 & 0.705 & 6.705 & 15 & 12.705 & 19 & 22.705 & 22 & 6.696 & 15 & 12.696 & 19 & 22.696 & 22 \\
0.97 & 0.005 & 0.154 & 0.149 & 6.149 & 15 & 12.149 & 19 & 22.149 & 23 & 6.142 & 15 & 12.142 & 19 & 22.142 & 23 \\
     & 0.010 & 3.300 & 2.000 & 8.000 & 15 & 14.000 & 18 & 24.000 & 22 & 8.000 & 15 & 14.000 & 19 & 24.000 & 23 \\
\midrule
     & 0.001 & 24.000 & 2.000 & 8.000 & 15 & 14.000 & 19 & 24.000 & 22 & 8.000 & 15 & 14.000 & 19 & 24.000 & 22 \\
0.99 & 0.005 & 24.000 & 2.000 & 8.000 & 15 & 14.000 & 19 & 24.000 & 23 & 8.000 & 15 & 14.000 & 19 & 24.000 & 23 \\
     & 0.010 & 24.021 & 2.000 & 8.000 & 15 & 14.000 & 19 & 24.000 & 23 & 8.000 & 15 & 14.000 & 19 & 24.000 & 23 \\
\bottomrule
\end{tabular}
\end{table}
\begin{table}[!htbp]
\centering
\caption{\scriptsize Finite-support DR (FDR) solution time, cumulative completion time, and cuts across master iterations of the cutting surface algorithm under continuous-support DR (CDR) for all prescribed service-level targets $\theta \in \{0.95, 0.97, 0.99\}$ and Wasserstein ambiguity radius $\rho \in \{0.001, 0.005, 0.01\}$, shown for mean-support expansion parameter $\varsigma=0.05$.}
\label{tab:cumulative-time-cut-0.05}
\setlength{\tabcolsep}{3pt}
\renewcommand{\arraystretch}{0.5}
\scriptsize
\begin{tabular}{cc|cc|cccccc|cccccc}
\toprule
& & \multicolumn{2}{c|}{FDR} & \multicolumn{12}{c}{CDR} \\
\cmidrule(lr){3-4}\cmidrule(lr){5-16}
& & & & \multicolumn{6}{c|}{Covariance-scaling factor $\underline{\lambda}_f = 0.333$} & \multicolumn{6}{c}{Covariance-scaling factor $\underline{\lambda}_f = 0.5$} \\
\cmidrule(lr){5-10}\cmidrule(lr){11-16}
& &Benchmark & \makecell{From CDR Iter 0} & \multicolumn{2}{c}{Iter 2} & \multicolumn{2}{c}{Iter 3} & \multicolumn{2}{c|}{Iter 4}
& \multicolumn{2}{c}{Iter 2} & \multicolumn{2}{c}{Iter 3} & \multicolumn{2}{c}{Iter 4} \\
\cmidrule(lr){5-6}\cmidrule(lr){7-8}\cmidrule(lr){9-10}
\cmidrule(lr){11-12}\cmidrule(lr){13-14}\cmidrule(lr){15-16}
$\theta$ & $\rho$ & Time (hr) & Time (hr)
& Time (hr) & Cut & Time (hr) & Cut & Time (hr) & Cut
& Time (hr) & Cut & Time (hr) & Cut & Time (hr) & Cut \\
\midrule
     & 0.001 & 0.022 & 0.008 & 0.849 & 15 & 2.112 & 18 & 9.231 & 21 & 0.558 & 15 & 3.043 & 18 & 5.789 & 22 \\
0.95 & 0.005 & 0.026 & 0.008 & 6.007 & 15 & 12.007 & 19 & 22.007 & 23 & 6.006 & 15 & 12.006 & 19 & 22.006 & 23 \\
     & 0.010 & 0.024 & 0.010 & 6.010 & 14 & 12.010 & 18 & 22.010 & 22 & 6.010 & 14 & 12.010 & 18 & 22.010 & 22 \\
\midrule
     & 0.001 & 0.669 & 0.077 & 5.184 & 15 & 10.418 & 18 & 18.418 & 21 & 5.024 & 15 & 11.024 & 18 & 19.024 & 22 \\
0.97 & 0.005 & 0.154 & 0.139 & 6.139 & 15 & 12.139 & 18 & 22.139 & 22 & 6.138 & 15 & 12.138 & 18 & 22.138 & 22 \\
     & 0.010 & 3.300 & 2.000 & 8.000 & 15 & 14.000 & 19 & 24.000 & 23 & 7.245 & 15 & 13.245 & 19 & 23.245 & 23 \\
\midrule
     & 0.001 & 24.000 & 2.000 & 8.000 & 15 & 14.000 & 18 & 24.000 & 22 & 8.000 & 15 & 14.000 & 18 & 24.000 & 22 \\
0.99 & 0.005 & 24.000 & 2.000 & 8.000 & 14 & 14.000 & 18 & 24.000 & 21 & 8.000 & 14 & 14.000 & 18 & 24.000 & 21 \\
     & 0.010 & 24.021 & 2.000 & 8.000 & 14 & 14.000 & 18 & 24.000 & 22 & 8.000 & 14 & 14.000 & 18 & 24.000 & 22 \\
\bottomrule
\end{tabular}
\end{table}
\vspace{-5em}

\begin{figure}[ht]
\vspace{-10pt}
\centering
\begin{subfigure}[t]{0.30\textwidth}
    \centering
    \includegraphics[width=0.9\linewidth,height=8em]{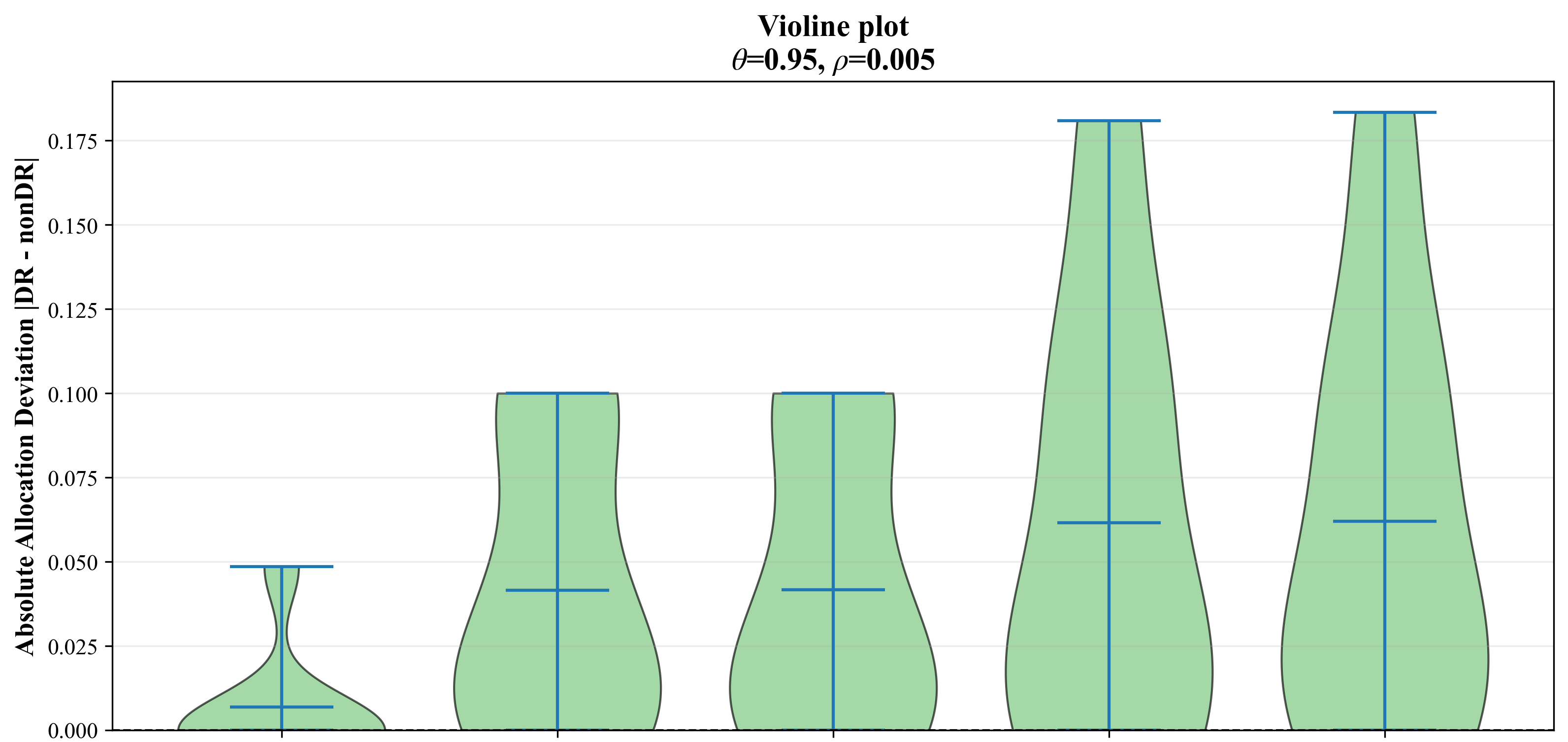}
    \label{fig:violin-0.95-0.005}
\end{subfigure}\hfill
\begin{subfigure}[t]{0.345\textwidth}
    \centering
    \includegraphics[width=\linewidth,height=8em]{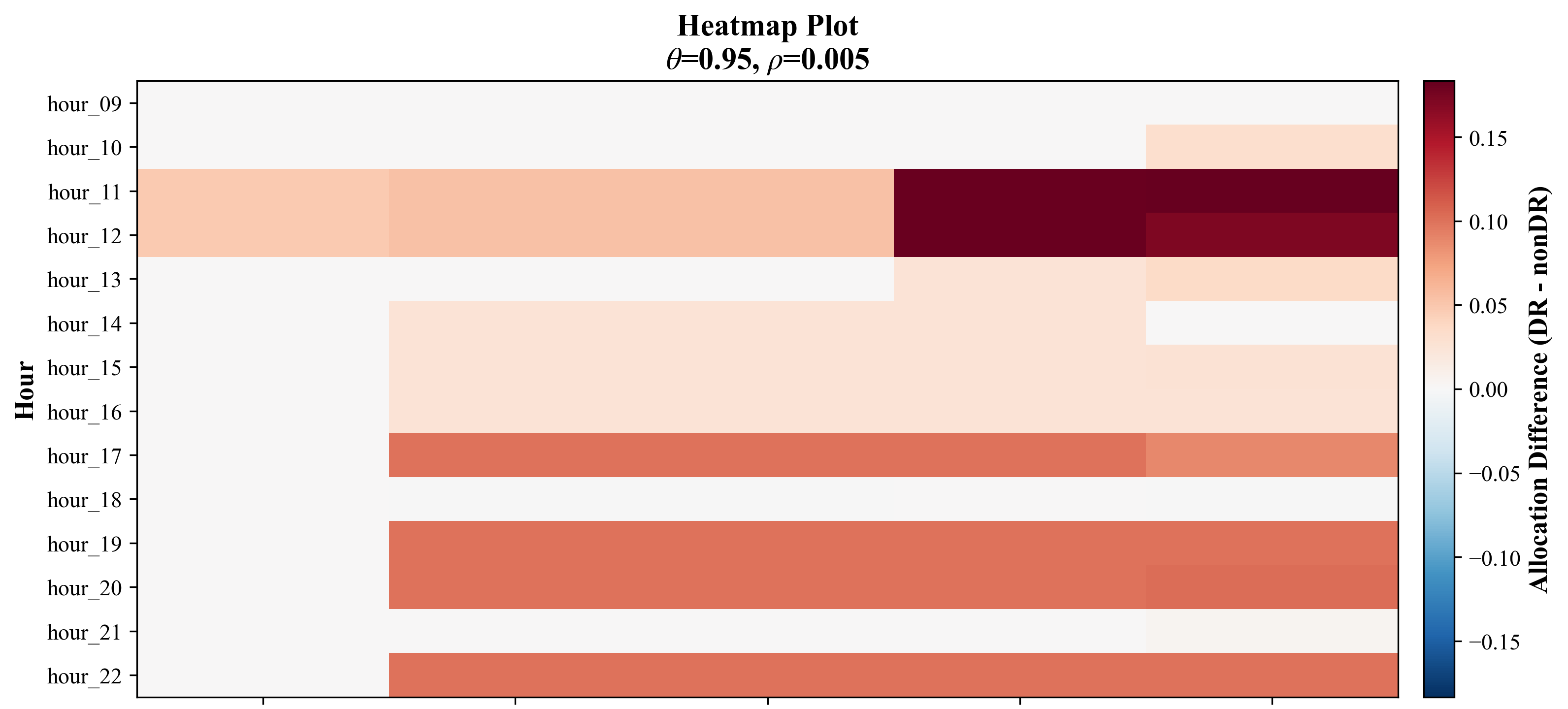}
    \label{fig:heatmap-0.95-0.005}
\end{subfigure}\hfill
\begin{subfigure}[t]{0.345\textwidth}
    \centering
    \includegraphics[width=0.95\linewidth,height=8em]{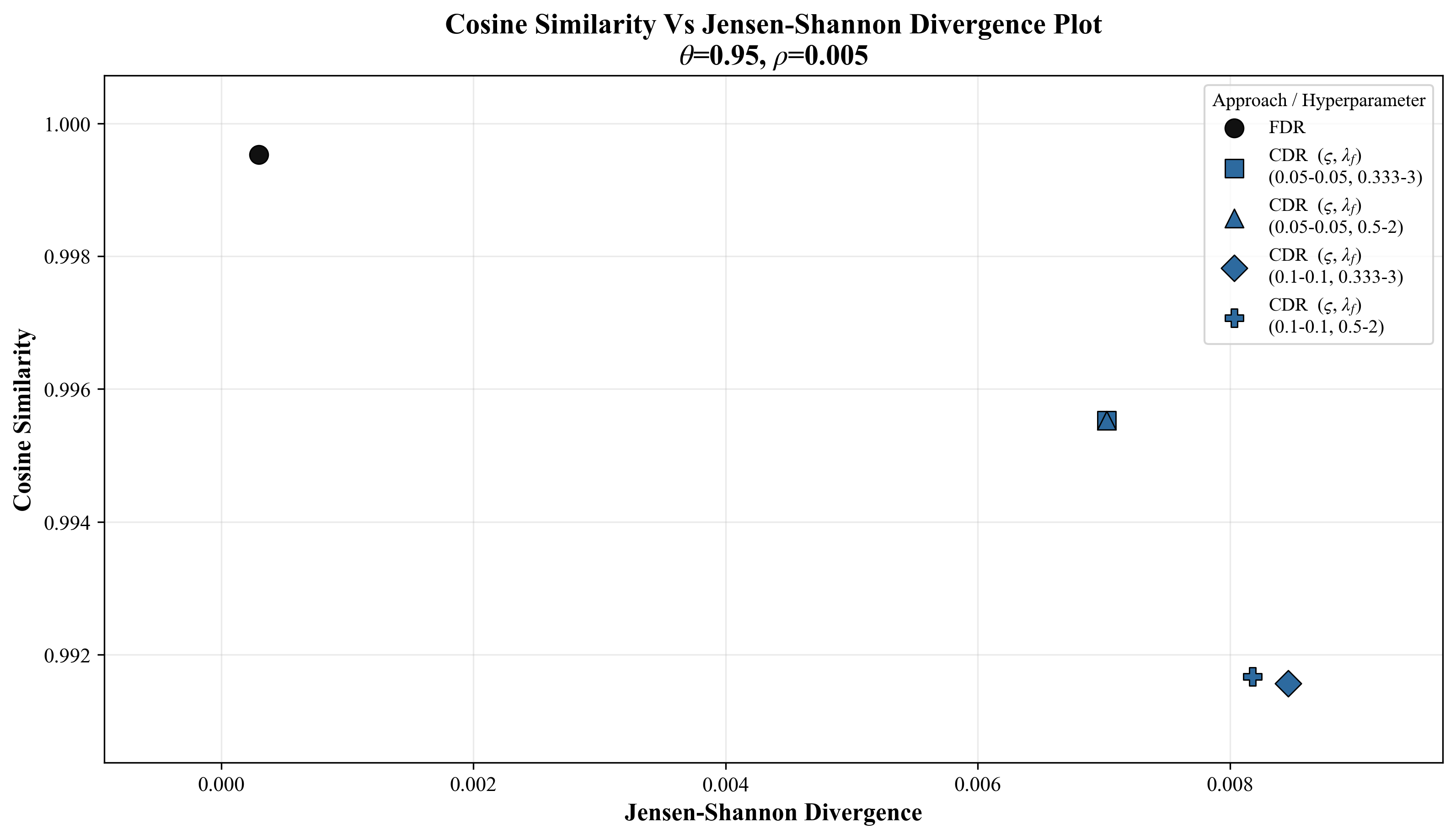}
    \label{fig:scatter-0.95-0.005}
\end{subfigure}

\vspace{-5pt}
\begin{subfigure}[t]{0.30\textwidth}
    \centering
    \includegraphics[width=0.9\linewidth,height=8em]{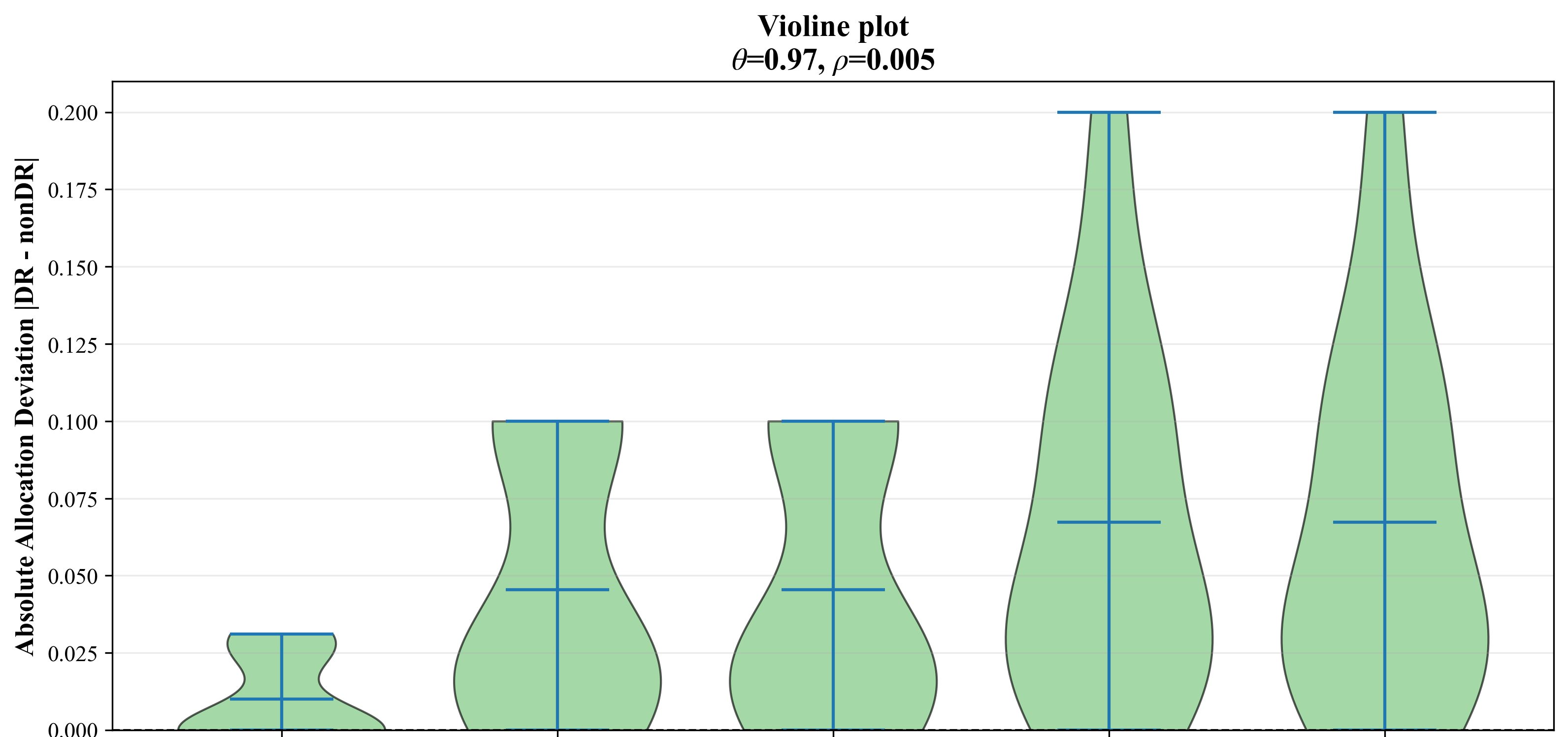}
     \label{fig:violin-0.97-0.005}
\end{subfigure}\hfill
\begin{subfigure}[t]{0.345\textwidth}
    \centering
    \includegraphics[width=\linewidth,height=8em]{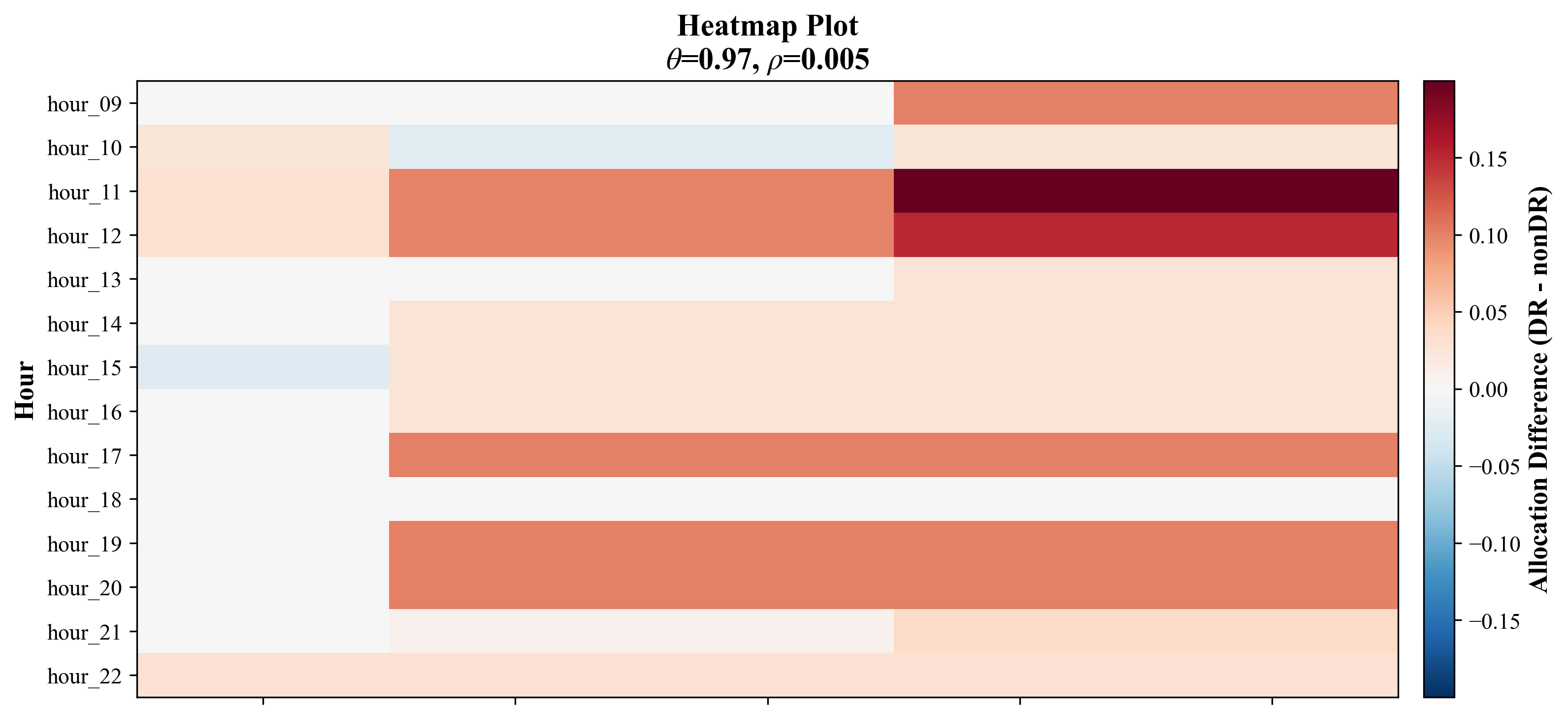}
    \label{fig:heatmap-0.97-0.005}
\end{subfigure}\hfill
\begin{subfigure}[t]{0.345\textwidth}
    \centering
    \includegraphics[width=0.95\linewidth,height=8em]{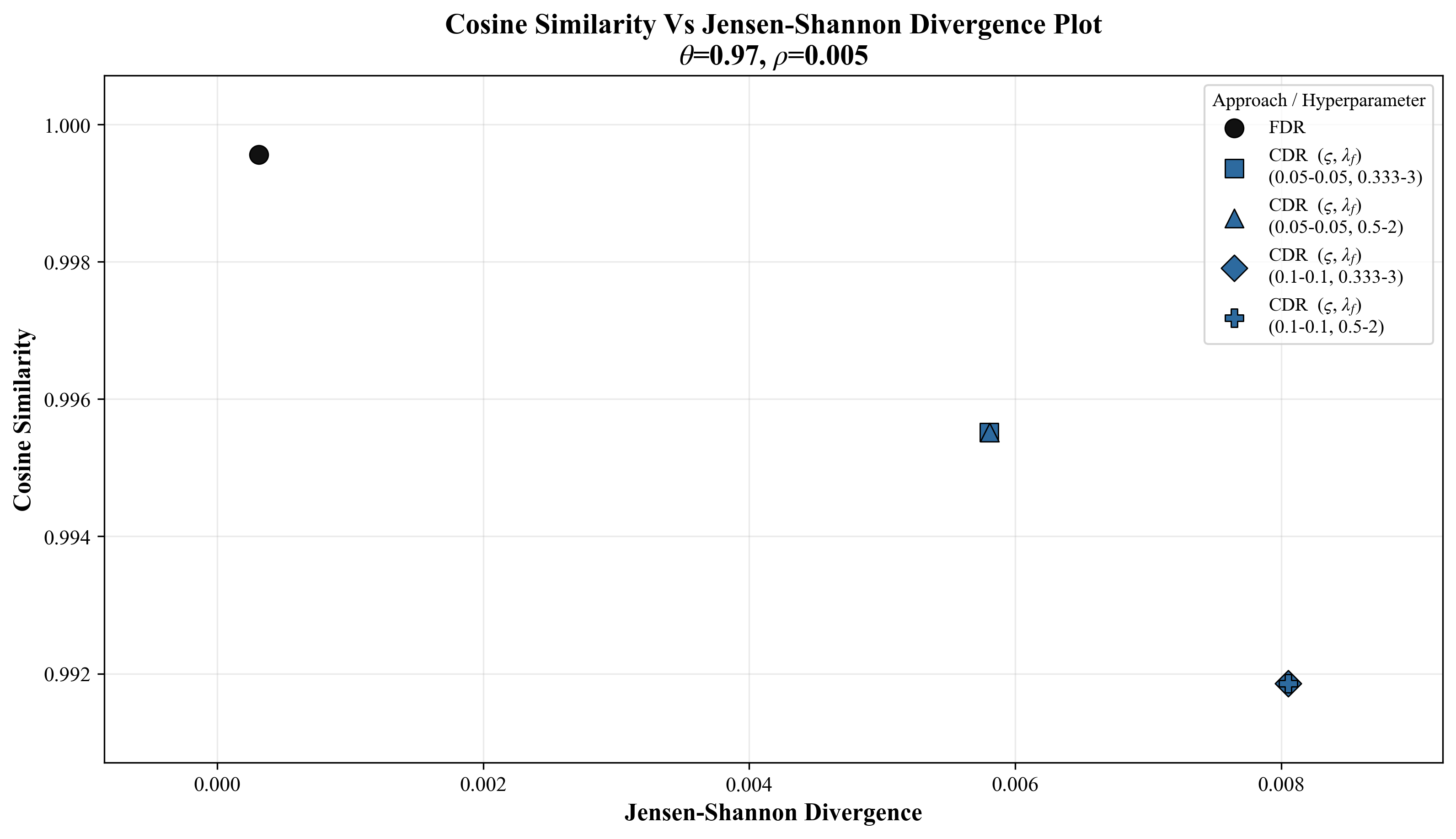}
    \label{fig:scatter-0.97-0.005}
\end{subfigure}

\vspace{-5pt}
\begin{subfigure}[t]{0.30\textwidth}
    \centering
    \includegraphics[width=0.9\linewidth,height=9.5em]{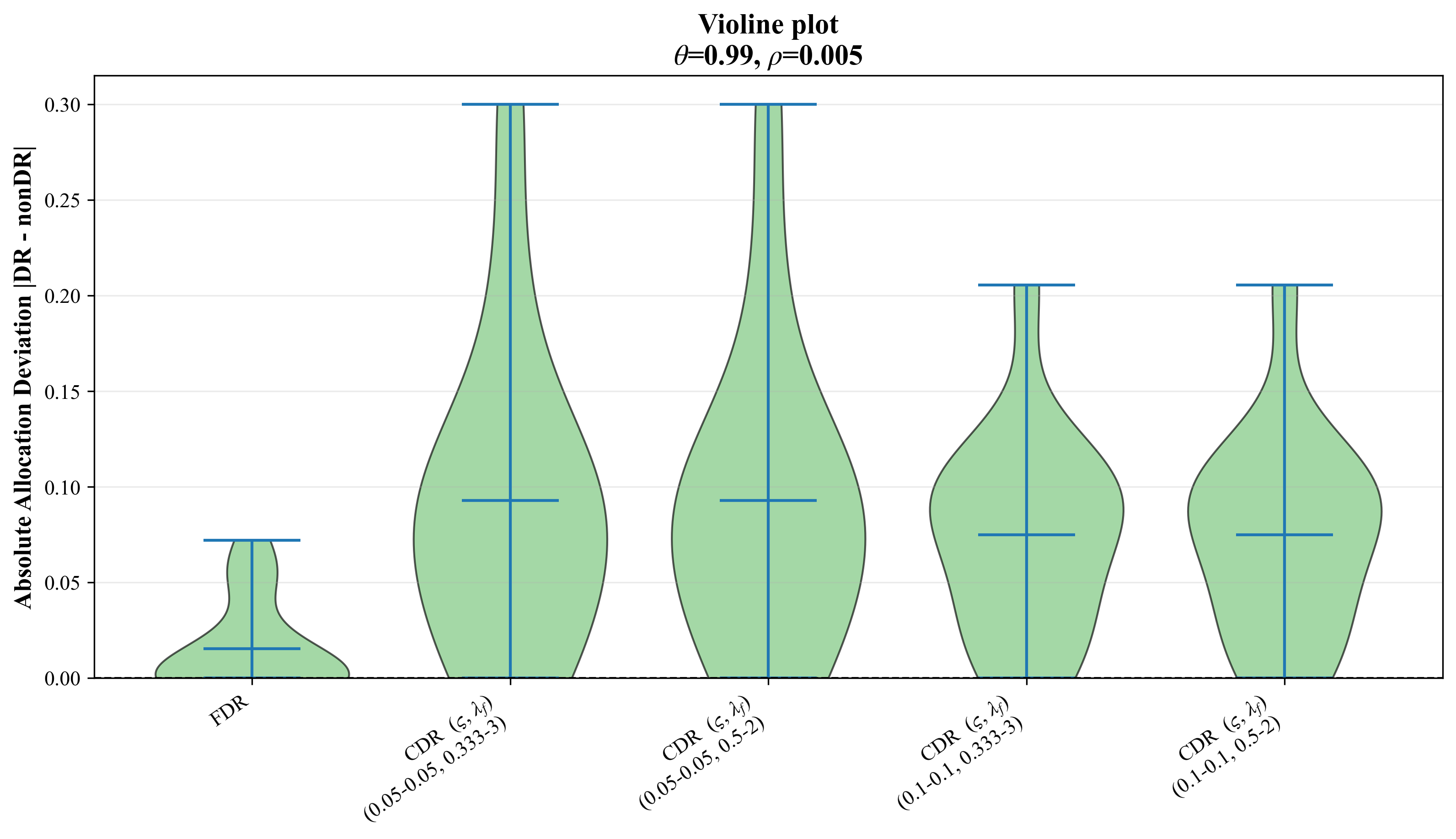}
     \label{fig:violin-0.99-0.005}
\end{subfigure}\hfill
\begin{subfigure}[t]{0.345\textwidth}
    \centering
    \includegraphics[width=\linewidth,height=9.5em]{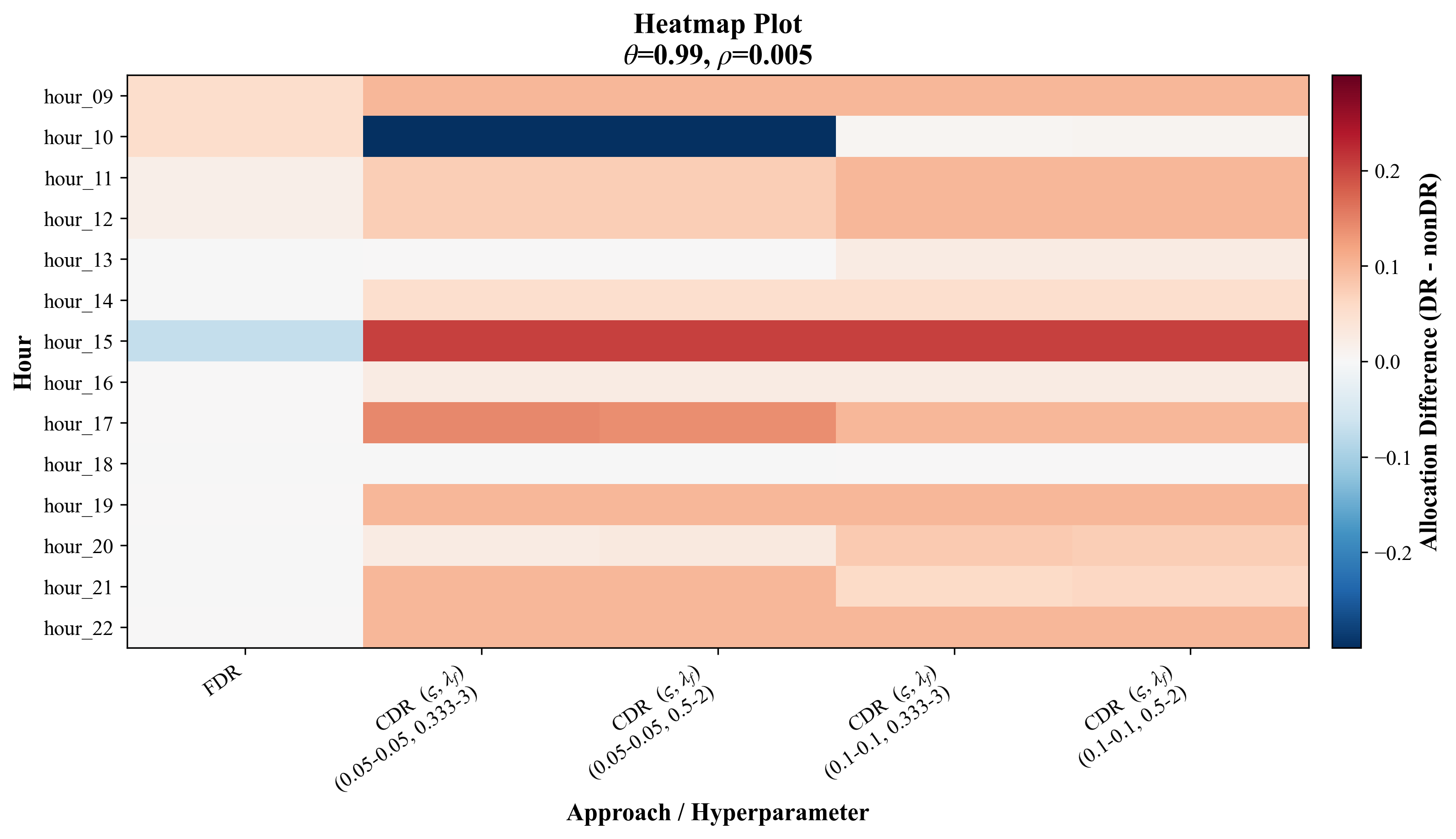}
    \label{fig:heatmap-0.99-0.005}
\end{subfigure}\hfill
\begin{subfigure}[t]{0.345\textwidth}
    \centering
    \includegraphics[width=0.95\linewidth,height=9.5em]{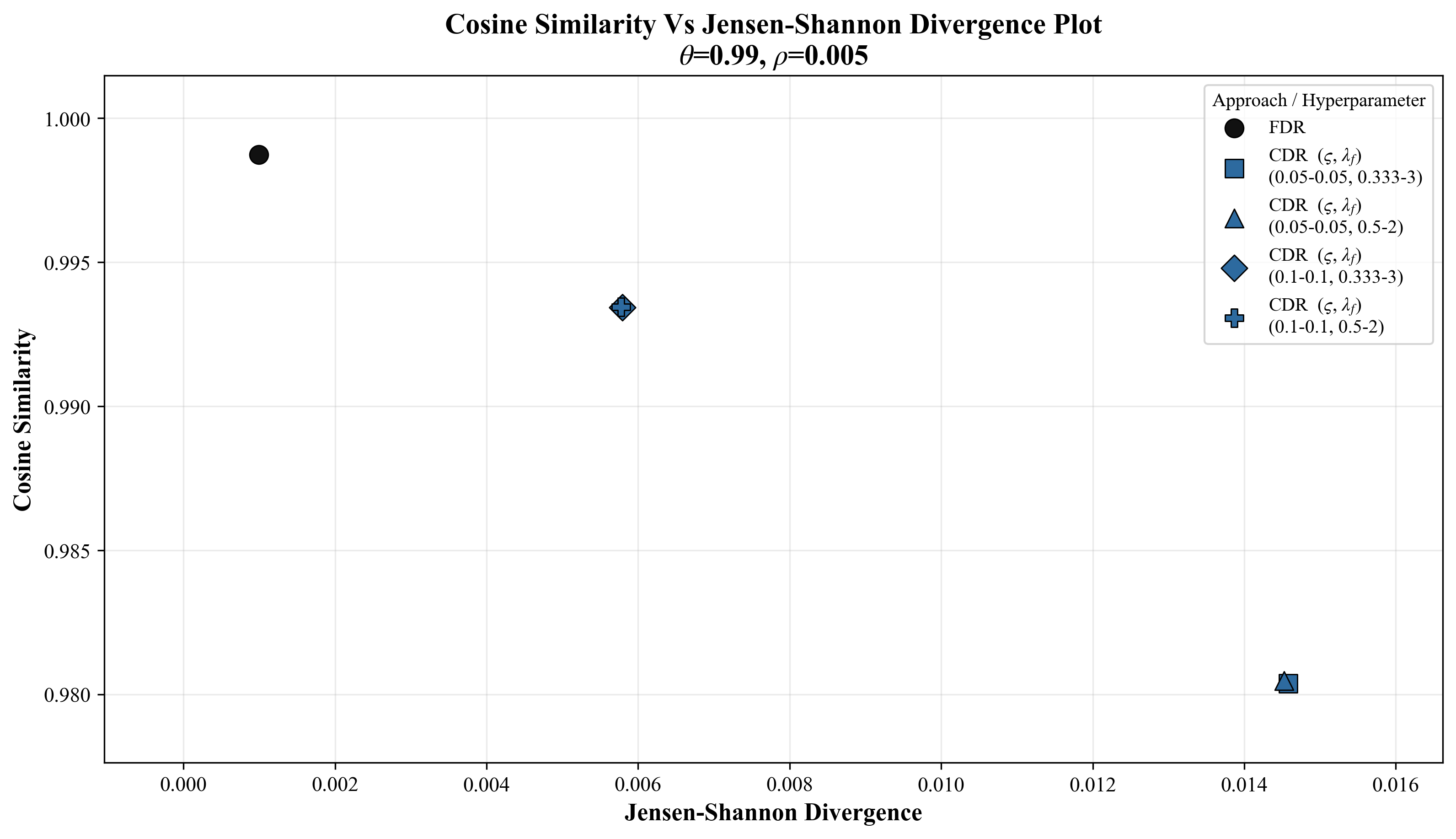}
    \label{fig:scatter-0.99-0.005}
\end{subfigure}
\vspace{-5pt}
\begin{subfigure}[t]{0.30\textwidth}
    \centering
    \includegraphics[width=0.9\linewidth,height=8em]{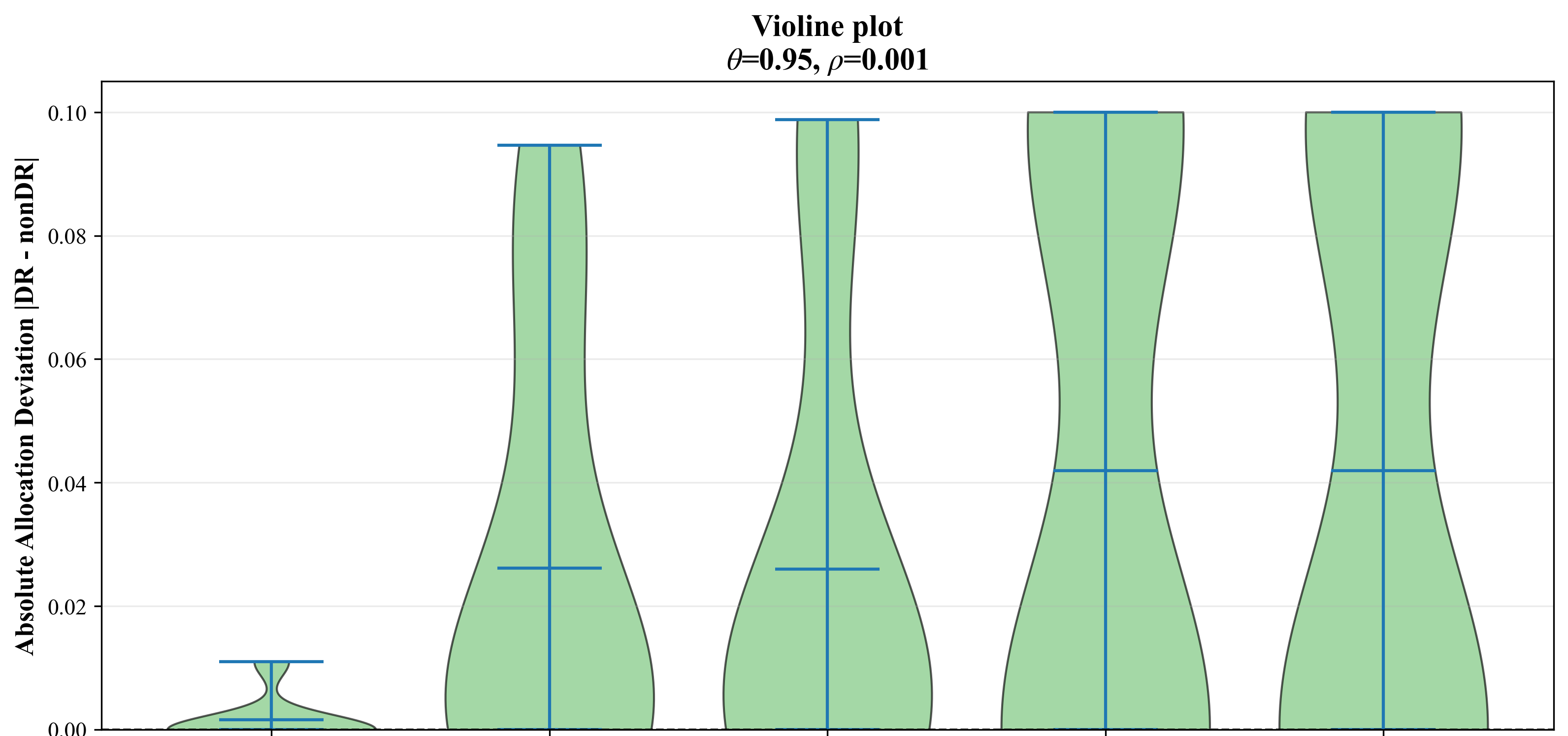}
    \label{fig:violin-0.95-0.001}
\end{subfigure}\hfill
\begin{subfigure}[t]{0.345\textwidth}
    \centering
    \includegraphics[width=\linewidth,height=8em]{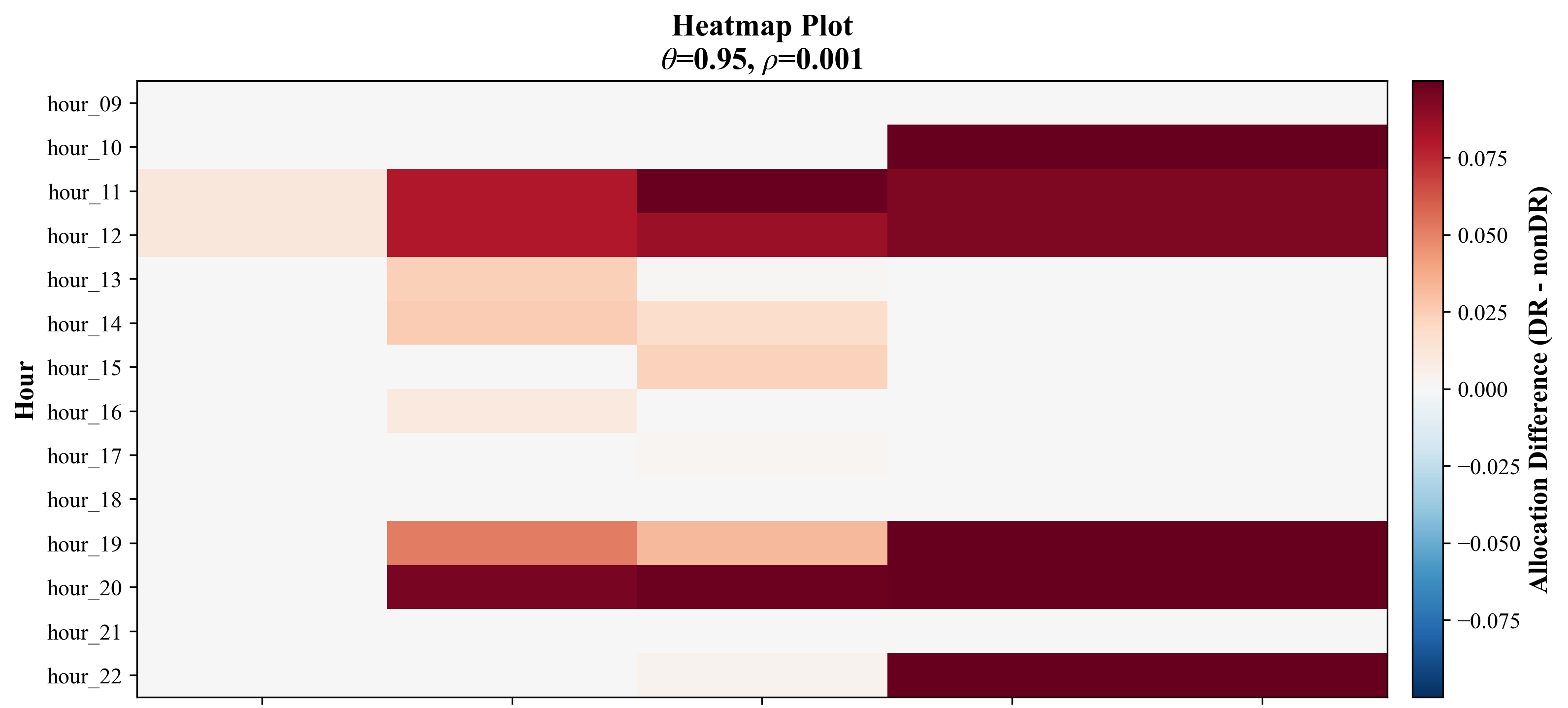}
    \label{fig:heatmap-0.95-0.001}
\end{subfigure}\hfill
\begin{subfigure}[t]{0.345\textwidth}
    \centering
    \includegraphics[width=0.95\linewidth,height=8em]{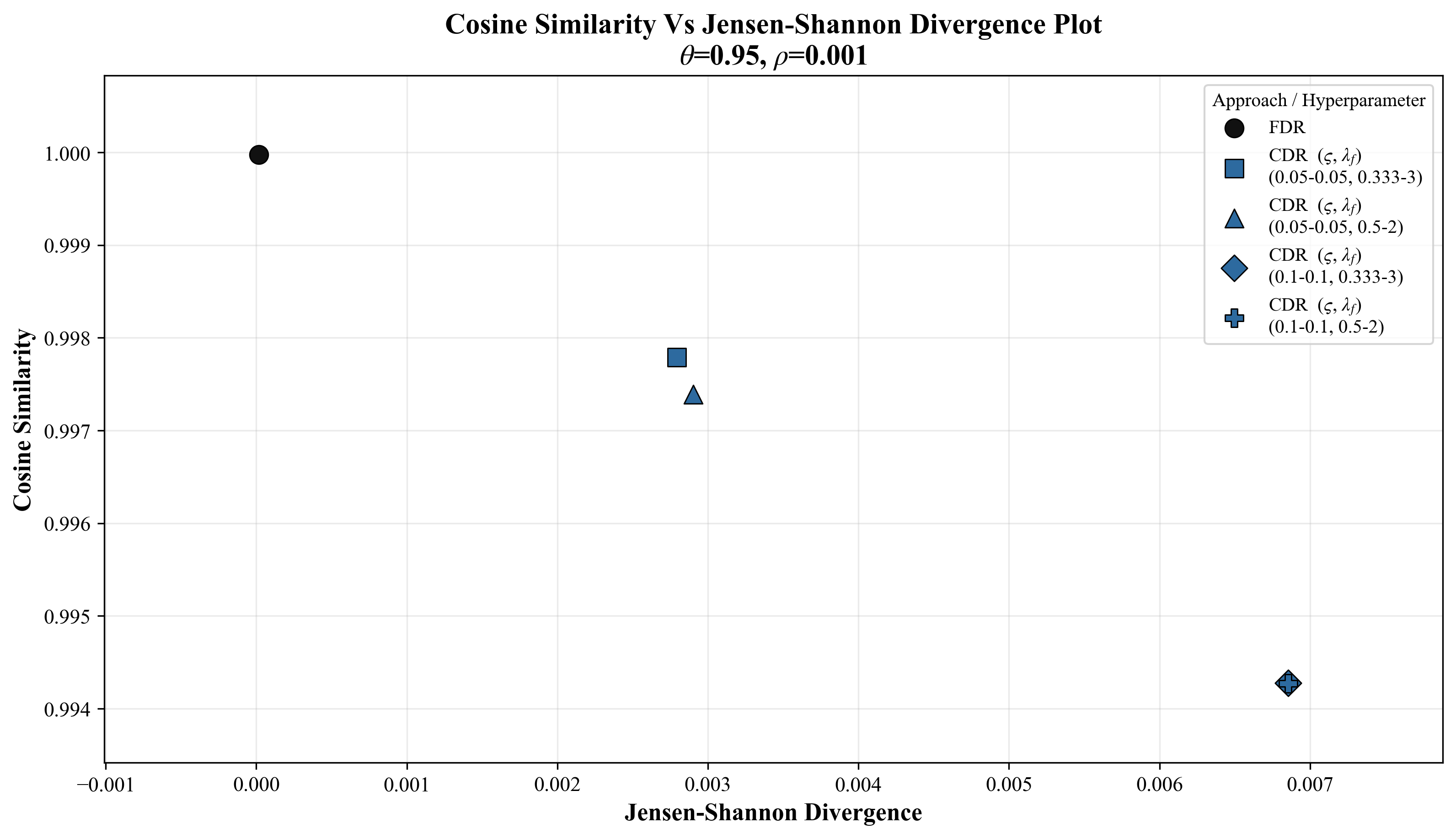}
    \label{fig:scatter-0.95-0.001}
\end{subfigure}

\vspace{-10pt}
\begin{subfigure}[t]{0.30\textwidth}
    \centering
    \includegraphics[width=0.9\linewidth,height=8em]{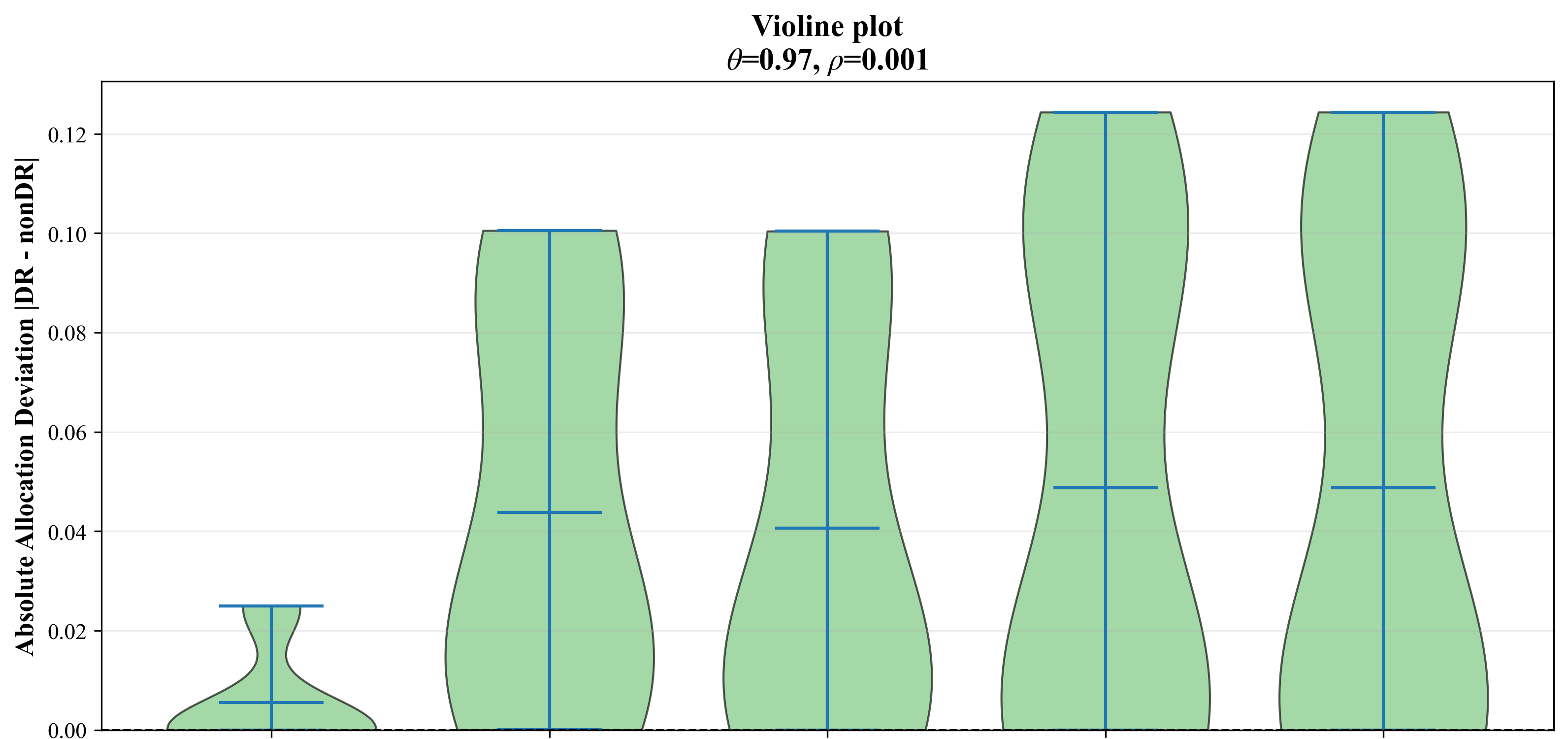}
     \label{fig:violin-0.97-0.001}
\end{subfigure}\hfill
\begin{subfigure}[t]{0.345\textwidth}
    \centering
    \includegraphics[width=\linewidth,height=8em]{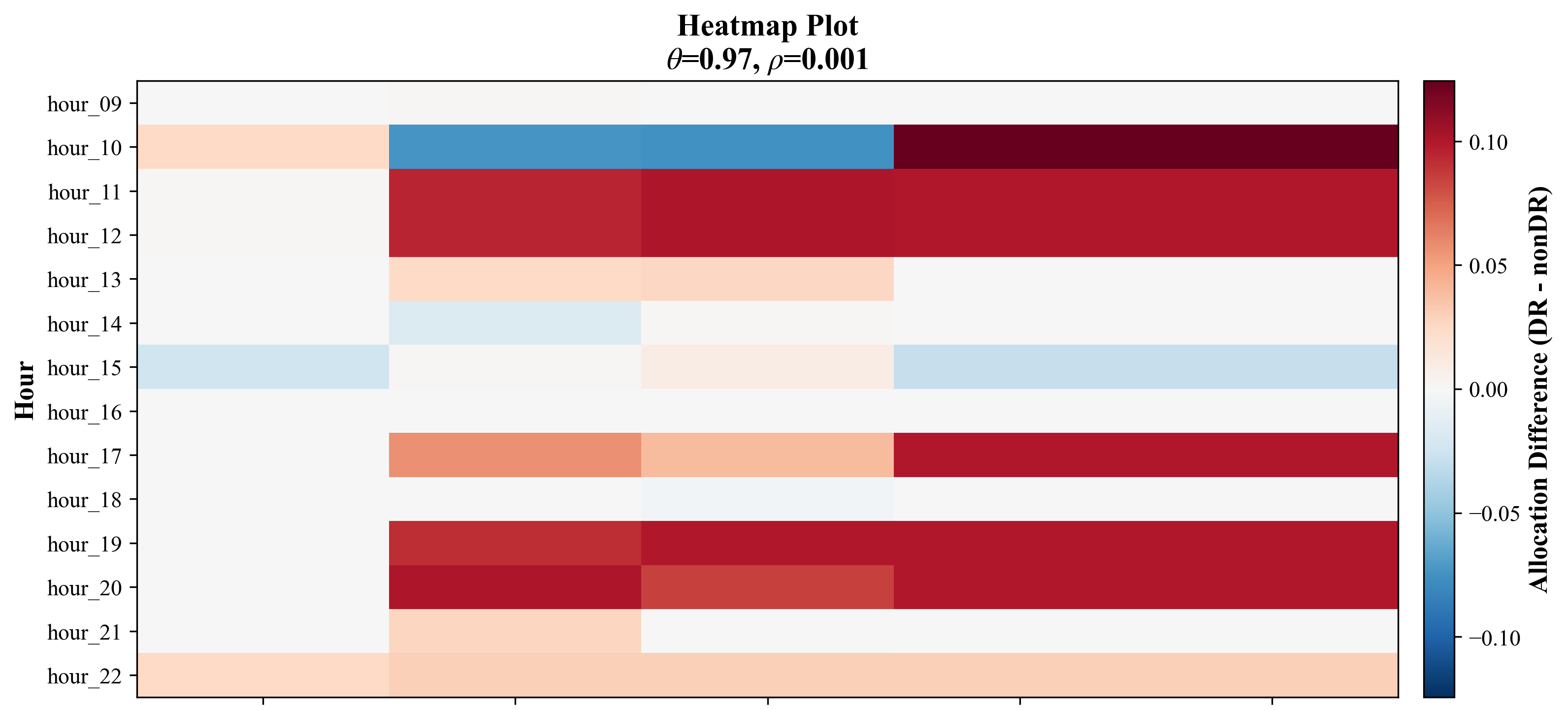}
    \label{fig:heatmap-0.97-0.001}
\end{subfigure}\hfill
\begin{subfigure}[t]{0.345\textwidth}
    \centering
    \includegraphics[width=0.95\linewidth,height=8em]{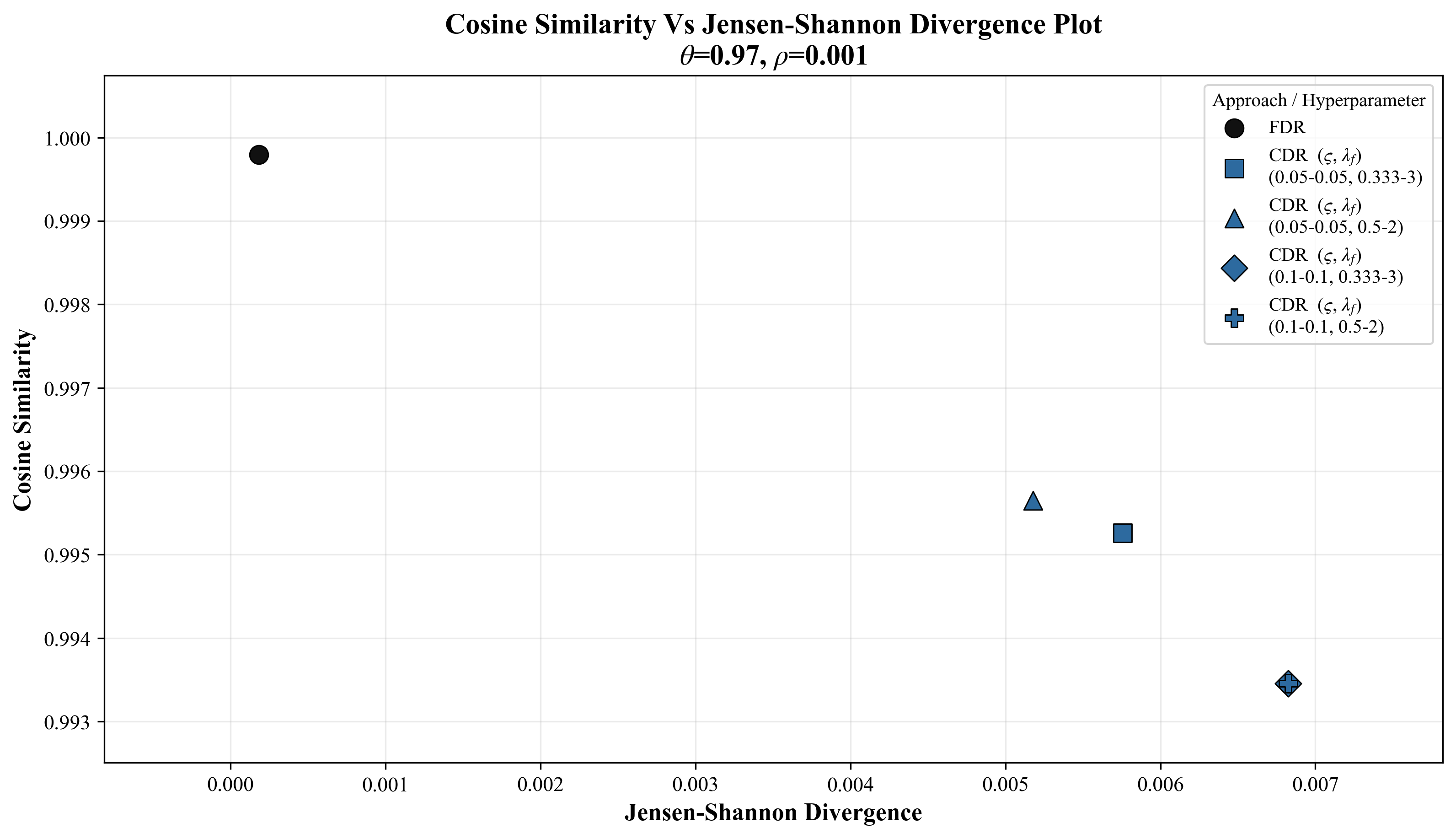}
    \label{fig:scatter-0.97-0.001}
\end{subfigure}

\vspace{-10pt}
\begin{subfigure}[t]{0.30\textwidth}
    \centering
    \includegraphics[width=0.9\linewidth,height=9.5em]{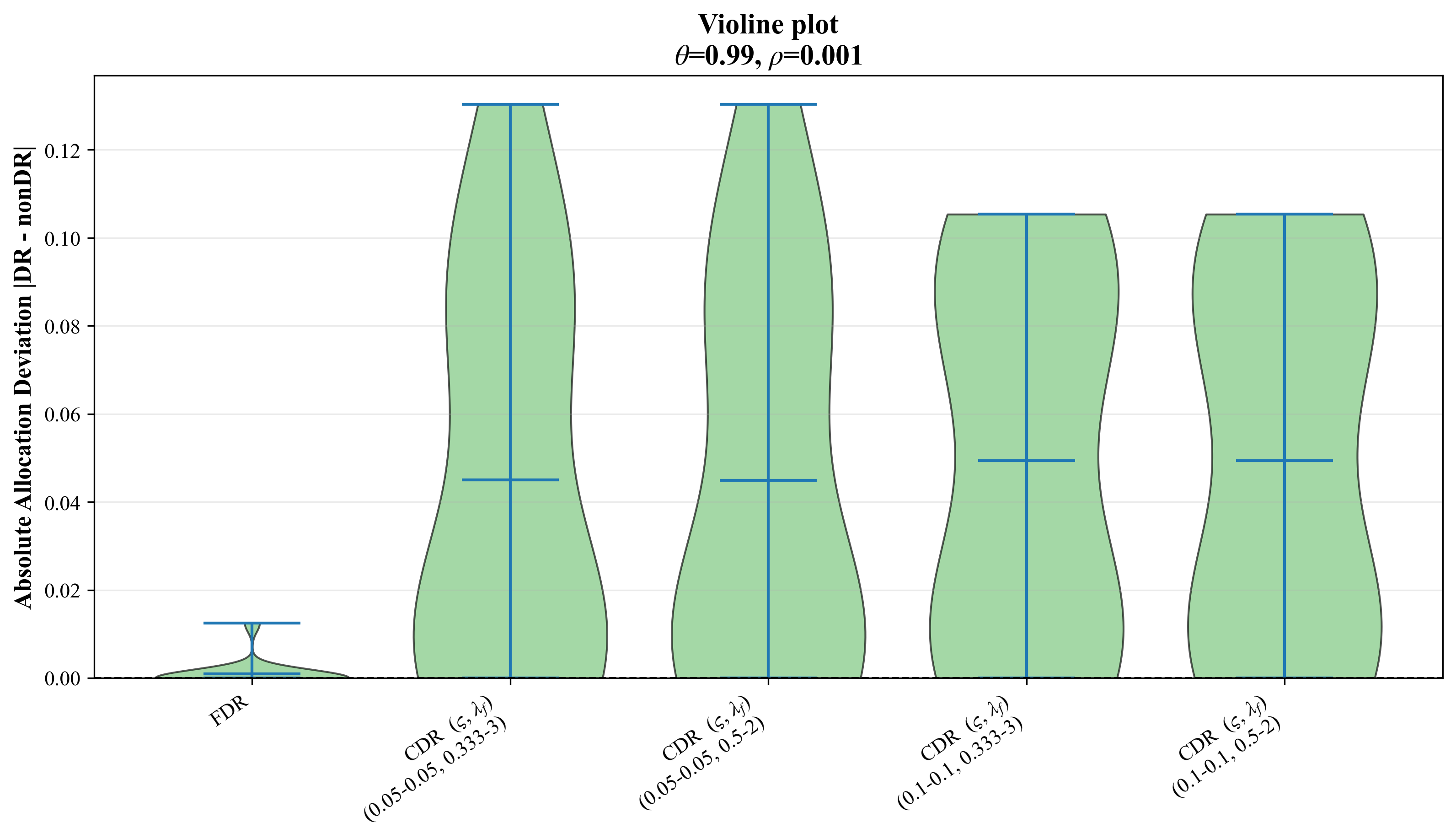}
     \label{fig:violin-0.99-0.001}
\end{subfigure}\hfill
\begin{subfigure}[t]{0.345\textwidth}
    \centering
    \includegraphics[width=\linewidth,height=9.5em]{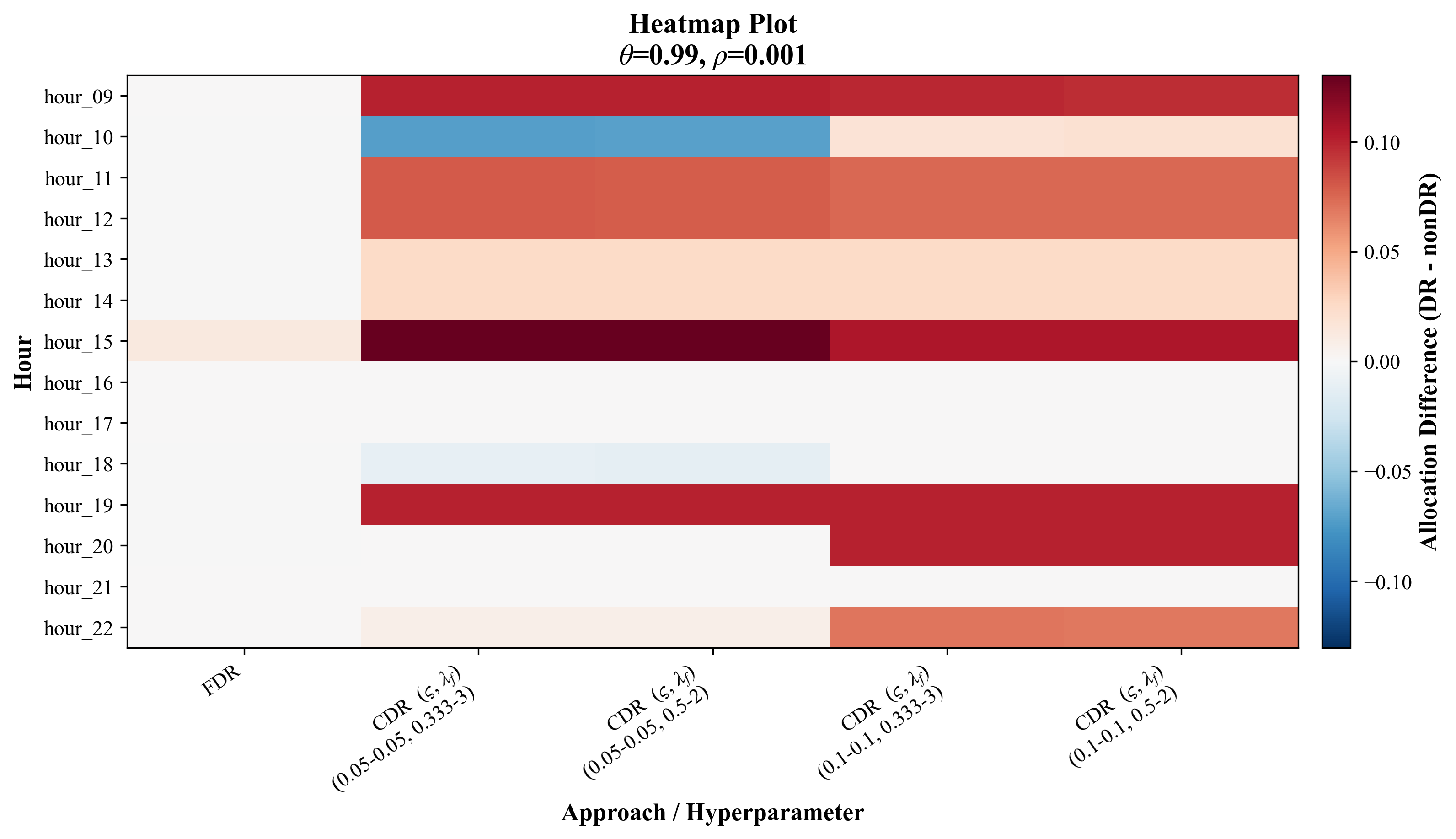}
    \label{fig:heatmap-0.99-0.001}
\end{subfigure}\hfill
\begin{subfigure}[t]{0.345\textwidth}
    \centering
    \includegraphics[width=0.95\linewidth,height=9.5em]{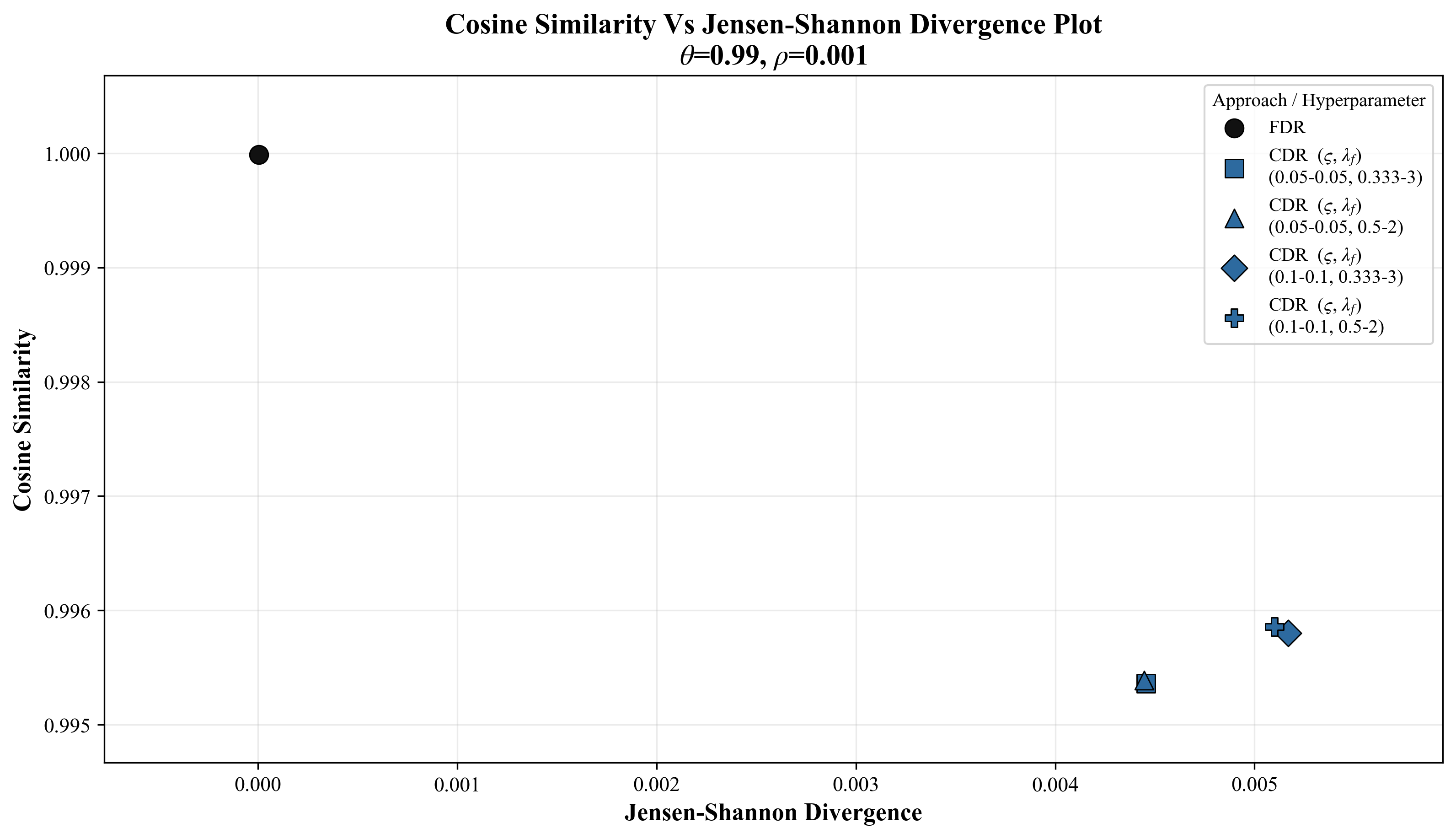}
    \label{fig:scatter-0.99-0.001}
\end{subfigure}
\vspace{-1em}
\caption{\scriptsize 
Comparison of finite- and continuous-support solutions for $(\varsigma,\lambda_{\min})\in\{(0.05,0.333),(0.05,0.5),(0.1,0.333),(0.1,0.5)\}$. The upper and lower row blocks correspond to $\rho=0.005$ and $\rho=0.001$, respectively, with rows in each block ordered from top to bottom by $\theta=0.95,0.97,0.99$. In each row, the left, middle, and right panels show, respectively, the violin plot of absolute deviations from the nominal solution, the heatmap of allocation differences relative to the nominal solution, and the cosine-similarity--Jensen--Shannon-divergence scatter plot.}
\label{fig:combined_three_rows-0.001}
\end{figure}

\end{APPENDICES}

\end{document}